\newcommand{\R}{\mathbb{R}}
\newcommand{\C}{\mathbb{C}}
\newcommand{\N}{\mathbb{N}}
\newcommand{\Z}{\mathbb{Z}}
\newcommand{\E}{\mathbb{E}}
\newcommand{\calM}{\mathcal{M}}
\newcommand{\calS}{\mathcal{S}}
\newcommand{\calI}{\mathcal{I}}
\newcommand{\calU}{\mathcal{U}}
\newcommand{\bbP}{\mathbb{P}}
\numberwithin{equation}{section}
\newcommand{\ud}[0]{\,\mathrm{d}}
\newcommand{\dist}[0]{\operatorname{dist}}
\newcommand{\abs}[1]{|#1|}
\newcommand{\Babs}[1]{\Big|#1\Big|}
\newcommand{\norm}[2]{|#1|_{#2}}
\newcommand{\Norm}[2]{\|#1\|_{#2}}
\newcommand{\pair}[2]{\langle #1,#2 \rangle}
\newcommand{\ave}[1]{\langle #1\rangle}
\newcommand{\bave}[1]{\big\langle #1\big\rangle}
\newcommand{\BMO}[0]{\operatorname{BMO}}
\newcommand{\supp}[0]{\operatorname{spt}}
\newcommand{\sign}[0]{\operatorname{sgn}}
\newcommand{\eps}[0]{\varepsilon}
\newcommand{\ch}[0]{\operatorname{ch}}
\newcommand{\calD}[0]{\mathcal{D}}
\newcommand{\wt}[1]{{\widetilde{#1}}}
\newcommand{\wh}[1]{{\widehat{#1}}}
\newcommand{\sub}[0]{\operatorname{sub}}
\theoremstyle{plain}
\newtheorem{thm}[equation]{Theorem}
\newtheorem{lem}[equation]{Lemma}
\newtheorem{prop}[equation]{Proposition}
\newtheorem{cor}[equation]{Corollary}
\theoremstyle{definition}
\newtheorem{defn}[equation]{Definition}
\theoremstyle{remark}
\newtheorem{rem}[equation]{Remark}
\title{Multiresolution analysis and Zygmund dilations}
\author{Tuomas Hyt\"onen}
\author{Kangwei Li}
\author{Henri Martikainen}
\author{Emil Vuorinen}
\address[T.H. \& E.V.]{Department of Mathematics and Statistics, University of Helsinki, P.O.B. 68, FI-00014 University of Helsinki, Finland}
\email{tuomas.hytonen@helsinki.fi}
\email{emil.vuorinen@helsinki.fi}
\address[K.L.]{Center for Applied Mathematics, Tianjin University, Weijin Road 92, 300072 Tianjin, China}
\email{kli@tju.edu.cn}
\address[H.M.]{Department of Mathematics and Statistics, Washington University in St. Louis, 1 Brookings Drive, St. Louis, MO 63130, USA}
\email{henri@wustl.edu}
\subjclass[2020]{42B20, 42B25}
\keywords{Singular integrals, multi-parameter analysis, Zygmund dilations, multiresolution analysis, weighted norm inequalities}
\begin{document}

\allowdisplaybreaks

\begin{abstract}
Zygmund dilations are a group of dilations lying in between the standard product theory
and the one-parameter setting -- in $\R^3 = \R \times \R \times \R$
they are the dilations $(x_1, x_2, x_3) \mapsto (\delta_1 x_1, \delta_2 x_2, \delta_1 \delta_2 x_3)$. The dyadic multiresolution analysis and the related
dyadic-probabilistic methods
have been very impactful in the modern product singular integral theory. However, multiresolution analysis has not been understood in the Zygmund dilation setting
or in other modified product space settings.
In this paper we develop this missing dyadic multiresolution analysis of Zygmund type, and justify its usefulness
by bounding, on weighted spaces, a general class of singular integrals that are invariant under Zygmund dilations. We provide
novel examples of Zygmund $A_p$ weights and Zygmund kernels showcasing the optimality of our kernel assumptions
for weighted estimates.
\end{abstract}

\maketitle
\tableofcontents

\section{Introduction}

\subsection{General background: singular integrals adapted to different dilation structures}

Singular integral operators (SIOs) on $\R^d$ take the general form 
\begin{equation}\label{eq:CZO}
  Tf(x) = \int_{\R^d} K(x,y)f(y)\ud y,
\end{equation}
where $K(x,y)$ is called the {\em kernel} of $T$.

\subsubsection{Classical theory}
Classical theory of Calder\'on--Zygmund \cite{CZ} addresses kernels that are invariant under the basic transformations of the ambient space: translations and positive dilations.
By translation invariance we mean that $K(x+h,y+h)=K(x,y)$ for all $x,y,h\in\R^d$, and thus in fact $K(x,y)=K(x-y)$ is a {\em convolution kernel}.
Invariance under positive dilations refers to the condition that $\delta^d K(\delta x,\delta y)=K(x,y)$ for all $x,y\in\R^d$ and $\delta>0$; in combination with translation invariance this implies that $K(x,y)=\Omega(x-y)\abs{x-y}^{-d}$ for some $\Omega(u)$ that depends only on the angular part $u/\abs{u}$ of its argument.
Under appropriate regularity and cancellation hypotheses on $\Omega$, \cite{CZ} obtains the $L^p$ boundedness of $f\mapsto Tf$ given by \eqref{eq:CZO}. Related results in a ``dual'' representation of the same operators as {\em Fourier multipliers} are due to Mihlin \cite{Mih}, and a certain unification of the two points of view is provided by H\"ormander \cite{Hor}.

It is by now well-known that neither translation nor dilation invariance is essential for the theory on the level of individual operators. Dilation invariance was already abolished in \cite{Hor,Mih}, and translation invariance via the general theory first developed by Coifman--Weiss \cite{CW} and Coifman--Meyer \cite{CM}, and to some extent completed by David--Journ\'e \cite{DJ} whose ``$T(1)$ theorem'' provides a general $L^2$ boundedness criterion of SIOs without either of the two invariances. Nevertheless, much of the theory (rather naturally) deals with classes of operators, such that both translates and dilates of an operator, while not necessarily equal to the original operator, are nevertheless operators of the same class.
Thus, any assumptions imposed on the kernel will obey these invariances, and, in particular, the unique pointwise upper bound on $K(x,y)$ with this property takes the form
\begin{equation}\label{eq:CZK1param}
  \abs{K(x,y)}\lesssim\abs{x-y}^{-d};
\end{equation}
additional regularity and cancellation hypotheses with similar invariances (as in \cite{CZ}, and often modelled after \cite{CZ}) are needed to obtain any interesting conclusions.

Attempting anything even close to a survey of the extensive Calder\'on--Zygmund theory initiated by \cite{CZ} would take us much too far afield, but two further aspects are directly relevant to our present investigation: the (classical) description of the {\em weighted norm inequalities} satisfied by these operators \cite{CF} and by the closely related maximal operator \cite{Muc}, as well as the (more recent) {\em dyadic representation theorem} \cite{Hy}, which allows one to decompose any standard SIO into simpler constituents amenable to further analysis.

\subsubsection{Product space theory}
There is also a well-developed theory of more general $m$-parameter dilations
\begin{equation}\label{eq:prod-dil}
  (x_1, \ldots, x_m) \mapsto (\delta_1 x_1, \ldots, \delta_m x_m), \qquad \delta_1, \ldots, \delta_m > 0,
\end{equation}
when $\R^d$ is viewed as the {\em product space}
\begin{displaymath}
  \R^d = \prod_{i=1}^m \R^{d_i}, \qquad d = d_1 + \cdots + d_m.
\end{displaymath}
The upper bound on $K(x,y)$ that is invariant under these dilations takes the form
\begin{equation}\label{eq:CZKmparam}
  \abs{K(x,y)}\lesssim\prod_{i=1}^m\abs{x_i-y_i}^{-d_i},
\end{equation}
which is a much weaker assumption than the classical \eqref{eq:CZK1param}. The theory of Fourier multipliers in this setting by Marcinkiewicz \cite{Mar} actually predates all other results mentioned in this Introduction. A theory of convolution kernels $K(x-y)$ of this type, and weighted norm inequalities for both these SIOs and the related {\em strong maximal operator}, has been developed in \cite{FS}, extending the classical theory of \cite{CZ,CF,Muc} to the $m$-parameter case. Moreover, $m$-parameter extensions of the $T(1)$ theorem \cite{DJ} and the dyadic representation theorem \cite{Hy} have been obtained in \cite{Jo2} and \cite{Ma1}, respectively. Thus, at least in terms of these aspects of the theory, the $m$-parameter results are essentially on the same level with their classical counterparts.

\subsubsection{Entangled dilation systems}
Yet a further extension of the basic set-up consists of ``entangled'' systems of dilations
\begin{equation}\label{eq:NWdil}
  (x_1, \ldots, x_m) \mapsto (\delta_1^{\lambda_{11}}\cdots\delta_k^{\lambda_{1k}} x_1, \ldots, \delta_1^{\lambda_{m1}}\cdots\delta_k^{\lambda_{mk}} x_m), \qquad \delta_1, \ldots, \delta_k > 0,
\end{equation}
described by a fixed {\em dilation matrix}
\begin{equation*}
  \Lambda=\begin{pmatrix} {\lambda_{11}} & \ldots & {\lambda_{1k}} \\ \vdots & \ddots & \vdots \\ {\lambda_{m1}} & \ldots & {\lambda_{mk}} \end{pmatrix}.
\end{equation*}

A pioneering contribution in this direction is due to Nagel--Wainger \cite{NW}, who obtain the $L^2$ boundedness of a class of operators with kernels invariant under both translations and dilations of type \eqref{eq:NWdil}, thus extending the scope of the classical Calder\'on--Zygmund results \cite{CZ} to the general dilations \eqref{eq:NWdil}. Later, Ricci--Stein \cite{RS} deal with $L^p$ boundedness and more general kernels, still translation-invariant, but relaxing the invariance under \eqref{eq:NWdil} on the level of individual kernels to invariance of the assumptions imposed on them---again, in parallel with the development of the classical theory. In terms of limits of this theory, it seems that neither a weighted theory nor a $T(1)$ criterion for non-convolution operators is available on this level of generality; moreover, the kernels covered by the theory are either required to have strong invariance properties as in \cite{NW}, or they are somewhat indirectly described in terms of a suitable series representation, as in \cite{RS}.

\subsection{Zygmund dilations; existing results in this setting}
Further advances have been achieved in the case of specific individual dilation groups, among which the group of {\em Zygmund dilations}
\begin{equation}\label{eq:Zdil}
  (x_1, x_2, x_3) \mapsto (\delta_1 x_1, \delta_2 x_2, \delta_1 \delta_2 x_3), \qquad \delta_1, \delta_2 > 0,
\end{equation}
provides the simplest nontrivial example of interest.  This is the setting that we will concentrate on in this paper. A concrete example of a kernel invariant under \eqref{eq:Zdil} is the {\em Nagel--Wainger kernel}
\begin{equation*}
  K(x_1, x_2, x_3) = \frac{\sign(x_1x_2)}{x_1^2 x_2^2 + x_3^2};
\end{equation*}
the $L^2$ boundedness of the corresponding SIO is a special case of the results of \cite{NW}.

A particular interest in the Zygmund dilations \eqref{eq:Zdil} stems from the fact that, in the usual representation of the Heisenberg group on $\R^3$, the group law
\begin{equation*}
  (x_1,x_2,x_3)\odot(y_1,y_2,y_3)=\Big(x_1+y_1,x_2+y_2,x_3+y_3+\alpha(x_1y_2-y_1x_2)\Big)
\end{equation*}
is compatible with these dilations. (There are different conventions about the choice of the parameter $\alpha\in\R\setminus\{0\}$ above, but this choice does not affect the mentioned compatibility.) See M\"uller--Ricci--Stein \cite{MRS} for results and further motivation on Fourier multipliers of Marcinkiewicz type in this setting.

However, the starting point of the line of investigation that we continue in this paper is the work of R.\ Fefferman and Pipher \cite{FP}. They studied a class of convolution-type SIOs compatible with \eqref{eq:Zdil}, connecting both convolution kernel and Fourier multiplier points of view, and, importantly, they identified the weighted norm inequalities relevant for this class of operators, and the related maximal operator compatible with \eqref{eq:Zdil}. The condition on the weights and the maximal operator are given by the familiar formula
\begin{equation*}
\begin{split}
    [w]_{A_{p,Z}} &:=\sup_{I\in\mathcal{R}_Z}\Big(\fint_I w(x)\ud x\Big)\Big(\fint_I w^{-1/(p-1)}(x) \ud x\Big)^{p-1}<\infty, \\
    M_Zf(x) &:=\sup_{I\in\mathcal{R}_Z} 1_I(x)\fint_I\abs{f(y)}\ud y,
\end{split} 
\end{equation*}
where the supremums are taken over the collection
\begin{equation*}
  \mathcal{R}_Z:=\{I_1\times I_2\times I_3: \text{each } I_i\subset\R\text{ is an interval}, \ell(I_1)\ell(I_2)=\ell(I_3)\}
\end{equation*}
of {\em Zygmund rectangles}, which is invariant under \eqref{eq:Zdil}. A significant point is that the class of weights $A_{p,Z}$ is strictly larger than the class of {\em strong $A_p$ weights}, which is relevant in the product-space theory. Thus, the strategy of \cite{RS}, of reducing operators with an entangled dilation structure \eqref{eq:NWdil} into ones with a simpler product dilation structure \eqref{eq:prod-dil}, is out of question, if one seeks for the maximal generality of weighted norm inequalities for SIOs compatible with \eqref{eq:Zdil}.

A certain limitation of the results of \cite{FP}, when compared to the classical and the product space theory of SIOs, is that \cite{FP} assumes an unspecified ``large enough'' degree of smoothness of the kernels (and Fourier multipliers) that they consider. This limitation was recently addressed by Han et al. \cite{HLLT} who introduced a new set of \eqref{eq:Zdil}-invariant conditions on convolution kernels, involving only minimal H\"older-continuity type regularity assumptions analogous to those in the classical theory, and proved that the corresponding class of SIOs is well-defined and bounded on (the unweighted) $L^p$. (See also \cite{HLLTW} for related end-point estimates, although these go to a somewhat different direction than our presents goals.) In particular, the kernel size bound of \cite{HLLT} takes the form
\begin{equation}\label{eq:CZZK}
   \abs{K(x,y)}\lesssim\frac{D_\theta(x-y)}{\abs{x_1-y_1}\abs{x_2-y_2}\abs{x_3-y_3}},
\end{equation}
where the denominator is the same as that in the product theory \eqref{eq:CZKmparam} (with $d=m=3$ and $d_1=d_2=d_3=1$), but there is an additional decay
\begin{equation*}
  D_\theta(z):=\Bigg(\frac{\abs{z_1 z_2}}{\abs{z_3}}+\frac{\abs{z_3}}{\abs{z_1 z_2}}\Bigg)^{-\theta},
\end{equation*}
for some $\theta\in(0,1]$, in terms of the deviation of $z\in\R^3$ from the ``Zygmund manifold'' $\abs{z_1z_2}=\abs{z_3}$.

In view of the $A_{p,Z}$-weighted theory for smooth enough kernels due to \cite{FP}, and the unweighted theory under minimal kernel assumptions by \cite{HLLT}, it might seem natural to expect an $A_{p,Z}$-weighted theory under the same minimal kernel assumptions in the Zygmund dilation setting. Such theory has recently been considered in \cite{DLOPW2}; however, there seems to be some issues as the weighted estimates, as we will see, are not, in fact, true with minimal kernel assumptions.

\subsection{Contributions of this paper}

Perhaps surprisingly, as we will see in this paper, there is a delicate threshold in the kernel parameter $\theta$ in \eqref{eq:CZZK} concerning their compatibility with $A_{p,Z}$-weighted norm inequalities: For kernels with $\theta=1$ (possibly with a logarithmic correction necessary to cover some examples), we achieve a full weighted theory with the same class of weights $A_{p,Z}$ as in \cite{FP}. For kernels with $\theta\in(0,1)$, we still have weighted norm inequalities in the smaller class of strong $A_p$ weights (consistent, in particular, with the unweighted results due to \cite{HLLT}), but we also provide explicit counterexamples to the boundedness in the full class of $A_{p,Z}$ weights whenever $\theta<1$, see Proposition \ref{prop:mainCounterex}.

An additional interest in our counterexample is the fact that it depends on new explicit examples of $A_{p,Z}$ weights. The previous lack of such examples in the literature has been a limitation of the existing theory.
The following class of examples is a Zygmund dilation analogue of the well-known power weights in the classical theory:

\begin{lem}\label{lem:zygexa}
  Let $p\in(1,\infty)$, and let $w:\R^3\to[0,\infty)$ be given by
  \begin{equation*}
    w(x):=\begin{cases}\abs{x_1x_2}^{\alpha-\gamma}\abs{x_3}^\gamma, & \text{if}\quad\abs{x_3}\leq\abs{x_1x_2}, \\
    \abs{x_1x_2}^{\alpha-\delta}\abs{x_3}^\delta, & \text{if}\quad\abs{x_3}\geq\abs{x_1x_2}.\end{cases}
  \end{equation*}
  Then $w\in A_{p, Z}$ if and only if
  \begin{equation*}
      \gamma\in(-1,p-1),\quad\alpha\in(\gamma-1,\gamma+p-1),\quad\delta\in(\alpha-(p-1),\alpha+1). 
  \end{equation*}
  \end{lem}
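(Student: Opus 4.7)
The strategy is to combine the Zygmund-homogeneity of $w$ with the discrete symmetries of the weight and a case analysis on the position of the rectangle. First I would check that both pieces of $w$ transform identically under Zygmund dilations, $w(\delta_1 x_1, \delta_2 x_2, \delta_1\delta_2 x_3) = (\delta_1\delta_2)^\alpha w(x)$, so that the Muckenhoupt product $\Phi(I) := (\fint_I w)(\fint_I \sigma)^{p-1}$, where $\sigma := w^{-1/(p-1)}$, is invariant under Zygmund dilations of $I$. Together with the evenness of $w$ in each variable, this lets me reduce to unit-size Zygmund rectangles $I(a) := \prod_{i=1}^3 [a_i - \tfrac{1}{2}, a_i + \tfrac{1}{2}]$ with $a \in [0, \infty)^3$.

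For necessity I would evaluate $\Phi$ on two families. The origin-centered case $a = 0$, after splitting along the manifold $|x_3| = |x_1 x_2|$ and integrating $x_3$ first, gives finiteness of $\fint w$ iff $\gamma > -1$, $\alpha > -2$, and $\delta < \alpha + 1$; together with the dual analysis of $\sigma$ this produces $\gamma \in (-1, p-1)$ and $\delta \in (\alpha - (p-1), \alpha + 1)$. To obtain the remaining sharp condition $\alpha \in (\gamma - 1, \gamma + p - 1)$ I would use the family $a_\kappa := (0, \kappa, 0)$ with $\kappa \to \infty$: then $x_2 \asymp \kappa$ on $I(a_\kappa)$, so $|x_1 x_2| \asymp \kappa|x_1|$ and the Zygmund manifold meets $I_1\times I_3$ along $|x_3| = \kappa|x_1|$. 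Splitting the $x_1$-integration at $|x_1| = 1/(2\kappa)$ yields
\[
\fint_{I(a_\kappa)} w \asymp \kappa^{\alpha - \gamma}\text{ when } \alpha > \gamma - 1,\qquad \fint_{I(a_\kappa)}\sigma \asymp \kappa^{(\gamma - \alpha)/(p-1)}\text{ when } \alpha < \gamma + p - 1,
\]
with both of size $O(\kappa^{-1}\log\kappa)$ at the thresholds. The product $\Phi(I(a_\kappa))$ is therefore bounded uniformly in $\kappa$ only in the stated range.

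For sufficiency I assume the three ranges and classify $a \in [0, \infty)^3$ by the signature $\epsilon_i := 1_{a_i \leq 1}$; coordinates with $\epsilon_i = 0$ have $|x_i| \asymp a_i$ on $I(a)$ and contribute matching constant-size factors to both $\fint w$ and $\fint \sigma$. Up to the $(x_1, x_2)$-symmetry the non-trivial signatures are: $(1,1,1)$, handled by the origin integrability above; $(1,0,0)$, a scaled version of $a_\kappa$, handled by $\alpha \in (\gamma-1, \gamma+p-1)$; $(1,1,0)$, reducing after factoring out $|x_3| \asymp a_3$ to the origin-centered average of $|x_1 x_2|^{\alpha-\delta}$ and handled by $\delta \in (\alpha-(p-1), \alpha+1)$; and $(0,0,1)$, a one-variable $A_p$ bound for $|x_3|^\gamma$ (or $|x_3|^\delta$ if the rectangle lies entirely in region 2), handled by $\gamma \in (-1, p-1)$. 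In every case the divergent powers in $\fint w$ are exactly balanced by reciprocal powers in $(\fint\sigma)^{p-1}$, giving a uniform bound $\Phi(I(a)) \leq C$.

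The hard part will be the sufficiency case analysis in configurations where $I(a)$ simultaneously meets a coordinate plane and straddles the Zygmund manifold, since the piecewise definition of $w$ then forces a careful decomposition of the integration domain and a separate tracking of the two regions. The Zygmund-scaling reduction and the discrete symmetries of $w$ are what keep the case list finite and the cancellation of exponents tractable.
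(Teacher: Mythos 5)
Your reduction to unit-size Zygmund rectangles via the homogeneity $w(\delta_1 x_1,\delta_2 x_2,\delta_1\delta_2 x_3)=(\delta_1\delta_2)^\alpha w(x)$ is a genuinely different route from the paper, and it is a clean one: the paper instead proves a general structural result (Lemma~\ref{lem:zrec}) that for any weight even in each variable it suffices to test $A_{p,Z}$ on rectangles $I_1\times I_2\times I_3$ where each $I_i$ is either $[0,t_i]$ or $[c_i,c_i+t_i]$ with $c_i\ge t_i$, after which the sufficiency is an iterated-integration computation. Your reduction exploits the specific Zygmund-homogeneity of this $w$ (which the paper does not explicitly invoke) and collapses the size degree of freedom entirely, leaving only a position parameter $a\in[0,\infty)^3$; the paper's reduction keeps arbitrary sizes but gives canonical ``anchored'' positions. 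Both reductions are legitimate. Your necessity argument is also a valid alternative: the paper reads off $\gamma\in(-1,p-1)$ and $\delta\in(\alpha-(p-1),\alpha+1)$ from local integrability of $w,\sigma$ near $(1,1,0)$ and $(0,1,1)$, and obtains the $\alpha$-$\gamma$ constraint from the one-dimensional $A_p$ condition of the slice $x_1\mapsto w(x_1,\eps,\eps^2)$ on $(\eps,1)$; your unit-cube tests at $a=0$ and at $a_\kappa=(0,\kappa,0)$ recover the same three necessary conditions, with the computations I checked coming out as you claim.

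There is, however, a gap in your sufficiency case list. With $\epsilon_i:=1_{a_i\le 1}$, the signatures up to $(x_1,x_2)$-symmetry and discarding the trivial $(0,0,0)$ are $(1,1,1)$, $(1,1,0)$, $(1,0,1)$, $(1,0,0)$, $(0,0,1)$ --- five in all --- while you list only four, and the one you label ``$(1,0,0)$'' is actually $a_\kappa=(0,\kappa,0)$, i.e.\ $(1,0,1)$ in your convention. The truly missing signature is $a_1\le1$, $a_2>1$, $a_3>1$. On that cube $|x_2|\asymp a_2$ and $|x_3|\asymp a_3$, the Zygmund manifold cuts across at $|x_1|\asymp a_3/a_2$, and when $a_3\lesssim a_2$ both regions genuinely contribute to the integral over $x_1\in[-1/2,3/2]$. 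The analysis there is of the same flavor as your $a_\kappa$ case, so I don't think anything fails, but the omission must be repaired for the case list to be exhaustive. More broadly, the sufficiency section is an outline: the assertion that ``divergent powers in $\fint w$ are exactly balanced by reciprocal powers in $(\fint\sigma)^{p-1}$'' is precisely the claim that has to be proved in each signature, and in the straddling configurations this requires integrating $x_3$ first and carrying the resulting boundary term $|x_1x_2|^{\gamma+1}$ (resp.\ $|x_1x_2|^{\delta+1}$) through the remaining integrations, which is where all three hypotheses on $(\gamma,\alpha,\delta)$ actually enter. You flag this yourself, but the argument needs to be executed, not just sketched, to count as a proof.
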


\begin{proof}
We give the proof of this after Lemma \ref{lem:zrec} in the body of the paper.
\end{proof}

Besides describing rather precise limits for the weighted theory of the class of singular convolution operators introduced by \cite{HLLT}, we actually go a step further by dropping the translation-invariance assumption on the operators present in all previous related contributions \cite{DLOPW2,HLLT,HLLTW}. In other words, for the first time in the Zygmund dilation setting, we deal with kernels of the general form $K(x,y)$ instead of $K(x-y)$, and we obtain a (special) $T(1)$-type theorem in this framework. The assumptions on the kernel consists of a natural generalisation of the conditions of \cite{HLLT}, and indeed we will check (see Proposition \ref{E:eqlem6}) that the operators considered in \cite{DLOPW2,HLLT,HLLTW} are a special case of those that we deal with. (For kernel size and smoothness, this will be fairly obvious, but the connections between the cancellation assumptions in the two settings are not as straightforward.)

An advantage of replacing convolution kernels $K(x-y)$ by the general form $K(x,y)$ is that the latter make sense also on ambient domains without any translational structure. Thus, although we still formulate the results of the present paper on the Euclidean domain $\R^3$ only, our set-up and methods should be amenable to a further extension to much more general ambient domains, in analogy with the well-known extension of the classical Calder\'on--Zygmund theory to {\em spaces of homogeneous type} \cite{CW}.

Achieving this level of generality depends on a methodological change of paradigm compared to the previous contributions \cite{DLOPW2,FP,HLLT,HLLTW}  in the Zygmund dilation setting. In short, our approach consists of developing a {\em multiresolution analysis}, and more specifically a {\em dyadic representation theorem}, in this framework. These techniques have been highly influential in recent advances on SIOs and related questions of  geometric measure theory (see e.g. \cite{HPW, Hy, Ma1, NTV, To:book}).
However, multiresolution analysis, dyadic or otherwise, was not previously understood in the Zygmund dilation setting
or in other modified product space settings.
In fact, in \cite{DLOPW2} the authors also write that (emphasis added):
\emph{"[In] this specific multi-parameter setting with Zygmund dilations, most [classical methods] do not seem to apply. [\ldots ] the multiresolution analysis is not understood in this setting and it is unknown whether there is a suitable wavelet basis."}
We develop these missing multiresolution methods and also bring the related dyadic-probabilistic apparatus
to the Zygmund setting. The methods are flexible and will lend themselves to further generalizations.

Postponing the detailed definitions until later, we now give a conceptual formulation of our main results as follows. For detailed statements, we refer the reader to Theorem \ref{E:thm1} for the representation formula \eqref{eq:repr-intro}, to Corollary \ref{E:cor1} for the positive directions in both boundedness claims, and to Proposition \ref{prop:mainCounterex} for the negative statement in \eqref{it:theta<1}.

\begin{thm}\label{thm:intro}
  Let $T$ be a singular integral of Zygmund type as defined in Section \ref{E:sec1}, with parameter $\theta\in(0,1]$ in the kernel bound \eqref{eq:CZZK} and the related smoothness and cancellation assumptions in Section \ref{E:sec1}.
  Then $T$ is an average, over random Zygmund lattices,
  of the dyadic model operators $Q_{k}$ of Definition \ref{E:defn1}, more specifically, we have 
  for suitable constants $c_{k, \theta}$ that
\begin{equation}\label{eq:repr-intro}
    \ave{Tf,g}
    = C\E \sum_{k \in \N^3} c_{k, \theta} \langle Q_{k} f, g \rangle.
\end{equation}
  \begin{enumerate}[\rm(a)]
    \item\label{it:theta<1} If $\theta\in(0,1)$, the operator $T$ is bounded in unweighted $L^p$ spaces (and more generally on $L^p(w)$ with strong $A_p$ weights),
    but $L^p(w)$ estimates with $w\in A_{p,Z}$ may fail.
    \item\label{it:theta=1}  If $\theta = 1$, possibly with a suitable logarithmic growth factor,
    then for every $p\in(1,\infty)$ and every weight $w\in A_{p,Z}$, the operator $T$ extends boundedly to $L^p(w)$.
  \end{enumerate}
\end{thm}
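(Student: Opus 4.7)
My proof would be organized around three ingredients: (i) a new random Zygmund lattice together with an associated multiresolution decomposition, (ii) a representation of $\langle Tf, g \rangle$ as an average of dyadic model operators with an explicit decay factor $c_{k,\theta}$, and (iii) uniform weighted estimates for the model operators that match this decay against the relevant weight class.

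For (i), I would build three correlated random dyadic grids $\calD_0(\omega_1), \calD_0(\omega_2), \calD_0(\omega_3)$ on $\R$ whose tripled product generates precisely the Zygmund rectangles $I_1 \times I_2 \times I_3$ with $\ell(I_1)\ell(I_2) = \ell(I_3)$, while each marginal remains a translation-invariant Nazarov--Treil--Volberg grid. The third grid must be scale-locked to the first two but retain enough probabilistic freedom to support a one-dimensional good/bad decomposition along the $x_3$-axis; producing such a construction is the principal methodological obstacle, and is what previous authors identified as missing. With this lattice in hand I would introduce a Zygmund Haar basis obtained as the tensor product of Haar functions on the three axes, restricted to Zygmund rectangles.

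For (ii), I would expand $\ave{Tf}{g}$ in this basis and apply the three-fold good/bad principle to restrict to configurations where, on each axis, the smaller Haar cube is deeply nested in the larger one with fixed relative depth $k_i \in \N$. Feeding the triple $k = (k_1,k_2,k_3)$ into the kernel bound \eqref{eq:CZZK}, together with the axis-by-axis cancellation and smoothness conditions of Section \ref{E:sec1}, produces on the separated part a gain $c_{k,\theta}$ governed by the deviation from the Zygmund manifold; near-diagonal contributions are absorbed by the $T(1)$-type hypotheses. Regrouping the remaining bilinear form by $k$ produces the model operators $Q_k$ of Definition \ref{E:defn1}, and taking expectation over the random lattice yields \eqref{eq:repr-intro}.

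For (iii), weighted boundedness of each $Q_k$ on $L^p(w)$ reduces, via the Zygmund Haar and shift structure, to estimates for operators whose dyadic complexity is measured by $k$. When $w$ is a strong $A_p$ weight, the model operators essentially factor into one-parameter Haar shifts, and any positive $\theta$ makes the series over $k$ summable, giving part (a). For the full class $A_{p,Z}$, one can only control $\|Q_k\|_{L^p(w)\to L^p(w)}$ by a power of the complexity $k_1+k_2+k_3$; matching this growth against $c_{k,\theta}$ is possible precisely when $\theta = 1$ (possibly with a logarithmic correction), which gives part (b). The sharpness claim in (a)---failure of $L^p(w)$ bounds when $\theta < 1$ and $w\in A_{p,Z}\setminus\text{strong }A_p$---is exactly Proposition \ref{prop:mainCounterex}, tested on the explicit weights of Lemma \ref{lem:zygexa}. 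The hardest step is the design of the multiresolution structure compatible with the locked scale relation $\ell(I_1)\ell(I_2) = \ell(I_3)$; once this is in place, the sharp thresholding at $\theta = 1$ follows by carefully tracking the interplay between Haar shift complexity and the kernel decay $D_\theta$.
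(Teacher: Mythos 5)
Your high-level architecture---random dyadic lattices, a dyadic representation with complexity-$k$ model operators, weighted bounds for the models with decay matched against the complexity, and Proposition \ref{prop:mainCounterex} for sharpness---is the paper's architecture. But the specific resolution you propose would not work, and two of the key ideas are missing.

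First, the basis. You propose ``a Zygmund Haar basis obtained as the tensor product of Haar functions on the three axes, restricted to Zygmund rectangles.'' This is the tri-parameter tensor $h_{I^1}\otimes h_{I^2}\otimes h_{I^3}$ restricted to $\calD_Z$, and it does \emph{not} give a resolution of the identity over $\calD_Z$: the martingale differences $\Delta_{I^1}\Delta_{I^2}\Delta_{I^3}$ do not telescope back to $f$ when restricted to the Zygmund scaling. The paper instead groups the coordinates as $\R\times\R^2$ and uses $\Delta_{I,Z}=\Delta_{I^1}\Delta_{I^{2,3}}$ where $\Delta_{I^{2,3}}$ is the \emph{one-parameter} martingale difference on the rectangle $I^2\times I^3$ (so $\Delta_{I^{2,3}}\ne\Delta_{I^2}\Delta_{I^3}$), and then the identity $f=\sum_{I\in\calD_Z}\Delta_{I,Z}f$ does hold because the inner sum over $I^{2,3}\in\calD^{2,3}_{\ell(I^1)}$ collapses to the identity on $\R^2$ for each fixed $I^1$. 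Your proposed basis skips this grouping and would leave you unable to expand $f$ at all. Relatedly, the ``scale-locked'' random lattice you envision is not needed: the paper uses three independent random shifts $\sigma^1,\sigma^2,\sigma^3$ and simply selects the Zygmund rectangles from the product; the goodness/independence bookkeeping works with the standard one-dimensional good/bad decomposition in each coordinate.

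Second, and more seriously, the mechanism behind the $\theta=1$ threshold is asserted rather than supplied. The model operators $Q_k$ are indexed by parent rectangles $K\in\calD_{2^{-k^1-k^2+k^3}}$ that are \emph{not} Zygmund rectangles---the paper emphasizes that allowing non-Zygmund parents is critical and counterintuitive, since the weights only see Zygmund rectangles. Because of this, bounding $Q_k$ on $L^p(w)$ with $w\in A_{p,Z}$ forces one to control the maximal operator $M_{\calD_\lambda}$ over super-Zygmund rectangles for $\lambda=2^{k^3-k^1-k^2}>1$, and the trivial estimate is $\|M_{\calD_\lambda}f\|_{L^p(w)}\lesssim\lambda\|f\|_{L^p(w)}$. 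That linear growth in $\lambda$ exactly cancels the kernel decay $2^{-\max(k^3-k^1-k^2,0)\theta}$ at $\theta=1$, leaving nothing summable. The paper's crucial step is to improve this to $\lambda^{1-\eta}$ with $\eta=\eta(p,w)>0$, via reverse-H\"older self-improvement of $w\in A_{p,Z}$ to $w^{1+\epsilon}\in A_{p,Z}$ and Stein--Weiss interpolation with the unweighted $L^p$ bound (Proposition \ref{E:prop3}). Without that quantitative improvement, the sum over $k$ diverges at $\theta=1$ and part \eqref{it:theta=1} of the theorem cannot be concluded. Your plan identifies that ``matching this growth against $c_{k,\theta}$ is possible precisely when $\theta=1$'' but does not produce the gain that makes it so; this is where the actual content of the proof lies.

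A smaller inaccuracy: for strong $A_p$ weights the model operators do not ``factor into one-parameter Haar shifts.'' One instead dominates all the relevant maximal functions by the tri-parameter strong maximal function, which is bounded on $L^p(w)$ for $w\in A_p(\R^3)$, and this removes the complexity-dependent growth from the maximal function bound so that any $\theta>0$ suffices for summability.
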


See also our paper \cite{HLMV:CZX}, where we isolate the conditions on the lower-dimensional kernels obtained by fixing the variables $x_1,y_1$ in the Zygmund setting
and study the related operators on $\R^2$. In particular, we study whether these lower-dimensional kernels
should be considered as one-parameter or bi-parameter, or something strictly in between them.

\subsection{About the proof}
As has been done before, we view Zygmund dilations as certain kind of modified bi-parameter theory on $\R^3 = \R \times \R^2$.
There are really two different classes of Zygmund SIOs, one is modified bi-parameter theory w.r.t. to the parameters
$\{1\}$ and $\{2,3\}$ and the second class w.r.t to the parameters $\{2\}$ and $\{1,3\}$. Already in \cite{FP}
general Ricci--Stein operators \cite{RS} were split into two different multipliers satisfying the above. 

We let $\calD = \prod_{m=1}^3 \calD^m$,
where $\calD^1$, $\calD^2$ and $\calD^3$ are dyadic grids on $\R$, and
define the Zygmund rectangles $I = I^1 \times I^2 \times I^3 \in \calD_Z \subset \calD$
by insisting that $\ell(I^1) \ell(I^2) = \ell(I^3)$.
Given $I = \prod_{m=1}^3 I^m$ we define the Zygmund martingale difference operator
\begin{equation*}
  \Delta_{I,Z} f := \Delta_{I^1} \Delta_{I^2 \times I^3} f,
\end{equation*}
where $\Delta_{I^2 \times I^3} \ne \Delta_{I^2} \Delta_{I^3}$ is the \textbf{one-parameter} martingale difference on the rectangle $I^2 \times I^3$.
For a dyadic $\lambda > 0$ we define the dilated lattices
$$
  \calD_{\lambda}^{2,3} = \{I^2 \times I^3 \in \calD^{2,3} := \calD^2 \times \calD^3 \colon \ell(I^3) = \lambda \ell(I^2)\}
$$
and then expand
\begin{equation*}
  \begin{split}
    f = \sum_{I^1 \in \calD^1} \Delta_{I^1} f = \sum_{I^1 \in \calD^1} \sum_{I^2 \times I^3 \in \calD^{2,3}_{\ell(I^1)}} \Delta_{I^1} \Delta_{I^2 \times I^3}  f
    = \sum_{I \in \calD_Z} \Delta_{I, Z} f.
  \end{split}
\end{equation*}
Similar definitions can be made that are useful for the Zygmund SIOs that
have structure in the parameters $\{2\}$ and $\{1,3\}$. This fundamental and simple decomposition, however,
does not, on its own, appear to be sufficiently sophisticated to study bilinear forms $\pair{Tf}{g}$ induced by operators $T$
invariant under Zygmund dilations.

Given an SIO $T$ of Zygmund type, we could at this point write the decomposition
$$
\langle Tf, g \rangle = 
\sum_{I, J \in \calD_Z} \pair{T \Delta_{I, Z}f}{\Delta_{J, Z} g}.
$$
However, if we wish that the Zygmund scaling continues to hold for the rectangles related to functions
associated with partial adjoints of $T$,
the only way that this can be the case is to have $$
\ell(I^u)=\ell(J^u), \qquad u=1,2,3.
$$
The above simple Zygmund resolution of $T$ does not give us this property, and requires a substantial 
modification, see \eqref{eq:dec1} through \eqref{eq:dec5}. 

With the starting resolution established, there are still significant levels of freedom on how to organize the martingale structure.
In particular, given $I, J \in \calD_Z$ with $\ell(I^u)=\ell(J^u)$ for $u=1,2,3$ we could seek for a common parent $K \supset I \cup J$ that is itself still
a Zygmund rectangle, or we could allow $K$ to be a general tri-parameter rectangle. Perhaps surprisingly, it is critical
to do the latter. This is somewhat counterintuitive, as e.g. weights involve only Zygmund rectangles.
Eventually, we model Zygmund SIOs through the particular model operators appearing in Definition \ref{E:defn1}.
Proving weighted estimates for them with good enough decay on the complexity is a sophisticated
problem leading to the interplay with the kernel constant $\theta$ associated with the decay in terms
of the Zygmund ratio. Indeed, something rather delicate has to be going on as we know that
weighted estimates do not hold with $\theta < 1$.

The weighted boundedness of the model operators, and thus that of $T$, is eventually proved by establishing completely new maximal and square function
estimates. We need not only estimates for the maximal function $M_{\calD_{ \sub}}$
associated with the sub-Zygmund rectangles $$
\calD_{\sub}:=\{I\in\calD: \ell(I^1)\ell(I^2)\geq\ell(I^3)\}$$
but also more refined estimates, obtained by a suitable use of Stein-Weiss interpolation, for the family of maximal functions $M_{\calD_{\lambda}}$ associated with the super-Zygmund rectangles
$$
\calD_{\lambda} := \{I\in\calD: \lambda \ell(I^1)\ell(I^2) = \ell(I^3)\}, \qquad \lambda = 2^k, \, k \ge 0.
$$
These super-Zygmund rectangles have to be handled via the subcollections $\calD_{\lambda}$, and
the key part is to improve upon the trivial growth estimate
$$
\| M_{\calD_{\lambda}} f \|_{L^p(w)} \lesssim \lambda \| f \|_{L^p(w)},
$$
where $w$ is a Zygmund $A_p$ weight. We have to get $\lambda^{1-\eta}$ with a constant $\eta$
allowed to depend on the weight -- this is just barely enough for the weighted estimates with $\theta = 1$.

Similarly, the most natural Zygmund square function
$$
S_{Z} f := \Big( \sum_{I \in \calD_Z} |\Delta_{I,Z} f|^2 \Big)^{1/2}
$$
does not cover the delicate Zygmund structure present in our model operators. To combat this we introduce
new variants tailored to help us stay in the Zygmund realm and avoid venturing out to the general tri-parameter realm.

\subsection*{Acknowledgements}
T. Hyt\"onen and E. Vuorinen were supported by the Academy of Finland through project numbers 314829 (both) and 346314 (T.H.),
and by the University of Helsinki internal grants for the Centre of Excellence in Analysis and Dynamics Research (E.V.) and the Finnish Centre of Excellence in Randomness and Structures ``FiRST'' (T.H.).
K. Li was supported by the National Natural Science Foundation of China through project number 12001400.

\section{Necessary conditions for weighted estimates}

We begin by constructing some concrete examples of Zygmund weights, something that seems to have been missing from the literature despite a number of papers \cite{DLOPW2,FP,HLLT,HLLTW}, where the theory of these weights has been developed.
A weight $w$---a locally integrable a.e. positive function on $\R^d$---belongs to the weight class $A_p = A_p(\R^d)$, $1 < p < \infty$, if
$$
[w]_{A_p} := \sup_{I}\, \ave{w}_I  \ave{w^{1-p'}}^{p-1}_I
=  \sup_{I}\, \ave{w}_I  \ave{w^{-\frac{1}{p-1}}}^{p-1}_I
 < \infty,
$$
where the supremum is over all cubes $I \subset \R^d$. In the product space variant we replace the supremum over
cubes by a supremum over all rectangles -- in particular, in the tri-parameter case in $\R^3$ the supremum is over
all rectangles $I=I^1 \times I^2 \times I^3 \subset \R^3$. 
In the Zygmund case, the definition, due to \cite[p.\ 339]{FP}, is the following.
A weight $w$ on $\R^3$ belongs to the weight class $A_{p, Z}$, $1 < p < \infty$, if
$$
[w]_{A_{p,Z}} := \sup_{I}\, \ave{w}_I  \ave{w^{1-p'}}^{p-1}_I
 < \infty,
$$
where the supremum is over all rectangles $I\in \mathcal R_Z$, here $\mathcal R_Z:=\{I^1 \times I^2 \times I^3: \ell(I^3) = \ell(I^1)\ell(I^2)\}$. This is a trickier class than the full product
case, since we cannot simply fix some of the variables and always expect the weight to belong uniformly to $A_p$ with respect to the remaining variables. 

A good perspective to the Zygmund case is the following: with $x_2, x_3$ fixed, it is a one-parameter theory on $\R$ and
with $x_1$ fixed, it is an appropriately dilated one-parameter theory on $\R^2$. In particular, if $w \in A_{p,Z}$ we have:
\begin{enumerate}
\item $x_1 \mapsto w(x_1, x_2, x_3) \in A_p(\R)$ uniformly on $x_2, x_3$ and
\item $\langle w \rangle_{I^1} \in A_{p, \ell(I^1)}(\R^2)$ uniformly on $I^1$.
\end{enumerate}
The dilated $A_p$ on $\R^2$ that appears above is defined as follows. Let
\begin{equation}\label{eq:dilatedweights}
  [w]_{A_{p, \lambda}} = \sup_{K}\, \ave{w}_K  \ave{w^{-\frac{1}{p-1}}}^{p-1}_K,
\end{equation}
where $K = K^1 \times K^2$ is a product of intervals with $\ell(K^2) = \lambda \ell(K^1)$.

\begin{rem}
In \cite{DLOPW2,FP,HLLT,HLLTW}, 
 the subscript $\mathfrak z$ is used instead of $Z$ for objects satisfying the Zygmund scaling.
\end{rem}

\subsection{Examples of $A_{p,Z}$ weights}
To facilitate the verification of the $A_{p, Z}$ condition in concrete instances, we observe:

\begin{lem}\label{lem:zrec}
Suppose that $w:\R^3\to\R_+$ is even with respect to each variables separately, i.e., $w(\sigma_1x_1,\sigma_2x_2,\sigma_3x_3)=w(x_1,x_2,x_3)$ whenever $\sigma_1,\sigma_2,\sigma_3\in\{-1,+1\}$. Then the $A_{p,Z}$ condition holds for $w$, if and only if it holds for all rectangles $I=I_1\times I_2\times I_3\in \mathcal R_Z$ of the special type, where  each $I_i$ has either form $[0,t_i]$ or $[c_i,c_i+t_i]$ for some $c_i\geq t_i$.
\end{lem}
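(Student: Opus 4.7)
The "if" direction is trivial, so the plan is to prove the "only if": given any Zygmund rectangle $I=I_1\times I_2\times I_3\in\calR_Z$, produce a special Zygmund rectangle $\hat I$ with
\[
\ave{w}_I\ave{w^{1-p'}}_I^{p-1}\leq C_p\ave{w}_{\hat I}\ave{w^{1-p'}}_{\hat I}^{p-1},
\]
for some $C_p$ depending only on $p$. I would build $\hat I$ in two stages: first use evenness to reduce to a rectangle $\tilde I$ with each side in the required special form (up to bounded loss on averages), and then enlarge one side further to restore the exact Zygmund relation $\tilde t_1\tilde t_2=\tilde t_3$.

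For Stage 1, I would first use evenness to reflect any wholly-negative $I_i$ across $0$ and, for each straddling $I_i=[a_i,b_i]$, assume $b_i\geq|a_i|$. Splitting the integral and using evenness,
\[
\int_{a_i}^{b_i}w\,dx_i=\int_0^{|a_i|}w\,dx_i+\int_0^{b_i}w\,dx_i\leq 2\int_0^{b_i}w\,dx_i,
\]
so replacing a straddling $I_i$ by $[0,b_i]$ (with length in $[\ell(I_i)/2,\ell(I_i)]$) costs at most a factor $2$ in $\ave{w}_I$ and similarly for $\ave{w^{1-p'}}_I$. For any remaining non-special $I_i=[a_i,a_i+t_i]\subset[0,\infty)$ with $0<a_i<t_i$, the inclusion $I_i\subset[0,a_i+t_i]$ has length ratio $<2$, so $\ave{w}_{I_i}\leq 2\ave{w}_{[0,a_i+t_i]}$, and I would replace $I_i$ by $[0,a_i+t_i]$. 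Iterating over the three coordinates yields $\tilde I$ with every side of special form and $\tilde t_i/t_i\in[1/2,2]$, while the $A_{p,Z}$ quantity changes by at most a constant factor.

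For Stage 2, because $t_1t_2=t_3$ and each $\tilde t_i/t_i\in[1/2,2]$, the ratio $\mu=\tilde t_1\tilde t_2/\tilde t_3$ must lie in $[1/8,8]$, so one side needs to be enlarged by a factor $\leq 8$. A side of form $[0,\tilde t_i]$ extends freely to any $[0,L]$, and a side $[c_i,c_i+\tilde t_i]$ with $c_i\geq\tilde t_i$ extends to $[c_i,c_i+L]$ provided $L\leq c_i$. The hard part will be the subcase in which the only side amenable to the needed enlargement is of this latter form but the required $L$ exceeds $c_i$ (though $L\leq 8\tilde t_i$, forcing $c_i<8\tilde t_i$); here I would apply a "reset" step, first replacing the side by $[0,c_i+\tilde t_i]$ (enlargement factor $\leq 9$, now $[0,\cdot]$-form), and then completing the Zygmund adjustment on this or another $[0,\cdot]$-form side. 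The main obstacle is tracking the case analysis cleanly; since there are only three sides, at most a few resets suffice, the compound enlargement factor is bounded, and the end result $\hat I$ is a special Zygmund rectangle satisfying the desired inequality.
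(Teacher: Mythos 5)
Your proposal is correct and follows essentially the same strategy as the paper's proof: use evenness to pass to the longer half on the positive axis, then enlarge to reach $[0,t_i]$ or $[c_i,c_i+t_i]$ form (losing only a bounded factor in each average), and finally restore the Zygmund relation by a bounded number of enlargements, invoking a ``reset to $[0,\cdot]$-form'' step when a $[c_i,\cdot]$-side cannot be enlarged in place---exactly the paper's iteration, which terminates because each reset permanently converts one side to $[0,\cdot]$-form. The only cosmetic difference is that the paper proves the one-sided reduction $\ave{w}_J\lesssim\ave{w}_I$ and notes $w^{-1/(p-1)}$ is also even, whereas you carry both factors of the $A_{p,Z}$ quantity through the argument at once.
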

\begin{proof}
For each $J\in\mathcal R_{Z}$, we claim that there exists an $I\in\mathcal R_{Z}$ of the special form such that all $w$ as in the assumptions satisfy $\ave{w}_J\lesssim\ave{w}_I$. Since also $w^{-1/{(p-1)}}$ satisfies the same assumptions, this implies that
\begin{equation*}
  \ave{w}_J\ave{w^{-\frac 1{p-1}}}_J^{p-1}\lesssim\ave{w}_I\ave{w^{-\frac 1{p-1}}}_I^{p-1},
\end{equation*}
which proves the lemma.

To verify the claim, first, we can replace each $J_i$ by the longer of the intervals $J_i\cap[-\infty,0]$ and $J_i\cap[0,\infty]$, and if necessary by its reflection onto $[0,\infty]$. Second, if $\dist(J_i,0)<\ell(J_i)$, we replace $J_i$ by $[0,2\ell(J_i)]\supset J_i$ for each $i$. The resulting rectangle $J=J_1\times J_2\times J_3$ has all other properties except that it may fail to be in $\mathcal R_{Z}$. In the first replacement, the length of $J_i$ may be at most divided by two, and in the second step, at most multiplied by two. So in each case $J_i$ deviates from the original by at most a factor of $2$, and hence $\ell(J_1)\ell(J_2)/\ell(J_3)\in[\frac18,8]$. If the ratio is not $1$, let $J_i=[a_i,a_i+t_i]$ be an interval that is ``too short''. If $a_i=0$, or $a_i\geq 8t_i$, then replacing $J_i$ by $[a_i,a_i+t_i']$ for a suitable $t_i'\in[t_i,8t_i]$, we obtain a desired Zygmund rectangle. If $J_i=[a_i,a_i+t_i]$ is ``too short'', but $a_i\in(0,8t_i)$, then we replace this $J_i$ by $[0,9t_i]$, which contains $J_i$ and is most $9$ times longer. This makes some other interval $J_j$ too short, and we can repeat the same consideration. After each iteration, we either achieve a Zygmund rectangle of the desired form, or we replace an interval $J_i=[a_i,a_i+t_i]$ with $a_i>0$ by one with $a_i=0$. This latter can happen at most three times, so after at most a fixed number of iterations, we achieve a Zygmund rectangle of the desired form.
\end{proof}
  
  Now we can provide the proof of Lemma \ref{lem:zygexa} from the Introduction:
  
  \begin{proof}[Proof of Lemma \ref{lem:zygexa}]
   For necessity, we note first that both $w$ and $w^{-1/{(p-1)}}$ need to be locally integrable. In particular, the integrability in the neighborhood of $(1,1,0)$ shows that $\gamma\in(-1,p-1)$, and in the neighborhood of $(0,1,1)$ (or $(1,0,1)$), we get $\alpha-\delta\in(-1,p-1)$.

The final necessary condition connecting $\alpha$ and $\gamma$ is a little trickier. We note that $w_\eps(x_1):=w(x_1,\eps,\eps^2)$ needs to be in $A_p(\R)$, uniformly in $\eps>0$. For $x_1\geq\eps$, we have $w_\eps(x_1)=(x_1\eps)^{\alpha-\gamma}(\eps^2)^\gamma= c(\eps)x_1^{\alpha-\gamma}$, and hence
\begin{equation*}
  \ave{w_\eps}_{(\eps,1)}\ave{w_\eps^{-1/{(p-1)}}}_{(\eps,1)}^{p-1}
  \sim\Big(\int_\eps^1 x_1^{\alpha-\gamma}\ud x_1\Big)\Big(\int_\eps^1 x_1^{-\frac{\alpha-\gamma}{p-1}}\ud x_1\Big)^{p-1}.
\end{equation*}
If $\alpha-\gamma\le -1$, then $\int_\eps^1 x_1^{-\frac{\alpha-\gamma}{p-1}}\ud x_1\sim 1$ and $\int_\eps^1 x_1^{\alpha-\gamma}\ud x_1\to \infty$ as $\eps \to 0$. Hence $\alpha-\gamma>-1$. Similarly, $-\frac{\alpha-\gamma}{p-1}>-1$. This completes the proof of the necessity.

Turning to the sufficiency, Lemma \ref{lem:zrec} allows us to reduce the problem to  verifying 
\[
\sup_I \langle w\rangle_I \langle w^{-\frac 1{p-1}}\rangle_I^{p-1} <\infty,
\]where the supremum is taken over rectangles of the form $I=I_1\times I_2\times I_3$, each $I_i=[0,t_i]$ or $I_i=[c_i,c_i+t_i]$ with $c_i\geq t_i$, and $t_1 t_2=t_3$. First, let us consider the case $I_1=[0,t_1]$. Fix $x_2\in I_2, x_3\in I_3$.  If $t_1\le x_3/{x_2}$, then we have 
\begin{align*}
\langle w\rangle_{I_1}=\frac 1{t_1}\int_0^{t_1}\abs{x_1x_2}^{\alpha-\delta}\abs{x_3}^\delta\ud x_1\sim t_1^{\alpha-\delta} |x_2|^{\alpha-\delta}|x_3|^\delta.
\end{align*}
If  $t_1> x_3/{x_2}$, then 
\begin{align*}
\langle w\rangle_{I_1}&=\frac 1{t_1}\left( \int_0^{x_3/{x_2}}\abs{x_1x_2}^{\alpha-\delta}\abs{x_3}^\delta \ud x_1+ \int_{x_3/{x_2}}^{t_1}\abs{x_1x_2}^{\alpha-\gamma}\abs{x_3}^\gamma \ud x_1\right)\\
&\lesssim t_1^{-1}\big(x_2^{-1}x_3^{\alpha+1}+t_1^{\alpha-\gamma+1}x_2^{\alpha-\gamma}x_3^\gamma\big)\lesssim t_1^{\alpha-\gamma}x_2^{\alpha-\gamma}x_3^\gamma.
\end{align*}
Replacing  $(\gamma, \alpha, \delta)$ by $(-\frac \gamma{p-1}, -\frac\alpha{p-1}, -\frac\delta{p-1})$ gives
\begin{equation*}
\langle w^{-\frac 1{p-1}}\rangle_{I_1}\lesssim \begin{cases}(t_1x_2)^{\frac{\gamma-\alpha}{p-1}}x_3^{-\frac\gamma{p-1}}, & x_3\leq t_1x_2, \\
  (t_1x_2)^{\frac{\delta-\alpha}{p-1}} x_3^{-\frac\delta{p-1}}, &  x_3\geq t_1x_2.\end{cases}
\end{equation*}
Then if $I_2=[0, t_2]$, similar calculus gives us that  
\begin{equation*}
\langle w\rangle_{I_1\times I_2}\lesssim \begin{cases}t_3^{\alpha-\gamma} x_3^{\gamma}, &  x_3\leq t_3, \\
  t_3^{\alpha-\delta} x_3^{\delta}, &  x_3\geq t_3.\end{cases}\quad \langle w^{-\frac1{p-1}}\rangle_{I_1\times I_2}\lesssim \begin{cases}t_3^{\frac{\gamma-\alpha}{p-1}} x_3^{-\frac\gamma{p-1}}, &  x_3\leq t_3, \\
  t_3^{\frac{\delta-\alpha}{p-1}} x_3^{-\frac\delta{p-1}}, &  x_3\geq t_3.\end{cases}
\end{equation*}
Then the case $I_3=[c_3, c_3+t_3]$ is immediate. And for the case $I_3=[0, t_3]$ we just need to use the fact that $|x_3|^\gamma\in A_p$. Now suppose $I_2=[c_2, c_2+t_2]$. Then if $I_3=[c_3, c_3+t_3]$ one can split it to the following cases
\[
c_3+t_3\le t_1 c_2,\quad c_3\ge t_1 (c_2+ t_2),\quad \max\{t_3, t_1c_2-t_3\}\le c_3< t_1 (c_2+ t_2),
\]each of them is immediate. If $I_3=[0, t_3]$ it forces $x_3\le t_1 x_2$ and we still just need to use  that $|x_3|^\gamma\in A_p$.

It remains to consider the case $I_1=[c_1, c_1+t_1]$. In this case, if $I_2=[0, t_2]$ then it is just symmetrical to what we have discussed in the above. Suppose $I_2=[c_2, c_2+t_2]$. Then if $I_3=[0, t_3]$, this forces $x_1x_2\ge x_3$ and again we just need to use that $|x_3|^\gamma\in A_p$. If $I_3=[c_3, c_3+t_3]$ this is just trivial. We are done.

\end{proof}
\subsection{Zygmund SIOs and related results in the literature}
Now let us recall the setting of \cite{DLOPW2,HLLT,HLLTW}.
Let 
\begin{equation*}
  D_\theta(x):=D_\theta(x_1,x_2,x_3):=\Big(\frac{\abs{x_1x_2}}{\abs{x_3}}+\frac{\abs{x_3}}{\abs{x_1x_2}}\Big)^{-\theta},
\end{equation*}
where $\theta \in (0,1]$. Let 
$
K \colon \R^3 \setminus \{ x \colon x_1x_2x_3=0\} \to \C
$
be a function. Define
$$
D^{a_1}_{h_1}K(x)
=a_1K(x_1+h_1,x_2,x_3)-K(x), \quad a_1 \in \{0,1\}.
$$
The fact that $D^{a_1}_{h_1}$ acts on the first variable is understood from the subscript $1$.
Analogously, we define the operators $D^{a_i}_{h_i}$ for $i \in \{2,3\}$, which act on the $i$th parameter. 
Let $x \in \R^3 \setminus \{x \colon x_1x_2x_3=0\}$ and let $h_i \in \R$, $i \in \{1,2,3\}$, 
be such that $|h_i| \le |x_i|/2$.

\begin{defn}\label{def:R}
We say that $K$ satisfies the {\em regularity conditions of Han et al.}\ with parameters $\alpha,\theta\in(0,1]$ if
\begin{equation}\tag{R}\label{E:eq132}
|D^{a_1}_{h_1} D^{a_2}_{h_2} D^{a_3}_{h_3} K(x)|
\lesssim \prod_{i=1}^3 \frac{|h_i|^{a_i\alpha}}{|x_i|^{1+a_i\alpha}} D_\theta(x_1,x_2,x_3)
\end{equation}
for all $a_i \in \{0,1\}$ such that $a_1+a_2+a_3 \le 2$.
\end{defn}

In \cite{DLOPW2,HLLT,HLLTW}, the parameters $\alpha$ and $\theta$
were denoted by $\alpha=\theta_1$ and $\theta=\theta_2$.

In addition to the regularity conditions we formulate the following cancellation conditions:

\begin{defn}\label{def:C}
We say that $K$ satisfies the {\em basic cancellation conditions of Han et al.}\ with parameter $\alpha\in(0,1]$ if
\begin{equation}\tag{C1.a}\label{E:eq133}
   \Babs{\iiint_{\delta_i<\abs{x_i}<r_i:i\in\{1,2,3\}}K(x)\ud x}\lesssim 1,
\end{equation}
and
\begin{equation}\tag{C1.j}\label{E:eq134}
   \Babs{\iint_{\delta_i<\abs{x_i}<r_i:i\in\{1,2,3\}\setminus\{j\}}D^{a_j}_{h_j} K(x)\ud x_{\{1,2,3\}\setminus\{j\}}  }
   \lesssim \frac{|h_j|^{a_j\alpha}}{|x_j|^{1+a_j\alpha}}, \quad |h_j | \le |x_j|/2,
\end{equation}
for each $j=1,2,3$.

We say that $K$ satisfies the {\em Zygmund cancellation condition of Han et al.}\ with parameters $\alpha,\theta\in(0,1]$ if 
\begin{equation}\tag{C2.b}\label{E:eq135}
   \Babs{\int_{\delta_1<\abs{x_1}<r_1}D^{a_2}_{h_2} D^{a_3}_{h_3} 
   K(x)\ud x_1}
   \lesssim \prod_{i=2}^3 \frac{|h_i|^{a_i\alpha}}{|x_i|^{1+a_i\alpha}} 
   (D_\theta(\delta_1,x_2,x_3)+D_\theta(r_1,x_2,x_3)),
\end{equation}
 where $a_2+a_3 \le 1$ and $|h_i| \le |x_i|/2$ for $i \in \{2,3\}$.
 
 We say that $K$ satisfies the {\em cancellation condition of Han et al.} with parameters $\alpha,\theta\in(0,1]$ if it satisfies both basic and Zygmund cancellation conditions of Han et al.\ with these same parameters.
 \end{defn}
 
In \cite{HLLT}, several alternative sets of conditions, with partial mutual overlap, are introduced, and accordingly these same conditions are also denoted by (C1.a) = (C2.a) = (C2'.a), (C1.1) = (C1.c) = (C2.c) and (C1.2) = (C1.d) = (C2'.c) and (C1.3) = (C1.b). 

\begin{defn}\label{def:limits}
We say that $K$ has {\em relevant limits in the sense of Han et al.}, if the integral of
\begin{equation}\label{E:eq136}
  K_{\eps}^N(x):=K(x)\prod_{j=1}^3 1_{(\eps_j,N_j)}(\abs{x_j}).
\end{equation}
converges, as each $\eps_j\to 0$ and $N_j\to\infty$, over each of the following sets:
\begin{enumerate}
  \item over $\{x\in\R^3:\abs{x_i}\leq 1\ \forall i=1,2,3\}$;
  \item over $\{(x_2,x_3)\in\R^2:\abs{x_2},\abs{x_3}\leq 1\}$, for a.e. fixed $x_1\in\R$; and
  \item over $\{(x_1,x_3)\in\R^2:\abs{x_1},\abs{x_3}\leq 1\}$, for a.e. fixed $x_2\in\R$.
\end{enumerate}

We say that $K$ satisfies {\em all assumptions of Han et al.}\ with parameters $\alpha,\theta\in(0,1]$, if it has relevant limits in the sense of Han et al.\ and satisfies both regularity and cancellation conditions of Han et al.\ with these same parameters. We denote by $CZH(\alpha,\theta)$ (for Calder\'on--Zygmund-type conditions of Han et al.) the class of all such kernels, and by $\Norm{K}{CZH(\alpha,\theta)}$ the smallest admissible constant in all of \eqref{E:eq132}, \eqref{E:eq133}, \eqref{E:eq134} and \eqref{E:eq135}.
\end{defn}

In the following statement, we collect the main results of \cite{HLLT} concerning these kernels:

\begin{thm}[\cite{HLLT}, Theorems 1.1--1.3]\label{thm:HLLT}
Suppose that $K$ satisfies the regularity conditions and the basic cancellation conditions of Han et al.\ with parameters $\alpha,\theta\in(0,1]$. Then
$$
  \Norm{K_{\eps}^N*f}{L^2(\R^3)}\lesssim\Norm{f}{L^2(\R^3)},
$$
where the implied constant only depends on those in the assumptions.

If, moreover, $K$ has relevant limits in the sense of Han et al., then the limit $K*f:=\lim K_{\eps}^N*f$ (as each $\eps_j\to 0$ and $N_j\to\infty$) exists in $L^2(\R^3)$, and satisfies
$
  \Norm{K*f}{L^2(\R^3)}\lesssim\Norm{f}{L^2(\R^3)}.
$

If, moreover, $K$ satisfies the Zygmund cancellation condition of Han et al.\ (and hence, in fact, all assumptions of Han et al.) with parameters $\alpha,\theta\in(0,1]$, then $K*f$, initially defined as above on $L^2\cap L^p$, extends to a bounded linear operator on $L^p(\R^3)$, and satisfies
\begin{equation*}
  \Norm{K*f}{L^p(\R^3)}\lesssim\Norm{f}{L^p(\R^3)},\qquad 1<p<\infty.
\end{equation*}
\end{thm}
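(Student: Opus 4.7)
My approach splits along the three assertions, with each step building on the previous one.

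For the uniform $L^2$ bound on the truncations $K_\eps^N * f$, the plan is a Cotlar--Stein almost-orthogonality argument tuned to the Zygmund scales. Fix a smooth partition of unity $\{\phi_{j,k}\}_{j,k\in\Z}$ on $\R^3$ with $\phi_{j,k}$ supported where $|x_1|\sim 2^j$, $|x_2|\sim 2^k$, $|x_3|\sim 2^{j+k}$, together with a further partition in the off-Zygmund region where $D_\theta$ is already small. Write $K_\eps^N = \sum_{j,k} K_{j,k}^{\eps,N}$ accordingly and let $T_{j,k}$ denote convolution with $K_{j,k}^{\eps,N}$. The basic cancellation conditions \eqref{E:eq133}--\eqref{E:eq134} combined with the regularity \eqref{E:eq132} imply that each $T_{j,k}$ is $L^2$-bounded uniformly in $j,k,\eps,N$, and an integration-by-parts style computation pitting the smoothness of $\phi_{j,k}$ against the cancellation of the neighboring piece yields almost-orthogonality bounds of the form
\begin{equation*}
\|T_{j,k} T_{j',k'}^*\|_{L^2 \to L^2}^{1/2} + \|T_{j,k}^* T_{j',k'}\|_{L^2 \to L^2}^{1/2} \lesssim 2^{-c(|j-j'|+|k-k'|)}
\end{equation*}
for some $c > 0$ depending on $\alpha$ and $\theta$. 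Cotlar--Stein then furnishes the uniform $L^2$ bound.

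For existence of the limit $K * f := \lim K_\eps^N * f$ I would work on the Fourier side. Parseval reduces the problem to showing almost everywhere convergence of the multipliers $\widehat{K_\eps^N}(\xi)$ under the uniform $L^\infty$ bound already secured. The three statements in Definition~\ref{def:limits} are tailored so that, after expanding $e^{-2\pi i x \cdot \xi}$ into pieces indexed by which coordinate factors $e^{-2\pi i x_j \xi_j}$ are effectively non-oscillating, the resulting families of integrals reduce precisely to one of the three stated convergent integrals. Dominated convergence on the Fourier side then gives $L^2$ convergence of $K_\eps^N * f$ for every $f \in L^2$, and the limit inherits the uniform $L^2 \to L^2$ bound.

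For the $L^p$ assertion, the Zygmund cancellation \eqref{E:eq135} is what makes it possible to represent $K * \cdot$ as a sum of model operators compatible with the bi-parameter multiresolution structure on $\R^3 = \R \times \R^2$ discussed in the introduction. My plan is to use \eqref{E:eq135} to replace each smoothly localized piece $K_{j,k}$ by Fefferman--Pipher-type paraproducts in which the third scale is slaved to the first two via the Zygmund constraint, and then apply a bi-parameter $T(1)$ argument with the Zygmund-adapted square function and maximal operator $M_Z$. The main obstacle, and the reason this step is substantially harder than the $L^2$ one, is that the super-Zygmund maximal operators $M_{\calD_\lambda}$ grow with $\lambda$, so one needs the improved estimate $\|M_{\calD_\lambda} f\|_{L^p} \lesssim \lambda^{1-\eta}\|f\|_{L^p}$ with $\eta > 0$ in order to combine with the $D_\theta$ decay and absolutely sum the contributions of the model operators. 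Once this refinement and the analogous Zygmund square function bound are in place, summability of the model contributions gives an $L^p$ bound for one fixed $p$, and interpolation with the $L^2$ result together with standard duality yields the full range $1 < p < \infty$.
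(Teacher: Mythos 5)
The statement you are proving is a \emph{cited} result: the paper quotes \cite[Theorems 1.1--1.3]{HLLT} verbatim and does not supply a proof of its own. Consequently there is no in-paper proof to compare against. What the paper \emph{does} do, and what partially overlaps with your third step, is re-derive the $L^p$ boundedness part as a byproduct of its own machinery: Proposition~\ref{E:eqlem6} verifies that a kernel satisfying all assumptions of Han et al.\ gives a $(\theta,\alpha,\alpha)$- (or $(\log,\alpha,\alpha)$-) CZZ operator, and Corollary~\ref{E:cor1} with $w\equiv 1$ then yields $L^p$ bounds via the dyadic representation theorem and the Zygmund shift estimates. Crucially, however, that argument explicitly \emph{assumes} the $L^2$ part of Theorem~\ref{thm:HLLT} (it is quoted in the proof of Proposition~\ref{E:eqlem6} to make sense of the operator and its partial kernels), so the paper's route is genuinely circular as a proof of the whole theorem and only gives an alternative to the $L^p$ step. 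Your plan for that step is therefore in the same spirit as the paper's, but is far less concrete: the closing remark that ``interpolation with the $L^2$ result together with standard duality yields the full range'' does not work from a bound at one fixed $p_0$ and at $p=2$ alone; interpolation plus duality only covers the interval between $p_0$, $2$, and their conjugates, not all of $(1,\infty)$. You need the square function/maximal function argument to give all $p$ directly, as Corollary~\ref{E:cor1} does.

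For the first two steps there is nothing in the paper to check against, and the proposal has real gaps that you should be aware of. In the Cotlar--Stein step, the claimed almost-orthogonality decay $2^{-c(|j-j'|+|k-k'|)}$ is not something that follows from \eqref{E:eq132}--\eqref{E:eq134} by an ``integration-by-parts style'' computation alone in the Zygmund setting: the pieces $K_{j,k}$ overlap across the off-Zygmund directions, and the role of $D_\theta$ needs to enter the orthogonality estimate explicitly; the original \cite{HLLT} argument uses a discrete Littlewood--Paley--Stein decomposition rather than Cotlar--Stein precisely to control these interactions. In the second step, Definition~\ref{def:limits} is a statement about convergence of \emph{spatial} integrals of $K$ over fixed bounded regions, while your plan needs pointwise a.e.\ convergence of the \emph{Fourier transforms} $\widehat{K_\eps^N}(\xi)$ together with a dominating function. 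The uniform $L^\infty$ bound on $\widehat{K_\eps^N}$ does follow from the uniform $L^2$ bound, but passing from the stated spatial limits to the Fourier-side limits requires an argument you have not supplied, and it is not obvious that the three cases in Definition~\ref{def:limits} correspond to the ``non-oscillating'' regimes in the way you describe. I would either flesh out that correspondence carefully or work on the spatial side directly, as \cite{HLLT} does.

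In short: the paper proves none of Theorem~\ref{thm:HLLT}; your proposal is a plausible blueprint for parts (1) and (2) that differs from the source and has unfinished steps, and for part (3) it is essentially the paper's own Corollary~\ref{E:cor1}-route, but your concluding interpolation/duality claim is not sufficient on its own.
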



For later use below, we record the following technical observation:

\begin{lem}\label{lem:CZHBanach}
For $\alpha,\theta\in(0,1]$, the kernel class $CZH(\alpha,\theta)$, equipped with $\Norm{\ }{CZH(\alpha,\theta)}$, is a Banach space.
\end{lem}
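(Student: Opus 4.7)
The plan is to verify that $\Norm{\,\cdot\,}{CZH(\alpha,\theta)}$ is a norm and then prove completeness by extracting a pointwise candidate limit from a Cauchy sequence and transferring all the defining conditions to it by dominated convergence. First, I would check the norm axioms: positive homogeneity and the triangle inequality are immediate from the definition as the infimal constant in a family of affine inequalities. Positive definiteness follows by specializing \eqref{E:eq132} to $a_1=a_2=a_3=0$, which yields the pointwise bound $|K(x)|\le\Norm{K}{CZH(\alpha,\theta)}\cdot D_\theta(x)/(|x_1||x_2||x_3|)$ off the coordinate planes; hence $\Norm{K}{CZH(\alpha,\theta)}=0$ forces $K\equiv 0$ on the natural domain of the kernels.

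For completeness, let $(K_n)\subset CZH(\alpha,\theta)$ be Cauchy. Applied to $K_n-K_m$, the same pointwise bound shows that $(K_n(x))$ is a Cauchy sequence in $\C$ for every $x$ with $x_1x_2x_3\neq 0$, so I can define $K(x)$ as the pointwise limit there. I would then verify conditions \eqref{E:eq132}, \eqref{E:eq133}, \eqref{E:eq134}, \eqref{E:eq135} for $K$ and the norm convergence $\Norm{K_n-K}{CZH(\alpha,\theta)}\to 0$ simultaneously, following a common template: fix any of these inequalities together with all the free parameters appearing in it; it holds for each $K_n$ with constant $\Norm{K_n}{CZH(\alpha,\theta)}$, and for $K_n-K_m$ with constant $\Norm{K_n-K_m}{CZH(\alpha,\theta)}$. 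The bound \eqref{E:eq132} is pointwise (each $D^{a_i}_{h_i}$ being a finite linear combination of point evaluations) and passes to the limit trivially. The integrals in \eqref{E:eq133}, \eqref{E:eq134} and \eqref{E:eq135} are taken over domains that are bounded and bounded away from the coordinate planes, so combining this with the uniform majorant furnished by \eqref{E:eq132} for the whole sequence (which is $CZH$-bounded because it is Cauchy) dominated convergence legalises passing $K_m\to K$ under the integral sign. Sending $m\to\infty$ in the inequality for $K_n-K_m$ then yields $\Norm{K_n-K}{CZH(\alpha,\theta)}\le\limsup_m\Norm{K_n-K_m}{CZH(\alpha,\theta)}$, which delivers both $K\in CZH(\alpha,\theta)$ and the required norm convergence.

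The one genuinely delicate point is verifying the relevant-limits condition of Definition \ref{def:limits} for the limit kernel $K$. For set (1), write $I_n(\eps,N)$ and $I(\eps,N)$ for the integrals of $K_{n,\eps}^N$ and $K_{\eps}^N$ over that set, and $L_n=\lim_{\eps,N}I_n(\eps,N)$, which exists by hypothesis. Applying \eqref{E:eq133} to $K_n-K_m$ gives $|I_n(\eps,N)-I_m(\eps,N)|\lesssim\Norm{K_n-K_m}{CZH(\alpha,\theta)}$ uniformly in $\eps,N$, so $(L_n)$ is Cauchy in $\C$ and converges to some $L$. A standard $\delta/3$ decomposition
\begin{equation*}
|I(\eps,N)-L|\le |I(\eps,N)-I_n(\eps,N)|+|I_n(\eps,N)-L_n|+|L_n-L|
\end{equation*}
then yields $I(\eps,N)\to L$: the first term is bounded uniformly by $\Norm{K-K_n}{CZH(\alpha,\theta)}$ via \eqref{E:eq133}, the third vanishes by the Cauchy property of $(L_n)$, and for each fixed $n$ the second can be made small by hypothesis. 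For sets (2) and (3) the same argument applies with the twist that the convergence is required only for a.e.\ $x_1$ (respectively $x_2$): for each $n$ the exceptional set is null, on the complement of their countable union all $L_n$ exist, and the analogous $\delta/3$ argument runs with \eqref{E:eq134} at $a_j=0$ supplying the uniform estimate for the first term.
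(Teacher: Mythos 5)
Your proof is correct and follows essentially the same route as the paper: argue that the norm axioms are routine, extract a limit kernel $K$ from the Cauchy sequence using the pointwise bound implied by the regularity condition \eqref{E:eq132}, transfer the regularity and cancellation inequalities to $K_n-K$ (making the $CZH$-norm of the difference small), and then handle the relevant-limits condition of Definition \ref{def:limits} with a three-term $\delta/3$ decomposition. The only cosmetic differences are that the paper phrases the convergence as Cauchy in $C(E)$ for compact $E\subset\R^3\setminus\{x:x_1x_2x_3=0\}$ (rather than pointwise, as you do) and verifies the Cauchy criterion for the truncated integrals directly, while you first construct the candidate limit $L$ and then show convergence to it; these are interchangeable formulations of the same argument.
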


\begin{proof}
It is routine to see that $CZH(\alpha,\theta)$ equipped with $\Norm{\ }{CZH(\alpha,\theta)}$ is a normed space, so it remains to check completeness. Let $(K_n)_{n=1}^\infty$ be a Cauchy sequence in $CZH(\alpha,\theta)$. From the regularity condition, we see that it is a Cauchy sequence in the space $C(E)$ of continuous function on $E$, for each compact subset $E\subset\R^3\setminus\{x:x_1x_2x_3=0\}$, and by the well-known completeness of these spaces, $K_n$ converges to some limit $K:\R^3\setminus\{x:x_1x_2x_3=0\}\to\C$, uniformly on compact subsets of $\R^3\setminus\{x:x_1x_2x_3=0\}$. Since the cancellation conditions involve integrals over such sets, it follows readily that $K_n-K$ satisfies both regularity and cancellation conditions of Han et al.\ with constants that become arbitrarily small as $n\to\infty$.

Then it only remains to check that the limit function $K$ also inherits the relevant limits. To see this, we denote by
\begin{equation*}
\begin{split}
  F_k(\eps) &:=\{(x_k,x_3)\in\R^2:\eps_i<\abs{x_i}<1, i=k,3\},\qquad k=1,2, \\
  F_3(\eps) &:=\{x\in\R^3:\eps_i<\abs{x_i}<1, i=1,2,3\}
\end{split}
\end{equation*}
the integration domains in the definition of relevant limits.
Then for $k\in\{1,2,3\}$ and $\eps,\eps'\in(0,1)^3$, we have
\begin{equation*}
\begin{split}
  \Babs{\int_{F_k(\eps)}K(x)-\int_{F_k(\eps')}K(x)}
  &\leq\Babs{\int_{F_k(\eps)}(K(x)-K_n(x))}+\Babs{\int_{F_k(\eps')}(K(x)-K_n(x))} \\
  &\qquad+\Babs{\int_{F_k(\eps)}K_n(x)-\int_{F_k(\eps')}K_n(x)}=:I+II+III,
\end{split}
\end{equation*}
where we omitted the differential $\ud x$ for $k=3$, or $\ud x_k\ud x_3$ for $k=1,2$, for brevity.

For $k\in\{1,2\}$, the set $F_k(\eps)$ is an example of the integration domains appearing in \eqref{E:eq134} (with $j=3-k$), while $F_3(\eps)$ is an example of the integration domain in \eqref{E:eq133}. By what we already observed, $I$ and $II$ above become as small as we like, uniformly in $\eps,\eps'$, as $n\to\infty$. Given $\delta>0$, we can hence fix an $n$ so that $I+II<\delta/2$. For this fixed $n$, we find that $III<\delta/2$, as soon as $\eps,\eps'$ are close enough to zero. Thus $\int_{F_k(\eps)}K(x)$ satisfies the Cauchy criterion, and hence converges as $\eps\to 0$. This shows that $K$ has relevant limits, and completes the proof of completeness of $CZH(\alpha,\theta)$.
\end{proof}

\subsection{Failure of weighted estimates with $w\in A_{p,Z}$ for $\theta<1$}\label{sec:failure}

Below, we will give a concrete example of a Zygmund singular integral operator satisfying the full set of assumptions of Theorem \ref{thm:HLLT}.
We will then derive some necessary conditions for weighted estimates
 \begin{equation}\label{H:conj}
   \Norm{K*f}{L^p(w)}\lesssim\Norm{f}{L^p(w)},\qquad 1<p<\infty,\quad w\in A_{p,Z},
 \end{equation}
and use them to show that weighted bounds cannot, in general, hold
if the kernel exponent $\theta$ satisfies $\theta<1$. This limits what we can do in the future -- we have to put our efforts to proving
the Zygmund weighted estimates with $\theta = 1$. That positive result is the most subtle result of the paper.
Similarly, the counterexamples for $\theta < 1$ also point to some issues with Theorem 1.1 of \cite{DLOPW2}.

\begin{lem}\label{lem:HanOk}
Let $\phi$ be a smooth compactly supported bump function satisfying $\int \phi=0$, and let $t=(t_1,t_2,t_3)\in\R_+^3$. Then the kernel
\begin{equation}\label{eq:exCZZ}
  K(x):=D_\theta(t)\prod_{i=1}^3\frac{1}{t_i}\phi(\frac{x_i}{t_i})
\end{equation}
satisfies all assumptions of Han et al.\ with parameters $\alpha=1$ and the same $\theta\in(0,1]$ as in \eqref{eq:exCZZ}.
\end{lem}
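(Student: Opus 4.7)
The kernel factors as $K(x) = D_\theta(t)\prod_{i=1}^3 \psi_i(x_i)$ with $\psi_i(u) := t_i^{-1}\phi(u/t_i)$, each a smooth mean-zero bump supported in $\{|u|\lesssim t_i\}$. Because the difference operators $D_{h_i}^{a_i}$ act on distinct variables and every integration domain in Definitions \ref{def:R}--\ref{def:C} is a product of one-dimensional sets, all differences, integrals, and their compositions factor across the three coordinates. The plan is to verify a few one-variable facts and a ``dilation-matching'' identity, then assemble them into each condition.

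\emph{One-variable ingredients.} From $\|\psi_i\|_\infty \lesssim t_i^{-1}$ and $\|\psi_i'\|_\infty \lesssim t_i^{-2}$ on $\{|x_i|\lesssim t_i\}$, the mean value theorem together with $|h_i|\le|x_i|/2$ gives
\[
|\tilde D_{h_i}^{a_i}\psi_i(x_i)|\lesssim\frac{|h_i|^{a_i}}{t_i^{1+a_i}}\cdot 1_{|x_i|\lesssim t_i}\lesssim\frac{|h_i|^{a_i}}{|x_i|^{1+a_i}},
\]
where $\tilde D_{h_i}^{a_i}\psi_i(x_i):=a_i\psi_i(x_i+h_i)-\psi_i(x_i)$. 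Second, $\int\psi_i=0$ refines the trivial uniform bound to $|\int_{|x_i|\le s}\psi_i|\lesssim\min(s/t_i,1)$ and $|\int_{|x_i|\ge r}\psi_i|\lesssim\min(r/t_i,1)\cdot 1_{r\lesssim t_i}$. \emph{Dilation matching.} With $\rho(y):=|y_1y_2|/|y_3|+|y_3|/|y_1y_2|$ so $D_\theta=\rho^{-\theta}$, and setting $u:=t_1t_2/t_3$, $\sigma_i:=|x_i|/t_i$, $\beta:=\sigma_1\sigma_2/\sigma_3$, one has $\rho(x)=\beta u+1/(\beta u)$ and a case split on $\beta\gtrless 1$ yields the elementary bound $\rho(x)\le\max(\beta,1/\beta)\,\rho(t)$. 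Hence on the kernel support $\sigma_i\lesssim 1$, using $\theta\le 1$ (so $1+a_i-\theta\ge 0$),
\[
D_\theta(t)\prod_{i=1}^3\sigma_i^{1+a_i}\lesssim D_\theta(x),
\]
because $\max(\beta,1/\beta)^\theta\le(\sigma_1\sigma_2/\sigma_3)^\theta+(\sigma_3/(\sigma_1\sigma_2))^\theta$ and each resulting monomial in $\sigma_i$ has only non-negative exponents and bases $\lesssim 1$.

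\emph{Assembly.} The regularity \eqref{E:eq132} follows from $D_{h_1}^{a_1}D_{h_2}^{a_2}D_{h_3}^{a_3}K=D_\theta(t)\prod_i\tilde D_{h_i}^{a_i}\psi_i(x_i)$, the pointwise difference bound, and the displayed inequality. For the basic cancellations \eqref{E:eq133}, \eqref{E:eq134}, the integrals over the free variables factor and are uniformly bounded by $\|\phi\|_{L^1}$, while $D_\theta(t)\le 2^{-\theta}$ absorbs into the constant and the locked-variable dependence is given by the difference bound. For the Zygmund cancellation \eqref{E:eq135}, the $x_1$-integral factors out as $\int_{\delta_1<|x_1|<r_1}\psi_1$; splitting via $\int\psi_1=0$ into two boundary pieces of sizes $\min(\delta_1/t_1,1)$ and $\min(r_1/t_1,1)\cdot 1_{r_1\lesssim t_1}$, each must be matched against the corresponding $D_\theta(\delta_1,x_2,x_3)$ or $D_\theta(r_1,x_2,x_3)$. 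This is a two-parameter analogue of the dilation matching: setting $\mu:=s/t_1$ and $\beta:=\mu\sigma_2/\sigma_3$ for $s\in\{\delta_1,r_1\}$, case analysis on the signs of $u-1$ and $\beta u-1$ reduces the claim, in each of four regimes, to a monomial inequality in $\mu,\sigma_2,\sigma_3,u$ with non-negative exponents (using $\theta\le 1$) and bases $\lesssim 1$ on the range where the estimate is nontrivial. This case split is the main substantive step; everything else is routine factoring, and the relevant limits of Definition \ref{def:limits} are immediate from the compact support of $K$, which makes all three integrals stabilize once each $\eps_j$ is small enough and each $N_j$ large enough.
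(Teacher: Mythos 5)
Your proof is correct and follows essentially the same route as the paper's: factor the kernel and all difference/integral operators coordinate-wise, bound each one-dimensional factor using the support and mean-zero property of $\phi$, and absorb the mismatch between $D_\theta(t)$ and the target $D_\theta$ into powers of $\sigma_i=|x_i|/t_i$ whose exponents are non-negative precisely because $\theta\le 1$. Your dilation-matching inequality $\rho(x)\le\max(\beta,1/\beta)\,\rho(t)$ is a tidy way to package what the paper achieves by multiplying separately by $(|x_1x_2|/|x_3|)^\theta$ and $(|x_3|/|x_1x_2|)^\theta$, but the underlying monomial-exponent check is the same.
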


Without the decay factor in front, $K(x)$ would be a simple example of a smooth product Calder\'on--Zygmund kernel in $\R^3$. With the decay factor, we check that it satisfies all assumptions of Han et al.

\begin{proof}
We verify the different conditions one by one.

\subsubsection*{Regularity conditions \eqref{E:eq132}}
We observe that
\begin{equation*}
  \partial^\alpha K(x)=D_\theta(t)\prod_{i=1}^3\frac{1}{t_i^{1+\alpha_i}}\phi^{(\alpha_i)}(\frac{x_i}{t_i});
\end{equation*}
hence,
\begin{equation*}
  \abs{\partial^\alpha K(x)}\prod_{i=1}^3\abs{x_i^{1+\alpha_i}}\lesssim D_\theta(t)\prod_{i=1}^3(\frac{\abs{x_i}}{t_i})^{1+\alpha_i}1_{\{\abs{x_i}\lesssim t_i\}}.
\end{equation*}
Now
\begin{equation*}
\begin{split}
  (\frac{\abs{x_1x_2}}{\abs{x_3}})^\theta &
  \abs{\partial^\alpha K(x)}\prod_{i=1}^3\abs{x_i^{1+\alpha_i}} \\
  &\lesssim (\frac{t_1t_2}{t_3})^\theta D_\theta(t)\times \prod_{i=1}^2(\frac{\abs{x_i}}{t_i})^{1+\theta+\alpha_i}1_{\{\abs{x_i}\lesssim t_i\}}\times(\frac{\abs{x_3}}{t_3})^{1-\theta+\alpha_3}1_{\{\abs{x_3}\lesssim t_3\}},
\end{split}
\end{equation*}
where each factor is bounded by one. Similarly,
\begin{equation*}
\begin{split}
  (\frac{\abs{x_3}}{\abs{x_1x_2}})^\theta &
  \abs{\partial^\alpha K(x)}\prod_{i=1}^3\abs{x_i^{1+\alpha_i}} \\
  &\lesssim (\frac{t_3}{t_1t_2})^\theta D_\theta(t)\times \prod_{i=1}^2(\frac{\abs{x_i}}{t_i})^{1-\theta+\alpha_i}1_{\{\abs{x_i}\lesssim t_i\}}\times(\frac{\abs{x_3}}{t_3})^{1+\theta+\alpha_3}1_{\{\abs{x_3}\lesssim t_3\}},
\end{split}
\end{equation*}
where again each factor is bounded by one.
These derivative estimates imply the condition \eqref{E:eq132}.

\subsubsection*{Basic cancellation conditions  \eqref{E:eq133} and \eqref{E:eq134}}
We simply estimate
\begin{equation*}
\begin{split}
  \Babs{\iiint_{\delta_i<\abs{x_i}<r_i:i\in\{1,2,3\}}K(x)\ud x}
  &\leq\iiint_{\R^3}\abs{K(x)}\ud x 
  \leq \prod_{i=1}^3\int_{\R}\frac{1}{t_i}\abs{\phi(\frac{x_i}{t_i})}\ud x_i=\Norm{\phi}{1}^3\lesssim 1;
\end{split}
\end{equation*}
the decay factor $D_\theta(t)$ was not even needed here.
In a similar way, we have
\begin{equation*}
\begin{split}
   & \Babs{\iint_{\delta_i<\abs{x_i}<r_i:i\in\{1,2,3\}\setminus\{j\}}\partial_j^\gamma K(x)\ud x_{\{1,2,3\}\setminus\{j\}}} \leq\iint_{\R^2}\abs{\partial_j^\gamma K(x)}\ud x_{\{1,2,3\}\setminus\{j\}}\\
  &\hspace{1cm}  \leq \prod_{i\in\{1,2,3\}\setminus\{j\}} \int_{\R}\frac{1}{t_i}\abs{\phi(\frac{x_i}{t_i})}\ud x_i \times\frac{1}{t_j^{1+\gamma}}\Babs{\phi^{(\gamma)}(\frac{x_j}{t_j})} 
  \lesssim 1\times\frac{1}{t_j^{1+\gamma}}1_{\{\abs{x_j}\lesssim t_j\}}\lesssim\frac{1}{\abs{x_j^{1+\gamma}}}.
\end{split}
\end{equation*}

\subsubsection*{Zygmund cancellation condition \eqref{E:eq135}}
Observe that
\begin{equation*}
  \int_{\delta_1<\abs{x_1}<r_1}\partial_2^{\beta_2}\partial_3^{\beta_3} K(x)\ud x_1
  =D_\theta(t)\int_{\delta_1<\abs{x_1}<r_1}\frac{1}{t_1}\phi(\frac{x_1}{t_1})\ud x_1 \prod_{i=2}^3 \frac{1}{t_i^{1+\beta_i}}\phi^{(\beta_i)}(\frac{x_i}{t_i}).
\end{equation*}
The last factors are estimated as before,
\begin{equation*}
    \Babs{\frac{1}{t_i^{1+\beta_i}}\phi^{(\beta_i)}(\frac{x_i}{t_i})}\lesssim \frac{1}{t_i^{1+\beta_i}}1_{\{\abs{x_i}\lesssim t_i\}}.
\end{equation*}
For the integral
\begin{equation*}
  \int_{\delta_1<\abs{x_1}<r_1}\frac{1}{t_1}\phi(\frac{x_1}{t_1})\ud x_1 
  =\int_{\delta_1/t_1<\abs{y}<r_1/t_1}\phi(y)\ud y,
\end{equation*}
we make some alternative estimates. Since $\phi$ is compactly supported, this integral vanishes unless $\delta_1\lesssim t_1$, so without loss of generality we may assume $\delta_1\lesssim t_1$. 
If $r_1\lesssim t_1$, then
\begin{equation*}
  \Babs{\int_{\delta_1/t_1<\abs{y}<r_1/t_1}\phi(y)\ud y}\leq\int_{\abs{y}<r_1/t_1}\abs{\phi(y)}\ud y\lesssim \frac{r_1}{t_1}.
\end{equation*}
Finally, if $r_1\geq Ct_1$, we use $\int\phi=0$ and the fact that $\supp\phi\subset[-C,C]$ to arrive at
\begin{equation*}
\begin{split}
   \Babs{\int_{\delta_1/t_1<\abs{y}<r_1/t_1}\phi(y)\ud y}
   &=\Babs{\int_{\abs{y}\le\delta_1/t_1}\phi(y)\ud y+\int_{\abs{y}\ge r_1/t_1}\phi(y)\ud y} \\
   &=\Babs{\int_{\abs{y}\le \delta_1/t_1}\phi(y)\ud y}\lesssim\frac{\delta_1}{t_1}.
\end{split}
\end{equation*}
Putting these bounds together, we have proved that
\begin{equation*}
   \Babs{ \int_{\delta_1<\abs{x_1}<r_1}\frac{1}{t_1}\phi(\frac{x_1}{t_1})\ud x_1 }
   \lesssim \begin{cases} r_1/t_1, & r_1\lesssim t_1, \\ \delta_1/t_1, & \text{else},\end{cases}
\end{equation*}
and hence
\begin{equation*}
\begin{split}
  A:=\abs{x_2^{1+\beta_2}x_3^{1+\beta_3}}\Babs{\int_{\delta_1<\abs{x_1}<r_1}\partial_2^{\beta_2}\partial_3^{\beta_3}K(x)\ud x}  
  \lesssim D_\theta(t) \prod_{i=2}^3(\frac{\abs{x_i}}{t_i})^{1+\beta_i}1_{\{\abs{x_i}\lesssim t_i\}}
  \times\frac{s_1}{t_1},
\end{split}
\end{equation*}
where $s_1=r_1$ if $r_1\lesssim t_1$, and $s_1=\delta_1$ otherwise. Thus
\begin{equation*}
\begin{split}
  (\frac{s_1\abs{x_2}}{\abs{x_3}})^\theta A 
  \lesssim D_\theta(t)(\frac{t_1t_2}{t_3})^\theta\times(\frac{\abs{x_2}}{t_2})^{1+\theta+\beta_2}1_{\{\abs{x_2}\lesssim t_2\}}\times(\frac{\abs{x_3}}{t_3})^{1-\theta+\beta_3}1_{\{\abs{x_3}\lesssim t_3\}}\times
  (\frac{s_1}{t_1})^{1+\theta}
\end{split}
\end{equation*}
and
\begin{equation*}
\begin{split}
  (\frac{\abs{x_3}}{s_1\abs{x_2}})^\theta A 
  \lesssim D_\theta(t)(\frac{t_3}{t_1t_2})^\theta\times(\frac{\abs{x_2}}{t_2})^{1-\theta+\beta_2}1_{\{\abs{x_2}\lesssim t_2\}}\times(\frac{\abs{x_3}}{t_3})^{1+\theta+\beta_3}1_{\{\abs{x_3}\lesssim t_3\}}\times
  (\frac{s_1}{t_1})^{1-\theta},
\end{split}
\end{equation*}
where in both estimates, each factor on the right is bounded by one. For the last factor in both bounds, we recall that $s_1=r_1$ if and only if $r_1\lesssim t_1$, while if $s_1=\delta_1$, by our convention we also have $\delta_1\lesssim t_1$. Thus it follows that
$
  A\lesssim D_\theta(s_1,x_2,x_3).
$
Recalling the definition of $A$ and the two possible values of $s_1$, this can be written as
\begin{equation*}
    A=\abs{x_2^{1+\beta_2}x_3^{1+\beta_3}}\Babs{\int_{\delta_1<\abs{x_1}<r_1}\partial_2^{\beta_2}\partial_3^{\beta_3}K(x)\ud x_1} 
    \lesssim D_\theta(\delta_1,x_2,x_3)+D_\theta(r_1,x_2,x_3),
\end{equation*}
which is Condition \eqref{E:eq135} that we wanted to verify.

\subsubsection*{Relevant limits}
As a smooth and compactly supported function, $K$ is absolutely integrable. Hence the relevant limits of the integrals of its truncations readily exist by the dominated convergence theorem.
\end{proof}

\begin{lem}\label{lem:convoLower}
Suppose that $\phi\geq 1$ on $[-1,1]$, and that $f\geq 0$ is supported in a rectangle $R=I_1\times I_2\times I_3$. If $t_i=\ell(I_i)$ and
\begin{equation*}
  \Phi(x):=\prod_{i=1}^3\frac{1}{t_i}\phi(\frac{x_i}{t_i}),
\end{equation*}
then $1_R\Phi*f\geq 1_R\ave{f}_R$.
\end{lem}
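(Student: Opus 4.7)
The plan is to proceed by a direct pointwise lower bound on the convolution integrand when both the variable and the integration variable lie in $R$.

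First I would fix $x\in R$ and write
\begin{equation*}
(\Phi*f)(x)=\int_{\R^3}\Phi(x-y)f(y)\ud y=\int_R \Phi(x-y)f(y)\ud y,
\end{equation*}
using that $f$ is supported in $R$ and that $f\geq 0$ (so there are no cancellations to worry about).

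The key observation is that for $x,y\in R=I_1\times I_2\times I_3$ we have $\abs{x_i-y_i}\leq\ell(I_i)=t_i$ for each $i=1,2,3$, so $(x_i-y_i)/t_i\in[-1,1]$. By the hypothesis $\phi\geq 1$ on $[-1,1]$, this gives $\phi((x_i-y_i)/t_i)\geq 1$, and therefore
\begin{equation*}
\Phi(x-y)=\prod_{i=1}^3\frac{1}{t_i}\phi\Bigl(\frac{x_i-y_i}{t_i}\Bigr)\geq \prod_{i=1}^3\frac{1}{t_i}=\frac{1}{\abs{R}}
\end{equation*}
for every $x,y\in R$. Since $f\geq 0$, this pointwise bound integrates to
\begin{equation*}
(\Phi*f)(x)\geq \frac{1}{\abs{R}}\int_R f(y)\ud y=\ave{f}_R
\end{equation*}
for every $x\in R$, which is exactly the claim $1_R\Phi*f\geq 1_R\ave{f}_R$.

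There is no real obstacle here; the statement is essentially a tautology given the assumption that $\phi\geq 1$ on $[-1,1]$, which is calibrated precisely so that the product kernel $\Phi$ dominates $\abs{R}^{-1}1_{R-R}$ pointwise. The only thing to be slightly careful about is that the support hypothesis and nonnegativity of $f$ are both used to ensure that the lower bound on $\Phi(x-y)$ translates into a lower bound on the convolution without sign issues.
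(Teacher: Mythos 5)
Your proof is correct and uses essentially the same argument as the paper: both reduce to the pointwise observation that for $x,y\in R$ each ratio $(x_i-y_i)/t_i$ lies in $[-1,1]$, so $\Phi(x-y)\ge 1/\abs{R}$, and then nonnegativity of $f$ lets this integrate to the claimed bound. The only cosmetic difference is that the paper writes the convolution as $\int\Phi(y)f(x-y)\ud y$ and tracks the constraint $\abs{y_i}\le t_i$, while you write it as $\int\Phi(x-y)f(y)\ud y$; these are identical in substance.
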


\begin{proof}
Let $I_i=[c_i-t_i/2,c_i+t_i/2]$. If $x\in R$, then $\abs{x_i-c_i}\leq t_i/2$, and
\begin{equation*}
  \Phi*f(x)=\iiint \prod_{i=1}^3\frac{1}{t_i}\phi(\frac{y_i}{t_i})f(x-y)\ud y.
\end{equation*}
By the support assumption on $f$, we may restrict the integral to be over those $y\in\R^3$ with $x-y\in R$. This means that $\abs{(x_i-y_i)-c_i}\leq t_i/2$. Since also $x\in R$, we have $\abs{x_i-c_i}\leq t_i/2$, and thus we may restrict the integral to be over $\abs{y_i}\leq t_i$. But $\phi(y_i/t_i)\geq 1$ for all these values. Since also $f\geq 0$, we conclude that
\begin{equation*}
\begin{split}
   \Phi*f(x) &=\iiint_{\abs{y_i}\leq t_i:i=1,2,3} \prod_{i=1}^3\frac{1}{t_i}\phi(\frac{y_i}{t_i})f(x-y)\ud y \\
   &\ge \iiint_{\abs{y_i}\leq t_i:i=1,2,3} \prod_{i=1}^3\frac{1}{t_i} f(x-y)\ud y= \frac{1}{\abs{R}}\int_R f(z)\ud z=\ave{f}_R.
\end{split}
\end{equation*}
\end{proof}

For a rectangle $R=I_1\times I_2\times I_3$, let us denote by
\begin{equation*}
  \operatorname{ecc}_{Z}(R):=\max\Big\{\frac{\ell(I_1)\ell(I_2)}{\ell(I_3)},\frac{\ell(I_3)}{\ell(I_1)\ell(I_2)}\Big\}
\end{equation*}
its {\em Zygmund eccentricity}, i.e., the deviation from being a Zygmund rectangle.

\begin{lem}\label{lem:ApZecc}
Let $\alpha,\theta\in(0,1]$ and $p\in(1,\infty)$.
Suppose that $w$ is a weight with the following property: Any family of kernels $K$ that uniformly satisfies all assumptions of Han et al.\ with parameters $\alpha$ and $\theta$, also uniformly satisfies the weighted estimates
\begin{equation}\label{H:conj2}
  \Norm{K*f}{L^p(w)}\lesssim\Norm{f}{L^p(w)}.
\end{equation}
Then this weight satisfies the estimate
\begin{equation}\label{eq:ApZecc}
  \ave{w}_R^{1/p}\ave{w^{-1/(p-1)}}_R^{1/p'}\lesssim \operatorname{ecc}_{Z}(R)^\theta
\end{equation}
uniformly for all axes-parallel rectangles $R\subset\R^3$.
\end{lem}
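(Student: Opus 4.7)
The plan is to apply the hypothesis of the lemma to the explicit family of kernels provided by Lemma \ref{lem:HanOk}, with the parameter $t \in \R_+^3$ tied to the rectangle $R$ in the conclusion \eqref{eq:ApZecc}, and then test the resulting weighted $L^p$-bound against the natural dual function $f = 1_R w^{-1/(p-1)}$, as is standard in extracting $A_p$-type lower bounds. The key quantitative input that makes the argument work is that the kernels of Lemma \ref{lem:HanOk} already carry the decay factor $D_\theta(t)$, which we will match against $\operatorname{ecc}_Z(R)^{-\theta}$.

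More precisely, fix once and for all a smooth compactly supported $\phi$ on $\R$ with $\int \phi = 0$ and $\phi \geq 1$ on $[-1,1]$ (one can take $\phi = \psi_1 - \psi_2$ where $\psi_1$ is a standard bump with $\psi_1 \geq 1$ on $[-1,1]$ and $\psi_2$ is supported away from $[-1,1]$ with $\int \psi_2 = \int \psi_1$; this makes $\phi$ simultaneously admissible for Lemmas \ref{lem:HanOk} and \ref{lem:convoLower}). Given an arbitrary axes-parallel rectangle $R = I_1 \times I_2 \times I_3$, set $t_i := \ell(I_i)$ and let $K$ be the kernel \eqref{eq:exCZZ} from Lemma \ref{lem:HanOk}; the constants in the Han et al.\ assumptions depend only on $\phi$ and $\theta$, not on $t$, so the hypothesis of the present lemma supplies
\begin{equation*}
  \Norm{K*f}{L^p(w)} \lesssim \Norm{f}{L^p(w)}
\end{equation*}
with an implied constant independent of $R$.

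Now apply this with $f := 1_R\, w^{-1/(p-1)} \geq 0$, supported in $R$. The right-hand side becomes $(\int_R w^{-1/(p-1)})^{1/p}$. For the left-hand side, Lemma \ref{lem:convoLower} applied to the product bump $\Phi(x) = \prod_i t_i^{-1}\phi(x_i/t_i)$ gives $1_R \Phi * f \geq 1_R \ave{f}_R$, and hence, since $K = D_\theta(t)\,\Phi$,
\begin{equation*}
  \Norm{K*f}{L^p(w)} \;\geq\; D_\theta(t)\, \ave{f}_R\, w(R)^{1/p} \;=\; D_\theta(t)\, \ave{w^{-1/(p-1)}}_R\, w(R)^{1/p}.
\end{equation*}
Combining the two inequalities and dividing through by $(\int_R w^{-1/(p-1)})^{1/p}$ (which is nonzero; otherwise both sides of \eqref{eq:ApZecc} are trivial) yields
\begin{equation*}
  D_\theta(t)\, \ave{w}_R^{1/p}\, \ave{w^{-1/(p-1)}}_R^{1/p'} \;\lesssim\; 1.
\end{equation*}

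To finish, I would note that the two summands defining $D_\theta(t)$ are reciprocal to each other, one equals $\operatorname{ecc}_Z(R) \geq 1$ and the other equals its reciprocal $\leq 1$, so $D_\theta(t) \sim \operatorname{ecc}_Z(R)^{-\theta}$. Rearranging gives \eqref{eq:ApZecc}. The main (very mild) obstacle is the joint constructibility of $\phi$ satisfying $\int \phi = 0$ and $\phi \geq 1$ on $[-1,1]$; everything else is a direct combination of the two preceding lemmas with the standard $A_p$-duality testing function.
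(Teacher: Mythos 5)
Your argument follows the paper's own proof essentially step for step: apply the hypothesis to the kernel $K = D_\theta(t)\Phi$ from Lemma \ref{lem:HanOk} with $t_i = \ell(I_i)$, use Lemma \ref{lem:convoLower} for the pointwise lower bound $1_R \Phi*f \geq 1_R\ave{f}_R$, observe $D_\theta(t) \sim \operatorname{ecc}_Z(R)^{-\theta}$, and test against the dual function $w^{-1/(p-1)}1_R$. The algebra is correct, and the joint constructibility of $\phi$ (with $\int\phi=0$ and $\phi\geq 1$ on $[-1,1]$) is fine and noted in the paper too.

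There is, however, one genuine gap: you take $f = 1_R w^{-1/(p-1)}$ directly, and your only safeguard is the parenthetical observing that $\int_R w^{-1/(p-1)}$ is nonzero. But the problematic case is the opposite one, $\int_R w^{-1/(p-1)} = +\infty$, which the hypotheses do not rule out a priori (they only assume $w$ is locally integrable, not $w^{-1/(p-1)}$). In that case $\|f\|_{L^p(w)}=\infty$, the assumed weighted bound applied to $f$ is vacuous, and your chain of inequalities produces no information — in particular you cannot conclude $\ave{w^{-1/(p-1)}}_R<\infty$, which is part of what \eqref{eq:ApZecc} asserts. The paper avoids this by testing against $f = 1_R(w+\eps)^{-1/(p-1)}$, for which $f^p w \leq 1_R(w+\eps)^{-1/(p-1)}$ gives $\|f\|_{L^p(w)}^p \leq \eps^{-1/(p-1)}|R| < \infty$, and then letting $\eps\to 0$ by monotone convergence at the end. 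A simple truncation $f_n = 1_R\min(w^{-1/(p-1)},n)$ would not do here, since $\int_R f_n^p w$ is not controlled by $\int_R f_n$; you really need the $(w+\eps)$ regularization or an equivalent device. Adding that step would make your proof complete.
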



\begin{proof}
Let $R$ be an arbitrary rectangle, and let $f\geq 0$ be supported on that rectangle. Let $\phi$ be a smooth compactly supported bump function with $\int\phi=0$ and $\phi\geq 1$ on $[-1,1]$. (There is of course no conflict between these conditions; we can let $\phi$ be negative somewhere outside the interval $[-1,1]$.) Let $\Phi$ and $t$ be as in Lemma \ref{lem:convoLower}. By Lemma \ref{lem:HanOk}, $K=D_\theta(t)\Phi$ satisfies  all the assumptions of Han et al., with bounds uniform in $t$. Note that $D_\theta(t)\sim\operatorname{ecc}_{Z}(R)^{-\theta}$. Thus, the assumption of bounded action of such convolution operators on $L^p(w)$, combined with Lemma \ref{lem:convoLower}, shows that
\begin{equation*}
  \Norm{1_R\ave{f}_R}{L^p(w)}\leq\Norm{\Phi*f}{L^p(w)}\sim\operatorname{ecc}_{Z}(R)^\theta\Norm{K*f}{L^p(w)}
  \lesssim\operatorname{ecc}_{Z}(R)^\theta\Norm{f}{L^p(w)}.
\end{equation*}
We make the usual trick by choosing $f=1_R(w+\eps)^{-1/(p-1)}$, so that
\begin{equation*}
  f^p w=1_R(w+\eps)^{-p/(p-1)}w\leq 1_R(w+\eps)^{-p/(p-1)+1}=1_R(w+\eps)^{-1/(p-1)},
\end{equation*}
and we get
\begin{equation*}
  w(R)^{1/p}\ave{(w+\eps)^{-1/(p-1)}}_R\lesssim \operatorname{ecc}_{Z}(R)^\theta [(w+\eps)^{-1/(p-1)}(R)]^{1/p}.
\end{equation*}
Dividing both sides by $\abs{R}^{1/p}$ and then $\ave{(w+\eps)^{-1/(p-1)}}_R^{1/p}$, this gives
\begin{equation*}
  \ave{w}_R^{1/p}\ave{(w+\eps)^{-1/(p-1)}}_R^{1/p'}\lesssim \operatorname{ecc}_{Z}(R)^\theta
\end{equation*}
and, after monotone convergence as $\eps\to 0$, we deduce the claimed \eqref{eq:ApZecc}.
\end{proof}

Towards the failure of the bound \eqref{H:conj2} for all $w\in A_{p,Z}$, we will next check that the bound \eqref{eq:ApZecc} need not hold for all $w\in A_{p,Z}$ unless $\theta=1$.

\begin{prop}\label{prop:ApZeccGrowth}
Suppose that $w\in A_{2,Z}$.
\begin{enumerate}[\rm(a)]
  \item Then $\ave{w}_R\ave{w^{-1}}_R\leq[w]_{A_{2,Z}}\operatorname{ecc}_{Z}(R)^2$ for all axes-parallel rectangles $R\subset\R^3$.
  \item In general, one cannot make any estimate of the form $\ave{w}_R\ave{w^{-1}}_R\leq C\operatorname{ecc}_{Z}(R)^{2-\eta}$, for any $C,\eta>0$ independent of $R$.
\end{enumerate}
\end{prop}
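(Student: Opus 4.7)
For part (a), my plan is a direct inflation argument. Given an axes-parallel rectangle $R=I_1\times I_2\times I_3$, if $R$ is already Zygmund then $\operatorname{ecc}_Z(R)=1$ and the claim is immediate from the definition of $[w]_{A_{2,Z}}$. Otherwise, after possibly swapping roles, assume $\ell(I_1)\ell(I_2)>\ell(I_3)$ and enlarge $I_3$ to an interval $\tilde I_3\supset I_3$ with $\ell(\tilde I_3)=\ell(I_1)\ell(I_2)$, so that $\tilde R:=I_1\times I_2\times\tilde I_3$ is a Zygmund rectangle containing $R$ with $|\tilde R|/|R|=\operatorname{ecc}_Z(R)$. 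If instead $\ell(I_3)>\ell(I_1)\ell(I_2)$, I would symmetrically enlarge $I_1$ by the factor $\operatorname{ecc}_Z(R)$, giving the same inflation ratio. Positivity of $w$ then gives $\langle w\rangle_R\leq\operatorname{ecc}_Z(R)\,\langle w\rangle_{\tilde R}$ and the analogous bound for $w^{-1}$; multiplying these two inequalities and invoking the $A_{2,Z}$ condition on $\tilde R$ completes part (a).

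For part (b), my plan is to exhibit a specific $w\in A_{2,Z}$ together with a sequence of non-Zygmund rectangles $R_n$ with $\operatorname{ecc}_Z(R_n)\to\infty$ along which $\langle w\rangle_{R_n}\langle w^{-1}\rangle_{R_n}/\operatorname{ecc}_Z(R_n)^{2-\eta}\to\infty$ for every fixed $\eta>0$. The natural starting candidates are the explicit $A_{p,Z}$ weights of Lemma \ref{lem:zygexa} (with $p=2$) with parameters $\gamma,\alpha,\delta$ tuned near the boundary of the admissible region, where the $A_{2,Z}$-constant blows up and the piecewise-power structure of $w$ becomes most singular at the Zygmund surface $x_3=x_1x_2$. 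Both averages can be computed explicitly by splitting each $x_3$-integral at that surface and reducing to elementary power integrals in the three variables.

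The hard part will be the choice of $w$ and $R_n$. The most obvious symmetric choices from Lemma \ref{lem:zygexa}, such as $w(x)=\min(|x_3|,|x_1x_2|)^\gamma$, exhibit an invariance under Zygmund dilations that forces $\langle w\rangle_R\langle w^{-1}\rangle_R$ on rectangles like $[0,t]^2\times[0,Kt^2]$ to depend on $\gamma$ only and not on $K$, so these do not by themselves produce super-Zygmund blow-up. Overcoming this obstruction requires either breaking the scale-symmetry by placing the $R_n$ off the coordinate axes so that the piecewise regions $x_3\lessgtr x_1x_2$ contribute asymmetrically to both $\langle w\rangle_{R_n}$ and $\langle w^{-1}\rangle_{R_n}$, or using genuinely asymmetric exponents $\gamma\neq\delta$ that let the weight detect the non-Zygmund ratio $\ell(I_3)/(\ell(I_1)\ell(I_2))$, or superposing Lemma-type weights at a sequence of well-separated Zygmund scales, each responsible for one $R_n$. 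The technical heart of the argument is to calibrate these parameters so that $w$ stays in $A_{2,Z}$ while $\langle w\rangle_{R_n}\langle w^{-1}\rangle_{R_n}$ saturates, up to a sub-polynomial loss, the upper bound $\operatorname{ecc}_Z(R_n)^2$ from part (a).
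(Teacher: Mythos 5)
Your part (a) is correct and follows the same inflation argument as the paper: given a non-Zygmund rectangle $R$, blow up the ``short'' interval (either $I_3$ in the sub-Zygmund case or $I_1$ in the super-Zygmund case) to produce a Zygmund rectangle $\tilde R \supset R$ with $|\tilde R| = \operatorname{ecc}_Z(R)\,|R|$, then use $\ave{w}_R \le \operatorname{ecc}_Z(R)\ave{w}_{\tilde R}$ and its analogue for $w^{-1}$.

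For part (b), you have correctly ruled out scale-invariant weights and singled out the asymmetric exponents $\gamma\ne\delta$ of Lemma~\ref{lem:zygexa} as the right mechanism, but you have not actually produced the example, and the plan as stated is overshooting the target. You want a single weight $w$ and a single sequence $R_n$ such that $\ave{w}_{R_n}\ave{w^{-1}}_{R_n}/\operatorname{ecc}_Z(R_n)^{2-\eta}\to\infty$ for \emph{every} $\eta>0$. For a fixed weight of Lemma~\ref{lem:zygexa}-form, the admissible $\delta$ are bounded above by $3$, and one finds (see below) $\ave{w}_R\ave{w^{-1}}_R\sim\operatorname{ecc}_Z(R)^{\delta-1}$, so a single such weight only defeats the exponents $2-\eta$ with $\eta>3-\delta$; your superposition idea could in principle repair this, but it adds genuine work and is not needed. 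The statement (and its use in Proposition~\ref{prop:mainCounterex}) only requires: for each $\eta>0$ there is a weight $w=w_\eta\in A_{2,Z}$ for which no $C$ works. The paper's route is to fix $\eta$, take $\gamma=1-\eta$, $\alpha=2-2\eta$, $\delta=3-3\eta$ (admissible by Lemma~\ref{lem:zygexa}), and evaluate on the super-Zygmund rectangles $R_\eps=(0,\sqrt\eps)\times(0,\sqrt\eps)\times(\eps,1)$, which have $\operatorname{ecc}_Z(R_\eps)\sim\eps^{-1}$ and on which $w(x)=|x_1x_2|^{\alpha-\delta}|x_3|^\delta$ throughout. The product of the $x_1,x_2$ averages in $\ave{w}_{R_\eps}$ and $\ave{w^{-1}}_{R_\eps}$ cancels, leaving $\int_\eps^1 x_3^\delta\,\mathrm{d}x_3\cdot\int_\eps^1 x_3^{-\delta}\,\mathrm{d}x_3\sim\eps^{1-\delta}$ for $\delta>1$, hence $\ave{w}_{R_\eps}\ave{w^{-1}}_{R_\eps}\sim\operatorname{ecc}_Z(R_\eps)^{\delta-1}=\operatorname{ecc}_Z(R_\eps)^{2-3\eta}$, which beats $\operatorname{ecc}_Z(R_\eps)^{2-\eta'}$ for $\eta'>3\eta$. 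The concrete missing pieces in your proposal are precisely this choice of rectangle, the observation that $\delta$ can approach $3$ within the admissibility region, and the elementary computation; once those are supplied, your ``asymmetric exponents'' branch closes out in a few lines.
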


\begin{proof}
If $\ell(R_1)\ell(R_2)<\ell(R_3)$, we let $\tilde R_1\supset R_1$ have $\ell(\tilde R_1)=\ell(R_3)/\ell(R_2)$, so that $\tilde R=\tilde R_1\times R_2\times R_3\in\mathcal R_{Z}$ contains $R$ and has volume
\begin{equation*}
  \abs{\tilde R}=\frac{\ell(R_3)}{\ell(R_2)}\ell(R_2)\ell(R_3)=\frac{\ell(R_3)}{\ell(R_1)\ell(R_2)}\abs{R}=\operatorname{ecc}_{Z}(R)\abs{R},
\end{equation*}
while if $\ell(R_1)\ell(R_2)>\ell(R_3)$, we let $\tilde R_3\supset R_3$ have $\ell(\tilde R_3)=\ell(R_1)\ell(R_2)$, so that $\tilde R=R_1\times R_2\times \tilde R_3\in\mathcal R_{Z}$ contains $R$ and has volume
\begin{equation*}
  \abs{\tilde R}=(\ell(R_1)\ell(R_2))^2=\frac{\ell(R_1)\ell(R_2)}{\ell(R_3)}\abs{R}=\operatorname{ecc}_{Z}(R)\abs{R}.
\end{equation*}
So in either case we find that $R\subset\tilde R\in\mathcal R_{Z}$ with $\abs{\tilde R}=\operatorname{ecc}_{Z}(R)\abs{R}$, and hence
\begin{equation*}
\begin{split}
  \ave{w}_R\ave{w^{-1}}_R
  &=\frac{1}{\abs{R}^2}\int_R w\times\int_R w^{-1} 
  \leq\frac{\operatorname{ecc}_{Z}(R)^2}{\abs{\tilde R}^2}\int_{\tilde R} w\times\int_{\tilde R} w^{-1} \\
  &=\operatorname{ecc}_{Z}(R)^2\ave{w}_{\tilde R}\ave{w^{-1}}_{\tilde R}
  \leq \operatorname{ecc}_{Z}(R)^2[w]_{A_{2,Z}}.
\end{split}
\end{equation*}

To see that this estimate is optimal in terms of the power of $\operatorname{ecc}_{Z}(R)$, let us consider a weight $w$ as in Lemma \ref{lem:zygexa} on the rectangle $R=(0,\sqrt\eps)\times(0,\sqrt\eps)\times(\eps,1)$, which has $\operatorname{ecc}_{Z}(R)\sim \eps^{-1}$. Since $\abs{x_1x_2}<\eps<\abs{x_3}$ for $x\in R$, we have
$w(x)=\abs{x_1x_2}^{\alpha-\delta}\abs{x_3}^\delta$, $x\in R$.
Hence
\begin{equation*}
  \ave{w}_R \sim\Big(\prod_{i=1}^2\frac{1}{\sqrt\eps}\int_0^{\sqrt\eps} x_i^{\alpha-\delta}\ud x_i\Big)\times\int_{\eps}^1 x_3^\delta\ud x_3,
\end{equation*}
where
\begin{equation*}
  \prod_{i=1}^2\frac{1}{\sqrt\eps}\int_0^{\sqrt\eps} x_i^{\alpha-\delta}\ud x_i
  \sim\prod_{i=1}^2\frac{1}{\sqrt\eps}(\sqrt\eps)^{\alpha-\delta+1}
  =\eps^{\alpha-\delta}.
\end{equation*}
Since $w^{-1}$ has the same form with $(\alpha,\delta)$ replaced by $(-\alpha,-\delta)$, we find that
\begin{equation*}
  \ave{w}_R\ave{w^{-1}}_R\sim \int_{\eps}^1 x_3^\delta\ud x_3\times \int_{\eps}^1 x_3^{-\delta}\ud x_3
  \sim \int_{\eps}^1 x_3^{-\abs{\delta}}\ud x_3,
\end{equation*}
since $\int_\eps^1 x_3^{\abs{\delta}}\ud x_3\sim 1$, and in particular
\begin{equation*}
  \ave{w}_R\ave{w^{-1}}_R\sim\eps^{1-\delta}\sim\operatorname{ecc}_{Z}(R)^{\delta-1},\qquad\text{if}\quad\delta>1.
\end{equation*}
It remains to observe that, for any $\eta\in(0,1)$, the lemma allows us to choose
\begin{equation*}
  \gamma=1-\eta,\quad\alpha=\gamma+1-\eta=2-2\eta,\quad\delta=\alpha+1-\eta=3-3\eta,
\end{equation*}
so that $\delta-1=2-3\eta$ can be arbitrarily close to $2$.
\end{proof}

We are now ready to state our main counterexamples:

\begin{prop}\label{prop:mainCounterex}
Let $\alpha\in(0,1]$ and $\theta\in(0,1)$. Then we can find a weight $w\in A_{2,Z}$ with the following properties:
\begin{enumerate}[\rm(a)]
  \item\label{it:unifFails} For some family of kernels $K$ that uniformly satisfy all assumptions of Han et al.\ with parameters $\alpha$ and $\theta$, there is no uniform bound
\begin{equation}\label{H:conj3}
  \Norm{K*f}{L^2(w)}\lesssim\Norm{f}{L^2(w)}.
\end{equation}
  \item\label{it:singleUnbounded} For some single kernel $K$ that satisfies all assumptions of Han et al.\ with parameters $\alpha$ and $\theta$, the bound \eqref{H:conj3} fails.
\end{enumerate}
\end{prop}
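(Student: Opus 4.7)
The proof strategy is to combine the necessary eccentricity bound of Lemma~\ref{lem:ApZecc} with the sharp growth of $\ave{w}_R\ave{w^{-1}}_R$ recorded in Proposition~\ref{prop:ApZeccGrowth}(b), and then to exploit the Banach space structure of Lemma~\ref{lem:CZHBanach} to promote a uniformly unbounded family to a single unbounded kernel. Given $\theta\in(0,1)$, pick $\eta>0$ so small that $2-3\eta>2\theta$, and let $w$ be the weight from Lemma~\ref{lem:zygexa} with the parameter choice $(\gamma,\alpha,\delta)=(1-\eta,2-2\eta,3-3\eta)$ used in the proof of Proposition~\ref{prop:ApZeccGrowth}(b). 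Then $w\in A_{2,Z}$, and along the rectangles $R_n:=(0,\sqrt{\eps_n})\times(0,\sqrt{\eps_n})\times(\eps_n,1)$ with $\eps_n\to 0$, that proof shows
$$
\ave{w}_{R_n}\ave{w^{-1}}_{R_n}\sim\operatorname{ecc}_{Z}(R_n)^{2-3\eta}\gg\operatorname{ecc}_{Z}(R_n)^{2\theta}.
$$

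For part~(a), let $t_n:=(\sqrt{\eps_n},\sqrt{\eps_n},1)$ be the sidelength vector of $R_n$, and let $K_n:=K_{t_n}$ be the bump kernels of Lemma~\ref{lem:HanOk}. That lemma shows each $K_n$ lies in $CZH(1,\theta)\subseteq CZH(\alpha,\theta)$ with norm bounded independently of $n$. Were this family to yield uniformly bounded operators on $L^2(w)$, Lemma~\ref{lem:ApZecc} applied at each $R_n$ would force
$$
\ave{w}_{R_n}^{1/2}\ave{w^{-1}}_{R_n}^{1/2}\lesssim\operatorname{ecc}_{Z}(R_n)^\theta,
$$
contradicting the growth displayed above.

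For part~(b) we want a \emph{single} kernel witnessing the same obstruction. The cleanest route is functional-analytic: assume, for contradiction, that every $K\in CZH(\alpha,\theta)$ gives a bounded operator on $L^2(w)$. The linear map $K\mapsto T_K$ from the Banach space $CZH(\alpha,\theta)$ to $\mathcal{L}(L^2(w))$ has closed graph, because $CZH$-norm convergence $K^{(m)}\to K$ forces $T_{K^{(m)}}f\to T_Kf$ in $L^2$ on a dense subclass (for instance, Schwartz functions $f$ that vanish near the coordinate hyperplanes), via Theorem~\ref{thm:HLLT} applied to $K^{(m)}-K$ together with the dominated-convergence bookkeeping already performed in the proof of Lemma~\ref{lem:CZHBanach}. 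The closed graph theorem then delivers a uniform bound $\|T_K\|_{L^2(w)\to L^2(w)}\lesssim\|K\|_{CZH(\alpha,\theta)}$, which contradicts part~(a) applied to the normalized sequence $K_n/\|K_n\|_{CZH(\alpha,\theta)}$. The main obstacle is precisely this continuity step: one must verify that $CZH$-norm convergence of kernels implies convergence of the associated, properly regularized, convolution operators on a dense subspace of $L^2(w)$, which is essentially soft but requires careful tracking of how the relevant limits in the sense of Definition~\ref{def:limits} interact with the $CZH$ norm.
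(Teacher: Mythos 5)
Your proof is correct and follows essentially the same route as the paper: part (a) combines Lemma~\ref{lem:ApZecc} with the sharp $\operatorname{ecc}_Z$-growth from Proposition~\ref{prop:ApZeccGrowth}(b) to derive a contradiction, and part (b) is the closed graph argument in the Banach space $CZH(\alpha,\theta)$ of Lemma~\ref{lem:CZHBanach}, using Theorem~\ref{thm:HLLT} for the unweighted $L^2$ convergence needed to verify closedness. Your added explicit choice of $\eta$ with $2-3\eta>2\theta$ just unpacks Proposition~\ref{prop:ApZeccGrowth}(b), and the "main obstacle" you flag is exactly what the paper resolves by passing to a.e.\ convergent subsequences on the dense class $L^2(w)\cap L^2$.
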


\begin{proof}
\eqref{it:unifFails}: Since $\theta\in(0,1)$, Proposition \ref{prop:ApZeccGrowth} shows that we can find some $w\in A_{2,Z}$ that fails to satisfy
\begin{equation}\label{eq:ApZecc2}
  \ave{w}_R^{1/2}\ave{w^{-1}}_R^{1/2}\lesssim\operatorname{ecc}_Z(R)^\theta.
\end{equation}
If, for this weight $w$, we would in fact have the uniform bound \eqref{H:conj3}, then by Lemma \ref{lem:ApZecc}, we would also have \eqref{eq:ApZecc2}. This obvious contradiction proves \eqref{it:unifFails}.

\eqref{it:singleUnbounded}: This formal improvement of \eqref{it:unifFails} will in fact be obtained as a consequence of \eqref{it:unifFails} and a standard application of the closed graph theorem. Suppose that \eqref{it:singleUnbounded} is not true, or in other words that bound \eqref{H:conj3} holds for each individual $K\in CZH(\alpha,\theta)$, but with the implied constant necessarily (due to \eqref{it:unifFails} that we already proved) depending on $K$. This allows us to define a linear operator $\Lambda$ from $CZH(\alpha,\theta)$ to the space $\mathcal L(L^2(w))$ of bounded linear operators on $L^2(w)$ by setting $\Lambda(K)(f):=K*f$.

We claim that $\Lambda$ is closed. To see this, let $K_i\to K$ in $CZH(\alpha,\theta)$ and $\Lambda(K_i)\to T$ in $\mathcal L(L^2(w))$, where the latter implies that $K_i*f=\Lambda(K_i)(f)\to Tf$ in $L^2(w)$ for each $f\in L^2(w)$. On the other hand, by Theorem \ref{thm:HLLT}, the convergence $K_i\to K$ in $CZH(\alpha,\theta)$ implies that $K_i*f\to K*f$ in $L^2$ for each $f\in L^2$. Passing to subsequences that converge almost everywhere, it follows that $Tf=\lim K_i*f=K*f=\Lambda(K)(f)$ for all $f\in L^2(w)\cap L^2$. Since this space is dense in $L^2(w)$, we conclude that $T=\Lambda(K)$, and hence $\Lambda$ is a closed operator.

By the closed graph theorem, the closed operator $\Lambda$ from the Banach space $CZH(\alpha,\theta)$ (according to Lemma \ref{lem:CZHBanach}) to the Banach space $\mathcal L(L^2(w))$ is bounded. But this implies the uniform estimate \eqref{H:conj3}, which we already know to be false. The proof of \eqref{it:singleUnbounded} is hence finished by contradiction.
\end{proof}

\section{Ricci-Stein operators and Fefferman-Pipher multipliers}\label{E:sec3}

Having seen that more stringent abstract kernel estimates are required for the weighted boundedness, we aim at 
identifying them by looking at some known singular integrals of Zygmund type. One of the main
conclusions of this section will be that the so-called truncated Fefferman-Pipher multipliers \cite{FP} belong
to the class we will introduce in Section \ref{E:sec1}. This will be stated and proved in Proposition \ref{E:prop5}.

We begin by recalling a class of multi-parameter singular integral operators in the Zygmund setting studied by Fefferman-Pipher \cite{FP}.
This class is a special case of a more general class considered in Ricci-Stein \cite{RS}.

Fix some large number $N \in \N$ and define
$$
\calS_N=
\Big\{\psi \in C^N(\R^3) 
\colon \| \psi \|_{\calS_N}:= \sup_{x \in \R^3} (1+|x|^N) \sum_{\substack{\alpha \in \N^3 \\ |\alpha|_\infty \le N}} 
|\partial^\alpha \psi(x)| < \infty\Big\}.
$$
Suppose that for $k,j \in \Z$ we have a function $\psi_{k,j} \in \calS_N$. We assume the
size and smoothness condition
$
\sup_{k,j} \| \psi_{k,j}\|_{\calS_N} < \infty.
$
We also assume the following cancellation conditions.
For every $y \in \R^3$ and $\alpha \in \N^3$, $|\alpha|_\infty \le N$, there holds that
\begin{equation}\label{E:eq88}
\begin{split}
&\int_{\R^2} x_1^{\alpha_1} x_2^{\alpha_2} \psi_{k,j}(x_1,x_2,y_3) \ud x_1 \ud x_2 =0, \\
&\int_{\R^2} x_1^{\alpha_1} x_3^{\alpha_3} \psi_{k,j}(x_1,y_2,x_3) \ud x_1 \ud x_3 =0 \quad \text{and} \\
&\int_{\R^2} x_2^{\alpha_2} x_3^{\alpha_3} \psi_{k,j}(y_1,x_2,x_3) \ud x_2 \ud x_3 =0
\end{split}
\end{equation}
for all $k,j$.
The operators studied in Fefferman-Pipher \cite{FP} are of the form
\begin{equation}\label{E:eq87}
Tf=f*K,
\end{equation}
where
$$
K(x):=\sum_{k,j \in \Z} 2^{-2(k+j)} \psi_{k,j}\Big( \frac{x_1}{2^k},\frac{x_2}{2^j},\frac{x_3}{2^{j+k}}\Big).
$$
We will call this kind of operators Ricci-Stein operators.

It was shown in \cite{FP} that a Ricci-Stein operator can be written as $f*K=T_{m_1}f+T_{m_2}f$, 
where $\widehat{T_{m_i}f}=m_i \widehat f$ is a Fourier multiplier operator
and $m_i \in \calM_Z^i$ is a multiplier that we will call a Fefferman-Pipher multiplier. 
The multiplier collections $\calM_Z^i$ are defined in Section \ref{E:subsec1}. These collections are symmetrical;
$\calM^1_Z$ is related to the partitioning $\{1\}$, $\{2,3\}$ of the parameters and $\calM^2_Z$ is related to $\{2\}$, $\{1,3\}$.
According to \cite[p.\ 356]{FP},
an arbitrary Fefferman-Pipher multiplier operator $T_m$, where $m \in \calM^1_Z \cup \calM^2_Z$, satisfies the weighted estimate
$
\|T_{m}f\|_{L^p(w)} \lesssim \| f \|_{L^p(w)},
$
where $1<p<\infty$ and $w \in A_{p,Z}$. 
In particular by \cite[Theorem 2.4]{FP}, it follows that also the Ricci-Stein operators \eqref{E:eq87} satisfy the weighted estimate.
(According to \cite{DLOPW2}, ``the proof in this paper'' (i.e., in \cite{DLOPW2}) ``fills a gap in the argument of \cite{FP}.'')

In the next sections we will study some properties of the Fefferman-Pipher multipliers. 
In this paper we work with the parameter grouping $\{1\}$, $\{2,3\}$; symmetrically we could 
of course choose the other grouping. 
Thus, in the next sections we consider multipliers in $\calM^1_Z$.
We prove some properties for these that will form the basis of our definition of singular integrals in 
Section \ref{E:sec1}. In particular, (truncated) Fefferman-Pipher multipliers will belong to our class.
Since our aim is to develop dyadic-probabilistic techniques, we look for a different kind of
set of assumptions than that used in the papers \cite{{DLOPW2}} and \cite{HLLT}. The structure of our
assumptions is analogous to the one used in bi-parameter analysis \cite{Ma1}.

\subsection{Fefferman-Pipher multipliers}\label{E:subsec1}
Let
$$
\rho_{s,t}(x)=(sx_1,tx_2,stx_3), \quad x \in \R^3, s,t>0.
$$
Define
$$
A^1 :=\{\xi\in\R^3:\tfrac12<\abs{\xi_1}\leq 1,\tfrac12<\abs{(\xi_2,\xi_3)}\leq 1\}
$$
and
\begin{equation}\label{E:eq64}
\begin{split}
\mathcal M^1_{Z} &:=\Big\{  m\in C^N\big((\R\setminus\{0\})\times(\R^2\setminus\{(0,0)\})\big): \\
&\qquad\Norm{m}{\mathcal M^1_{Z}}:=
\sup_{\norm{\alpha}{\infty}\leq N}\sup_{s,t>0} \sup_{\xi\in A^1}\abs{\partial^\alpha(m\circ\rho_{s,t})(\xi)}<\infty\Big\}.
\end{split}
\end{equation}
The sets $A^2$ and $\mathcal M^2_{Z}$ are defined similarly by swapping the roles of coordinates $1$ and $2$ throughout.

Let $m \in \calM_Z^1$. Another way to see the differential estimate in \eqref{E:eq64} is
the following. If $\xi\in A^1$ one has by definition that
\[
\abs{ (\partial^\alpha m)(s\xi_1, t\xi_2, st\xi_3)}\le  \Norm{m}{\mathcal M^1_{Z}} s^{-\alpha_1} t^{-\alpha_2} (st)^{-\alpha_3}=\Norm{m}{\mathcal M^1_{Z}} s^{-\alpha_1+\alpha_2} (st)^{-\alpha_2-\alpha_3}.
\]
For arbitrary $\eta_1\neq 0$ and $(\eta_2, \eta_3)\neq 0$ denote 
\[
s=|\eta_1|,\qquad st=|(s\eta_2, \eta_3)|,\qquad (\xi_1, \xi_2, \xi_3)= (\frac{\eta_1}s, \frac{\eta_2}t, \frac{\eta_3}{st}).
\]
Then $(\xi_1, \xi_2, \xi_3)\in A^1$ and so 
\begin{equation}\label{E:eq66}
\abs{ (\partial^\alpha m)(\eta_1, \eta_2, \eta_3)}\le  \Norm{m}{\mathcal M^1_{Z}} |\eta_1|^{-\alpha_1+\alpha_2} |(|\eta_1|\eta_2, \eta_3)|^{-\alpha_2-\alpha_3}.
\end{equation}

As we already mentioned above, Fefferman--Pipher \cite{FP} shows that any Ricci--Stein operator $Tf=K*f$ as in  \eqref{E:eq87} can be written as $\widehat{Tf}=m_1 \hat{f}+m_2\hat{f}$, where $m_i\in\mathcal M^i_{Z}$. 
Let us conversely see what can be said about $K_i=\check m_i$ for $m_i\in\mathcal M^i_{Z}$. 
Without loss of generality, we consider $i=1$.

\begin{lem}\label{E:lem6}
Let $m_1 \in \mathcal M^1_{Z}$. Then $K_1=\check m_1$ satisfies
\begin{equation*}
  \abs{\partial^\beta K(x)}\lesssim 
  \frac{1}{(|x_1x_2|+|x_3|)^{2+\beta_3}}\frac{1}{|x_1|^{\beta_1}|x_2|^{\beta_2}} 
  \min\Big\{1,\Big(\frac{|x_1x_2|}{|x_3|}\Big)^{\min(\beta_1,\beta_2)}\Big\}L_\beta(x),
\end{equation*}
where
\begin{equation*}
  L_\beta(x):=L(x):=1+\log_+\frac{|x_3|}{|x_1x_2|}
\end{equation*}
if $\beta_1=\beta_2$ and $L_\beta(x):=1$ otherwise.
\end{lem}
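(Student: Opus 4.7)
The strategy is Fourier-analytic: decompose $m_1$ dyadically along the Zygmund dilation orbit, estimate the inverse Fourier transform of each piece via integration by parts, and sum carefully. Fix a cutoff $\chi \in C_c^\infty(\R^3)$ supported in a slight enlargement of $A^1$ so that
$$
\sum_{(k,j)\in\Z^2}\chi(\rho_{2^{-k},2^{-j}}\xi)\equiv 1\qquad\text{on }\{\xi:\xi_1\neq 0,\,(\xi_2,\xi_3)\neq 0\};
$$
such a $\chi$ exists because, up to a half-open convention, $A^1$ is a fundamental domain for the dyadic Zygmund dilation group acting on this set. Decompose $m_1=\sum_{k,j}m_{k,j}$ with $m_{k,j}(\xi):=m_1(\xi)\chi(\rho_{2^{-k},2^{-j}}\xi)$, and set $K_{k,j}:=\check m_{k,j}$.

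The rescaled function $\tilde m_{k,j}(\eta):=m_{k,j}(\rho_{2^k,2^j}\eta)$ is supported in a fixed compact set, and by \eqref{E:eq66} its $C^N$-norm is bounded uniformly in $(k,j)$. Since $|\det\rho_{2^k,2^j}|=2^{2(k+j)}$ and $x\cdot\rho_{2^k,2^j}\eta=(\rho_{2^k,2^j}x)\cdot\eta$, a change of variables followed by differentiation yields
$$
\partial^\beta K_{k,j}(x)=2^{2(k+j)}\,2^{k\beta_1+j\beta_2+(k+j)\beta_3}\,(\partial^\beta\check{\tilde m}_{k,j})(\rho_{2^k,2^j}x),
$$
and $N$-fold integration by parts gives, for any $N$ below the fixed regularity parameter,
$$
|\partial^\beta K_{k,j}(x)|\lesssim \frac{2^{2(k+j)}\,2^{k\beta_1+j\beta_2+(k+j)\beta_3}}{(1+2^k|x_1|)^N(1+2^j|x_2|)^N(1+2^{k+j}|x_3|)^N}.
$$

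It remains to sum over $(k,j)\in\Z^2$. Writing $s=2^k$, $t=2^j$, comparing the dyadic sum with the continuous integral $\iint\ldots\,\frac{ds\,dt}{st}$, and substituting $u=s|x_1|$, $v=t|x_2|$ (so that $st|x_3|=ruv$ with $r:=|x_3|/|x_1x_2|$), the desired bound reduces to controlling
$$
|x_1|^{-(2+\beta_1+\beta_3)}|x_2|^{-(2+\beta_2+\beta_3)}\iint_0^\infty\frac{u^{1+\beta_1+\beta_3}v^{1+\beta_2+\beta_3}}{(1+u)^N(1+v)^N(1+ruv)^N}\,du\,dv.
$$
When $r\leq 1$ the inner double integral is $O(1)$, which matches the target bound in the regime $|x_3|\leq|x_1x_2|$ (where both $\min\{1,(|x_1x_2|/|x_3|)^{\min(\beta_1,\beta_2)}\}$ and $L_\beta(x)$ equal $1$). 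When $r>1$, performing the $u$-integral first (via $w=ruv$) gives $\min\{1,(rv)^{-(2+\beta_1+\beta_3)}\}$; splitting the remaining $v$-integral over $(0,1/r)$, $(1/r,1)$ and $(1,\infty)$ then shows that the total is of order $r^{-(2+\min(\beta_1,\beta_2)+\beta_3)}$, with an additional factor $\log r$ arising precisely when $\beta_1=\beta_2$, for then the intermediate integrand on $(1/r,1)$ is $v^{-1}$. Translating back to $x$-variables produces the bound claimed in the lemma.

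The main delicacy, and the reason the statement distinguishes the case $\beta_1=\beta_2$, is exactly this borderline: the $v$-integration then diverges logarithmically and yields the factor $L_\beta(x)=1+\log_+(|x_3|/|x_1x_2|)$, whereas in all other cases the integral converges strictly and $L_\beta(x)=1$. Everything else is standard Fourier-side integration by parts together with careful dyadic bookkeeping.
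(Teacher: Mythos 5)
Your proof is correct and reaches the stated bound, including the logarithmic borderline at $\beta_1=\beta_2$. The route differs from the paper's in how the dyadic bookkeeping is organized. The paper decomposes $m$ by tensoring one-dimensional resolutions of unity $\phi^1(2^{-j}\xi_1)\,\phi^{23}(2^{-k}\xi_2,2^{-j-k}\xi_3)$, then bounds $\abs{x^{\beta+\mathbf 1}\partial^\beta K_{j,k}(x)}$ via $\Norm{\partial^\alpha(\xi^\beta m_{j,k})}{1}$ and chooses the weight exponents $\alpha_i\in\{0,N\}$ discretely; the sums in $j,k$ are then handled as pure geometric series, split according to which of $x_1,x_2,x_3$ sits at the turning point. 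You instead use a single partition of unity adapted to the dyadic Zygmund orbit (so each piece $m_{k,j}$ rescales to a fixed compact set with uniformly bounded $C^N$-norm), apply integration by parts once and for all to get a rapidly decaying kernel $\prod_i(1+2^{\cdot}|x_i|)^{-N}$, and then convert the double dyadic sum into a scale-invariant double integral $\calI(r)$ depending only on $r=|x_3|/|x_1x_2|$. The latter isolates the scaling structure more cleanly: the distinction between $r\le1$ and $r>1$, the power $r^{-(2+\min(\beta_1,\beta_2)+\beta_3)}$, and the emergence of the $\log r$ factor precisely when $\beta_1=\beta_2$ all drop out of a single calculus exercise, whereas the paper obtains the same conclusion by tracking increasing/decreasing/constant geometric series. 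Two small points you should make explicit if writing this up: the existence of a smooth $\chi$ with $\sum_{k,j}\chi\circ\rho_{2^{-k},2^{-j}}\equiv1$ on the non-degenerate set should be spelled out (start from a bump on a slightly larger set and divide by the Zygmund-dyadic sum, checking uniform positivity), and the sum--integral comparison needs the one-line remark that the summand has power-type behavior on each dyadic scale.
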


\begin{proof}
With standard resolutions of unity $\phi^1$ on $\R\setminus\{0\}$ and $\phi^{23}$ on $\R^2\setminus\{(0,0)\}$, we have
\begin{equation*}
  1=\sum_{j\in\Z}\phi^1(2^{-j}x_1)=\sum_{k\in\Z}\phi^{23}(2^{-k}(x_2,x_3))=\sum_{j,k}\phi^1(2^{-j}x_1)\phi^{23}(2^{-k}x_2,2^{-j-k}x_3),
\end{equation*}
and thus
\begin{equation}\label{E:eq126}
\begin{split}
  m&=\sum_{j,k}((\phi^1\otimes\phi^{23})\circ\rho_{2^{-j},2^{-k}})\cdot m \\
 & =\sum_{j,k}((\phi^1\otimes\phi^{23})\cdot (m\circ\rho_{2^j,2^k}))\circ\rho_{2^{-j},2^{-k}}=:\sum_{j,k}m_{j,k}.
\end{split}
\end{equation}
Since $\phi^1$ is supported in $[-2,-1/2] \cup [1/2,2]$ and $\phi^{23}$ in $\bar B(0,2) \setminus B(0,1/2)$, there holds that
\begin{equation*}
\supp m_{j,k}\subseteq \supp ((\phi^1\otimes\phi^{23})\circ\rho_{2^{-j},2^{-k}})\subseteq \rho_{2^j,2^k}\wt A^1,
\end{equation*}
where
$$
\wt A^1:= ([-2,- 1/2] \cup [ 1/2,2 ]) \times (\bar B(0,2) \setminus B(0,1/2)).
$$
If $\xi \in \rho_{2^j,2^k}\wt A^1$, then one can find $s \sim 2^j$, $t \sim 2^k$ and $\eta \in A^1$ so that $\xi = \rho_{s,t}(\eta)$.
Using this one gets that
\begin{equation*}
\Norm{\partial^\alpha m_{j,k}}{\infty}\lesssim 2^{-(j,k,j+k)\cdot\alpha},\qquad
\Norm{\partial^\alpha m_{j,k}}{1}\lesssim 2^{(j,k,j+k)\cdot(\mathbf 1-\alpha)},\qquad\mathbf 1:=(1,1,1).
\end{equation*}

If $K_{j,k}:=\check m_{j,k}$, then
\begin{equation*}
\begin{split}
  \Norm{x^\alpha\partial^\beta K_{j,k}}{\infty}
  &\lesssim\Norm{\partial^\alpha(\xi^\beta m_{j,k})}{1}
  \leq\sum_{\gamma\leq\alpha}\binom{\alpha}{\gamma}\Norm{\partial^{\gamma}\xi^\beta\cdot\partial^{\alpha-\gamma}m_{j,k}}{1}  \\
  &\lesssim\sum_{\gamma\leq\alpha\wedge\beta}\Norm{\xi^{\beta-\gamma}\cdot\partial^{\alpha-\gamma}m_{j,k}}{1}
  \leq\sum_{\gamma\leq\alpha\wedge\beta}\Norm{\xi^{\beta-\gamma}1_{\rho_{2^j,2^k} \wt A^1}}{\infty}\Norm{\partial^{\alpha-\gamma}m_{j,k}}{1} \\
  & \lesssim\sum_{\gamma\leq\alpha\wedge\beta} 2^{(j,k,j+k)\cdot(\beta-\gamma)} 2^{(j,k,j+k)\cdot(\mathbf 1-(\alpha-\gamma))}
    \lesssim 2^{(j,k,j+k)\cdot(\beta+\mathbf 1-\alpha)},
\end{split}  
\end{equation*}
and thus
\begin{equation*}
  \abs{x^{\beta+\mathbf 1}\partial^\beta K_{j,k}(x)}\lesssim 2^{(j,k,j+k)\cdot(\beta+\mathbf 1-\alpha)}\abs{x^{\beta+\mathbf 1-\alpha}}.
\end{equation*}
Here the left-hand side is independent of $\alpha$, so we are free to choose one which is convenient for us. 
Taking either $\alpha_i\in\{0,N\}$, we find that
$
   K(x):=\sum_{j,k}K_{j,k}(x)
$
satisfies
\begin{equation}\label{eq:FPmk}
\begin{split}
  \abs{&x^{\beta+\mathbf 1}\partial^\beta K(x)}\lesssim
  \sum_{j} \min\{(2^j \abs{x_1})^{\beta_1+1},(2^j \abs{x_1})^{\beta_1+1-N}\} \\
  &\times\sum_k\min\{(2^k \abs{x_2})^{\beta_2+1},(2^k \abs{x_2})^{\beta_2+1-N}\} 
  \min\{(2^{j+k} \abs{x_3})^{\beta_3+1},(2^{j+k} \abs{x_3})^{\beta_3+1-N}\}.
\end{split}
\end{equation}
For convenience, we will assume without loss of generality that $x_i>0$ for each $i=1,2,3$. The general case can be handled by writing $\abs{x_i}$ in place of $x_i$.

The inner sum in \eqref{eq:FPmk} can be estimated either by
\begin{equation*}
    \sum_{2^k\leq 1/x_2} (2^k x_2)^{\beta_2+1}(2^{k+j}x_3)^{\beta_3+1}
    +\sum_{2^k\geq 1/x_2}(2^k x_2)^{\beta_2+1-N}(2^{k+j}x_3)^{\beta_3+1} 
    \lesssim \Big(\frac{2^j x_3}{x_2}\Big)^{\beta_3+1},
\end{equation*}
or by 
\begin{equation*}
    \sum_{2^k\leq 2^{-j}/x_3} (2^k x_2)^{\beta_2+1}(2^{k+j}x_3)^{\beta_3+1}
    +\sum_{2^k\geq 2^{-j}/x_3}(2^k x_2)^{\beta_2+1}(2^{k+j}x_3)^{\beta_3+1-N} 
    \lesssim \Big(\frac{x_2}{2^{j} x_3}\Big)^{\beta_2+1},
\end{equation*}
in both cases provided that $\beta_2+\beta_3<N-2$.


The outer sum in \eqref{eq:FPmk} can then be estimated either by
\begin{equation*}
  \sum_{2^j\leq 1/x_1} (2^j x_1)^{\beta_1+1}(2^j x_3/x_2)^{\beta_3+1}
  +\sum_{2^j\geq 1/x_1}(2^j x_1)^{\beta_1+1-N}(2^j x_3/x_2)^{\beta_3+1}
  \lesssim\Big(\frac{x_3}{x_1x_2}\Big)^{\beta_3+1}
\end{equation*}
or, if $x_1x_2\leq x_3$, by
\begin{equation*}
\begin{split}
  \sum_{2^j\leq x_2/x_3} & (2^j x_1)^{\beta_1+1}(2^j x_3/x_2)^{\beta_3+1}
  +\sum_{x_2/x_3\leq 2^j\leq 1/x_1} (2^j x_1)^{\beta_1+1}\Big(\frac{x_2}{2^j x_3}\Big)^{\beta_2+1} \\
  &+\sum_{2^j\geq 1/x_1}(2^j x_1)^{\beta_1+1-N}\Big(\frac{x_2}{2^j x_3}\Big)^{\beta_2+1}=:I+II+III.
\end{split}
\end{equation*}
It is straightforward that
\begin{equation*}
  I\sim \Big(\frac{x_1x_2}{x_3}\Big)^{\beta_1+1},\qquad
  III\sim \Big(\frac{x_1x_2}{x_3}\Big)^{\beta_2+1},
\end{equation*}
provided that $\beta_1-\beta_2<N$. For $II$, we have an increasing, decreasing, or constant geometric series, depending on the relative size of $\beta_1$ and $\beta_2$, and we find that
\begin{equation*}
  II \sim \Big(\frac{x_1x_2}{x_3}\Big)^{\min(\beta_1,\beta_2)+1}L_\beta(x),
\end{equation*}
where
\begin{equation*}
  L_\beta(x):=L(x):=1+\log_+\frac{x_3}{x_1x_2}
\end{equation*}
if $\beta_1=\beta_2$ and $L_\beta(x):=1$ otherwise.

In conclusion, we have two qualitatively different estimates depending on the relative size of $x_1x_2$ and $x_3$:
\begin{equation*}
\begin{split}
  \abs{x^{\beta+\mathbf 1}\partial^\beta K(x)} &\lesssim \Big(\frac{x_3}{x_1x_2}\Big)^{\beta_3+1},\qquad \frac{x_3}{x_1x_2}\leq 1, \\
   \abs{x^{\beta+\mathbf 1}\partial^\beta K(x)} &\lesssim \Big(\frac{x_1x_2}{x_3}\Big)^{\min(\beta_1,\beta_2)+1}L_\beta(x),
   \qquad \frac{x_3}{x_1x_2}\geq 1,
\end{split}
\end{equation*}
or
\begin{equation*}
  \abs{\partial^\beta K(x)} \lesssim \frac{1}{x_1^{1+\beta_1}x_2^{1+\beta_2}x_3^{1+\beta_3}} \Big(\frac{x_3}{x_1x_2}\Big)^{\beta_3+1}
  =\frac{1}{(x_1x_2)^{2+\beta_3}x_1^{\beta_1} x_2^{\beta_2}},\qquad \frac{x_3}{x_1x_2}\leq 1, \\
\end{equation*}
and, 
\begin{equation*}
\begin{split}
  \abs{\partial^\beta K(x)} &\lesssim \frac{1}{x_1^{1+\beta_1}x_2^{1+\beta_2}x_3^{1+\beta_3}} \Big(\frac{x_1x_2}{x_3}\Big)^{\min(\beta_1,\beta_2)+1}L_\beta(x) \\
  &=\frac{1}{x_3^{2+\beta_3}x_1^{\beta_1} x_2^{\beta_2}}\Big(\frac{x_1x_2}{x_3}\Big)^{\min(\beta_1,\beta_2)}L_\beta(x) ,\qquad \frac{x_3}{x_1x_2}\geq 1.
\end{split}
\end{equation*}
Noting that $x_1x_2+x_3\sim\max(x_1x_2,x_3)$, the two bounds may be finally combined into
\begin{equation*}
  \abs{\partial^\beta K(x)}\lesssim 
  \frac{1}{(x_1x_2+x_3)^{2+\beta_3}}\frac{1}{x_1^{\beta_1}x_2^{\beta_2}} \min\Big\{1,\Big(\frac{x_1x_2}{x_3}\Big)^{\min(\beta_1,\beta_2)}\Big\}L_\beta(x).
\end{equation*}
 Recall that we assumed that $x_i>0$; the general case $x_i \not=0$ can be obtained by replacing $x_i$ by $|x_i|$ in the right
hand side.
\end{proof}

Let us explicitly write out some of the derivatives. Let $x \in \R^3$ be such that $x_i \not=0$ for every $i$.  
First, taking $\beta=0$ we have the size estimate
\begin{equation}\label{E:eq51}
|K(x)|
\lesssim \frac{L(x)}{(|x_1x_2|+|x_3|)^{2}}.
\end{equation}
The first order derivatives are
\begin{equation}\label{E:eq52}
|\partial_i K(x)|
\lesssim \frac{1}{(|x_1x_2|+|x_3|)^{2}}\frac{1}{|x_i|}, \quad i=1,2, \quad
|\partial_3 K(x)| \lesssim \frac{L(x)}{(|x_1x_2|+|x_3|)^{3}}.
\end{equation}
Finally, we have the second order derivatives
$$
|\partial_1 \partial_2 K(x)|
\lesssim  \frac{1}{(|x_1x_2|+|x_3|)^{2}}\frac{1}{|x_1||x_2|} \min\Big\{1,\frac{|x_1x_2|}{|x_3|}\Big\}L(x)
\lesssim \frac{1}{(|x_1x_2|+|x_3|)^{2}}\frac{1}{|x_1||x_2|}
$$
and
$$
|\partial_1 \partial_3 K(x)|
\lesssim  \frac{1}{(|x_1x_2|+|x_3|)^{3}}\frac{1}{|x_1|}.
$$

One point to notice here is that differentiation with respect to the first two variables gives the corresponding extra decay $1/|x_i|$,
but differentiation with respect to the third variable produces an extra decay with respect to the full factor 
$1/(|x_1x_2|+|x_3|)$. However, we point out that
not all of the decay available here is needed for the weighted boundedness with abstract kernels.

\subsection{Partial kernel properties}
Our definition of Zygmund singular integrals will involve certain partial kernel representations 
(see Section \ref{E:subsec5}).
In this section we consider the corresponding properties of the Fefferman-Pipher multipliers $m \in \calM^1_Z$. 

Let $m \in \calM^1_Z$. Recall the decomposition \eqref{E:eq126} of $m$.
Define truncations of $m$ by
\begin{equation}\label{E:eq127}
m_{J}:= \sum_{\substack{|j| \le J^1 \ |k| \le J^2}}m_{j,k}, \quad J=(J^1,J^2) \in \N^2. 
\end{equation}

\begin{lem}\label{E:lem3}
Suppose that $m \in \calM^1_Z$. Let  $m_J$ be its truncation as defined in \eqref{E:eq127}
and let $K_J := \check m_J$.
Then  for $x_2 \not=0 \not= x_3$ we have the estimate
$$
\Big| \iint \displaylimits_{I^1\times I^1}  \partial_2^{\beta_2}\partial_3^{\beta_3}K_J(x_1-y_1,x_2,x_3) \ud y_1 \ud x_1 \Big|
\lesssim \frac{|I^1|}{|x_2|^{1+\beta_2}|x_3|^{1+\beta_3}}
\frac{\log\Big(\frac{|I^1||x_2|}{|x_3|}+\frac{|x_3|}{|I^1||x_2|}\Big)}{\frac{|I^1||x_2|}{|x_3|}+\frac{|x_3|}{|I^1||x_2|}},
$$
where $I^1 \subset \R$ is an interval and $\beta_2+\beta_3 \le 1$.
\end{lem}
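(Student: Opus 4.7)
The plan is to recast the double integral as a one-dimensional pairing and then analyse it on the Fourier side. Writing $a:=|I^1|$, $b:=|x_2|$, $c:=|x_3|$, $\lambda:=ab/c$, $s:=2^j$, $t:=2^k$, the change of variable $u=x_1-y_1$ together with Fubini gives
\[
\iint_{I^1\times I^1}\partial_2^{\beta_2}\partial_3^{\beta_3}K_J(x_1-y_1,x_2,x_3)\,\ud y_1\,\ud x_1=\int_{\R}g_\beta(u)h(u)\,\ud u,
\]
where $g_\beta(u):=\partial_2^{\beta_2}\partial_3^{\beta_3}K_J(u,x_2,x_3)$ and $h(u):=|I^1\cap(I^1+u)|=\max(a-|u|,0)$ is the tent function. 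Since $h=1_{I^1}\ast 1_{-I^1}$, its Fourier transform enjoys the two-sided bound $|\hat h(\eta)|=|\sin(\pi a\eta)/(\pi\eta)|^2\lesssim\min\{a^2,\eta^{-2}\}$, and it is this high-frequency decay---inaccessible to the pointwise estimate of Lemma~\ref{E:lem6}---that will drive the proof.

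Next I would invoke the Littlewood--Paley decomposition $m_J=\sum_{|j|\le J^1,\,|k|\le J^2}m_{j,k}$ from \eqref{E:eq126}. Each piece $\hat g_{\beta,j,k}(\eta)$ is supported in $|\eta|\sim s$ (inherited from the $\xi_1$-support of $m_{j,k}$) and equals the partial Fourier transform in $(\xi_2,\xi_3)$ of $m_{j,k}(\eta,\cdot,\cdot)\,\xi_2^{\beta_2}\xi_3^{\beta_3}$ evaluated at $(x_2,x_3)$. A direct volume-times-size bound together with integrations by parts in $\xi_2$ and $\xi_3$ yields
\[
|\hat g_{\beta,j,k}(\eta)|\lesssim s^{1+\beta_3}t^{2+\beta_2+\beta_3}\min\{1,(tb)^{-N}\}\min\{1,(stc)^{-N}\},\qquad|\eta|\sim s,
\]
and the geometric sum in $t$ peaks at $t_\ast=\min(1/b,1/(sc))$, so $\bigl|\sum_k\hat g_{\beta,j,k}(\eta)\bigr|\lesssim s^{1+\beta_3}\max(b,sc)^{-(2+\beta_2+\beta_3)}$. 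Plancherel, combined with $|\hat h(\eta)|\lesssim\min(a^2,s^{-2})$ on $|\eta|\sim s$, reduces the proof to controlling the single dyadic sum
\[
\mathcal S:=\sum_s s^{2+\beta_3}\min(a^2,s^{-2})\max(b,sc)^{-(2+\beta_2+\beta_3)}.
\]

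The thresholds $s=1/a$ and $s=b/c$ split the $s$-axis into at most four subintervals, on each of which the summand is a monomial $s^\gamma$. Geometric dyadic summation picks up the maximal summand whenever $\gamma\ne 0$, and produces a logarithmic factor exactly when $\gamma=0$. The target $\log(\lambda+\lambda^{-1})$ then arises from precisely two degenerate cases: $\beta_3=0$ on $1/a<s\le b/c$ (active iff $\lambda>1$, yielding $\log\lambda$), and $\beta_2=0$ on $b/c<s\le 1/a$ (active iff $\lambda<1$, yielding $\log(1/\lambda)$). A short case-by-case comparison then matches the claimed bound in every regime of $(\lambda,\beta_2,\beta_3)$.

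The main obstacle I anticipate is capturing the factor $1/(\lambda+\lambda^{-1})$: in the regime $\lambda\ge 1$ a naive integration of Lemma~\ref{E:lem6}'s pointwise bound against the tent $h$ overshoots the target by a factor of order $\lambda$. One must therefore exploit the cancellation $\int_\R K_{j,k}(u,x_2,x_3)\,\ud u=0$, which is automatic from $m_{j,k}|_{\xi_1=0}=0$; the Plancherel reformulation encodes this cancellation cleanly through the $\eta^{-2}$-decay of $\hat h$ paired with the dyadic frequency localization of the $\hat g_{\beta,j,k}$.
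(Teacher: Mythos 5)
Your plan is correct, but it proceeds along a genuinely different route from the paper's proof. The paper stays entirely in physical space: it first establishes the pointwise estimates on $K_J$ and its derivatives in Lemma~\ref{E:lem6}, then uses the zero integral $\int_\R \partial_2^{\beta_2}\partial_3^{\beta_3}K_J(\cdot,x_2,x_3)=0$ to switch the inner integration region from $I^1\times I^1$ to $I^1\times(\R\setminus I^1)$ when $|I^1||x_2|\geq|x_3|$, and in the opposite regime integrates the pointwise bound directly over the diagonal region; the log and the Zygmund decay factor emerge from elementary one-dimensional integral estimates. You instead rewrite the double integral as the pairing of $g_\beta$ with the tent $h=1_{I^1}*1_{-I^1}$, pass to the Fourier side, and exploit the two-sided bound $|\widehat h(\eta)|\lesssim\min(a^2,\eta^{-2})$ together with the frequency support and size of the Littlewood--Paley pieces $m_{j,k}$. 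This encodes the cancellation (the mean-zero of each $K_{j,k}$ in $x_1$) automatically through the $\eta^{-2}$ high-frequency decay of $\widehat h$, exactly where a naive physical-space integration in the regime $\lambda=|I^1||x_2|/|x_3|\gg 1$ would lose a factor of $\lambda/\log\lambda$. Your reduction to the single dyadic sum $\mathcal S=\sum_s s^{2+\beta_3}\min(a^2,s^{-2})\max(b,sc)^{-(2+\beta_2+\beta_3)}$ is sound: the volume-times-size plus $N$-fold integration-by-parts bound for $\widehat g_{\beta,j,k}$ follows from the $\mathcal M_Z^1$ differential estimates after the rescaling $\rho_{2^j,2^k}$, the geometric $t$-sum does peak at $t_*=\min(1/b,1/(sc))$, and a case check of $\mathcal S$ across the breakpoints $s=1/a$ and $s=b/c$ matches the stated right-hand side in every case $\beta_2+\beta_3\le 1$, $\lambda\gtrless 1$ (the flat monomial in the middle dyadic range, controlled by the exponent $-\beta_2$ when $\lambda<1$ and $\beta_3$ when $\lambda>1$, is what produces the logarithm). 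What your approach buys: it bypasses the pointwise kernel estimates of Lemma~\ref{E:lem6} entirely, working directly from the multiplier definition, and the cancellation is handled structurally rather than by an explicit vanishing-integral trick; the cost is the Plancherel machinery and a slightly more involved dyadic bookkeeping. Both proofs are uniform in the truncation parameter $J$, as needed.
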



\begin{proof}[Proof of Lemma \ref{E:lem3}]
Since $m_L$ is supported outside the origin one sees that
\begin{equation}\label{E:eq60}
\int_{\R} \partial_2^{\beta_2} \partial_3^{\beta_3}K_J(x_1,x_2,x_3) \ud x_1=0.
\end{equation}
Indeed, this can be seen by taking the Fourier transform with respect to $x_2$ and $x_3$
of the left hand side of \eqref{E:eq60}.

Suppose first that $|I^1||x_2|/|x_3| \ge 1$. Due to the zero integral \eqref{E:eq60} we may equivalently estimate the integral
over $ I^1 \times (\R \setminus I^1)$ instead of  $I^1 \times I^1$. By the estimates \eqref{E:eq51} and \eqref{E:eq52} there holds that
\begin{equation*}
\begin{split}
\Big| \iint \displaylimits_{I^1\times (\R \setminus I^1)} & \partial_2^{\beta_2}\partial_3^{\beta_3}
K_J(x_1-y_1,x_2,x_3) \ud y_1 \ud x_1 \Big| \\
&\lesssim \iint \displaylimits_{I^1\times (\R \setminus I^1)} \frac{1}{|x_2|^{\beta_2} }
\frac{1+ \log_+ \Big(\frac{|x_3|}{|x_1-y_1||x_2|}\Big)}{(|x_1-y_1||x_2|+|x_3|)^{2+\beta_3}} \ud y_1 \ud x_1.
\end{split}
\end{equation*}
We introduced the logarithm regardless of the value of $\beta_2$ because it does not affect the estimate.
Let $t:= |x_3|/|x_2|$. By a change of variables we reduce to
\begin{equation}\label{E:eq61}
 \iint \displaylimits_{t^{-1}I^1\times (\R \setminus t^{-1}I^1)} 
\frac{1+ \log_+ \Big(\frac{1}{|x_1-y_1|}\Big)}{(|x_1-y_1|+1)^{2+\beta_3}} \ud y_1 \ud x_1 \frac{1}{|x_2|^{2+\beta_2}|x_3|^{\beta_3}},
\end{equation}
where the integral can be estimated by
\begin{equation*}
\int_{t^{-1}I^1} \frac{1}{(d(x_1, \partial(t^{-1}I^1)) +1)^{1+\beta_3}} \ud x_1
\lesssim \begin{cases}
1+ \log_+\Big( \frac{|I^1||x_2|}{|x_3|} \Big), \quad &\beta_3=0, \\
1, \quad & \beta_3=1.
\end{cases}
\end{equation*}
Substituting this into \eqref{E:eq61} gives 
$$
\begin{cases}
\frac{|I^1|}{|x_2|^{1+\beta_2}|x_3|} \frac{|x_3|}{|I^1||x_2|} \Big(1+ \log_+\Big( \frac{|I^1||x_2|}{|x_3|} \Big)\Big)
\sim \frac{|I^1|}{|x_2|^{1+\beta_2}|x_3|}  \frac{1+ \log_+\Big( \frac{|I^1||x_2|}{|x_3|} \Big)}{\frac{|I^1||x_2|}{|x_3|}+\frac{|x_3|}{|I^1||x_2|}}, \quad & \beta_3=0, \\
\frac{|I^1|}{|x_2||x_3|^2} \frac{|x_3|}{|I^1||x_2|}
\sim \frac{|I^1|}{|x_2||x_3|^2}\frac{1}{\frac{|I^1||x_2|}{|x_3|}+\frac{|x_3|}{|I^1||x_2|}}, \quad & \beta_2=0, \beta_3=1.
\end{cases}
$$

Assume then that $|I^1||x_2|/|x_3| < 1$. This time we integrate over $I^1 \times I^1$. Proceeding as above
we reduce to the integral
$$
\iint \displaylimits_{t^{-1}I^1\times  t^{-1}I^1} 
\frac{L_{(0, \beta_2, \beta_3)}(x_1-y_1,1,1)}{(|x_1-y_1|+1)^{2+\beta_3}} \ud y_1 \ud x_1 \frac{1}{|x_2|^{2+\beta_2}|x_3|^{\beta_3}}.
$$
Notice that here $|x_1-y_1| \le 1$, so the denominator is $\sim1$. Using the estimate
$$
\int_0^s \log \Big( \frac 1u \Big) \ud u \lesssim s\Big(1+ \log \Big( \frac 1s \Big) \Big), \quad s \in (0,1],
$$
we get 
$$
\begin{cases}
\Big(\frac{|I^1||x_2|}{|x_3|}\Big)^2 \Big(1+ \log_+\Big(\frac{|x_3|}{|I^1||x_2|}\Big) \Big) \frac{1}{|x_2|^{2}|x_3|^{\beta_3}}
\sim \frac{|I^1|}{|x_2||x_3|^{1+\beta_3}} \frac{1+ \log_+\Big( \frac{|x_3|}{|I^1||x_2|} \Big)}{\frac{|I^1||x_2|}{|x_3|}+\frac{|x_3|}{|I^1||x_2|}},
\quad &\beta_2=0, \\
\Big(\frac{|I^1||x_2|}{|x_3|}\Big)^2 \frac{1}{|x_2|^{3}}
\sim \frac{|I^1|}{|x_2|^2|x_3|}\frac{1}{\frac{|I^1||x_2|}{|x_3|}+\frac{|x_3|}{|I^1||x_2|}}, \quad  &\beta_2=1, \beta_3=0.
\end{cases}
$$

\end{proof}

\begin{lem}\label{E:lem5}
Let $m \in \calM^1_Z$ and 
denote by $T_{m}$ the corresponding Fourier multiplier operator.

Let $f_1,g_1 \in L^2(\R)$ and $f_{2,3}, g_{2,3} \in L^2(\R^2)$.
Then
$$
\langle T_{m} (f_1 \otimes f_{2,3}), g_1 \otimes g_{2,3} \rangle
= \ave{T_{m_{f_{2,3}, g_{2,3}}} f_1,g_1},
$$ 
where $m_{ f_{2,3}, g_{2,3}}$ is a standard H\"ormander-Mihlin multiplier in $\R$ satisfying the estimates
$$
|(\ud/\ud \xi_1)^{\alpha} m_{f_{2,3}, g_{2,3}}(\xi_1)|
\lesssim \| m \|_{\calM_Z^1}\|f_{2,3}\|_{L^2} \| g_{2,3}\|_{L^2} |\xi_1|^{-\alpha}.
$$
Thus,  $T_{m_{f_{2,3}, g_{2,3}}}$ is a convolution form Calder\'on-Zygmund operator.
In particular, there exists a standard Calder\'on-Zygmund kernel $K_{m, f_{2,3}, g_{2,3}}$ (see \eqref{E:eq91}) 
such that  $\|K_{m, f_{2,3}, g_{2,3}}\|_{CZ_1(\R^2)} \lesssim \|f_{2,3}\|_{L^2} \| g_{2,3}\|_{L^2}$. Moreover, if
$f_1$ and  $g_1$ are supported for example in disjoint intervals, then
$$
\langle T_{m} (f_1 \otimes f_{2,3}), g_1 \otimes g_{2,3} \rangle
= \iint K_{m, f_{2,3}, g_{2,3}}(x_1,y_1) f_1(y_1) g_1(x_1) \ud y_1 \ud x_1.
$$
\end{lem}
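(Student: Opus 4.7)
The plan is to pass to the Fourier side and define $m_{f_{2,3},g_{2,3}}$ as a partial integration of $m$ against the Fourier transforms of $f_{2,3}$ and $g_{2,3}$. By Plancherel (using the convention where inner products are preserved up to a fixed constant, which we absorb into notation),
\begin{equation*}
  \ave{T_m(f_1\otimes f_{2,3}), g_1\otimes g_{2,3}}
  = \int_{\R^3} m(\xi)\, \wh f_1(\xi_1)\wh f_{2,3}(\xi_2,\xi_3)\,\overline{\wh g_1(\xi_1)\wh g_{2,3}(\xi_2,\xi_3)}\ud\xi.
\end{equation*}
Setting
\begin{equation*}
  m_{f_{2,3},g_{2,3}}(\xi_1):=\int_{\R^2} m(\xi_1,\xi_2,\xi_3)\,\wh f_{2,3}(\xi_2,\xi_3)\,\overline{\wh g_{2,3}(\xi_2,\xi_3)}\ud\xi_2\ud\xi_3,
\end{equation*}
Fubini's theorem (justified by the size estimate \eqref{E:eq66} with $\alpha=0$ and Cauchy--Schwarz on the $(\xi_2,\xi_3)$ integral) immediately gives the identity $\ave{T_m(f_1\otimes f_{2,3}), g_1\otimes g_{2,3}}=\ave{T_{m_{f_{2,3},g_{2,3}}}f_1,g_1}$.

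The second step is the Mihlin estimate on $m_{f_{2,3},g_{2,3}}$. I would differentiate under the integral (legitimate because the integrand satisfies a uniform bound with respect to an $L^1$-majorant in $(\xi_2,\xi_3)$ on any compact subset of $\xi_1\neq 0$) to obtain
\begin{equation*}
   (\ud/\ud\xi_1)^\alpha m_{f_{2,3},g_{2,3}}(\xi_1)
   = \int_{\R^2} (\partial_1^\alpha m)(\xi_1,\xi_2,\xi_3)\wh f_{2,3}(\xi_2,\xi_3)\overline{\wh g_{2,3}(\xi_2,\xi_3)}\ud\xi_2\ud\xi_3.
\end{equation*}
Applying \eqref{E:eq66} with the multi-index $(\alpha,0,0)$ yields $|\partial_1^\alpha m(\xi)|\lesssim\Norm{m}{\calM^1_Z}|\xi_1|^{-\alpha}$, which is uniform in $(\xi_2,\xi_3)$. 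Then Cauchy--Schwarz in $(\xi_2,\xi_3)$ combined with Plancherel on $\R^2$ gives the required bound
$
|(\ud/\ud\xi_1)^\alpha m_{f_{2,3},g_{2,3}}(\xi_1)|\lesssim \Norm{m}{\calM_Z^1}\Norm{f_{2,3}}{L^2}\Norm{g_{2,3}}{L^2}|\xi_1|^{-\alpha}.
$

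Once the Mihlin condition is established, the one-parameter Calder\'on--Zygmund theory provides a standard kernel $K_{m,f_{2,3},g_{2,3}}$ on $\R$ representing $T_{m_{f_{2,3},g_{2,3}}}$, with $\Norm{K_{m,f_{2,3},g_{2,3}}}{CZ_1(\R)}$ controlled by the Mihlin constants we just obtained, hence by $\Norm{f_{2,3}}{L^2}\Norm{g_{2,3}}{L^2}$. The off-diagonal integral representation when $f_1,g_1$ have disjoint supports is then the familiar CZ kernel representation for Mihlin multipliers.

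The only slightly delicate point is the interchange of differentiation and integration at $\xi_1\neq 0$, but this follows from the uniform $L^1$-domination of $\partial_1^\alpha m\cdot\wh f_{2,3}\overline{\wh g_{2,3}}$ on a neighbourhood of any such $\xi_1$ (fixed $\xi_1$ bounds the $\xi_1$-factors in \eqref{E:eq66}, and Cauchy--Schwarz handles the $L^2(\R^2)$ factors). All the other steps are essentially bookkeeping once the Mihlin estimate \eqref{E:eq66} is leveraged with the multi-index concentrated in the first coordinate.
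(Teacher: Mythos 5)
Your proposal is correct and follows essentially the same route as the paper: pass to the Fourier side by Plancherel, define $m_{f_{2,3},g_{2,3}}$ as the partial integral of $m$ against $\wh f_{2,3}\,\overline{\wh g_{2,3}}$, use \eqref{E:eq66} with the multi-index $(\alpha,0,0)$ together with Cauchy--Schwarz and Plancherel to get the Mihlin bound, and then invoke the standard one-parameter Mihlin-to-Calder\'on--Zygmund-kernel result. The only differences are cosmetic (you spell out the Fubini and differentiation-under-the-integral justifications, which the paper leaves implicit, and you correctly place the kernel in $CZ_1(\R)$ rather than the typo $CZ_1(\R^2)$ in the statement).
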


\begin{proof}
We have that
\begin{equation*}
\begin{split}
\int T_{m} (f_1 \otimes f_{2,3})(x) (g_1 \otimes g_{2,3})(x)\ud x 
&=\int m(\xi) \wh f_1(\xi_1)  \wh f_{2,3}(\xi_{2,3}) \wh g_1(-\xi_1) \wh g_{2,3}(-\xi_{2,3})\ud \xi  \\
&= \int  m_{f_{2,3}, g_{2,3}}(\xi_1) \wh f_1(\xi_1)\wh g_1(-\xi_1) \ud \xi_1,
\end{split}
\end{equation*}
where
$$
m_{f_{2,3}, g_{2,3}}(\xi_1)
=\int m(\xi)   \wh f_{2,3}(\xi_{2,3})  \wh g_{2,3}(-\xi_{2,3})\ud \xi_{2,3}.
$$
Notice that
\begin{equation*}
\begin{split}
|(\ud/\ud \xi_1)^{\alpha} &m_{ f_{2,3}, g_{2,3}}(\xi_1)|
=\Big| \int \partial_1^{\alpha}m(\xi)   \wh f_{2,3}(\xi_{2,3})  \wh g_{2,3}(-\xi_{2,3})\ud \xi_{2,3} \Big| \\
& \lesssim \| \partial_1^{\alpha}m(\xi_1, \cdot) \|_{L^{\infty}(\R^2)} \|f_{2,3}\|_{L^2} \| g_{2,3}\|_{L^2} 
\le  \| m \|_{\calM_Z^1} |\xi_1|^{-\alpha} \|f_{2,3}\|_{L^2} \| g_{2,3}\|_{L^2},
\end{split}
\end{equation*}
where in the last step we used \eqref{E:eq66}.
It is a well known fact that this estimate implies the rest of the statements of the lemma, 
see for example Stein \cite[p. 245, Proposition 2]{Stein:book}.

\end{proof}

\section{Zygmund singular integrals}\label{E:sec1}
We denote by $CZ_\gamma(\R)$ the standard Calder\'on-Zygmund kernels in $\R$ which satisfy the H\"older
continuity estimates with exponent $\gamma\in (0,1]$. More precisely, we have that $K \in CZ_\gamma (\R)$ if $K$
is a function
\begin{equation}\label{E:eq91}
K \colon \R \times \R \setminus \{(x,y) \colon x=y\} \to \C
\end{equation}
that satisfies the size estimate
\begin{equation}\label{E:eq115}
|K(x,y)| \le \frac{C}{|x-y|}
\end{equation}
and the H\"older estimates
\begin{equation}\label{E:eq109}
|K(x',y) -K(x,y)|\le C\frac{|x'-x|^\gamma}{|x-y|^{1+\gamma}} \quad \text{and} \quad |K(y,x')-K(y,x)| \le C\frac{|x'-x|^\gamma}{|x-y|^{1+\gamma}}
\end{equation}
whenever $|x'-x| \le |x-y|/2$.
The smallest possible constant in these inequalities is denoted by $\| K \|_{CZ_\gamma(\R)}$.
 
Let $T$ be a linear operator defined on an appropriate class of functions on $\R^3$. The class should at least include
finite linear combinations of indicators of rectangles. We define what it means for $T$ to be a Zygmund singular integral.
Let $f=1_{I}$ and $g=1_{J}$, where $I=I^1 \times I^2 \times I^3=I^1 \times I^{2,3}$ is a rectangle (not necessarily Zygmund) and 
similarly with $J$. We say that two intervals or rectangles are disjoint if their interiors are disjoint.

\subsection{Full kernel representation}\label{E:subsec6}
The operator $T$ is related to a full kernel $K$ in the following way. 
The kernel $K$ is  a function
$
K \colon (\R^3 \times \R^3) \setminus \Delta \to \C,
$
where
$$
\Delta=\{(x,y) \in \R^3 \times \R^3 \colon |x_1-y_1||x_2-y_2||x_3-y_3|=0\}. 
$$
If $I^1$ and  $J^1$ are disjoint and also $I^{2,3}$ and $J^{2,3}$ are disjoint, then
$$
\ave {T1_I,1_J}
= \iint K(x,y) 1_I(y) 1_J(x) \ud y \ud x.
$$
The kernel $K$ satisfies the following estimates.

Let $(x,y) \in (\R^3 \times \R^3)\setminus \Delta$. First, we assume that $K$ satisfies the size estimate
\begin{equation}\label{E:eq30}
|K(x,y)|
\lesssim \frac{1}{|x_1-y_1||x_2-y_2||x_3-y_3| }D_\theta(x_1-y_1, x_2-y_2, x_3-y_3), 
\end{equation}
where we recall that 
\begin{equation}\label{E:eq120}
  D_\theta(x):=D_\theta(x_1,x_2,x_3):=\Big(\frac{\abs{x_1x_2}}{\abs{x_3}}+\frac{\abs{x_3}}{\abs{x_1x_2}}\Big)^{-\theta},\qquad \theta\in (0,1].
\end{equation}
Let $x'=(x_1',x_2',x_3')$ be such that $|x_i'-x_i| \le |x_i-y_i|/2$ for $i=1,2,3$. We assume that $K$ satisfies the mixed size and H\"older estimates
\begin{equation}\label{E:eq25}
\begin{split}
|K((x'_1,x_2,x_3),y)-K(x,y)|
&\lesssim  \Big(\frac{|x_1'-x_1|}{|x_1-y_1|}\Big)^{\alpha_1}\frac{D_\theta(x_1-y_1, x_2-y_2, x_3-y_3)}{|x_1-y_1||x_2-y_2||x_3-y_3| }
\end{split}
\end{equation}
and
\begin{equation}\label{E:eq28}
\begin{split}
|K((x_1&,x_2',x_3'),y)-K(x,y)|
\\
&\lesssim  \Big(\frac{|x_2'-x_2|}{|x_2-y_2|}+ \frac{|x_3'-x_3|}{|x_3-y_3|} \Big)^{\alpha_{23}}
\frac{D_\theta(x_1-y_1, x_2-y_2, x_3-y_3)}{|x_1-y_1||x_2-y_2||x_3-y_3| },
\end{split}
\end{equation}
where $\alpha_1, \alpha_{23} \in (0,1]$.
Finally, we assume that  $K$ satisfies the H\"older estimate
\begin{equation}\label{E:eq29}
\begin{split}
|K(x',&y)-K((x_1',x_2,x_3),y)-K((x_1,x_2',x_3'),y)+K(x,y)| \\
&\lesssim  \Big(\frac{|x_1'-x_1|}{|x_1-y_1|}\Big)^{\alpha_1}\Big(\frac{|x_2'-x_2|}{|x_2-y_2|}
+\frac{|x_3'-x_3|}{|x_3-y_3|}\Big)^{\alpha_{23}}   \frac{D_\theta(x_1-y_1, x_2-y_2, x_3-y_3)}{|x_1-y_1||x_2-y_2||x_3-y_3| }.
\end{split}
\end{equation}

Define the adjoint kernels $K^*$, $K^*_1$ and $K^*_{2,3}$ via the following relations:
\begin{equation*}
K^*(x,y)=K(y,x), \quad
K^*_1(x,y)=K((y_1,x_2,x_3),(x_1,y_2,y_3))
\end{equation*}
and
\begin{equation}\label{E:eq75}
K^*_{2,3}(x,y)=K((x_1,y_2,y_3),(y_1,x_2,x_3)).
\end{equation}
We assume that each adjoint kernel satisfies the same estimates as the kernel $K$.

\subsection{Partial kernel representations}\label{E:subsec5}

Related to every interval $I^1$ there exists a kernel 
$$
K_{I^1} \colon (\R^2 \times \R^2) \setminus \{(x_{2,3}, y_{2,3}) \colon |x_2-y_2||x_3-y_3|=0\} \to \C,
$$
so that if $I^{2,3}$ and   $J^{2,3}$ are disjoint, then
\begin{equation}\label{E:eq67}
\langle T(1_{I^1} \otimes 1_{I^{2,3}}), 1_{I^1} \otimes 1_{J^{2,3}}\rangle
= \iint K_{I^1}(x_{2,3}, y_{2,3}) 1_{I^{2,3}}(y_{2,3})1_{J^{2,3}}(x_{2,3}) \ud y_{2,3} \ud x_{2,3}.
\end{equation}
The kernel $K_{I^1}$ is assumed to satisfy the following estimates.
Let $x_{2,3}, y_{2,3}$ be such that $|x_2-y_2||x_3-y_3|\not=0$. First, we assume the size estimate
\begin{equation}\label{E:eq62}
|K_{I^1}(x_{2,3}, y_{2,3}) | 
\lesssim \frac{|I^1|}{|x_2-y_2||x_3-y_3|} D_{\theta}(|I^1|, x_2-y_2,x_3-y_3).
\end{equation}
Let $x'_{2,3}=(x_2',x_3')$ be such that $|x_i'-x_i| \le |x_i-y_i|/2$ for $i=2,3$. 
We assume the H\"older estimate
\begin{equation}\label{E:eq63}
\begin{split}
&|K_{I^1}(x_{2,3}',y_{2,3})-K_{I^1}(x_{2,3},y_{2,3})|  \\
&\hspace{1cm}\lesssim 
\Big( \frac{|x_2'-x_2|}{|x_2-y_2|}+\frac{|x_3'-x_3|}{|x_3-y_3|} \Big)^{\alpha_{23}}\frac{|I^1|}{|x_2-y_2||x_3-y_3|} 
D_{\theta}(|I^1|, x_2-y_2,x_3-y_3). 
\end{split}
\end{equation}
We assume that the adjoint kernel defined by $K_{I^1}^*(x_{2,3}, y_{2,3})
=K_{I^1}(y_{2,3}, x_{2,3})$ satisfies the same estimates. 

Now we consider the other partial kernel representation.
Related to every rectangle $I^{2,3}$ there exists a standard Calder\'on-Zygmund kernel $K_{I^{2,3}} \in CZ_{\alpha_1}(\R)$ (see \eqref{E:eq91}) 
so that if $I^1$ and  $J^1$ are disjoint, then
\begin{equation}\label{E:eq68}
\langle T(1_{I^1} \otimes 1_{I^{2,3}}), 1_{J^1} \otimes 1_{I^{2,3}} \rangle
= \iint K_{I^{2,3}}(x_1,y_1) 1_{I^1}(y_1) 1_{J^1}(x_1) \ud y_1 \ud x_1.
\end{equation}
The kernel $K_{I^{2,3}}$ satisfies 
\begin{equation}\label{E:eq110}
\begin{split}
\| K_{I^{2,3}}\|_{CZ_{\alpha_1}(\R)} \lesssim |I^{2,3}|.
\end{split}
\end{equation}

\subsection{Cancellation assumptions}\label{E:subsec8} 
Let  $h_{I^1}$ and $h_{I^{2,3}}$
be cancellative Haar functions, see \ref{E:subsec2}. We say that $T$ satisfies the cancellation assumptions if
\begin{equation}\label{E:eq143}
\begin{split}
\ave{T(1 \otimes 1_{I^{2,3}}), h_{I^1} \otimes 1_{J^{2,3}}}
&=\ave{T(h_{I^1} \otimes 1_{I^{2,3}}), 1 \otimes 1_{J^{2,3}}}
=\ave{T(1_{I^1} \otimes 1), 1_{J^1} \otimes h_{I^{2,3}}} \\
&=\ave{T(1_{I^1} \otimes h_{I^{2,3}}), 1_{J^1} \otimes 1}=0.
\end{split}
\end{equation}
See Section \ref{E:subsec7} for a discussion about how these pairings are defined.

In this paper, our Zygmund singular integral operators will always, without further mention, satisfy the above cancellation conditions.
A further generalization would aim to replace these cancellation assumptions with appropriate $T1 \in \BMO$ style assumptions -- however, the well--known
examples fall already into our current class.

The reader might expect that we would also need assumptions of the form
$$
\ave{T1, h_{I^1} \otimes h_{I^{2,3}}}
=\ave{T(1 \otimes h_{I^{2,3}}), h_{I^1} \otimes 1}
= \ave{T(h_{I^1} \otimes 1),  1 \otimes h_{I^{2,3}}}
=\ave{T (h_{I^1} \otimes h_{I^{2,3}}),1}=0.
$$
However, already the conditions in \eqref{E:eq143} imply that all the paraproduct type operators vanish in the representation theorem,
see Section \ref{E:subsec10}.

\subsection{Weak boundedness property}
We say that the operator $T$ satisfies the weak boundedness property if for all Zygmund rectangles $I$ there holds that
\begin{equation}\label{E:eq74}
| \langle T1_I, 1_I \rangle| \lesssim |I|.
\end{equation}

\begin{defn}
We say that a linear operator $T$ is a Calder\'on-Zygmund operator adapted to Zygmund dilations (CZZ operator)
if $T$ has the full kernel representation, 
the partial kernel representations and satisfies the cancellation assumptions and the weak boundedness property.
To emphasize the related parameters we may say that $T$ is a $(\theta, \alpha_1, \alpha_{23})$-CZZ operator.
\end{defn}

\subsection{A slight extension}
As one can already notice from Section \ref{E:sec3}, the Fefferman-Pipher multipliers do not
strictly speaking belong to the class defined above due to the logarithmic factors that appear
in the multiplier estimates. We define here a class which includes the multipliers, 
see Section \ref{E:subsec3}.

Recall the definition of $D_{\theta}$ in \eqref{E:eq120}. Let 
$$
D_{\log}(x):=D_1(x) \log \Big(\frac{|x_1x_2|}{|x_3|} + \frac{|x_3|}{|x_1x_2|}\Big).
$$
The new class is simply defined similarly as we did above, except that
we replace everywhere $D_\theta$ with $D_{\log}$. So this affects the full kernel estimates and 
the partial kernel estimates in Section \ref{E:subsec5}. Again, we allow the H\"older exponents 
$\alpha_1,\alpha_{23} \in (0,1]$ to be general.
We say that an operator $T$ in this class is 
a $(\log, \alpha_1, \alpha_{23})$-CZZ operator.

We can easily cover the relevant examples involving the logarithmic factor, see Corollary \ref{E:cor1}.
Nevertheless, for the majority of the paper we do not explicitly need to consider this case --
see the proof of Corollary \ref{E:cor1} for the details of the reduction.

\subsection{Concrete examples in our set-up}
\subsubsection{The Nagel-Wainger singular integral}
In Nagel-Wainger \cite{NW}, it was shown that the convolution operator $T_{NW}=f*K$ with
$$
K_{NW}(x_1,x_2,x_3) 
=\frac{\sign{(x_1x_2)}}{x_1^2x_2^2+x_3^2}
$$
is bounded in $L^2(\R^3)$. The kernel $K_{NW}$ satisfies  
the full set of assumptions of Theorem \ref{thm:HLLT}, 
see \cite[Remark 4.5, p.\ 2453]{HLLT}. 
In Section \ref{E:subsec9} we show that the operators satisfying the assumptions
of Theorem \ref{thm:HLLT} belong to the class we consider in this paper.

We indicate briefly how one could also quite directly show that $T_{NW}$ belongs to our class. 
Indeed, $K_{NW}$ satisfies the estimates
required of a full kernel as described in Section \ref{E:subsec6}.
It satisfies even better full kernel estimates which can be seen by differentiating it.
The antisymmetry present in $K_{NW}$ implies that one could take all the partial kernels required in
Section \ref{E:subsec5} to be zero. The antisymmetry also implies that the weak boundedness property
 \eqref{E:eq74} holds in the form
$\ave{T_{NW}1_I,1_I}=0$. Finally, since $T_{NW}$ is a principal value integral
and of convolution form one sees that $T_{NW}$ satisfies the translation invariance properties
\eqref{E:eq144} and \eqref{E:eq131}, which implies, by Section \ref{E:subsec7}, that $T_{NW}$ satisfies the cancellation assumptions of 
Section \ref{E:subsec8}.

\subsubsection{Fefferman-Pipher multipliers}\label{E:subsec3}
\begin{prop}\label{E:prop5}
Suppose that $m \in \calM^1_Z$ and let $m_J$ be a related truncation, see \eqref{E:eq127}.
Let $K_J=\check m_J$ be its inverse Fourier transform.
Then $T_Jf=K_J *f$ is a $(\log,1,1)$-CZZ operator.
\end{prop}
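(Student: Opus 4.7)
The plan is to verify each component of the $(\log, 1, 1)$-CZZ definition for $T_J f = K_J * f$, drawing on Lemmas \ref{E:lem6}, \ref{E:lem3}, and \ref{E:lem5} just proved. The key preliminary observation is that the truncation $m_J$ inherits the relevant estimates of $m$ with constants uniform in $J$: the Littlewood--Paley pieces $m_{j,k}$ in \eqref{E:eq126} have frequency supports $\supp m_{j,k} \subset \rho_{2^j, 2^k} \wt A^1$ with bounded overlap, so the sums in the proofs of Lemmas \ref{E:lem6} and \ref{E:lem3} (and the $L^\infty$ control in Lemma \ref{E:lem5}) only get smaller upon truncation. In particular, Lemmas \ref{E:lem6}, \ref{E:lem3}, and \ref{E:lem5} apply to $K_J$ with $J$-independent constants.

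For the full kernel $K(x, y) = K_J(x-y)$, write $z = x-y$, $M = \max(|z_1 z_2|, |z_3|)$, and $m_* = \min(|z_1 z_2|, |z_3|)$. Observe the elementary identity
\[
\frac{D_{\log}(z)}{|z_1||z_2||z_3|} \sim \frac{\log(M/m_* + m_*/M)}{M^2},
\]
together with $L(z) \lesssim \log(M/m_* + m_*/M)$. This turns Lemma \ref{E:lem6} at $\beta = 0$ into the size estimate \eqref{E:eq30} with $D_{\log}$ in place of $D_\theta$. The Hölder estimates \eqref{E:eq25}, \eqref{E:eq28}, \eqref{E:eq29} at $\alpha_1 = \alpha_{23} = 1$ follow from the mean value theorem together with Lemma \ref{E:lem6} applied at $\beta \in \{(1,0,0), (0,1,0), (0,0,1), (1,1,0), (1,0,1)\}$; in each case the asymmetric factor $L_\beta$ and the auxiliary factor $\min\{1, (|z_1 z_2|/|z_3|)^{\min(\beta_1, \beta_2)}\}$ are absorbed by $\log(M/m_* + m_*/M) \geq \log 2$.

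For the $(2,3)$-partial kernel, the convolution structure yields
\[
K_{I^1}(z_{2,3}) = \iint_{I^1 \times I^1} K_J(x_1 - y_1, z_{2,3}) \ud x_1 \ud y_1,
\]
so Lemma \ref{E:lem3} with $\beta_2 = \beta_3 = 0$ gives the size bound \eqref{E:eq62} (with $D_{\log}$ replacing $D_\theta$), and Lemma \ref{E:lem3} with $\beta_2 + \beta_3 = 1$ combined with the mean value theorem yields the Lipschitz estimate \eqref{E:eq63}. For the $1$-partial kernel $K_{I^{2,3}}$, apply Lemma \ref{E:lem5} to $m_J$ with $f_{2,3} = g_{2,3} = 1_{I^{2,3}}$: this directly identifies $K_{I^{2,3}}$ with $K_{m_J, 1_{I^{2,3}}, 1_{I^{2,3}}}$, which is a standard $CZ_1(\R)$ kernel of norm $\lesssim \Norm{1_{I^{2,3}}}{L^2}^2 = |I^{2,3}|$ --- exactly the bound \eqref{E:eq110} with $\alpha_1 = 1$.

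Finally, since $T_J$ is a convolution operator it is translation invariant in the sense of \eqref{E:eq144} and \eqref{E:eq131}, and hence by the discussion in Section \ref{E:subsec7} the cancellation conditions \eqref{E:eq143} hold. The weak boundedness \eqref{E:eq74} is immediate from Plancherel and the uniform bound $\Norm{m_J}{L^\infty} \lesssim \Norm{m}{\calM^1_Z}$ (another consequence of the bounded overlap of the $\supp m_{j,k}$), via $|\ave{T_J 1_I, 1_I}| \leq \Norm{m_J}{L^\infty} \Norm{1_I}{L^2}^2 \lesssim |I|$. I expect the main technical effort to lie in the second paragraph: the asymmetric factor $L(z) = 1 + \log_+(|z_3|/|z_1 z_2|)$ produced by Lemma \ref{E:lem6} must be reconciled with the symmetric logarithm appearing in $D_{\log}$ across the two regimes $|z_3| \lessgtr |z_1 z_2|$. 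This reconciliation turns out to be routine only because $\log(M/m_* + m_*/M) \geq \log 2$ provides a pointwise lower bound that absorbs the asymmetry with room to spare.
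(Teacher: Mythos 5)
Your proof matches the paper's: the paper also deduces the full kernel estimates from the derivative bounds of Lemma \ref{E:lem6} (via the calculations in Section \ref{E:subsec1}), the $(2,3)$-partial kernel estimates from Lemma \ref{E:lem3}, the $1$-partial kernel from Lemma \ref{E:lem5}, the weak boundedness from $\Norm{m_J}{L^\infty}$ and Plancherel, and the cancellation conditions from translation invariance via Section \ref{E:subsec7}. You simply spell out two points the paper leaves implicit --- the $J$-uniformity of the constants via bounded overlap of the Littlewood--Paley supports, and the reconciliation of the asymmetric $L_\beta$ factor with $D_{\log}$ --- and both of these supplementary details are correct.
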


\begin{proof}
Notice that $K_J$ is integrable. 
The full kernel of $T_J$ is $(x,y) \mapsto K_J(x-y)$. The calculations in Section \ref{E:subsec1}
imply that this kernel satisfies the estimates required of a full kernel in Section \ref{E:subsec6}.

Let $I^1$ be an interval and let $I^{2,3}$ and $J^{2,3}$ be disjoint rectangles.
Then
$$
\langle T_J(1_{I^1} \otimes 1_{I^{2,3}}), 1_{I^1} \otimes 1_{J^{2,3}}\rangle
= \iint K_{I^1}(x_{2,3}, y_{2,3}) 1_{I^{2,3}}(y_{2,3})1_{J^{2,3}}(x_{2,3}) \ud y_{2,3} \ud x_{2,3},
$$
where
$$
K_{I^1}(x_{2,3}, y_{2,3})
=\iint \displaylimits_{I^1\times I^1}  K_J(x_1-y_1,x_2-y_2,x_3-y_3) \ud y_1 \ud x_1.
$$
Lemma \ref{E:lem3} implies that $K_{I^1}$ satisfies the estimates \eqref{E:eq62} and \eqref{E:eq63}
with $\alpha_{23}=1$ and with $D_{\log}$ instead of $D_\theta$.
Lemma \ref{E:lem5} implies that $T_J$ satisfies the other partial kernel representation \eqref{E:eq68}
and that the related estimate \eqref{E:eq110} holds.

Since $m_J \in L^\infty$ one directly sees via the Fourier transform that $T_J$ is bounded in $L^2$.
This implies that the weak boundedness property \eqref{E:eq74} holds.

Finally, $T_J$ satisfies the cancellation assumptions of Section \ref{E:subsec8}. This follows 
from the argument in Section \ref{E:subsec7}. Indeed, since $T_J f = K_J * f$ and the kernel $K_J$ is integrable, 
it is easy to verify that $T_J$ satisfies the translation invariance properties \eqref{E:eq144}
and \eqref{E:eq131}. Also, due to the integrability of $K_J$, it is easy to verify the cancellation assumptions directly.
\end{proof}

\subsubsection{The class of Han, Li, Lin and Tan \cite{HLLT}}\label{E:subsec9}
In this section we show that the operators considered in \cite{DLOPW2, HLLT} belong to our class.

\begin{prop}\label{E:eqlem6}
Suppose that $K$ is a kernel satisfying the conditions \eqref{E:eq132}, \eqref{E:eq133},
\eqref{E:eq134} with $j=1$ and \eqref{E:eq135}. Suppose also that $K$ has relevant limits in the sense of Definition \ref{def:limits}.
Consider the operator
$$
Tf =\lim_{\substack{\eps_1, \eps_2, \eps_3 \to 0 \\ N_1,N_2,N_3 \to \infty}} K^N_\eps * f
= \lim _{\substack{\varepsilon_i \to 0
\\ N_i \to \infty}} K^N_{\varepsilon} *f,
$$ 
where the truncations $K^N_\varepsilon$ are
defined in \eqref{E:eq136}.
Recall the parameters $\theta$ and $\alpha$ related to the kernel $K$. 
If $\theta<1$, then
$T$ is a $(\theta, \alpha, \alpha)$-CZZ operator. If $\theta =1$, then
$T$ is a $(\log, \alpha, \alpha)$-CZZ operator.
\end{prop}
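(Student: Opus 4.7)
The plan is to verify each ingredient of the CZZ definition in turn. Since $T$ is (the principal-value) convolution with $K$, its full kernel is $(x,y)\mapsto K(x-y)$, and the four full-kernel estimates \eqref{E:eq30}, \eqref{E:eq25}, \eqref{E:eq28}, \eqref{E:eq29} are read off directly from the regularity condition \eqref{E:eq132}: interpreting each $D^{a_i}_{h_i}K$ as a translate-difference in the $i$th coordinate, the admissible patterns $a_1+a_2+a_3\le 2$ cover exactly the first and mixed differences appearing in \eqref{E:eq25}--\eqref{E:eq29}. Because both \eqref{E:eq132} and $D_\theta$ depend only on $|x_i-y_i|$, the three partial adjoint kernels $K^*$, $K^*_1$, $K^*_{2,3}$ inherit the same bounds.

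Next, for the partial kernel $K_{I^1}$ I would change variables $u=x_1-y_1$ in the defining integral and use the Fubini identity $(|I^1|-|u|)_+=\int_0^{|I^1|}1_{|u|<r}\,\ud r$ to rewrite
\begin{equation*}
  K_{I^1}(x_{2,3},y_{2,3})=\int_0^{|I^1|}\int_{|u|<r}K(u,x_2-y_2,x_3-y_3)\,\ud u\,\ud r.
\end{equation*}
Applying the Zygmund cancellation \eqref{E:eq135} with $a_2=a_3=0$ (and $\delta_1\to 0$) bounds the inner integral by $D_\theta(r,x_2-y_2,x_3-y_3)/(|x_2-y_2||x_3-y_3|)$, so the size estimate \eqref{E:eq62} reduces to the scalar inequality
\begin{equation*}
  \int_0^L D_\theta(r,a,b)\,\ud r \lesssim
  \begin{cases} L\,D_\theta(L,a,b), & \theta\in(0,1), \\ L\,D_{\log}(L,a,b), & \theta=1. \end{cases}
\end{equation*}
After substituting $s=r|a|/|b|$ and setting $S=L|a|/|b|$, this becomes $\int_0^S(s+1/s)^{-\theta}\,\ud s\lesssim S(S+1/S)^{-\theta}$ for $\theta<1$, and the same bound with an extra $\log(S+1/S)$ factor for $\theta=1$; both follow from an elementary case split at $s=1$. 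The H\"older estimate \eqref{E:eq63} is handled identically, first splitting the increment of $K_{I^1}$ coordinate by coordinate and then applying \eqref{E:eq135} with $a_2=1$ or $a_3=1$.

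For the partial kernel $K_{I^{2,3}}$ I would apply the Fubini trick in the two coordinates $(u_2,u_3)=(x_2-y_2,x_3-y_3)$ to obtain
\begin{equation*}
  K_{I^{2,3}}(x_1,y_1)=\int_0^{|I^2|}\int_0^{|I^3|}\iint_{|u_i|<r_i}K(x_1-y_1,u_2,u_3)\,\ud u_2\,\ud u_3\,\ud r_2\,\ud r_3.
\end{equation*}
The basic cancellation \eqref{E:eq134} with $j=1$ (after sending the implicit $\delta_2,\delta_3$ to zero) bounds the inner double integral by $1/|x_1-y_1|$ when $a_1=0$ and by $|h_1|^\alpha/|x_1-y_1|^{1+\alpha}$ when $a_1=1$; the outer integrations produce exactly the factor $|I^2||I^3|=|I^{2,3}|$, giving $\Norm{K_{I^{2,3}}}{CZ_\alpha(\R)}\lesssim|I^{2,3}|$ as required by \eqref{E:eq110}. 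Finally, the weak boundedness \eqref{E:eq74} follows from the $L^2$-boundedness supplied by Theorem \ref{thm:HLLT} via Cauchy--Schwarz, $|\ave{T1_I,1_I}|\le\Norm{T1_I}{L^2(\R^3)}\Norm{1_I}{L^2(\R^3)}\lesssim|I|$, and the cancellation assumptions \eqref{E:eq143} are obtained exactly as in Proposition \ref{E:prop5}: the truncations $K^N_\eps$ are integrable, which makes the translation invariance of $T$ routine to check, and then the argument of Section \ref{E:subsec7} converts this translation invariance into the four vanishing pairings. The main obstacle will be the scalar integral estimate for $K_{I^1}$, whose case-sensitive analysis is precisely what forces the transition from $D_\theta$ at $\theta<1$ to the logarithmic refinement $D_{\log}$ at $\theta=1$, matching the two conclusions of the proposition.
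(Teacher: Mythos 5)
Your proposal is correct and follows the same conceptual skeleton as the paper's proof (full kernel from \eqref{E:eq132}; partial kernel $K_{I^1}$ from the Zygmund cancellation \eqref{E:eq135}; partial kernel $K_{I^{2,3}}$ from the basic cancellation \eqref{E:eq134} with $j=1$; cancellation via translation invariance and Section~\ref{E:subsec7}), but two of your reductions are genuinely cleaner. For $K_{I^1}$, your Fubini identity $(|I^1|-|u|)_+=\int_0^{|I^1|}1_{|u|<r}\,\ud r$ replaces the paper's reflection split of $\iint_{I^1\times I^1}$ into a near-diagonal piece and a complementary piece, and immediately isolates the scalar inequality
$$
\int_0^S\Big(s+\tfrac1s\Big)^{-\theta}\ud s\lesssim
\begin{cases} S\big(S+\tfrac1S\big)^{-\theta},&\theta<1,\\[2pt]
S\big(S+\tfrac1S\big)^{-1}\log\big(S+\tfrac1S\big),&\theta=1,\end{cases}
$$
which cleanly exhibits where the $D_\theta\to D_{\log}$ transition occurs; the paper does the same computation, just with $d_1(y_1)$ as the integration variable. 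For weak boundedness, your use of Cauchy--Schwarz together with the $L^2$ bound from Theorem~\ref{thm:HLLT} is a shortcut that avoids the paper's direct estimate of $\iint_{I\times I}K$ via \eqref{E:eq133}; this is valid because you have already invoked Theorem~\ref{thm:HLLT} to make $T$ well defined. The one place where your write-up is lighter than the paper is the existence of the principal-value limits defining $K_{I^1}$ and $K_{I^{2,3}}$: the Fubini rewrite and the a priori bounds from \eqref{E:eq135} and \eqref{E:eq134} do give dominated convergence in the outer $r$-variables, but the convergence of the inner truncated integrals as $\delta_i\to0$ still needs the Cauchy-criterion argument (using $D_\theta(\delta_1,\cdot,\cdot)\to0$, resp. the relevant-limits hypothesis of Definition~\ref{def:limits}) that the paper spells out; you should state this step explicitly rather than passing to the limit silently. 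Also, reading \eqref{E:eq28}--\eqref{E:eq29} "directly" from \eqref{E:eq132} really means telescoping the increment $x_{2,3}\to x_{2,3}'$ through an intermediate point so that only one coordinate is differenced at a time, which is exactly what the paper does; worth making explicit since \eqref{E:eq132} constrains $a_1+a_2+a_3\le 2$.
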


\begin{proof}
We use from Theorem \ref{thm:HLLT} the fact that the
limit defining $T$ exists in $L^2$ and $T$ is a bounded operator in $L^2$.
In particular, $T$ is well defined on indicators of rectangles.

The condition \eqref{E:eq132} implies that the kernel $(x,y) \mapsto K(x-y)$ satisfies the full kernel estimates
of Section \ref{E:subsec6}. Indeed, the size estimate \eqref{E:eq30} and the mixed H\"older and
size estimate \eqref{E:eq25} are immediate. Consider the other mixed estimate
\eqref{E:eq28}. We have that
\begin{equation*}
\begin{split}
|K((x_1,x_2',x_3'),y)-K(x,y)|
&\le |K((x_1,x_2',x_3'),y)-K((x_1,x_2',x_3),y)| \\
&+|K((x_1,x_2',x_3),y)-K(x,y)|.
\end{split}
\end{equation*}
Here one can apply condition \eqref{E:eq132} to both terms on the right hand side which gives \eqref{E:eq28}.
Similar reasoning implies the H\"older estimate \eqref{E:eq29}.

We turn to the partial kernel estimates and begin with \eqref{E:eq67}.
Let $I^1$ be an interval and let $I^{2,3}$ and  $J^{2,3}$ be disjoint rectangles. 
Then 
\begin{equation}\label{E:eq139}
\begin{split}
&\langle T(1_{I^1} \otimes 1_{I^{2,3}}), 1_{I^1} \otimes 1_{J^{2,3}}\rangle\\
&= \lim_{\substack{\eps_i \to 0\\ N_i \to \infty}}\iint K_{\eps}^N(x_1-y_1, x_2-y_2, x_3-y_3) 1_{I^1\times I^{2,3}}(y)1_{I^1\times J^{2,3}}(x)\ud y  \ud x\\
&= \lim_{\substack{\eps_i \to 0\\ N_i \to \infty}}
\iint 1_{I^{2,3}}(y_{2,3})1_{J^{2,3}}(x_{2,3})
\iint \displaylimits_{I^1\times I^1} K_{\eps}^N(x_1-y_1, x_2-y_2, x_3-y_3) \ud y_1  \ud x_1 \ud y_{2,3}\ud x_{2,3},
\end{split}
\end{equation} 
where in the first step we used the fact that the limit defining $T$ exists in $L^2$.

We show that the limit
\begin{equation}\label{E:eq138}
\begin{split}
K_{I^1}&(x_2-y_2,x_3-y_3) \\
&:= \lim_{\substack{\eps_2, \eps_3\to 0\\ N_2,N_3\to \infty}}\lim_{\substack{\eps_1\to 0\\ N_1\to \infty}}
\iint_{I^1\times I^1} K_{\eps}^N(x_1-y_1, x_2-y_2, x_3-y_3) \ud y_1  \ud x_1
\end{split}
\end{equation}
exists and 
\begin{equation}\label{E:eq211}
\begin{split}
&\lim_{\substack{\eps_2, \eps_3 \to 0\\ N_2, N_3 \to \infty}} \lim_{\substack{\eps_1\to 0 \\ N_1\to \infty}}\iint 1_{I^{2,3}}(y_{2,3})1_{J^{2,3}}(x_{2,3}) \\
&
\hspace{3cm}\iint \displaylimits_{I^1\times I^1} K_{\eps}^N(x_1-y_1, x_2-y_2, x_3-y_3) \ud y_1  \ud x_1 \ud y_{2,3}\ud x_{2,3}\\
&\quad=\iint K_{I^1}(x_2-y_2,x_3-y_3) 1_{I^{2,3}}(y_{2,3})1_{J^{2,3}}(x_{2,3})
  \ud y_{2,3}\ud x_{2,3}.
  \end{split}
\end{equation}
We will also show that
\begin{equation}\label{E:eq137}
K_{I^1}(x_2-y_2,x_3-y_3)
\lesssim \begin{cases}
\frac{|I^1|}{|x_2-y_2||x_3-y_3|} D_\theta (|I^1|,x_2-y_2,x_3-y_3), \quad & \theta < 1, \\
\frac{|I^1|}{|x_2-y_2||x_3-y_3|} D_{\log} (|I^1|,x_2-y_2,x_3-y_3), \quad & \theta = 1. 
\end{cases}
\end{equation}
Note that since the two limits in \eqref{E:eq139} and \eqref{E:eq211} exist they must coincide.
This  implies that $K_{I^1}$ is a
partial kernel of $T$. 
Moreover, $K_{I^1}$ satisfies the size estimate as required
in \eqref{E:eq62}, with $D_{\log}$ instead of $D_1$ if $\theta=1$.
The H\"older estimate \eqref{E:eq63} of $K_{I^1}$ is proved exactly as we will prove \eqref{E:eq137};
for notational convenience we prove the size estimate.

We prove that
\begin{equation}\label{E:eq142}
\lim_{\substack{\eps_1\to 0\\ N_1\to \infty}}
\iint_{I^1\times I^1} K_{\eps}^N(x_1-y_1, u_2, u_3) \ud y_1  \ud x_1
\end{equation}
exists and is dominated by $1_{(\eps_2,N_2)} (u_2)1_{(\eps_3, N_3)} (u_3)$ multiplied with
\begin{equation}\label{E:eq140}
\begin{cases}
\frac {|I^1|}{|u_2||u_3|} D_{\theta}(|I^1|, u_2, u_3), \quad &\theta<1,\\
\frac {|I^1|}{|u_2||u_3|}D_{\log}(|I^1|, u_2, u_3), \quad &\theta=1.
\end{cases}
\end{equation}
From here one sees that the limit \eqref{E:eq138} exists and satisfies \eqref{E:eq137}.

Consider
$$
\iint_{I^1\times I^1} K_{\eps}^N(x_1-y_1, u_2, u_3) \ud y_1  \ud x_1.
$$
By a change of variables we may assume that $I^1$ is centred at the origin. 
 Below we write 
\[
d_1=d_1(y_1)= \dist (y_1, \partial I^1)=\frac{|I^1|}2-|y_1|, \quad y_1 \in I^1.
\]
Denote by $I^1_l$ and $I^1_r$ the left and right halves of $I^1$. 
For the moment write $f(u_1)=K^N_{\eps}(u_1,u_2,u_3)$. We have that
 \begin{align*}
& \iint_{I^1\times I^1} f(x_1-y_1) \ud y_1  \ud x_1 
=\int_{I^1}\ud y_1 \int_{I^{1}-y_1}f(x_1) \ud x_1\\
 &=\int_{I^1}\ud y_1 \int_{-d_1}^{d_1} f(x_1) \ud x_1+\int_{I^1_l}\ud y_1 \int_{d_1}^{|I^1|-d_1}f(x_1) \ud x_1
 + \int_{I^1_r}\ud y_1 \int_{d_1-|I^1|}^{-d_1}f(x_1) \ud x_1\\
 &=\int_{I^1}\ud y_1 \int_{-d_1}^{d_1} f(x_1) \ud x_1+\frac 12\int_{I^1}\ud y_1 \int_{d_1}^{|I^1|-d_1}f(x_1) \ud x_1
 + \frac 12\int_{I^1}\ud y_1 \int_{d_1-|I^1|}^{-d_1}f(x_1) \ud x_1\\
 &=\frac 12 \int_{I^1}\ud y_1 \int_{-d_1}^{d_1} f(x_1) \ud x_1+\frac 12\int_{I^1}\ud y_1 \int_{ |x_1|\le |I^1|-d_1}f(x_1) \ud x_1.
 \end{align*}
 
We  estimate  the second term in the previous line. Write again $K^N_{\eps}(u_1,u_2,u_3)$
instead of $f(u_1)$. Assume that $N_1 \ge |I^1|$. By condition \eqref{E:eq135} there holds that 
 \begin{align*}
&\Big| \int_{I^1}\int_{ |x_1|\le |I^1|-d_1}K_{\eps}^N(x_1, u_2, u_3) \ud x_1\ud y_1 \Big|\\
&= \Big| \int_{I^1} \int_{ \eps_1< |x_1|\le |I^1|-d_1}K(x_1, u_2, u_3) \ud x_1\ud y_1 \Big|1_{(\eps_2,N_2)} (u_2)1_{(\eps_3, N_3)} (u_3)\\
&\lesssim  \frac{|I^1|}{|u_2||u_3|}(D_{\theta}(|I^1|, u_2, u_3)+D_{\theta}(\eps_1, u_2, u_3) )1_{(\eps_2,N_2)} (u_2)1_{(\eps_3, N_3)} (u_3),
 \end{align*}where we have used that $|I^1|-d_1\sim |I^1|$. Likewise, we have for $\eps_1<\eps_1' < \frac{|I^1|}{2}$ that 
 \begin{align*}
 &\Big| \int_{I^1} \int_{ |x_1|\le |I^1|-d_1}K_{\eps}^N(x_1, u_2, u_3) \ud x_1\ud y_1 -\int_{I^1} \int_{ |x_1|\le |I^1|-d_1}K_{(\eps_1', \eps_2, \eps_3)}^N(x_1, u_2, u_3) \ud x_1 \ud y_1 \Big|\\
 &=  \Big| \int_{I^1} \int_{ \eps_1< |x_1|\le \eps_1'}K(x_1, u_2, u_3) \ud x_1\ud y_1 \Big|1_{(\eps_2,N_2)} (u_2)1_{(\eps_3, N_3)} (u_3)\\
 &\lesssim \frac{|I^1|}{|u_2||u_3|}(D_{\theta}(\eps_1', u_2, u_3)+D_{\theta}(\eps_1, u_2, u_3) )1_{(\eps_2,N_2)} (u_2)1_{(\eps_3, N_3)} (u_3).
 \end{align*}Then since 
 $
 \lim_{\eps_1\to 0} D_{\theta}(\eps_1, u_2, u_3)=0,  
 $
 by the Cauchy convergence principle we have that
 \[
 \lim_{\substack{\eps_1\to 0\\ N_1\to \infty}} \int_{I^1} \int_{ |x_1|\le |I^1|-d_1}K_{\eps}^N(x_1, u_2, u_3) \ud x_1 \ud y_1
 \] exists, and it is controlled by 
 \[
 \frac{|I^1|}{|u_2||u_3|}D_{\theta}(|I^1|, u_2, u_3)1_{(\eps_2,N_2)} (u_2)1_{(\eps_3, N_3)} (u_3).
 \]
 
Similarly, we have 
\begin{align*}
&\Big|\int_{I^1} \int_{-d_1}^{d_1} K_{\eps}^N(x_1, u_2, u_3) \ud x_1\ud y_1\Big|\\
&\hspace{1cm}\lesssim \int_{I^1} \frac{1}{|u_2||u_3|}(D_{\theta}(d_1, u_2, u_3)+D_{\theta}(\eps_1, u_2, u_3) )1_{(\eps_2,N_2)} (u_2)1_{(\eps_3, N_3)} (u_3) \ud y_1
\end{align*}
and for $\eps_1<\eps_1' < \frac{|I^1|}{2}$,
\begin{align*}
&\Big| \int_{I^1} \int_{ |x_1|\le  d_1}K_{\eps}^N(x_1, u_2, u_3) \ud x_1\ud y_1 -\int_{I^1} \int_{ |x_1|\le d_1}K_{(\eps_1', \eps_2, \eps_3)}^N(x_1, u_2, u_3) \ud x_1 \ud y_1 \Big|\\
 &=\Big|\int_{|y_1|<\frac{|I^1|}2-\eps_1'}\int_{ \eps_1<|x_1|\le  \eps_1'}K_{\eps}^N(x_1, u_2, u_3) \ud x_1\ud y_1 \\
 &\hspace{2cm}
 + \int_{\frac{|I^1|}2-\eps_1'\le |y^1|\le \frac{|I^1|}2}\int_{ \eps_1<|x_1|\le  d_1}K_{\eps}^N(x_1, u_2, u_3) \ud x_1\ud y_1 \Big|\\
 &\lesssim \Big[\frac{|I^1|}{|u_2||u_3|}(D_{\theta}(\eps_1', u_2, u_3)+D_{\theta}(\eps_1, u_2, u_3) )
 + \frac{\eps_1'}{|u_2||u_3|}\Big]1_{(\eps_2,N_2)} (u_2)1_{(\eps_3, N_3)} (u_3),
\end{align*}
where we simply dominated the $D_\theta$ factors by $1$ in the last term.
Hence
\[
 \lim_{\substack{\eps_1\to 0\\ N_1\to \infty}} \int_{I^1} \int_{-d_1}^{d_1} K_{\eps}^N(x_1, u_2, u_3) \ud x_1 \ud y_1
\]exists and it is controlled by $1_{(\eps_2,N_2)} (u_2)1_{(\eps_3, N_3)} (u_3)$ times
\begin{align*}
\int_{I^1} \frac{1}{|u_2||u_3|}D_{\theta}(d_1, u_2, u_3)\ud y_1
&= \frac{1}{|u_2||u_3|} \int_{I^1} D_{\theta}\Big(\frac{|I^1|}2-|y_1|, u_2, u_3\Big)\ud y_1
=:I+II,
\end{align*}
where 
\begin{align*}
I&= \frac{1}{|u_2||u_3|} \int_{I^1: (\frac{|I^1|}2-|y_1|)|u_2|>|u_3|} D_{\theta}\Big(\frac{|I^1|}2-|y_1|, u_2, u_3\Big)\ud y_1,\\
II&= \frac{1}{|u_2||u_3|} \int_{I^1: (\frac{|I^1|}2-|y_1|)|u_2|\le |u_3|} D_{\theta}\Big(\frac{|I^1|}2-|y_1|, u_2, u_3\Big)\ud y_1.
\end{align*}
Note that in $I$ there holds that $|u_3|/{|u_2|}<|I^1|/2$. We have 
\begin{align*}
I&\le\frac{1}{|u_2||u_3|}\int_{|y_1|< \frac{|I^1|}2-\frac{|u_3|}{|u_2|} }\frac 1{( \frac{|I^1|}2-|y_1|)^{\theta}}\frac{|u_3|^{\theta}}{|u_2|^{\theta}}\ud y_1\\
&= \frac{2}{|u_2||u_3|}\int_{|u_3|/{|u_2|}}^{|I^1|/2}\frac 1{y_1^{\theta}} \frac{|u_3|^{\theta}}{|u_2|^{\theta}}\ud y_1
\lesssim\begin{cases}
\frac {|I^1|}{|u_2||u_3|} D_{\theta}(|I^1|, u_2, u_3), \quad &\theta<1,\\
\frac {|I^1|}{|u_2||u_3|}D_{\log}(|I^1|, u_2, u_3), \quad &\theta=1.
\end{cases}
\end{align*}
Next, we estimate $II$ and we first consider the case $|u_3|/{|u_2|}\ge |I^1|/2$. There holds that
\begin{align*}
II&\le \frac{1}{|u_2||u_3|}\int_{|y_1|\le \frac{|I^1|}{2}} \Big( \frac{|I^1|}2-|y_1|\Big)^{\theta}\frac{|u_2|^{\theta}}{|u_3|^{\theta}}\ud y_1
\lesssim \frac{|I^1|}{|u_2||u_3|}D_{\theta}(|I^1|, u_2, u_3).
\end{align*}If $|u_3|/{|u_2|}<|I^1|/2$, then 
\begin{align*}
II&\le \frac{1}{|u_2||u_3|}\int_{  \frac{|I^1|}{2}-\frac{|u_3|}{|u_2|}
\le |y_1|\le \frac{|I^1|}{2}} \Big( \frac{|I^1|}2-|y_1|\Big)^{\theta}\frac{|u_2|^{\theta}}{|u_3|^{\theta}}\ud y_1\\
&\lesssim \frac{1}{|u_2|^2}\sim \frac{|I^1|}{|u_2||u_3|}D_{1}(|I^1|, u_2, u_3)\le  \frac{|I^1|}{|u_2||u_3|}D_{\theta}(|I^1|, u_2, u_3).
\end{align*}
Therefore, we have shown that \eqref{E:eq142} exists and is controlled by \eqref{E:eq140}.

To finish the proof of the partial kernel property in question, it remains to
consider the limit \eqref{E:eq211}. Looking at the estimates we have proved, 
there is no problem in moving the limits $\eps_1 \to 0$ and $N_1 \to \infty$ inside when $\eps_2, \eps_3, N_2$ and  
$N_3$ are fixed.
The upper bounds in \eqref{E:eq140}
are integrable over $I^{2,3} \times J^{2,3}$ since $I^{2,3}$ and $J^{2,3}$ are pairwise disjoint.
Therefore, we may move the limits $\eps_2, \eps_3 \to 0$ and $N_2,N_3 \to \infty$ also inside, which proves
\eqref{E:eq211}.

Next we consider $\langle T(1_{I^1} \otimes 1_{I^{2,3}}), 1_{J^1} \otimes 1_{I^{2,3}} \rangle$, where $I^1$ and $J^1$
are disjoint. There holds that
\begin{equation}\label{E:eq148}
\begin{split}
&\langle T(1_{I^1} \otimes 1_{I^{2,3}}), 1_{J^1} \otimes 1_{I^{2,3}} \rangle\\&
= \lim_{\substack{\eps_i \to 0 \\ N_i \to \infty}}
\iint K_{\eps}^N(x_1-y_1, x_2-y_2, x_3-y_3) 1_{I^1\times I^{2,3}}(y)1_{J^1\times I^{2,3}}(x)\ud y  \ud x\\
&= \lim_{\substack{\eps_i \to 0 \\ N_i \to \infty}}
\iint 1_{I^{1}}(y_{1})1_{J^{1}}(x_{1})\ud y_{1}\ud x_{1}\iint_{I^{2,3}\times I^{2,3}} K_{\eps}^N(x_1-y_1, x_2-y_2, x_3-y_3) \ud y_{2,3}  \ud x_{2,3}.
\end{split}
\end{equation}

First we demonstrate that the limit
\begin{equation}\label{E:eq147}
K_{I^{2,3}}(u_1)
:=\lim_{\substack{\eps_i \to 0 \\ N_i \to \infty}}\iint_{I^{2,3}\times I^{2,3}} K_{\eps}^N(u_1, x_2-y_2, x_3-y_3) \ud y_{2,3}  \ud x_{2,3}
\end{equation}
exists for a.e. $u_1$. Fix some $(x_2,x_3) \in I^{2,3}$ and let $d_i=d(x_i, \partial I^i)$ and $B_i=B(x_i,d_i)$. 
We argue that
\begin{equation}\label{E:eq145}
\Big | \int_{I^{2,3}} K_{\eps}^N(u_1, x_2-y_2, x_3-y_3) \ud y_{2,3} \Big| 
\lesssim \frac{1}{|u_1|} \Big(\log \Big( \frac{|I^2|}{d_2} \Big) + \log \Big( \frac{|I^3|}{d_3} \Big) \Big).
\end{equation}
Notice that the upper bound is integrable over $I^{2,3}$.

Consider the left hand side of \eqref{E:eq145}.
Split the integration area as
$$
I^{2,3}= (B_2 \times B_3) \cup ((I^2 \setminus B_2) \times I^3) \cup (B_2 \times (I^3 \setminus B_3)).
$$
The condition \eqref{E:eq134} with $j=1$ implies that the integral over the first set is dominated by 
$
|u_1|^{-1}.
$
The size estimate of $K$ (see \eqref{E:eq132}) gives that the integral over $(I^2 \setminus B_2) \times I^3$
is dominated by
$$
\int_{(I^2 \setminus B_2) \times \R} 
\frac{\Big( \frac{|u_1||x_2-y_2|}{|x_3-y_3|} + \frac{|x_3-y_3|}{|u_1||x_2-y_2|}\Big)^{-\theta}}{|u_1||x_2-y_2||x_3-y_3|} \ud y_{2,3}
\lesssim \int_{I^2 \setminus B_2} \frac{1}{|u_1||x_2-y_2|} \ud y_2
\le \frac{1}{|u_1|} \log \Big(\frac{|I^2|}{d_2}\Big).
$$
The integral over $B_2 \times (I^3 \setminus B_3)$ is estimated similarly. This proves \eqref{E:eq145}.

Then, we show that the limit
\begin{equation}\label{E:eq146}
\lim_{\substack{\eps_i \to 0 \\ N_i \to \infty}}\int_{ I^{2,3}} K_{\eps}^N(u_1, x_2-y_2, x_3-y_3) \ud y_{2,3}
\end{equation}
exists for a.e. $x_{2,3} \in I^{2,3}$. We may assume that $x_i$ is not on the boundary of $I^i$, $i=2,3$. Define
$A=\{ y_{2,3} \colon |y_i - x_i| \le 1\} $.  We have that
\begin{equation*}
\int_{ I^{2,3}}
=\int_{ A} +\int_{ I^{2,3}\setminus A} 
-\int_{ A \setminus I^{2,3}}.
\end{equation*}
By the assumption of relevant limits, see Definition \ref{def:limits}, the integral of $K_{\eps}^N(u_1, x_2-\cdot, x_3-\cdot)$
over $A$ has a limit as each $\eps_i \to 0$ and $N_i \to \infty$. By the size estimate of $K$ we see that
$K(u_1, x_2-\cdot, x_3-\cdot)$ is integrable over $I^{2,3}\setminus A$ and $A \setminus I^{2,3}$.
Therefore, the integrals of $K_{\eps}^N(u_1, x_2-\cdot, x_3-\cdot)$
over $I^{2,3}\setminus A$ and $A \setminus I^{2,3}$ have limits as each $\eps_i \to 0$ and $N_i \to \infty$.
This implies that the limit \eqref{E:eq146} exists. Now \eqref{E:eq145} and \eqref{E:eq146} combined
imply via dominated convergence that the limit \eqref{E:eq147} exists.

Next we show that 
\begin{equation}\label{E:eq141}
\Big|\iint_{I^{2,3}\times I^{2,3}} K_{\eps}^N(u_1, x_2-y_2, x_3-y_3) \ud y_{2,3}  \ud x_{2,3} \Big|
\lesssim |I^2||I^3| |u_1|^{-1}. 
\end{equation}
Therefore, by dominated convergence, \eqref{E:eq148} and \eqref{E:eq147} give that
$$
\langle T(1_{I^1} \otimes 1_{I^{2,3}}), 1_{J^1} \otimes 1_{I^{2,3}} \rangle
=\iint K_{I^{2,3}}(y_1-x_1)1_{I^{1}}(y_{1})1_{J^{1}}(x_{1})\ud y_{1}\ud x_{1}
$$
and $K_{I^{2,3}}$ satisfies the size estimate included in \eqref{E:eq110}. The H\"older estimates
are proved in the same way.

We turn to prove \eqref{E:eq141}.
By a change of variables we can assume that both $I^2$ and $I^3$ are centred at the origin. 
Define 
\[
d_i(y_i):=d_i:=\dist(y_i, \partial I^i)=\frac{|I^i|}{2}-|y_i|, \quad i=2,3.
\]
Similarly as before, we have 
\begin{align*}
\iint_{I^{2,3}\times I^{2,3}}& K_{\eps}^N(u_1, x_2-y_2, x_3-y_3) \ud y_{2,3}  \ud x_{2,3}\\
&= \frac 14\iint_{I^2\times I^3} \ud y_2 \ud y_3\int_{|x_2|\le d_2}\int_{|x_3|\le d_3}K_{\eps}^N(u_1, x_2, x_3)\ud x_2 \ud x_3\\
&+\frac 14\iint_{I^2\times I^3} \ud y_2 \ud y_3\int_{|x_2|\le d_2}\int_{|x_3|\le |I^3|- d_3}K_{\eps}^N(u_1, x_2, x_3)\ud x_2 \ud x_3\\
&+\frac 14\iint_{I^2\times I^3} \ud y_2 \ud y_3\int_{|x_2|\le |I^2|-d_2}\int_{|x_3|\le d_3}K_{\eps}^N(u_1, x_2, x_3)\ud x_2 \ud x_3\\
&+\frac 14\iint_{I^2\times I^3} \ud y_2 \ud y_3\int_{|x_2|\le |I^2|-d_2}\int_{|x_3|\le |I^3|-d_3}K_{\eps}^N(u_1, x_2, x_3)\ud x_2 \ud x_3.
\end{align*}
By condition \eqref{E:eq134} with $j=1$ we know that each of the integrals on the right is controlled by 
$
|I^2||I^3| |u_1|^{-1} 1_{(\eps_1, N_1)}(u_1).
$ 
This proves \eqref{E:eq141} and therefore the other partial kernel property is also proved.

Clearly $T$ satisfies the translation invariance properties \eqref{E:eq144} and \eqref{E:eq131}.
Therefore, by Section \ref{E:subsec7}, $T$ satisfies the cancellation conditions of Section \ref{E:subsec8}. 

 It remains to check the weak boundedness property \eqref{E:eq74}. 
 For any $I\in \calD_Z$ we have
\begin{align*}
\langle T1_I, 1_I\rangle&= \lim_{\substack{\eps_1,\eps_2,\eps_3\to 0\\ N_1,N_2,N_3\to \infty}} \iint_{I \times I } K_{\eps}^N(x_1-y_1, x_2-y_2, x_3-y_3) \ud y   \ud x.
\end{align*}Again, by a change of variables, we can assume that $I$ is centred at the origin. 
Similarly as before, there holds that
\[
 \iint_{I \times I } K_{\eps}^N(x_1-y_1, x_2-y_2, x_3-y_3) \ud y   \ud x
 =\sum_{i_1=1}^2\sum_{i_2=1}^2\sum_{i_3=1}^2 C(i_1, i_2, i_3),
\]where 
\[
C(i_1, i_2, i_3)=\frac 18 \int_{I} \ud y \int_{I_{i_1}}\int_{I_{i_2}}\int_{I_{i_3}}K_{\eps}^N(x_1, x_2, x_3)\ud x 
\] and each $I_{i_j}=I_{i_j}(y_j)$ is either $\{|x_j|\le d_j(y_j) \}$ or $\{|x_j|\le |I^j|- d_j(y_j) \}$. 
Assumption \eqref{E:eq133} gives that
$
|C(i_1, i_2, i_3)|\lesssim |I|.
$
This proves the weak boundedness property and concludes the proof of  Proposition \ref{E:eqlem6}.
\end{proof}

\subsection{A sufficient condition for the cancellation assumptions}\label{E:subsec7}
Here we demonstrate that CZZ-operators that have certain translation invariance properties
satisfy the cancellation assumptions of Section \ref{E:subsec8}.
Suppose that $T$ is a CZZ-operator satisfying the translation invariance
property 
\begin{equation}\label{E:eq144}
\ave{T1_I, 1_J}
= \ave{T(1_{I+(0,t^2,t^3)}), 1_{J+(0,t^2,t^3)}}
\end{equation}
for all rectangles $I$ and $J$,
where $t^i \in \R$. Then $T$ satisfies the cancellation conditions
\begin{equation}\label{E:eq128}
\ave{T(1_{I^1} \otimes 1), 1_{J^1} \otimes h_{I^{2,3}}} 
=\ave{T(1_{I^1} \otimes h_{I^{2,3}}), 1_{J^1} \otimes 1}=0.
\end{equation}
Similarly, if $T$ has the translation invariance property 
\begin{equation}\label{E:eq131}
\ave{T1_I, 1_J}
= \ave{T(1_{I+(t^1,0,0)}), 1_{J+(t^1,0,0)}}
\end{equation}
then $T$ satisfies the cancellation conditions
$$
\ave{T(1 \otimes 1_{I^{2,3}}), h_{I^1} \otimes 1_{J^{2,3}}}
=\ave{T(h_{I^1} \otimes 1_{I^{2,3}}), 1 \otimes 1_{J^{2,3}}}=0.
$$
In particular, the full translation invariance
$
  \ave{T1_I, 1_J}= \ave{T(1_{I+t}), 1_{J+t}}  
$
for all $t\in\R^3$ implies all cancellation conditions above.

We show that \eqref{E:eq144} implies \eqref{E:eq128}.
The corresponding implication in the first parameter is done similarly.
Consider for example the pairing $\ave{T(1_{I^1} \otimes 1), 1_{J^1} \otimes h_{I^{2,3}}}$.
We begin by first of all explaining how this is defined.
At this point we specify what the Haar function 
$
h_{I^{2,3} }
\in \{ h_{I^2} \otimes h_{I^3}, h_{I^2} \otimes h^0_{I^3}, h^0_{I^2} \otimes h_{I^3}\}
$
is. Suppose for example that $h_{I^{2,3} }=h_{I^2} \otimes h^0_{I^3}=:h^{1,0}_{I^{2,3}}$.
The point is to identify a variable with cancellation.
Then
\begin{equation*}
\begin{split}
\ave{T(1_{I^1} \otimes 1),& 1_{J^1} \otimes h^{1,0}_{I^{2,3}}}
:= \ave{T(1_{I^1} \otimes 1_{ (3I^{2})^c \times \R}), 1_{J^1} \otimes h^{1,0}_{I^{2,3}}} \\
&+\ave{T(1_{I^1} \otimes 1_{ (3I^{2}) \times (3I^3)^c}), 1_{J^1} \otimes h^{1,0}_{I^{2,3}}} 
+\ave{T(1_{I^1} \otimes 1_{ (3I^{2}) \times (3I^3)}), 1_{J^1} \otimes h^{1,0}_{I^{2,3}}}.
\end{split}
\end{equation*}
Here, the last term on the right hand side is already defined.
Suppose that $I^1$ and $J^1$ are disjoint. Then,
the first pairing on the right hand side is defined by
$$
\iint (K(x,y)-K((x_1,c_{I^2},x_3),y))  1_{I^1 \times (3I^{2})^c \times \R}(y)
1_{J^1} \otimes h^{1,0}_{I^{2,3}}(x) \ud y \ud x.
$$
The subtraction of $K((x_1,c_{I^2},x_3),y)$ makes sense because of the zero integral of $h_{I^2}$.
The mixed H\"older and size estimate \eqref{E:eq28} implies that the integral converges absolutely.
The second pairing is defined by
$$
\iint K(x,y)  1_{I^1 \times 3I^{2} \times (3I^3)^c}(y)
1_{J^1} \otimes h^{1,0}_{I^{2,3}}(x) \ud y \ud x
$$
and the size estimate \eqref{E:eq30} implies implies the absolute convergence of the integral.
If $I^1=J^1$ then one could define these pairings correspondingly with the partial kernel $K_{I^1}$, 
see \eqref{E:eq67}. In the general case when $I^1$ and $J^1$ are not disjoint 
(notice that they may have different lengths),
one splits the indicators $1_{I^1}$ and $1_{J^1}$ so that one can use the partial kernels or the full kernel.

Let $L$ be an interval. If $A>0$ let $AL$ denote the interval with the same center as $L$ and with length $A|L|$.
Let $L_l$ and $L_r$ denote the left and right halves of $L$, respectively. Let $t=|I^2|/2$.
The translation invariance \eqref{E:eq144} implies that
\begin{equation}\label{E:eq129}
\begin{split}
\ave{T(1_{I^1\times AI^2 \times AI^3}), 1_{J^1\times I^{2}_l \times I^3}} 
&=\ave{T(1_{I^1\times (AI^2+t) \times AI^3}), 1_{J^1\times (I^{2}_l+t) \times I^3}} \\
&=\ave{T(1_{I^1\times (AI^2+t) \times AI^3}), 1_{J^1\times I^{2}_r \times I^3}}.
\end{split}
\end{equation}
Define $J^2_{A}=AI^2 \cap (AI^2+t)$. Then $1_{AI^2}=1_{AI^2\setminus(AI^2+t)}+1_{J^2_A}$
and $1_{(AI^2+t)}=1_{(AI^2+t) \setminus AI^2} +1_{J^2_A}$. Using this in \eqref{E:eq129}
implies
\begin{equation}\label{E:eq130}
\begin{split}
\ave{T(1_{I^1\times J^2_A \times AI^3}), &1_{J^1}\otimes (1_{I^{2}_l}-1_{I^2_r}) \otimes 1_{I^3}} \\
&=\ave{T(1_{I^1} \otimes 1_{(AI^2+t)\setminus AI^2} \otimes 1_{AI^3}), 1_{J^1\times I^{2}_r \times I^3}} \\
&-\ave{T(1_{I^1}\otimes 1_{(AI^2)\setminus (AI^2+t)} \otimes 1_{AI^3}), 1_{J^1\times I^{2}_l \times I^3}}.
\end{split}
\end{equation}

Using the above definition of the pairing $\ave{T(1_{I^1} \otimes 1), 1_{J^1} \otimes h^{1,0}_{I^{2,3}}}$
one see that the left hand side of \eqref{E:eq130} approaches 
$|I^2|^{1/2}|I^3|^{1/2}\ave{T(1_{I^1} \otimes 1), 1_{J^1} \otimes h^{1,0}_{I^{2,3}}}$ as $A \to \infty$.
On the other hand, if $I^1$ and $J^1$ are disjoint, 
we will check that the size estimate \eqref{E:eq30} implies that the terms on the right hand side of \eqref{E:eq130} tend to $0$ as $A \to \infty$:
Denoting
\begin{equation*}
  E:=(J^1\times I^{2}_r \times I^3)\times(I^1 \times [(AI^2+t)\setminus AI^2] \times \R),
\end{equation*}
we have
\begin{equation*}
\begin{split}
   &\abs{\ave{T(1_{I^1} \otimes 1_{(AI^2+t)\setminus AI^2} \otimes 1_{AI^3}), 1_{J^1\times I^{2}_r \times I^3}}} 
   \lesssim\iint_E\prod_{i=1}^3\frac{1}{\abs{x_i-y_i}} D_\theta(x-y)\ud x\ud y.
\end{split}
\end{equation*}
Integrating first with respect to $y_3 \in \R$ and then with respect to $x_3 \in I^3$ gives
that the last integral is dominated by 
\begin{equation*}
|I^3| \iint \displaylimits _{J^1 \times I^1   }
\frac{1}{\abs{x_1-y_1}} \ud x_{1} \ud y_{1}
\iint \displaylimits _{I^2_r \times [(AI^2+t)\setminus AI^2]} 
\frac{1}{\abs{x_2-y_2}} \ud x_2 \ud y_2
\lesssim |I^3|(|J^1||I^1|)^{1/2}
\frac{|I^2|}{A}.
\end{equation*}
Here the last quantity goes to $0$ as $A \to \infty$.
Similarly, the other term in the right hand side of \eqref{E:eq130} has the desired convergence to $0$
under the assumption that $I^1$ and $J^1$ are disjoint.

If $I^1$ and $J^1$ are not disjoint,
then one splits the indicators into pairwise disjoint and equal parts. The disjoint parts can be dealt with as above.
With the equal parts one uses the partial kernel property \eqref{E:eq67}, the size estimate of the partial kernel \eqref{E:eq62}
and a similar computation as above. In any case, the right hand side of \eqref{E:eq130} goes to $0$ as $A\to \infty$, which proves that
$
\ave{T(1_{I^1} \otimes 1), 1_{J^1} \otimes h^{1,0}_{I^{2,3}}}=0.
$
Similarly, one proves that the other pairing in \eqref{E:eq128} equals zero.

\section{Dyadic multiresolution analysis in the Zygmund setting}
\subsection{Dyadic notation and martingale differences}
Given a dyadic grid $\calD$ in $\R^d$, $I \in \calD$ and $k \in \Z$, $k \ge 0$, we use the following notation:
\begin{enumerate}
\item $\ell(I)$ is the side length of $I$.
\item $I^{(k)} \in \calD$ is the $k$th parent of $I$, i.e., $I \subset I^{(k)}$ and $\ell(I^{(k)}) = 2^k \ell(I)$.
\item $\ch(I)$ is the collection of the children of $I$, i.e., $\ch(I) = \{J \in \calD \colon J^{(1)} = I\}$.
\item $E_I f=\langle f \rangle_I 1_I$ is the averaging operator, where $\langle f \rangle_I = \fint_{I} f = \frac{1}{|I|} \int _I f$.
\item $\Delta_If$ is the martingale difference $\Delta_I f= \sum_{J \in \ch (I)} E_{J} f - E_{I} f$.
\item $\Delta_{I,k} f$ is the martingale difference block
$$
\Delta_{I,k} f=\sum_{\substack{J \in \calD \\ J^{(k)}=I}} \Delta_{J} f.
$$
\end{enumerate}

\subsection{Resolution of functions}
We work on $\R^3$. However, instead of the tri-parameter viewpoint $\R^3 = \R \times \R \times \R$,
we often view this as some kind of modified bi-parameter theory on $\R^3 = \R \times \R^2$.
A suitable dilation on $\R^2$ is required that depends on the choice of scale on $\R$.

We let
$
\calD = \prod_{m=1}^3 \calD^m,
$
where $\calD^1$, $\calD^2$ and $\calD^3$ are dyadic grids on $\R$.
We define the Zygmund rectangles $\calD_Z \subset \calD$
by setting
\begin{equation}\label{zyglat}
\calD_Z = \Big\{I = \prod_{m=1}^3 I^m \in \calD \colon \ell(I^1) \ell(I^2) = \ell(I^3)\Big\}.
\end{equation}
Given $I =  I^1 \times I^2 \times I^3 = I^1 \times I^{2,3} \in \calD_Z$ we define the Zygmund martingale difference operator
$
\Delta_{I,Z} f := \Delta_{I^1} \Delta_{I^{2,3}} f,
$
where we always mean the following:
\begin{enumerate}
\item The martingale difference $\Delta_{I^{2,3} }$ is the \textbf{one-parameter} martingale difference on the rectangle $I^{2,3}$ -- i.e.,
$$
\Delta_{I^{2,3}} f = \sum_{\substack{ S^2 \in \ch(I^2) \\ S^3 \in \ch(I^3)}} E_{S^2 \times S^3} - E_{I^{2,3}} f.
$$
It is \textbf{not} the bi-parameter martingale difference -- for the latter we will use no other notation than the explicit iterative notation $\Delta_{I^2} \Delta_{I^3}$.
In particular, we have
$$
\Delta_{I^{2,3}} = \Delta_{I^2} \Delta_{I^3}  + E_{I^2} \Delta_{I^3} + \Delta_{I^2} E_{I^3} \ne \Delta_{I^2} \Delta_{I^3}.
$$
\item We do not write $\Delta_{I^1}^1$ or  $\Delta_{I^{2,3}}^{2,3}$ -- but this is what we mean. That is, these operators
act on the full product space but only on the given parameters -- for instance, $\Delta_{I^1} f(x_1, x_2, x_3)
= \Delta_{I^1}^1 f(x_1, x_2, x_3) =  (\Delta_{I^1} f(\cdot, x_2, x_3))(x_1)$.
\end{enumerate}
There is also the equally natural symmetric version
$
\wt \Delta_{I, Z}f =  \Delta_{I^2} \Delta_{I^{1,3} },
$
but we will build our analysis around the operators $\Delta_{I,Z}$. 
For a dyadic $\lambda > 0$ define the dilated lattices
\begin{equation}\label{dillat}
\calD_{\lambda}^{2,3} = \{I^{2,3}\in \calD^{2,3} := \calD^2 \times \calD^3 \colon \ell(I^3) = \lambda \ell(I^2)\}.
\end{equation}
We may expand
\begin{equation}\label{zygexp}
\begin{split}
f = \sum_{I^1 \in \calD^1} \Delta_{I^1} f = \sum_{I^1 \in \calD^1} \sum_{I^{2,3} \in \calD^{2,3}_{\ell(I^1)}} 
\Delta_{I^1} \Delta_{I^{2,3}}  f
= \sum_{I \in \calD_Z} \Delta_{I, Z} f.
\end{split}
\end{equation}

\subsection{Properties of Zygmund martingale differences}
\begin{lem}\label{H:lem1}
For $I, J \in \calD_Z$ we have
$$
\Delta_{I,Z} \Delta_{J,Z} f = \left\{ \begin{array}{ll}
\Delta_{I,Z} & \textup{if } I = J, \\
0 & \textup{if } I \ne J.
\end{array} \right.
$$
\end{lem}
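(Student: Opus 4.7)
The plan is to reduce everything to well-known one-parameter martingale difference identities by exploiting the product structure of $\Delta_{I,Z} = \Delta_{I^1}\Delta_{I^{2,3}}$. Since $\Delta_{I^1}$ acts only on the first variable and $\Delta_{I^{2,3}}$ acts only on the last two variables, the operators $\Delta_{I^1}$, $\Delta_{J^{2,3}}$ commute (and likewise $\Delta_{I^{2,3}}$, $\Delta_{J^1}$), so
\begin{equation*}
  \Delta_{I,Z}\Delta_{J,Z} = \Delta_{I^1}\Delta_{J^1}\,\Delta_{I^{2,3}}\Delta_{J^{2,3}}.
\end{equation*}
I would then split into two cases according to whether $I^1=J^1$ or not.

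If $I^1\neq J^1$, the two intervals lie in the standard one-parameter dyadic grid $\calD^1$ on $\R$; by the classical martingale orthogonality ($\Delta_{I^1}\Delta_{J^1}=0$ for distinct dyadic intervals, regardless of their relative size), the whole composition vanishes. This covers a large chunk of the ``$I\ne J$'' case essentially for free.

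If $I^1=J^1$, then the Zygmund scaling $\ell(I^u)\ell(I^v)=\ell(I^3)$ ($u=1,v=2$) forces $\ell(I^3)/\ell(I^2)=\ell(I^1)=\ell(J^1)=\ell(J^3)/\ell(J^2)$, so both $I^{2,3}$ and $J^{2,3}$ belong to the same dilated lattice $\calD^{2,3}_{\ell(I^1)}$. The key sublemma I would prove here — this is the main conceptual step, though not hard — is that for every fixed dyadic $\lambda>0$, $\calD^{2,3}_\lambda$ is a nested lattice, i.e., any two of its rectangles are either one contained in the other or have disjoint interiors. Indeed, given $R_1^2\times R_1^3$ and $R_2^2\times R_2^3$ with $\ell(R_i^3)=\lambda\ell(R_i^2)$, the ratio of side lengths is the same in both coordinates, so the usual dyadic dichotomy in each coordinate rules out the ``one coordinate nested, the other offset'' pathology that otherwise plagues products of dyadic grids.

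Once this nested structure is in hand, $\{\Delta_{I^{2,3}}\}_{I^{2,3}\in\calD^{2,3}_\lambda}$ is a genuine one-parameter martingale difference system on $\R^2$ (viewed with atoms $I^{2,3}$ and their four children), and the usual one-parameter identity applies: $\Delta_{I^{2,3}}\Delta_{J^{2,3}}=0$ when $I^{2,3}\ne J^{2,3}$, and equals $\Delta_{I^{2,3}}$ when they coincide. Combining with $\Delta_{I^1}^2=\Delta_{I^1}$ gives $\Delta_{I,Z}^2=\Delta_{I,Z}$ in the diagonal case, while off-diagonal we pick up a zero factor from one of the two cases. The main obstacle is really just isolating and verifying the nestedness claim for $\calD^{2,3}_\lambda$; everything else is bookkeeping around the commutativity of operators acting on disjoint sets of variables.
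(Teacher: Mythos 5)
Your proof is correct and rests on the same key observation as the paper's: when $I^1=J^1$, the Zygmund scaling locks the side lengths of $I^{2,3}$ and $J^{2,3}$ into the same ratio, which forces the dilated lattice $\calD^{2,3}_{\ell(I^1)}$ to be nested and hence a genuine one-parameter filtration on $\R^2$. The paper runs this as a short contradiction argument ($I^2\subsetneq J^2$ forces $I^3\subsetneq J^3$, etc.), while you isolate the nestedness of $\calD^{2,3}_\lambda$ as an explicit sublemma and then quote one-parameter martingale orthogonality; the content is the same, but your packaging makes the structural fact reusable.
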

\begin{proof}
Suppose $\Delta_{I,Z} \Delta_{J,Z} f \ne 0$. Then automatically $I^1 = J^1$. Aiming for a contradiction, suppose
e.g. $I^2 \subsetneq J^2$. By the Zygmund relationship we have
$$
\ell(J^3) = \ell(J^1)\ell(J^2) = \ell(I^1) \ell(J^2) > \ell(I^1)\ell(I^2) = \ell(I^3).
$$
Thus we must also have $I^3 \subsetneq J^3$ and it is clear that $\Delta_{J^{2,3}}(\Delta_{I^{2,3}} f) = 0$. 
Following this logic we must have that $I^2 = J^2$ and then $I^3 = J^3$. The claim is now clear.
\end{proof}

Notice also that the Zygmund martingale differences satisfy
$$
\int_{\R} \Delta_{I,Z}f \ud x_1 = 0 \qquad \textup{and} \qquad \int_{\R^2} \Delta_{I,Z}f \ud x_2 \ud x_3 = 0.
$$
Moreover, we have
$$
\int (\Delta_{I, Z} f) g = \int f \Delta_{I,Z} g.
$$

\subsection{Zygmund maximal function and extrapolation}
We define
$$
M_{\calD_Z} f(x) = \sup_{R \in \calD_Z} \langle |f| \rangle_R 1_R(x). 
$$
The following is proved in Fefferman--Pipher \cite{FP}.

\begin{prop}\label{prop:FPmax}
We have
$
\|M_{\calD_Z}f\|_{L^p(w)} \lesssim \|f\|_{L^p(w)}
$
for all $p \in (1,\infty)$ and $w \in A_{p, Z}$.
\end{prop}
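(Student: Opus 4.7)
My plan is to combine two structural features of $A_{p,Z}$ weights already noted in the preamble with uniform weighted bounds for the dilated strong maximal operators $M_{\calD^{2,3}_\lambda}$. The structural facts are: (a)~for a.e.\ $x_{2,3}$, the slice $x_1\mapsto w(x_1,x_{2,3})$ lies in $A_p(\R)$ with norm $\le[w]_{A_{p,Z}}$; (b)~for every $I^1\in\calD^1$, the partial average $\langle w\rangle_{I^1}$ (integration in $x_1$) lies in $A_{p,\ell(I^1)}(\R^2)$ with norm $\le[w]_{A_{p,Z}}$. The auxiliary fact is the uniform-in-$\lambda$ $L^p(v)$-boundedness of $M_{\calD^{2,3}_\lambda}$ for $v\in A_{p,\lambda}(\R^2)$; this reduces to Muckenhoupt's strong-maximal theorem via the rescaling $x_3\mapsto\lambda^{-1}x_3$, which turns dilated rectangles into ordinary dyadic rectangles and $A_{p,\lambda}$ into classical strong $A_p$ on $\R^2$.

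First I would establish the pointwise domination: for any $R=I^1\times I^{2,3}\in\calD_Z$ with $x\in R$,
\begin{equation*}
  \fint_R|f| = \fint_{I^{2,3}}\!\Bigl(\fint_{I^1}|f(y_1,y_{2,3})|\ud y_1\Bigr)\ud y_{2,3} \le M_{\calD^{2,3}_{\ell(I^1)}}\!\bigl[\langle|f|\rangle^1_{I^1}\bigr](x_{2,3}),
\end{equation*}
where $\langle|f|\rangle^1_{I^1}(y_{2,3}) := \fint_{I^1}|f(y_1,y_{2,3})|\ud y_1$; taking the supremum over $R\ni x$,
\begin{equation*}
  M_{\calD_Z}f(x) \le \sup_{k\in\Z} M_{\calD^{2,3}_{2^k}}\!\bigl[\langle|f|\rangle^1_{I^1(x_1,k)}\bigr](x_{2,3}),
\end{equation*}
with $I^1(x_1,k)\in\calD^1$ the unique cube of length $2^k$ containing $x_1$.

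Second, I would establish a scale-by-scale bound uniform in $k$. Grouping by $I^1_0\in\calD^1$ with $\ell(I^1_0)=2^k$ and using $\int_{I^1_0}w(x_1,\cdot)\ud x_1=|I^1_0|\langle w\rangle_{I^1_0}$, Fubini gives
\begin{align*}
  \int_{\R^3} M_{\calD^{2,3}_{2^k}}\!\bigl[\langle|f|\rangle^1_{I^1(x_1,k)}\bigr](x_{2,3})^p w(x)\ud x
  &= \sum_{\substack{I^1_0\in\calD^1\\ \ell(I^1_0)=2^k}}|I^1_0|\int_{\R^2} M_{\calD^{2,3}_{2^k}}\!\bigl[\langle|f|\rangle^1_{I^1_0}\bigr]^p\langle w\rangle_{I^1_0}\ud x_{2,3} \\
  &\lesssim \sum_{I^1_0}|I^1_0|\int_{\R^2}\bigl(\langle|f|\rangle^1_{I^1_0}\bigr)^p\langle w\rangle_{I^1_0}\ud x_{2,3}
  \lesssim \|f\|_{L^p(w)}^p,
\end{align*}
where the first estimate uses (b) with the auxiliary uniform $L^p$-bound for $M_{\calD^{2,3}_{2^k}}$, and the second uses the pointwise $A_p$-H\"older inequality $(\fint_{I^1_0}|f|)^p\langle w\rangle_{I^1_0}\le[w]_{A_{p,Z}}\fint_{I^1_0}|f|^p w$ from (a), followed by collapsing the partition $\{I^1_0\}$ of $\R$.

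The main obstacle is converting this scale-$k$ uniform bound into a bound on the supremum over $k$, since naively summing in $k$ gives a divergent series. I plan to close this gap by linearizing the supremum: select for each $x$ a near-optimal $k(x)$ and a rectangle $R(x)=I^1(x_1,k(x))\times I^{2,3}(x)\in\calD_Z$ with $\fint_{R(x)}|f|\ge\tfrac12 M_{\calD_Z}f(x)$, so that $\|M_{\calD_Z}f\|_{L^p(w)}^p$ is bounded up to a constant by $\sum_{R\in\calD_Z}\langle|f|\rangle_R^p w(E_R)$, where $E_R=\{x:R(x)=R\}\subset R$ are pairwise disjoint; the pointwise $A_p$-H\"older estimate above then converts this into a standard $A_{p,Z}$-Carleson embedding of $|f|^p w$ that telescopes via the disjointness of the $E_R$. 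Alternatively, once any single $L^{p_0}(w)$-bound for $w\in A_{p_0,Z}$ has been secured, the full claim follows by Rubio de Francia extrapolation within the class $A_{p,Z}$, also developed in \cite{FP}.
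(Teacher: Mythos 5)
Your first two steps are sound: the pointwise domination of $M_{\calD_Z}f(x)$ by a $k$-dependent iterated maximal function, and the Fubini-over-slices argument that bounds each fixed-$k$ operator on $L^p(w)$ uniformly in $k$ (using the slice $A_p$ facts and a rescaled one-parameter Muckenhoupt bound on $\R^2$). For comparison: the paper does not prove Proposition \ref{prop:FPmax} itself but cites \cite{FP}, so there is no in-paper argument to match against, though the slice-and-Fubini idea resembles the paper's proof of Proposition \ref{prop:FPext} (which, however, takes Proposition \ref{prop:FPmax} as given).

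The gap is in the last step. After linearizing you do obtain $\|M_{\calD_Z}f\|_{L^p(w)}^p\lesssim\sum_{R\in\calD_Z}\langle|f|\rangle_R^p\,w(E_R)$ with pairwise disjoint $E_R\subset R$, but the assertion that the $A_p$-H\"older estimate then yields a Carleson embedding that ``telescopes via the disjointness of the $E_R$'' does not hold. Applying H\"older on each $R$ gives
\begin{equation*}
\sum_{R}\langle|f|\rangle_R^p\,w(E_R)
\lesssim\sum_{R}\frac{w(E_R)}{w(R)}\int_R|f|^pw
=\int|f(y)|^pw(y)\Big(\sum_{R\ni y}\frac{w(E_R)}{w(R)}\Big)\ud y,
\end{equation*}
and the bracketed Carleson sum is not controlled by disjointness alone, because the denominators $w(R)$ vary with $R$; nothing telescopes. (This would not work even in the one-parameter dyadic case: there the standard proof uses the openness $w\in A_p\Rightarrow w\in A_{p-\epsilon}$ together with Doob's $L^r(w)$-bound, $r>1$, for the weighted martingale maximal function, and disjointness enters only afterwards.) For the Zygmund basis the Doob-type ingredient -- $L^r(w)$-boundedness, $r>1$, of $g\mapsto\sup_{R\in\calD_Z}1_R\,\frac{1}{w(R)}\int_R g\,w$ -- is \emph{not} automatic, since Zygmund rectangles do not form a nested filtration; this is precisely the multi-parameter difficulty that Fefferman--Pipher resolve via covering-lemma and reverse-H\"older machinery in \cite{FP}. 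Your extrapolation fallback is circular: Rubio de Francia extrapolation within $A_{p,Z}$ is proved \emph{using} the boundedness of $M_{\calD_Z}$, and in any case you have not secured a single $p_0$-bound to extrapolate from.
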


It will be useful to observe that the same weight class 
$A_{p,Z}$ remains admissible for a slightly bigger maximal operator as well. Namely, let us consider the class
\begin{equation*}
  \calD_{\sub}:=\{K\in\calD: \ell(K^1)\ell(K^2)\geq\ell(K^3)\},
\end{equation*}
which we may refer to as {\em sub-Zygmund rectangles}, as the third interval has at most the size that it should have in the case of proper Zygmund rectangles.
For the corresponding maximal operator
\begin{equation*}
M_{\calD_{ \sub}} f(x) = \sup_{R \in \calD_{\sub}} \langle |f| \rangle_R 1_R(x),
\end{equation*}
we still have the following result.

\begin{prop}\label{prop:FPext}
We have
$
\|M_{\calD_{\sub}}f\|_{L^p(w)} \lesssim \|f\|_{L^p(w)}
$
for all $p \in (1,\infty)$ and $w \in A_{p, Z}$.
\end{prop}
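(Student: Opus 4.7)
The plan is to prove the pointwise domination
\begin{equation*}
  M_{\calD_{\sub}}f(x)\leq M^1_{\calD^1}\bigl[(M_{\calD_Z}f)(\cdot,x_{2,3})\bigr](x_1),
\end{equation*}
where $M^1_{\calD^1}$ denotes the one-parameter dyadic maximal operator acting in the first variable, and then to convert this pointwise bound into the weighted inequality via Proposition \ref{prop:FPmax} together with the fibrewise $A_p$ behaviour of $A_{p,Z}$ weights recorded just after the definition of $A_{p,Z}$ in the excerpt.

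To establish the pointwise bound, I would fix $R=R^1\times R^2\times R^3\in\calD_{\sub}$ with $x\in R$ and exploit the sub-Zygmund condition $\ell(R^3)\leq\ell(R^1)\ell(R^2)$: since all three side lengths are dyadic powers of $2$, the quantity $\ell(R^3)/\ell(R^2)$ is itself a dyadic length bounded above by $\ell(R^1)$, so $R^1$ partitions into dyadic sub-intervals $\tilde R^1_j\in\calD^1$ of common length $\ell(R^3)/\ell(R^2)$. Each slice $\tilde R_j:=\tilde R^1_j\times R^{2,3}$ is then a genuine Zygmund rectangle in $\calD_Z$. Writing $\tilde R^1(y_1)$ for the unique slice containing $y_1$, a Fubini computation on $R$ yields
\begin{equation*}
  \langle|f|\rangle_R=\frac{1}{|R^1|}\int_{R^1}\langle|f|\rangle_{\tilde R^1(y_1)\times R^{2,3}}\,dy_1\leq\frac{1}{|R^1|}\int_{R^1}M_{\calD_Z}f(y_1,x_{2,3})\,dy_1,
\end{equation*}
the inequality holding because $(y_1,x_{2,3})\in\tilde R^1(y_1)\times R^{2,3}\in\calD_Z$. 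Taking the supremum over $R\ni x$ in $\calD_{\sub}$, the right-hand side is bounded by $M^1_{\calD^1}[(M_{\calD_Z}f)(\cdot,x_{2,3})](x_1)$, which is the claimed pointwise bound.

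To finish, I would raise to the $p$-th power, integrate against $w$, and use Fubini in the split $\R^3=\R\times\R^2$. For each fixed $x_{2,3}$, the fibre $x_1\mapsto w(x_1,x_{2,3})$ belongs to $A_p(\R)$ with constant controlled by $[w]_{A_{p,Z}}$, so the classical one-parameter weighted maximal inequality, applied uniformly in $x_{2,3}$, gives $\|M_{\calD_{\sub}}f\|_{L^p(w)}\lesssim\|M_{\calD_Z}f\|_{L^p(w)}$, and the latter is dominated by $\|f\|_{L^p(w)}$ via Proposition \ref{prop:FPmax}. The main obstacle is conceptual rather than technical: one must slice the ``too large'' first side of a sub-Zygmund rectangle into Zygmund-scaled pieces rather than try to enlarge the ``too small'' third side to a Zygmund super-rectangle, since the latter route carries an uncontrolled blow-up factor $\ell(R^1)\ell(R^2)/\ell(R^3)$ that would ruin any estimate. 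Once this slicing is adopted, the rest is routine.
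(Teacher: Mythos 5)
Your proof is correct and takes essentially the same approach as the paper's: partition the oversized first side $R^1$ into dyadic subintervals of length $\ell(R^3)/\ell(R^2)$ so that each slice $\tilde R^1_j\times R^{2,3}$ is a Zygmund rectangle, use Fubini to realize $\langle|f|\rangle_R$ as an average of Zygmund-rectangle averages, and deduce the pointwise bound $M_{\calD_{\sub}}\leq M_{\calD^1}\circ M_{\calD_Z}$, whence the weighted estimate follows from fibrewise Muckenhoupt in $x_1$ combined with Proposition \ref{prop:FPmax}. Your closing remark correctly identifies the key structural point that makes this route work while the naive enlargement route fails.
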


\begin{proof}
We will check the pointwise bound
\begin{equation}\label{eq:MZpw}
   M_{\calD_{\sub}}\leq M_{\calD^1}\circ M_{\calD_Z}.
\end{equation}
The proposition follows, since $M_{\calD_Z}$ is bounded on $L^p(w)$ by Proposition \ref{prop:FPmax}, and $M_{\calD^1}$ is bounded on $L^p(w)$ by the classical Muckenhoupt theorem, Fubini's theorem, and the observation that $x_1 \mapsto w(x_1, x_2, x_3) \in A_p(\R)$ uniformly in $x_2, x_3$ for $w\in A_{p,Z}$.

To prove \eqref{eq:MZpw}, let $x\in K \in \calD_{\sub}$, and let
\begin{equation*}
  \calI^1:=\{I^1\in\calD^1:I^1\subset K^1, \ell(I^1)=\ell(K^3)/\ell(K^2)\},
\end{equation*}
which is a partition of $K^1$, since $\ell(K^3)/\ell(K^2)\leq\ell(K^1)$ by the sub-Zygmund condition. Now each $I^1\times K^{2,3}$ belongs to $\calD_Z$, and hence
\begin{equation}\label{eq:MZpwProof}
\begin{split}
  \ave{f}_K
  &=\fint_{K^1}\Big(\fint_{K^{2,3}}f(y_1,y_{2,3})\ud y_{2,3}\Big)\ud y_1 
  =\fint_{K^1}\sum_{I^1\in\calI^1}1_{I^1}(z_1)  \Big(\fint_{I^1\times K^{2,3}}f(y)\ud y\Big)\ud z_1 \\
  &\leq\fint_{K^1}\sum_{I^1\in\calI^1}1_{I^1}(z_1) M_{\calD_Z}f(z_1,x_{2,3})\ud z_1
    \leq M_{\calD^1}(M_{\calD_Z}f)(x).
\end{split}
\end{equation}
This proves \eqref{eq:MZpw}, and hence the proposition.
\end{proof}

The Zygmund variant of the Rubio de Francia extrapolation result -- also proved in \cite{FP} -- follows from the boundedness of the Zygmund maximal function
via the standard approach \cite{DU}.

\begin{prop}
Let $f, g$ be a given pair of functions and $p_0 \in (1,\infty)$. If we have
$$
\|g\|_{L^{p_0}(w)} \lesssim \|f\|_{L^{p_0}(w)}
$$
for all  $w \in A_{p_0, Z}$, then
$$
\|g\|_{L^{p}(w)} \lesssim \|f\|_{L^{p}(w)}
$$
for all $p \in (1,\infty)$ and $w \in A_{p, Z}$.
\end{prop}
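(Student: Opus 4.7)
The plan is to run the standard Rubio de Francia extrapolation argument, as presented in \cite{DU}, with the Hardy--Littlewood maximal operator replaced throughout by the Zygmund maximal operator $M_{\calD_Z}$. The two ingredients that the classical proof actually uses about the weight class are (a) the $L^s(w)$-boundedness of the maximal operator for $w$ in the appropriate $A_s$-class, and (b) the duality identity $w\in A_{p,Z}\iff w^{1-p'}\in A_{p',Z}$, which is immediate from the symmetric form of the definition of $A_{p,Z}$. In our setting (a) is exactly Proposition~\ref{prop:FPmax}, so no new structural input is needed beyond this one proposition.

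The central construction I would use is the Rubio de Francia iteration
$$
Rh := \sum_{k=0}^\infty \frac{M_{\calD_Z}^k h}{\bigl(2\Norm{M_{\calD_Z}}{L^s(w)\to L^s(w)}\bigr)^k},
$$
defined for $h\geq 0$ in $L^s(w)$ at any exponent $s$ at which the operator norm is finite. By Proposition~\ref{prop:FPmax} the series converges in $L^s(w)$, and $Rh$ enjoys the three standard properties: pointwise domination $h\leq Rh$; the norm bound $\Norm{Rh}{L^s(w)}\leq 2\Norm{h}{L^s(w)}$; and $M_{\calD_Z}(Rh)\leq 2\Norm{M_{\calD_Z}}{L^s(w)\to L^s(w)}\,Rh$. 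The last property is the Zygmund $A_1$-estimate $\ave{Rh}_I\lesssim \inf_I Rh$ for every $I\in\calR_Z$.

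Fix $p\in(1,\infty)$ and $w\in A_{p,Z}$. In the case $p>p_0$ I would take $s=(p/p_0)'$, use the H\"older duality
$$
\Norm{g}{L^p(w)}^{p_0} = \sup\Big\{\int |g|^{p_0}hw : h\geq 0,\ \Norm{h}{L^s(w)}=1\Big\},
$$
and replace $h$ by $Rh\geq h$. The crux is to verify that $(Rh)w\in A_{p_0,Z}$ with constant depending only on $[w]_{A_{p,Z}}$ and $s$; this follows from the Zygmund $A_1$-bound for $Rh$ via one application of H\"older's inequality inside the $A_{p_0,Z}$ ratio, mimicking the one-parameter computation. Invoking the hypothesis at $p_0$ and one more H\"older inequality then gives
$$
\int |g|^{p_0}(Rh)w \lesssim \int |f|^{p_0}(Rh)w \leq \Norm{f}{L^p(w)}^{p_0}\Norm{Rh}{L^s(w)}\leq 2\Norm{f}{L^p(w)}^{p_0}.
$$
For $p<p_0$ I would run the symmetric dual iteration, passing to $\sigma=w^{1-p'}\in A_{p',Z}$ (where $p'>p_0'$) and performing the analogous construction with $\sigma$ and exponent $(p'/p_0')'$ in place of $w$ and $(p/p_0)'$; the argument then closes by the same H\"older--type bookkeeping.

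The only step where the Zygmund structure genuinely enters is the verification that $(Rh)w\in A_{p_0,Z}$ when $Rh$ is a Zygmund $A_1$-weight and $w\in A_{p,Z}$, and this reduces to the exact same one-line H\"older computation as in the one-parameter setting. All remaining pieces---convergence of the iteration, the choice of $s$, and the dualization in the second case---are a black-box application of the classical proof in \cite{DU}. The main (and only) obstacle that a reader might worry about, namely whether $M_{\calD_Z}$ plays the role needed by the algorithm, is dispatched by Proposition~\ref{prop:FPmax}.
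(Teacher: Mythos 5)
Your overall strategy---run the Rubio de Francia algorithm with $M_{\calD_Z}$ in place of the Hardy--Littlewood maximal operator, using Proposition~\ref{prop:FPmax} as the one structural input---is exactly what the paper (citing \cite{DU}) intends. But the specific implementation you outline has a genuine gap. For $p>p_0$ you build $R$ by iterating $M_{\calD_Z}$ on $L^s(w)$ with $s=(p/p_0)'=p/(p-p_0)$; this needs $w\in A_{s,Z}$, and since $A_{q,Z}$ increases in $q$, that follows from $w\in A_{p,Z}$ only when $s\ge p$, i.e.\ $p\le p_0+1$ (take $p_0=2$, $p=4$: then $s=2$ and a generic $w\in A_{4,Z}$ is not in $A_{2,Z}$). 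More seriously, the declared crux---that $Rh$ being a Zygmund $A_1$ weight together with $w\in A_{p,Z}$, $p>p_0$, yields $(Rh)w\in A_{p_0,Z}$---is false. Already on $\R$, take $w=|x|^\alpha$ with $\alpha$ close to $p-1$ (so $w\in A_p$) and $u=|x|^\beta$ with $\beta\in(-1,0]$ (so $u\in A_1$); then $uw=|x|^{\alpha+\beta}$ has exponent exceeding $p_0-1$ whenever $p>p_0+1$, so $uw\notin A_{p_0}$. The one-line H\"older computation you have in mind is the Jones-type bound $H_1^{1-p_0}H_2\in A_{p_0,Z}$ for \emph{two} Zygmund $A_1$ weights $H_1,H_2$; it does not apply to one Zygmund $A_1$ weight times a weight in $A_{p,Z}$.

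The argument in \cite{DU}, which is stated there for an arbitrary Muckenhoupt basis and hence applies directly to $\calR_Z$, repairs this by constructing \emph{two} iterations and avoids the case split entirely. Set $\sigma=w^{1-p'}\in A_{p',Z}$ and build $\mathcal R_1$ from $M_{\calD_Z}$ on $L^p(w)$ and $\mathcal R_2$ from $M_{\calD_Z}$ on $L^{p'}(\sigma)$; both operators are bounded by Proposition~\ref{prop:FPmax} together with the duality $w\in A_{p,Z}\iff\sigma\in A_{p',Z}$. Put $H_1=\mathcal R_1\big((|f|+|g|)/(\|f\|_{L^p(w)}+\|g\|_{L^p(w)})\big)$, and for a dualizing $h\ge 0$ with $\|h\|_{L^{p'}(\sigma)}\le 1$ put $H_2=\mathcal R_2 h$; then $W:=H_1^{1-p_0}H_2\in A_{p_0,Z}$ by the factorization computation. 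The bookkeeping
$$
\int |f|\,h\le \int |f|\,H_2\le \|f\|_{L^{p_0}(W)}\Big(\int H_1H_2\Big)^{1/p_0'},\qquad \int H_1H_2\le \|H_1\|_{L^p(w)}\|H_2\|_{L^{p'}(\sigma)}\le 4,
$$
then feeds into the hypothesis at $p_0$, and the pointwise bound $|g|\le(\|f\|_{L^p(w)}+\|g\|_{L^p(w)})H_1$ lets you return from $\|g\|_{L^{p_0}(W)}$ to $\|g\|_{L^p(w)}$. Replacing your single iteration and the false $(Rh)w\in A_{p_0,Z}$ step with this two-iteration construction is what is needed to complete the proof.
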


As usual, extrapolation is extremely useful -- maybe the best motivation for weighted estimates.
For example, we readily now have that
\begin{equation}\label{E:eq8}
\Big\| \Big( \sum_i |M_{\calD_{\sub}}f_i|^r \Big)^{\frac{1}{r}} \Big\|_{L^p(w)} \lesssim \Big\| \Big( \sum_i |f_i|^r \Big)^{\frac{1}{r}} \Big\|_{L^p(w)} 
\end{equation}
for all $p,r \in (1,\infty)$ and $w \in A_{p, Z}$.

We will still need another variant of the maximal function in addition to $M_{\calD_Z}$ and $M_{\calD_{\sub}}$.
For $\lambda=2^k$, $k \in \Z$, define
$
\calD_{\lambda} = \{K \in \calD: \lambda \ell(K^1)\ell(K^2) = \ell(K^3)\} 
$
and the related maximal operator
\begin{equation*}
M_{\calD_{\lambda}} f(x) = \sup_{R \in \calD_{\lambda}} \langle |f| \rangle_R 1_R(x).
\end{equation*}
Notice that if $\lambda = 2^k$ and $k \le 0$, then
$M_{\calD_\lambda}$ is pointwise dominated by $M_{\calD_{\sub}}$.

In order to prove a certain weighted estimate for the operators $M_{\calD_{\lambda}}$ we recall the following interpolation result
due to Stein and Weiss, see \cite[Theorem 2.11]{SW}.

\begin{prop}\label{E:prop2}
Suppose that $1 \le p_0,p_1 \le \infty$ and
let $w_0$ and $w_1$ be positive weights. Suppose that $T$ is a sublinear operator that satisfies the estimates
$$
\| T f  \|_{L^{p_i}(w_i)} \le M_i \| f \|_{L^{p_i}(w_i)}, \quad i=1,2.
$$
Let $t \in (0,1)$ and define
$
1/p=(1-t)/p_0+t/p_1
$
and $w=w_0^{p(1-t)/p_0}w_1^{pt/p_1}$. Then $T$ satisfies the estimate
$$
\| T f   \|_{L^{p}(w)} \le M_0^{1-t}M_1^t \| f  \|_{L^{p}(w)}.
$$
\end{prop}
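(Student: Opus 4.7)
The plan is to reduce the weighted bound to an unweighted bound for an auxiliary family of operators, and then apply the Riesz--Thorin--Stein complex interpolation theorem. First I would linearize: if $T$ is only sublinear (e.g.\ a maximal operator as in the intended applications $M_{\calD_\lambda}$), for each $x$ one selects a measurable choice achieving (nearly) the supremum so that the inequality $\abs{Tf(x)} \leq \widetilde T f(x)$ holds for a \emph{linear} operator $\widetilde T$ that still satisfies the two endpoint bounds with constants $M_0, M_1$; it suffices to prove the conclusion for $\widetilde T$.

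Next I would introduce the analytic family. For $z$ in the closed strip $\{0 \le \Re z \le 1\}$, set
\begin{equation*}
  w(z) := w_0^{(1-z)/p_0}\, w_1^{z/p_1}, \qquad W(z) := w_0^{-(1-z)/p_0}\, w_1^{-z/p_1},
\end{equation*}
and define
\begin{equation*}
  S_z f := w(z)\cdot \widetilde T\bigl(W(z)\, f\bigr).
\end{equation*}
For simple functions $f, g$, the scalar function $z \mapsto \int S_z f \cdot \overline{g}$ is holomorphic on the open strip and continuous and bounded on its closure (after a standard truncation of the weights $w_i$ to lie in $[\eps, \eps^{-1}]$, to be removed by monotone convergence at the end).

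Then I would check the boundary behavior. On $\Re z = 0$, we have $\abs{w(z)} = w_0^{1/p_0}$ and $\abs{W(z)} = w_0^{-1/p_0}$, so the hypothesis at $p_0$ gives
\begin{equation*}
  \Norm{S_z f}{L^{p_0}} = \BNorm{w_0^{1/p_0}\, \widetilde T\bigl(w_0^{-1/p_0}\, \tilde f\bigr)}{L^{p_0}} \leq M_0 \Norm{\tilde f}{L^{p_0}} = M_0 \Norm{f}{L^{p_0}},
\end{equation*}
where $\tilde f := e^{i\Im z \cdot (\cdots)} f$ has the same modulus as $f$. Symmetrically, on $\Re z = 1$ one obtains $\Norm{S_z f}{L^{p_1}} \leq M_1 \Norm{f}{L^{p_1}}$.

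Finally, Stein's interpolation theorem (equivalently, the three-lines lemma applied to $z \mapsto \int S_z f \cdot g$ paired against simple $g$) yields
\begin{equation*}
  \Norm{S_t f}{L^p} \leq M_0^{1-t} M_1^t \Norm{f}{L^p}.
\end{equation*}
At $z = t$ the definitions collapse to $w(t) = w^{1/p}$ and $W(t) = w^{-1/p}$, so $S_t f = w^{1/p}\, \widetilde T(w^{-1/p} f)$. Substituting $f \mapsto w^{1/p} f$ converts this into $\Norm{\widetilde T f}{L^p(w)} \leq M_0^{1-t} M_1^t \Norm{f}{L^p(w)}$, and then $\Norm{Tf}{L^p(w)} \leq \Norm{\widetilde T f}{L^p(w)}$ completes the proof. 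The only genuine obstacle is the linearization/truncation bookkeeping required to legally run the analytic argument with possibly unbounded or degenerate weights $w_0, w_1$; once those technicalities are handled (in the usual way, by monotone approximation), the interpolation step is entirely mechanical.
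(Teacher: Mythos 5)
The paper does not prove this proposition; it is a citation of Stein--Weiss \cite[Theorem 2.11]{SW}, so there is no internal proof to compare against. Your argument -- conjugate $T$ by the holomorphic family of weights $w(z)$, check the two boundary lines of the strip, and apply the three-lines/Stein interpolation theorem, with a linearization to cover the sublinear case -- is the standard proof of the Stein--Weiss theorem and matches the cited reference in spirit.

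Two of your technical remarks are stated loosely enough to count as small gaps. First, the linearization inequality is written backwards: you cannot have $|Tg(x)| \le \widetilde T g(x)$ for \emph{all} $g$ while $\widetilde T$ still inherits the endpoint bounds. The correct set-up is the opposite: for the single function $f$ you wish to bound, choose a measurable selection so that $\widetilde T f \ge (1-\eps)\,Tf$, and observe that $|\widetilde T g| \le Tg$ holds for \emph{every} $g$, which is what transfers the two endpoint constants $M_0, M_1$ to the linear $\widetilde T$; one then lets $\eps \to 0$. Second, truncating the weights $w_i$ to $[\eps, \eps^{-1}]$ is not a legal regularization: the hypothesis $\| Tf\|_{L^{p_i}(w_i)} \le M_i \|f\|_{L^{p_i}(w_i)}$ does not transfer to a truncated weight $\wt w_i$, in either direction of truncation. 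The safe version is to restrict $f$ and the dual test function to sets $E_\delta=\{\delta < w_0, w_1 < \delta^{-1}\}$ (so that the integrals defining the analytic family are finite and continuity up to the boundary holds), prove the inequality there, and pass $\delta\to 0$; or, alternatively, approximate the $w_i$ from below by simple weights and use linearity of $\widetilde T$ to keep $z \mapsto \int S_zf\cdot g$ explicitly holomorphic. These are exactly the technicalities that the Stein--Weiss paper spends its effort on; the interpolation skeleton you wrote is correct.
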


Now we state the weighted estimates for the maximal operators $M_{\calD_{\lambda}}$.

\begin{prop}\label{E:prop3}
Let $\lambda=2^k$, $k=0,1,2,\dots$, $p \in (1, \infty)$ and $w \in A_{p,Z}$. Then, there exist constants
$\eta=\eta(p,w)>0$ and $C=C(p,w)>0$ so that
$$
\| M_{\calD_{\lambda}} f \|_{L^p(w)} \le C \lambda^{1-\eta} \| f \|_{L^p(w)}.
$$
\end{prop}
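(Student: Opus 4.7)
The plan is to interpolate, via Stein--Weiss (Proposition \ref{E:prop2}), between two very soft endpoint bounds for $M_{\calD_\lambda}$: a trivial pointwise gain of $\lambda$ coming from enlargement to the Zygmund scale, and the completely trivial $L^\infty\to L^\infty$ bound. The one nontrivial ingredient will be the self-improvement property $w\in A_{p,Z}\Rightarrow w\in A_{p-\delta,Z}$ for some $\delta=\delta(p,w)>0$, which is what lets the interpolation gain a power strictly smaller than $\lambda^1$.

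\textbf{Pointwise $\lambda$-bound.} For any $K=K^1\times K^{2,3}\in\calD_\lambda$ with $\lambda=2^k\ge 1$, let $\widetilde K^1\in\calD^1$ be the $k$-th dyadic ancestor of $K^1$, so that $\ell(\widetilde K^1)=\lambda\ell(K^1)$. Then $\widetilde K:=\widetilde K^1\times K^{2,3}$ satisfies $\ell(\widetilde K^1)\ell(K^2)=\lambda\ell(K^1)\ell(K^2)=\ell(K^3)$ and hence belongs to $\calD_Z$, with $|\widetilde K|=\lambda|K|$ and $K\subset\widetilde K$. For $x\in K$ this gives
$$
\langle|f|\rangle_K 1_K(x)\le \frac{|\widetilde K|}{|K|}\langle|f|\rangle_{\widetilde K} 1_{\widetilde K}(x) = \lambda\,\langle|f|\rangle_{\widetilde K} 1_{\widetilde K}(x)\le \lambda\,M_{\calD_Z}f(x),
$$
so $M_{\calD_\lambda}f\le\lambda M_{\calD_Z}f$ pointwise. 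Combined with Proposition \ref{prop:FPmax}, this yields $\|M_{\calD_\lambda}\|_{L^q(v)\to L^q(v)}\le C_{q,v}\,\lambda$ for every $q\in(1,\infty)$ and $v\in A_{q,Z}$. Coupled with the trivial $\|M_{\calD_\lambda} f\|_{L^\infty}\le\|f\|_{L^\infty}$, these are the two endpoints I will interpolate.

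\textbf{Interpolation and openness.} By self-improvement of $A_{p,Z}$ (see below) there exists $\delta=\delta(p,w)>0$ with $w\in A_{p-\delta,Z}$. Set $p_0:=p-\delta$, $w_0:=w$, $p_1:=\infty$, $w_1\equiv 1$, and $t:=\delta/p\in(0,1)$; then $(1-t)/p_0+t/p_1=(1-t)/p_0=1/p$ and $w_0^{p(1-t)/p_0}w_1^{pt/p_1}=w^{1}\cdot 1=w$, so the weight produced by Proposition \ref{E:prop2} is precisely $w$. Applying Stein--Weiss with $M_0=C_{p_0,w}\lambda$ (trivial $\lambda$-bound on $L^{p_0}(w)$) and $M_1=1$ (the $L^\infty$-bound), we obtain
$$
\|M_{\calD_\lambda}f\|_{L^p(w)}\le (C_{p_0,w}\lambda)^{1-t}\cdot 1^{t}\|f\|_{L^p(w)} = C(p,w)\,\lambda^{1-\eta}\|f\|_{L^p(w)},
$$
with $\eta:=t=\delta/p>0$, which is exactly the stated bound.

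\textbf{Main obstacle.} The only genuinely nontrivial step is the openness of $A_{p,Z}$ invoked above. If it is not available as a black box from Fefferman--Pipher \cite{FP}, I would derive it in the standard Coifman--Fefferman fashion: use the $L^p(w)$-boundedness of $M_{\calD_Z}$ (Proposition \ref{prop:FPmax}) and the Calder\'on--Zygmund stopping-time machinery on Zygmund rectangles to establish a reverse-H\"older inequality $(\fint_I w^{1+\varepsilon})^{1/(1+\varepsilon)}\lesssim\fint_I w$ for $I\in\mathcal R_Z$, and then convert this into $w\in A_{p-\delta,Z}$ in the usual way. Everything else in the argument is a direct application of interpolation to two essentially trivial endpoint bounds.
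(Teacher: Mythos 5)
Your proof is correct and reaches the stated conclusion, but it organizes the Stein--Weiss interpolation differently from the paper. The paper keeps the exponent fixed ($p_0=p_1=p$) and interpolates in the \emph{weight}: one endpoint is the unweighted $L^p$ bound (coming from $M_{\calD_\lambda}\le M_\calD$, the tri-parameter strong maximal function) with constant independent of $\lambda$, the other is the $L^p(w^{1+\epsilon})$ bound with constant $C\lambda$, where $w^{1+\epsilon}\in A_{p,Z}$ is supplied by a Zygmund-adapted reverse H\"older inequality. You instead interpolate in the \emph{exponent}, pairing the trivial $L^\infty\to L^\infty$ bound with the $\lambda$-bound on $L^{p-\delta}(w)$, which requires the openness $w\in A_{p,Z}\Rightarrow w\in A_{p-\delta,Z}$. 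Both are legitimate routes through Proposition~\ref{E:prop2}, and both produce $\eta$ depending on $p$ and $[w]_{A_{p,Z}}$; yours has the small aesthetic advantage of needing only the trivial $L^\infty$ endpoint rather than the unweighted tri-parameter maximal theorem.

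The one place where your writeup is weaker than it needs to be is the openness step. You flag it as the main obstacle and propose to derive reverse H\"older via a Calder\'on--Zygmund stopping-time argument over Zygmund rectangles, in the Coifman--Fefferman style. That is substantially harder than necessary and would require you to develop dyadic machinery on $\mathcal R_Z$ that the paper never sets up. The cleaner route, and the one the paper actually uses (in the proof of this very proposition), is to iterate the \emph{one-parameter} reverse H\"older inequality along the factorized structure: since $w(\cdot,x_2,x_3)\in A_p(\R)$ uniformly in $(x_2,x_3)$ and $\ave{w}_{I^1}\in A_{p,\ell(I^1)}(\R^2)$ uniformly in $I^1$, one gets $\ave{w^{1+\epsilon}}_I\lesssim\ave{w}_I^{1+\epsilon}$ for $I\in\mathcal R_Z$ directly, and the same for $\sigma=w^{-1/(p-1)}\in A_{p',Z}$. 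The reverse H\"older for $\sigma$ then gives your openness by the usual substitution $q-1=(p-1)/(1+\epsilon)$. So the ingredient you need is already in reach with a two-line iteration, and invoking it would close the only soft spot in your argument without any new stopping-time work.
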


\begin{proof}
Notice that $M_{\calD_{\lambda}} f \le M_\calD f$, where $M_\calD$ is the tri-parameter strong maximal function
$$
M_\calD f(x) = \sup_{R \in \calD} \langle | f | \rangle_R 1_R(x).
$$
Therefore, we have the unweighted estimate
$
\| M_{\calD_{\lambda}} f \|_{L^p} \le C_0  \| f \|_{L^p},
$
where $C_0=C_0(p)$.

On the other hand, if $K=K^1 \times K^2 \times K^3 \in \calD_{\lambda}$, then
$K \subset (K^1)^{(k)} \times K^2 \times K^3 \in \calD_Z$, and thus
$M_{\calD_{\lambda}}f \le \lambda M_{\calD_Z} f$.
Using Proposition \ref{prop:FPmax} this gives that
\begin{equation}\label{E:eq92}
\| M_{\calD_{\lambda}} f \|_{L^p(v)} \le C_1 \lambda \| f \|_{L^p(v)}, \quad v \in A_{p,Z},
\end{equation}
where $C_1=C_1(p,v)$.

Consider now the weight $w \in A_{p,Z}$. Recall that  
$[w(\cdot, x_2,x_3)]_{A_p}  \le [w]_{A_{p,Z}}$ uniformly on $x_2, x_3$  and 
$[\langle w \rangle_{I^1}]_{A_{p, \ell(I^1)}} \le [ w ]_{A_{p,Z}}$ uniformly on $I^1$.
Therefore, if $I$ is a Zygmund rectangle, by the standard one-parameter reverse H\"older there exists
an $\epsilon=\epsilon([w]_{A_{p,Z}})>0$ so that
$$
\langle w^{1+\epsilon} \rangle_I
=\ave{ \ave{ w^{1+\epsilon} }_{I^1}}_{I^{2,3}}
\lesssim \ave{ \ave{ w }^{1+\epsilon}_{I^1}}_{I^{2,3}}
\lesssim \ave{ \ave{ w }_{I^1}}^{1+\epsilon}_{I^{2,3}}
=\langle w \rangle_I^{1+\epsilon}.
$$
Likewise, since $w^{-1/(p-1)} \in A_{p',Z}$ (with $[w^{-1/(p-1)}]_{A_{p',Z}}=[w]_{A_{p,Z}}^{p'-1}$) there exists an 
$\epsilon=\epsilon([w]_{A_{p,Z}})>0$ so that 
$$
\bave{w^{-\frac{1+\epsilon}{p-1}}}_I \lesssim \bave{w^{\frac{-1}{p-1}}}_I^{1+\epsilon}.
$$
These two estimates imply that there exists an $\epsilon=\epsilon([w]_{A_{p,Z}})>0$ so that
$w^{1+\epsilon} \in A_{p,Z}$. Therefore, \eqref{E:eq92} shows that
\begin{equation}\label{E:eq93}
\| M_{\calD_{\lambda}} f \|_{L^p(w^{1+\epsilon})} \le C_1(p,w) \lambda \| f \|_{L^p(w^{1+\epsilon})}.
\end{equation}

Apply the above interpolation theorem, Proposition \ref{E:prop2}, with
$p_0=p_1=p$, $w_0=1$, $w_1=w^{(1+\epsilon)}$ and $t=1/(1+\epsilon)$. Then
$1^{(1-t)}w^{t(1+\epsilon)}=w$ and we get
$$
\| M_{\calD_{\lambda}} f \|_{L^p(w)} \le C_0^{\epsilon/(1+\epsilon)} (C_1(p,w) \lambda)^{1/(1+\epsilon)}
\| f \|_{L^p(w)}.
$$
We are done.
\end{proof}

\begin{cor}
Let $\lambda=2^k$, $k=0,1,2,\dots$, $p \in (1, \infty)$ and $w \in A_{p,Z}$. Then, there exist constants
$\eta=\eta(p,w)>0$ and $C=C(p,w)>0$ so that
\begin{equation}\label{E:eq121}
\Big\| \Big(\sum_{j=1}^\infty \big[M_{\calD_{\lambda}} f_j\big]^2\Big)^{\frac 12}\Big\|_{L^p(w)} \le C \lambda^{1-\eta} \Big\| \Big(\sum_{j=1}^\infty |f_j|^2\Big)^{\frac 12} \Big\|_{L^p(w)}.
\end{equation}
\end{cor}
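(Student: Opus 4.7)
My plan is to mirror exactly the Stein--Weiss interpolation argument used in the scalar Proposition \ref{E:prop3}, but applied now to the $\ell^2$-valued sublinear operator $U\colon (f_j)_j \mapsto (M_{\calD_\lambda} f_j)_j$. The two endpoints I would need are an unweighted vector-valued bound with constant independent of $\lambda$ and an $A_{p,Z}$-weighted vector-valued bound with a \emph{linear} factor of $\lambda$. Interpolating these through the self-improvement $w \rightsquigarrow w^{1+\epsilon}$ of the $A_{p,Z}$ class then upgrades the $\lambda$ to $\lambda^{1-\eta}$ on the original weight $w$.

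First I would establish the unweighted vector-valued bound. Since $M_{\calD_\lambda} \le M_{\calD}$ pointwise, where $M_{\calD}$ denotes the dyadic strong (tri-parameter) maximal function on $\R^3$, the Fefferman--Stein vector-valued inequality for $M_{\calD}$ (obtained by iterating the one-dimensional version parameter by parameter) gives
\[
\Big\|\Big(\sum_j |M_{\calD_\lambda} f_j|^2\Big)^{1/2}\Big\|_{L^p}
\le C_0(p) \Big\|\Big(\sum_j |f_j|^2\Big)^{1/2}\Big\|_{L^p},
\]
with $C_0$ independent of $\lambda$. For the weighted endpoint I would use, exactly as in Proposition \ref{E:prop3}, that any $K\in\calD_\lambda$ with $\lambda=2^k$ is contained in the Zygmund rectangle $(K^1)^{(k)}\times K^{2,3}\in\calD_Z$ of volume $\lambda|K|$, whence $M_{\calD_\lambda} f \le \lambda M_{\calD_Z} f \le \lambda M_{\calD_{\sub}} f$ pointwise. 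Combined with the vector-valued extrapolation estimate \eqref{E:eq8} for $M_{\calD_{\sub}}$ (with $r=2$), this yields, for every $v \in A_{p,Z}$,
\[
\Big\|\Big(\sum_j |M_{\calD_\lambda} f_j|^2\Big)^{1/2}\Big\|_{L^p(v)}
\le C_1(p,v)\,\lambda \Big\|\Big(\sum_j |f_j|^2\Big)^{1/2}\Big\|_{L^p(v)}.
\]

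With these two endpoints in hand, the final step is the Stein--Weiss interpolation. The reverse H\"older self-improvement recorded inside the proof of Proposition \ref{E:prop3} provides $\epsilon = \epsilon([w]_{A_{p,Z}}) > 0$ with $w^{1+\epsilon} \in A_{p,Z}$. Applying the vector-valued form of Proposition \ref{E:prop2} to $U$ with the choices $p_0 = p_1 = p$, $w_0 = 1$, $w_1 = w^{1+\epsilon}$, $t = 1/(1+\epsilon)$ produces the interpolated weight $w_0^{1-t} w_1^{t} = w$ and the interpolated constant
\[
C_0^{1-t}\bigl(C_1(p,w^{1+\epsilon})\,\lambda\bigr)^{t}
= C(p,w)\,\lambda^{1/(1+\epsilon)}
= C(p,w)\,\lambda^{1-\eta},
\qquad \eta := \frac{\epsilon}{1+\epsilon} > 0,
\]
which is exactly \eqref{E:eq121}.

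The main non-routine point is justifying Stein--Weiss in a vector-valued setting, since Proposition \ref{E:prop2} is stated for scalar sublinear operators. I expect to handle this by viewing $U$ as an $\ell^2$-valued sublinear operator on $L^p(w;\ell^2)$ and invoking the standard extension of complex interpolation to Banach-space-valued Lebesgue spaces. If one prefers to stay within the scalar framework, an equivalent route is to linearize $M_{\calD_\lambda}$ by selecting for each $x$ a single nearly extremal rectangle in $\calD_\lambda$ containing $x$; the resulting linear operator inherits both endpoint bounds, scalar Stein--Weiss applies componentwise, and the $\ell^2$-sum is then taken via Minkowski. Either way, the interpolation step is the only ingredient requiring any care; the rest is a direct transcription of the scalar argument.
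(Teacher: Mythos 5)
Your route is genuinely different from the paper's, and the comparison is instructive. The paper observes that at $p=2$ the vector-valued bound is an \emph{immediate} consequence of the scalar Proposition \ref{E:prop3}, since
\[
\Big\|\Big(\sum_j|M_{\calD_\lambda}f_j|^2\Big)^{1/2}\Big\|_{L^2(w)}^2=\sum_j\|M_{\calD_\lambda}f_j\|_{L^2(w)}^2,
\]
so no vector-valued interpolation theory is needed at all; one then tracks the dependence of the constant and of $\eta$ on $[w]_{A_{2,Z}}$ (the paper takes $\epsilon\sim[w]_{A_{2,Z}}^{-1}$, hence $\eta\sim c[w]_{A_{2,Z}}^{-1}$, so the $L^2$ constant is an increasing function $N$ of $[w]_{A_{2,Z}}$), and applies Rubio de Francia extrapolation for the Zygmund Muckenhoupt basis to get all $p\in(1,\infty)$ with $\eta\sim c(c_p[w]_{A_{p,Z}}^{\alpha(p)})^{-1}>0$. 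You instead re-run Stein--Weiss directly on the $\ell^2$-valued operator, using the Fefferman--Stein bound for the dyadic strong maximal function as the unweighted endpoint and \eqref{E:eq8} as the weighted endpoint; both endpoints are set up correctly, and the overall plan is sound provided you can justify Stein--Weiss interpolation for Hilbert-space-valued $L^p$ with change of weight (which is true, via complex interpolation of $L^p(w;\ell^2)$, but is not covered by the scalar Proposition \ref{E:prop2} as stated and does require an argument). The paper's route trades this justification for the easier $p=2$ observation plus extrapolation, and is shorter.

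One concrete flaw in your write-up: the proposed ``equivalent route'' — linearize, apply scalar Stein--Weiss componentwise, then take the $\ell^2$-sum via Minkowski — does not work for $p\neq 2$. Applying scalar Stein--Weiss to the (fixed) linearization $L$ gives $\|Lg\|_{L^p(w)}\lesssim\lambda^{1-\eta}\|g\|_{L^p(w)}$ for each scalar $g$, but that does not imply the $L^p(w;\ell^2)\to L^p(w;\ell^2)$ bound for $(f_j)\mapsto(Lf_j)$; the two are equivalent only at $p=2$. If you want the linearization route, you still need to interpolate the $\ell^2$-valued endpoints, not the scalar ones. Alternatively, simply do $p=2$ first (where both issues evaporate) and extrapolate, as the paper does.
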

\begin{proof}
It is clear that when $p=2$, this is an immediate corollary of Proposition \ref{E:prop3}. Moreover, it is easy to see that in this case one can take  $\epsilon\sim [w]_{A_{2,Z}}^{-1}$ and this means that 
\[
\Big\| \Big(\sum_{j=1}^\infty \big[M_{\calD_{\lambda}} f_j\big]^2\Big)^{\frac 12}\Big\|_{L^2(w)}  \le K([w]_{A_{2,Z}}) \lambda^{1-c[w]_{A_{2,Z}}^{-1}}\Big\| \Big(\sum_{j=1}^\infty |f_j|^2\Big)^{\frac 12} \Big\|_{L^2(w)},
\]where $K$ is some increasing function. Hence $N(x)=K(x)\lambda^{1-cx^{-1}}$ is also an increasing function on $[1,\infty)$. Note that the standard extrapolation \cite[Theorem 3.1]{DU} holds for general Muckenhoupt basis. Thus, by extrapolation we have 
\[
\Big\| \Big(\sum_{j=1}^\infty \big[M_{\calD_{\lambda}} f_j\big]^2\Big)^{\frac 12}\Big\|_{L^p(w)}  \le N(c_p[w]_{A_{p,Z}}^{\alpha(p)})\Big\| \Big(\sum_{j=1}^\infty |f_j|^2\Big)^{\frac 12} \Big\|_{L^p(w)},
\]which gives the desired estimate with $\eta\sim c(c_p[w]_{A_{p,Z}}^{\alpha(p)})^{-1}$. 
\end{proof}

\subsection{Haar functions}\label{E:subsec2}
For an interval $J \subset \R$ we denote by $J_{l}$ and $J_{r}$ the left and right
halves of $J$, respectively. We define
$h_{J}^0 = |J|^{-1/2}1_{J}$ and $h_{J}^1 = |J|^{-1/2}(1_{J_{l}} - 1_{J_{r}})$.
The reader should carefully notice that $h_I^0$ is the non-cancellative Haar function for us and that
in some other papers a different convention is used.

Let now $J = J_1 \times \cdots \times J_d \subset \R^d$ be a rectangle, and define the Haar function $h_J^{\eta}$, $\eta = (\eta_1, \ldots, \eta_d) \in \{0,1\}^d$, by setting
$
h_J^{\eta} = h_{J_1}^{\eta_1} \otimes \cdots \otimes h_{J_d}^{\eta_d}.
$
If $\eta \ne 0$ the Haar function is cancellative: $\int h_J^{\eta} = 0$. We mostly
exploit notation by suppressing the presence of $\eta$, and write $h_J$ for some $h_J^{\eta}$, $\eta \ne 0$. 

The next point is crucial for the correct understanding of this paper.
If we e.g. view $\R^d$ as the bi-parameter product space $\R^d = \R^{d_1} \times \R^{d_2}$, and we have a rectangle
$I^1 \times I^2$, where $I^m \subset \R^{d_m}$, then $h_{I^1 \times I^2}$ still means just the one-parameter Haar function: $h_{I^1 \times I^2} = h_{I^1 \times I^2}^{\eta}$
for some $\eta \in \{0,1\}^d \setminus \{0\}$. It is \textbf{not} the bi-parameter Haar function on the rectangle $I^1 \times I^2$, which would have the different form
$h_{I^1}^{\eta} \otimes h_{I^2}^{\rho}$ for some $\eta  \in \{0,1\}^{d_1} \setminus \{0\}$ and $\rho \in \{0,1\}^{d_2} \setminus \{0\}$. As with the martingale differences,
when we need a product object, we write it explicitly with this tensor / iterative notation.

In particular, given $I = I^1 \times I^2 \times I^3 \in \calD_Z \subset \prod_{m=1}^3 \calD^m$, we have
$$
\Delta_{I, Z}f = \Delta_{I^1} \Delta_{I^{2,3}} f = \langle f, h_{I^1} \otimes h_{I^{2,3}} \rangle h_{I^1} \otimes h_{I^{2,3}} =: 
\langle f, h_{I,Z} \rangle h_{I,Z}.
$$
We can always
write
$
\langle f, h_{I, Z} \rangle = \langle \Delta_{I,Z} f, h_{I, Z}\rangle.
$
This is used often.

\subsection{Zygmund square functions}
We start with an obvious lemma.
\begin{lem}\label{H:lem2}
  Let $\calD = \calD^1 \times \calD^2$ and $\calD_{\lambda} = \{K \in \calD \colon \ell(K^2) = \lambda \ell(K^1)\}$, where $\lambda$ is dyadic.
  We have uniformly on $\lambda$ the square function estimate
  $$
  \Big\| \Big( \sum_{K \in \calD_{\lambda}} |\Delta_K g|^2 \Big)^{1/2} \Big\|_{L^p(w)} \sim \|g\|_{L^p(w)}
  $$
  for all $p \in (1,\infty)$ and $w \in A_{p, \lambda}$, see \eqref{eq:dilatedweights}.
  \end{lem}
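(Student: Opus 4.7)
The plan is to reduce the claim to the classical weighted Littlewood--Paley inequality in $\R^2$ via an anisotropic change of variables that straightens the $\lambda$-dilated rectangles into squares.

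First I would verify that $\calD_\lambda$ is itself a dyadic filtration of $\R^2$: the Zygmund constraint $\ell(K^2)=\lambda\ell(K^1)$ couples the two side lengths, so any two rectangles in $\calD_\lambda$ are pairwise disjoint or nested, and each $K\in\calD_\lambda$ has exactly four $\calD_\lambda$-children obtained by simultaneously bisecting both $K^1$ and $K^2$ (halving both side lengths preserves the Zygmund constraint). Consequently $\{\Delta_K\}_{K\in\calD_\lambda}$ is the martingale difference sequence of the filtration generated by $\calD_\lambda$, and one has the unconditional decomposition $g=\sum_{K\in\calD_\lambda}\Delta_K g$ in $L^p(w)$.

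Next I would introduce $U_\lambda(x_1,x_2):=(x_1,\lambda^{-1}x_2)$ and set $g_\lambda:=g\circ U_\lambda^{-1}$, $w_\lambda:=w\circ U_\lambda^{-1}$, and $\widetilde K:=U_\lambda(K)$. Each $\widetilde K$ is an ordinary square of side length $\ell(K^1)$, and $\widetilde\calD:=\{\widetilde K:K\in\calD_\lambda\}$ is a standard dyadic lattice of squares on $\R^2$. A direct change of variables gives $\langle f\rangle_K=\langle f\circ U_\lambda^{-1}\rangle_{\widetilde K}$, whence $\Delta_K g=(\Delta_{\widetilde K}g_\lambda)\circ U_\lambda$, and
\begin{equation*}
\langle w_\lambda\rangle_{\widetilde K}\langle w_\lambda^{-1/(p-1)}\rangle_{\widetilde K}^{p-1}=\langle w\rangle_K\langle w^{-1/(p-1)}\rangle_K^{p-1}.
\end{equation*}
Since $U_\lambda$ sends the rectangles featuring in \eqref{eq:dilatedweights} bijectively onto arbitrary squares of $\R^2$, this identity yields $[w_\lambda]_{A_p(\R^2)}=[w]_{A_{p,\lambda}}$, where $A_p(\R^2)$ is the ordinary (square-based) one-parameter Muckenhoupt class. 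The Jacobian of $U_\lambda^{-1}$ is the constant $\lambda$, contributing equal overall factors $\lambda^{1/p}$ on both sides of the desired equivalence, and they cancel.

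The problem is thereby reduced to the classical statement that, for any standard dyadic lattice $\widetilde\calD$ of squares in $\R^2$ and any $w_\lambda\in A_p(\R^2)$,
\begin{equation*}
\Big\|\Big(\sum_{\widetilde K\in\widetilde\calD}|\Delta_{\widetilde K}g_\lambda|^2\Big)^{1/2}\Big\|_{L^p(w_\lambda)}\sim\|g_\lambda\|_{L^p(w_\lambda)},
\end{equation*}
with implicit constants depending only on $p$ and $[w_\lambda]_{A_p}$. This is standard, following for instance from vector-valued dyadic Calder\'on--Zygmund theory or from weighted dyadic martingale theory. Transferring the inequality back through $U_\lambda$ yields the lemma with constants depending only on $p$ and $[w]_{A_{p,\lambda}}$, which is precisely the required uniformity in $\lambda$. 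The only real bookkeeping to watch is the verification that the $A_{p,\lambda}$ characteristic matches the pushed-forward classical $A_p$ characteristic exactly; once that is observed, no further obstacle arises, as the underlying square function inequality on the reduced side is entirely classical.
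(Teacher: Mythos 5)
Your proposal is correct and takes essentially the same approach as the paper, which also deduces the lemma from the classical square-function estimate by a change of variables that rescales the second coordinate by $\lambda^{-1}$. You have simply spelled out the bookkeeping (transfer of the $A_p$ characteristic, cancellation of the Jacobian factors) that the paper leaves implicit.
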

  \begin{proof}
  Follows from a simple change of variable from the classical case $\lambda = 1$.
  \end{proof}
We now prove the weighted boundedness of the basic Zygmund-type square function
$$
S_{Z} f := \Big( \sum_{I \in \calD_Z} |\Delta_{I,Z} f|^2 \Big)^{1/2}.
$$
\begin{thm}\label{H:thm1}
We have
$
\|S_Zf\|_{L^p(w)} \sim \|f\|_{L^p(w)}
$
for all $p \in (1,\infty)$ and $w \in A_{p, Z}$.
\end{thm}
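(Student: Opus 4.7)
My plan is to prove both directions of the equivalence by iterating weighted one-parameter square function estimates, leveraging the two structural facts about an $A_{p,Z}$-weight: $w(\cdot,x_2,x_3)\in A_p(\R)$ uniformly in $(x_2,x_3)$, and $\ave{w}_{I^1}\in A_{p,\ell(I^1)}(\R^2)$ uniformly in $I^1\in\calD^1$. The unweighted $p=2$ case is immediate from the orthogonality of the Zygmund martingale differences (Lemma~\ref{H:lem1}): $\|S_Zf\|_{L^2}^2=\sum_{I\in\calD_Z}\|\Delta_{I,Z}f\|_{L^2}^2=\|f\|_{L^2}^2$.

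For the weighted upper bound $\|S_Zf\|_{L^p(w)}\lesssim\|f\|_{L^p(w)}$, the starting point is the pointwise identity
\begin{equation*}
  S_Zf(x)^2=\sum_{I^1\in\calD^1}\sum_{I^{2,3}\in\calD^{2,3}_{\ell(I^1)}}|\Delta_{I^{2,3}}(\Delta_{I^1}f)(x)|^2,
\end{equation*}
in which the inner sum is precisely the square function of Lemma~\ref{H:lem2} with $\lambda=\ell(I^1)$ applied to $\Delta_{I^1}f$. The key algebraic fact is the Haar factorization $\Delta_{I^1}f(x_1,x_2,x_3)=h_{I^1}(x_1)\,c_{I^1}(x_2,x_3)$, where $c_{I^1}(x_2,x_3)=\int f(y_1,x_2,x_3)h_{I^1}(y_1)\ud y_1$. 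Integrating in $(x_2,x_3)$ first and using $\int_{I^1}|h_{I^1}(x_1)|^2 w(x_1,x_2,x_3)\ud x_1=\ave{w}_{I^1}(x_2,x_3)\in A_{p,\ell(I^1)}(\R^2)$ (with characteristic controlled by $[w]_{A_{p,Z}}$), Lemma~\ref{H:lem2} eliminates the inner $(x_2,x_3)$-square function. The outer sum over $I^1$ is then the classical one-parameter dyadic Littlewood--Paley square function in $x_1$ with the slice-weight $v(x_1)=w(x_1,x_2,x_3)\in A_p(\R)$, bounded with uniform constant. I would arrange the argument via two successive Khintchine randomizations (and Fubini) so as to pass between $\ell^2$-sums and Rademacher averages, handling each parameter in its own weighted $L^p$ setting.

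For the lower bound $\|f\|_{L^p(w)}\lesssim\|S_Zf\|_{L^p(w)}$ I use duality. Setting $\sigma:=w^{1-p'}\in A_{p',Z}$ and taking any $g$ with $\|g\|_{L^{p'}(\sigma)}\le 1$, the expansion \eqref{zygexp} and Lemma~\ref{H:lem1} give $\ave{f,g}=\sum_{I\in\calD_Z}\ave{\Delta_{I,Z}f,\Delta_{I,Z}g}$; Cauchy--Schwarz in $I$ followed by H\"older in $x$ then yields
\begin{equation*}
  |\ave{f,g}|\le\int S_Zf\cdot S_Zg\le\|S_Zf\|_{L^p(w)}\|S_Zg\|_{L^{p'}(\sigma)}\lesssim\|S_Zf\|_{L^p(w)},
\end{equation*}
upon applying the upper bound from the previous step to $g$ with exponent $p'$ and weight $\sigma$; taking the supremum over $g$ delivers the claim.

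The main obstacle is the weighted iteration in the upper-bound step: the $(x_2,x_3)$-square function from Lemma~\ref{H:lem2} genuinely depends on the scale $\lambda=\ell(I^1)$, yet a generic $w\in A_{p,Z}$ does not restrict to an $A_{p,\lambda}$-weight on any single $x_1$-slice. The rescue is the Haar factorization of $\Delta_{I^1}f$: since $|h_{I^1}(x_1)|^2=|I^1|^{-1}1_{I^1}(x_1)$ is a unit-mass density in $x_1$, integrating it against $w$ in $x_1$ turns the pointwise slice-weight $w(x_1,\cdot)$ into the average $\ave{w}_{I^1}$, which does lie in $A_{p,\ell(I^1)}$ with uniform characteristic. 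Arranging Khintchine and Fubini so that this averaging happens before invoking Lemma~\ref{H:lem2} is the key technical bookkeeping.
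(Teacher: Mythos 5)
Your lower-bound argument (duality, Lemma~\ref{H:lem1}, Cauchy--Schwarz, then the upper bound with $(p',\sigma)$) is exactly the paper's, and your structural observations for the upper bound --- the factorization $\Delta_{I^1}f = h_{I^1}\otimes c_{I^1}$, the fact that $|h_{I^1}|^2$ is a unit-mass density so that $\int_{I^1}|h_{I^1}|^2 w\,\ud x_1=\ave{w}_{I^1}\in A_{p,\ell(I^1)}(\R^2)$, and the two slice properties of $A_{p,Z}$ weights --- are the same ones the paper uses. The genuine gap is in how you plan to run the upper bound for general $p$: you skip the reduction to $p=2$ and instead invoke ``two successive Khintchine randomizations'' to handle arbitrary $p$ directly. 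That plan does not close. The weight-averaging step $\int_{I^1}|h_{I^1}|^2 w\,\ud x_1=\ave{w}_{I^1}$ is crucially an $L^2$-in-$x_1$ identity; for $L^p$ with $p\ne 2$ the relevant density would be $|h_{I^1}|^p = |I^1|^{-p/2}1_{I^1}$, which has total mass $|I^1|^{1-p/2}\ne 1$, so it does not produce $\ave{w}_{I^1}$. Moreover, even after a Khintchine randomization in the $(x_2,x_3)$-parameter, the resulting Haar multiplier acts on a family of rectangles $\calD^{2,3}_{\ell(I^1)}$ that varies with $I^1$, so there is no $x_1$-independent operator in $(x_2,x_3)$ to which Lemma~\ref{H:lem2} can be applied ``in its own weighted $L^p$ setting''; the two parameters do not decouple. (Randomizing over $I\in\calD_Z$ to replace $S_Z$ by a Zygmund Haar multiplier is also circular: at $p=2$ its $L^2(w)$ boundedness is literally the same inequality as the one being proved.)

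The clean fix --- and the route the paper takes --- is to first invoke Rubio de Francia extrapolation for the $A_{p,Z}$ basis (stated just above the theorem, and available thanks to the boundedness of $M_{\calD_Z}$ from Proposition~\ref{prop:FPmax}) to reduce the upper bound to $p=2$. At $p=2$ your averaging step becomes exact: $\|S_Zf\|_{L^2(w)}^2=\sum_{I^1}\sum_{I^{2,3}\in\calD^{2,3}_{\ell(I^1)}}|\ave{\ave{f,h_{I^1}}_1,h_{I^{2,3}}}|^2\ave{\ave{w}_{I^1}}_{I^{2,3}}$, the inner sum is controlled by $\|\ave{f,h_{I^1}}_1\|_{L^2(\ave{w}_{I^1})}^2$ via Lemma~\ref{H:lem2} and the $A_2$-on-$I^{2,3}$ inequality, and the remaining $I^1$-sum is controlled by $\|f\|_{L^2(w)}^2$ via the one-parameter $A_2$ estimate in the $x_1$-slices. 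If you add the sentence ``by extrapolation it suffices to take $p=2$'' and then carry out the $L^2$ computation, your argument coincides with the paper's and needs no Khintchine step at all.
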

\begin{proof}
We prove the upper bound first.
By extrapolation, it is enough to take $p=2$. We write
\begin{align*}
\Big\| \Big( \sum_{I \in \calD_Z} |\Delta_{I, Z} f|^2 \Big)^{1/2}\Big \|_{L^2(w)}^2 &= \sum_{I \in \calD_Z} |\langle f, h_{I,Z} \rangle|^2 \langle w \rangle_I \\
&= \sum_{I^1} \sum_{I^{2,3} \in \calD^{2,3}_{\ell(I^1)}} |\langle \langle f, h_{I^1} \rangle_1, h_{I^{2,3}}\rangle|^2 \langle \langle w \rangle_{I^1} \rangle_{I^{2,3}}.
\end{align*}
Recall the notation from \eqref{dillat}.

As $\langle w \rangle_{I^1} \in A_{2, \ell(I^1)}(\R^{2})$ we know from Lemma \ref{H:lem2} that
\begin{align*}
\sum_{I^{2,3} \in \calD^{2,3}_{\ell(I^1)}} |\langle \langle f, h_{I^1} \rangle_1, h_{I^{2,3}} \rangle|^2 &\langle \langle w \rangle_{I^1} \rangle_{I^{2,3}} 
=\sum_{I^{2,3} \in \calD^{2,3}_{\ell(I^1)}} |\langle \Delta_{I^{2,3}} \langle f, h_{I^1} \rangle_1, h_{I^{2,3}} \rangle|^2 \langle \langle w \rangle_{I^1} \rangle_{I^{2,3}} \\
&\le \sum_{I^{2,3} \in \calD^{2,3}_{\ell(I^1)}} \langle |\Delta_{I^{2,3}} \langle f, h_{I^1} \rangle_1| \rangle_{I^{2,3}}^2 |I^2||I^3| \langle \langle w \rangle_{I^1} \rangle_{I^{2,3}}  \\
&\lesssim \sum_{I^{2,3} \in \calD^{2,3}_{\ell(I^1)}}  \| \Delta_{I^{2,3}} \langle f, h_{I^1} \rangle_1 \|_{L^2(\langle w \rangle_{I^1})}^2 
\lesssim \| \langle f, h_{I^1} \rangle_1 \|_{L^2(\langle w \rangle_{I^1})}^2.
\end{align*}
Notice now that
$$
\int_{\R^{2}} \sum_{I^1} |\langle f, h_{I^1} \rangle_1|^2 \langle w \rangle_{I^1} 
\lesssim \int_{\R^{2}} \sum_{I^1} \| \Delta_{I^1} f \|_{L^2(w)}^2 \lesssim \int_{\R^3} |f|^2 w
$$
since $x_1 \mapsto w(x_1, x_2, x_3) \in A_2(\R)$ uniformly on $x_2, x_3$. 

We move on to proving the lower bound, which can be done with a standard argument thanks to Lemma \ref{H:lem1}.
For $\|g\|_{L^p(\sigma)} \le 1$, where $\sigma = w^{1-p'}$ is the dual weight of $w$, we write
\begin{align*}
\int fg = \sum_{I, J \in \calD_{Z}} \int \Delta_{I,Z} f \Delta_{J,Z} g &= \sum_{I, J \in \calD_{Z}} \int (\Delta_{J,Z} \Delta_{I,Z} f) g \\
&= \sum_{I \in \calD_Z} \int (\Delta_{I,Z} \Delta_{I, Z} f)g = \sum_{I \in \calD_Z} \int \Delta_{I,Z} f \Delta_{I,Z} g.
\end{align*}
We can thus estimate
$$
\Big| \int fg  \Big| \le \int S_Z f \cdot S_Z g \cdot w^{\frac{1}{p}} w^{-\frac{1}{p}} \le \|S_Z f \|_{L^p(w)} \|S_Z g\|_{L^{p'}(\sigma)}.
$$
By the upper square function estimate we have $\|S_Z g\|_{L^{p'}(\sigma)} \lesssim \|g\|_{L^p(\sigma)} \le 1$, and so the
claim follows by taking supremum over such functions $g$.
\end{proof}

We need a more complicated variant.
Let $k=(k^1,k^2,k^3)$, where $k^i \in \{0,1,2,\dots\}$. Define
$$
\calU_k f:= \Big( \sum_{K \in \calD_{2^{-k^1-k^2+k^3}}} |\calU_{K,k} f|^2\Big)^{1/2},
$$
where 
\begin{equation}\label{E:eq6}
\calU_{K,k}f:= 
1_K \sum_{\substack{L \in \calD_Z \colon L^1 \subset K^1, \   \ell(L^1)  \ge 2^{-k^1}\ell(K^1) \\
 2^{-k^1} \ell(K^2) \le 2^{k^2} \ell(L^2) \le 2^{\max(k^2,k^3)} \ell(K^2) }}  \Delta_{L, Z} f.
\end{equation}

\begin{lem}\label{E:lem1}
We have
$
\|\calU_kf\|_{L^p(w)} \lesssim  (|k|+1) \|f\|_{L^p(w)}
$
for all $p \in (1,\infty)$ and $w \in A_{p, Z}$.
\end{lem}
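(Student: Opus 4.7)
The plan is to reduce the asserted estimate for $\calU_k$ to a weighted bound for the Zygmund square function from Theorem \ref{H:thm1}, via a vector-valued version of that same equivalence.

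First observe that the inner sum defining $\calU_{K,k}f$ depends on $K = K^1 \times K^{2,3}$ only through the pair $(K^1, t)$, where $t := \ell(K^2)$. Writing
\begin{equation*}
  G_{K^1, t} := \sum_{L \in \calL_{K^1, t, k}} \Delta_{L, Z} f,
\end{equation*}
with $\calL_{K^1, t, k}$ the collection of $L \in \calD_Z$ subject to the constraints in \eqref{E:eq6} relative to $(K^1, t)$, we have $\calU_{K,k}f = 1_K G_{K^1, t}$, and $G_{K^1, t}$ is supported on $K^1 \times \R^2$. For each fixed $(K^1, t)$ the admissible rectangles $K^{2,3}$ tile $\R^2$, so summing $1_K$ over them produces $1_{K^1 \times \R^2}$, and hence
\begin{equation*}
  \sum_{K \in \calD_{2^{-k^1-k^2+k^3}}} |\calU_{K, k} f|^2 \;=\; \sum_{K^1 \in \calD^1,\; t \text{ dyadic}} |G_{K^1, t}|^2 \quad\text{pointwise.}
\end{equation*}

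Next I would invoke a vector-valued version of Theorem \ref{H:thm1}: for any sequence $(g_i)$ of measurable functions on $\R^3$ and any $w \in A_{p, Z}$,
\begin{equation*}
  \Big\|\Big(\sum_i |g_i|^2\Big)^{1/2}\Big\|_{L^p(w)} \sim \Big\|\Big(\sum_{i, L'} |\Delta_{L', Z} g_i|^2\Big)^{1/2}\Big\|_{L^p(w)}.
\end{equation*}
This follows from the scalar statement by a standard Rademacher averaging argument: Khintchine's inequality applied pointwise gives the left-hand side $\sim (\E_r \|\sum_i r_i g_i\|_{L^p(w)}^p)^{1/p}$ with $r_i$ i.i.d.\ Rademacher variables; applying Theorem \ref{H:thm1} pathwise in $r$ replaces the norm by $\|S_Z(\cdot)\|_{L^p(w)}$; and Kahane's inequality for $\ell^2(L')$-valued Rademacher sums produces the right-hand side. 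Verifying this equivalence cleanly in our weighted Zygmund setting is the main technical ingredient.

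Applying the equivalence with $g_i = G_{K^1, t}$ and noting that $\Delta_{L', Z} G_{K^1, t} = 1_{\{L' \in \calL_{K^1, t, k}\}} \Delta_{L', Z} f$ by Lemma \ref{H:lem1}, the right-hand side becomes
\begin{equation*}
  \Big\|\Big(\sum_{L' \in \calD_Z} m(L')\, |\Delta_{L', Z} f|^2\Big)^{1/2}\Big\|_{L^p(w)}, \qquad m(L') := \#\{(K^1, t) : L' \in \calL_{K^1, t, k}\}.
\end{equation*}
A direct enumeration shows $m(L') \le (k^1 + 1)(k^1 + \max(k^2, k^3) + 1) \le (|k| + 1)^2$: for fixed $L'$, $K^1$ must be a dyadic ancestor of $(L')^1$ at one of $k^1 + 1$ admissible scales, while $t$ takes $k^1 + \max(k^2, k^3) + 1$ dyadic values. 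Inserting this bound and invoking Theorem \ref{H:thm1} once more for $S_Z f$,
\begin{equation*}
  \|\calU_k f\|_{L^p(w)} \lesssim (|k| + 1) \|S_Z f\|_{L^p(w)} \sim (|k| + 1) \|f\|_{L^p(w)},
\end{equation*}
as desired.
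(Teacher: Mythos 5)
Your proof is correct, and it genuinely differs from the paper's argument. The paper first extrapolates to $p=2$, then observes that each $L$ in the inner sum of \eqref{E:eq6} satisfies $L\subset K^{(\ell)}$ for a fixed $\ell=\ell(k)$, groups the blocks $\calU_{K,k}f$ by their common $\ell$-parent $G=K^{(\ell)}$, collapses the grouped blocks to $\sum_{L\subset G}^*\Delta_{L,Z}f$, applies the scalar $L^2(w)$-square-function equivalence of Theorem \ref{H:thm1} for each $G$, and finally counts $G$'s per $L$. You instead reindex the blocks by $(K^1,t)$ with $t=\ell(K^2)$, noting that the $K^{2,3}$ at a fixed scale tile $\R^2$ so $\sum_K|\calU_{K,k}f|^2=\sum_{K^1,t}|G_{K^1,t}|^2$ pointwise, and then apply an $\ell^2$-valued form of Theorem \ref{H:thm1} which you deduce by Khintchine and Kahane directly at the exponent $p$, bypassing extrapolation. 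Both routes lead to the identical multiplicity $m(L')\le(k^1+1)(k^1+\max(k^2,k^3)+1)$, so the final bound $(|k|+1)$ is the same. What the paper's route buys is that $L^2(w)$-orthogonality of the $\Delta_{L,Z}$ makes the reduction entirely Hilbert-space, with no Rademacher averaging; what your route buys is a uniform-in-$p$ argument that never leaves $L^p(w)$ and never invokes Zygmund extrapolation. The Rademacher/Kahane step that you flag as the ``main technical ingredient'' is in fact a standard and fully correct pointwise argument (Khintchine to linearize the outer $\ell^2$-sum, Theorem \ref{H:thm1} applied pathwise, Kahane to re-quadrate the inner $\ell^2(L')$-sum), valid in any $L^p(w)$ once the scalar equivalence is known for that weight, so you could state it more assertively rather than hedging.
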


\begin{proof}
As in the definition of $\calU_{K,k}$, suppose that $K \in \calD_{2^{-k^1-k^2+k^3}}$ and $L \in \calD_Z$ are such that
$L \cap K \not = \emptyset$,  $\ell(L^1)  \ge 2^{-k^1}\ell(K^1)$ and
$2^{-k^1} \ell(K^2) \le 2^{k^2} \ell(L^2) \le 2^{\max(k^2,k^3)} \ell(K^2)$. 
Then, we see that $\ell(L^1) \le \ell(K^1)$, $\ell(L^2) \le 2^{\max(0,k^3-k^2)} \ell(K^2)$ and
\begin{equation*}
\begin{split}
\ell(L^3)&=\ell(L^1)\ell(L^2) \le \ell(K^1) 2^{\max(0,k^3-k^2)} \ell(K^2) 
=2^{k^1+k^2}2^{\max(0,k^3-k^2)} 2^{-k^1-k^2}\ell(K^1) \ell(K^2) \\
&= 2^{k^1+k^2}2^{\max(0,k^3-k^2)} 2^{-k^3} \ell(K^3) 
=2^{k^1+\max(k^2-k^3,0)} \ell(K^3).
\end{split}
\end{equation*}
Thus, if we define $\ell=(0, \max(0, k^3-k^2), k^1+\max(k^2-k^3,0))$, we have that $L \subset K^{(\ell)}$.
For some $\lambda=\lambda(k)$ there holds that $K^{(\ell)} \in \calD_\lambda$ (one could check that actually $K^{(\ell)} \in \calD_Z$, but we do not need this).

By extrapolation we may assume that $p=2$. There holds that
\begin{equation*}
\begin{split}
\| \calU_k f \|_{L^2(w)}^2
= \sum_{K \in \calD_{2^{-k^1-k^2+k^3}}} \| \calU_{K,k} f \|_{L^2(w)}^2
&=\sum_{G \in \calD_\lambda} \sum_{\substack{K \in \calD_{2^{-k^1-k^2+k^3}} \\ K^{(\ell)}=G}}
\| \calU_{K,k} f \|_{L^2(w)}^2 \\
&=\sum_{G \in \calD_\lambda} 
\Big\| \sum_{\substack{K \in \calD_{2^{-k^1-k^2+k^3}} \\ K^{(\ell)}=G}} 
\calU_{K,k} f \Big\|_{L^2(w)}^2,
\end{split}
\end{equation*}
where, abbreviating by $*$ the summation conditions
\begin{equation*}
 (*):\qquad   \ell(L^1)  \ge 2^{-k^1}\ell(G^1),\quad 
   2^{-k^1} 2^{-\ell^2} \ell(G^2) \le 2^{k^2} \ell(L^2) \le 2^{\max(k^2,k^3)} 2^{-\ell^2}\ell(G^2),
\end{equation*}
we have from \eqref{E:eq6} that
\begin{equation*}
\begin{split}
\sum_{\substack{K \in \calD_{2^{-k^1-k^2+k^3}} \\ K^{(\ell)}=G}} 
\calU_{K,k} f
&=\sum_{\substack{K \in \calD_{2^{-k^1-k^2+k^3}} \\ K^{(\ell)}=G}}
1_K\sum_{L \in \calD_Z: L^1 \subset G^1}^*\Delta_{L,Z}f
=\sum_{L\in\calD_Z:L\subset G}^*\Delta_{L,Z}f,
\end{split}
\end{equation*}

Using the square function estimate from Theorem \ref{H:thm1} we get that
\begin{equation*}
\begin{split}
\sum_{G \in \calD_\lambda} 
\Big\| \sum_{\substack{K \in \calD_{2^{-k^1-k^2+k^3}} \\ K^{(\ell)}=G}} 
\calU_{K,k} f \Big\|_{L^2(w)}^2
&\sim \sum_{G \in \calD_\lambda}  \sum_{L \in \calD_Z \colon  L \subset G}^*  \| \Delta_{L, Z} f \|_{L^2(w)}^2 \\
&= \sum_{L \in \calD_Z} \| \Delta_{L, Z} f \|_{L^2(w)}^2
\sum_{G \in \calD_\lambda \colon L \subset G}^*1. \\
\end{split}
\end{equation*}
Condition $(*)$ in the last sum can be written in terms of $G$ as
\begin{equation*}
  \ell(L^1)\leq\ell(G^1)\leq 2^{k^1}\ell(L^1),\qquad
  2^{k^2+\ell^2-\max(k^2,k^3)} \ell(L^2) \le \ell(G^2) \le 2^{k^1+k^2+\ell^2} \ell(L^2).
\end{equation*}
Since $G\in\calD_\lambda$, the length $\ell(G^3)$ is uniquely determined by $\ell(G^1)$ and $\ell(G^2)$, and the position of $G$ is uniquely determined by its dimensions and the condition that $L\subset G$. From this we see that there are $k^1+k^2 + \ell^2-k^2-\ell^2+\max(k^2,k^3) +1 = k^1+\max(k^2,k^3)+1$
different possibilities for the length of $G^2$. Thus, the inner sum is dominated by
$(k^1+1)(k^1+\max(k^2,k^3)+1)$, which finishes the estimate.
\end{proof}

\subsection{Resolution of operators}
Given an operator $T$, it is, of course, possible to write -- using \eqref{zygexp} -- the direct Zygmund decomposition
$$
\langle Tf, g \rangle = 
\sum_{I, J \in \calD_Z} \pair{T \Delta_{I, Z}f}{\Delta_{J, Z} g}.
$$
However, we want to have $\ell(I) = \ell(J)$ (understood to mean that $\ell(I^m) = \ell(J^m)$ for $m = 1,2,3$) -- 
this is of fundamental use to us, and we are happy to pay the price that comes with it: we will have less
cancellation.

First, we expand
\begin{equation}\label{E:1par}
\begin{split}
\langle Tf,g \rangle
&= \sum_{I^1,J^1 \in \calD^1} \langle \Delta_{I^1}f, \Delta_{J^1} g \rangle \\
&=\sum_{\substack{I^1,J^1 \in \calD^1 \\ \ell(I^1)=\ell(J^1)}} \big[\langle TE_{I^1}f, \Delta_{J^1} g \rangle
+\langle T\Delta_{I^1}f, E_{J^1} g \rangle+\langle T\Delta_{I^1}f, \Delta_{J^1} g \rangle \big].
\end{split}
\end{equation}
The last equation is seen by dividing to $\ell(I^1) = \ell(J^1)$, $\ell(I^1) > \ell(J^1)$ and $\ell(J^1) > \ell(I^1$), and collapsing
the appropriate sums of martingale differences.
Then, for example, there holds that
\begin{equation*}
\begin{split}
\langle TE_{I^1}f, \Delta_{J^1} g \rangle
&= \sum_{I^{2,3}, J^{2,3} \in \calD^{2,3}_{\ell(I^1)}}
\langle TE_{I^1}\Delta_{I^{2,3}}f, \Delta_{J^1} \Delta_{J^{2,3}} g \rangle \\
&=\sum_{\substack{I^{2,3}, J^{2,3} \in \calD^{2,3}_{\ell(I^1)} \\ \ell(I^2)=\ell(J^2)}}
\big[\langle TE_{I^1}E_{I^{2,3}}f, \Delta_{J^1} \Delta_{J^{2,3}} g \rangle \\
&\qquad+\langle TE_{I^1}\Delta_{I^{2,3}}f, \Delta_{J^1} E_{J^{2,3}} g \rangle
+\langle TE_{I^1}\Delta_{I^{2,3}}f, \Delta_{J^1} \Delta_{J^{2,3}} g \rangle\big].
\end{split}
\end{equation*}
Doing this for all of the three terms in \eqref{E:1par} we arrive at our main Zygmund resolution of operators
\begin{align}
\langle Tf, g\rangle = \sum_{\substack{I, J \in \calD_Z \\ \ell(I) = \ell(J)}} \Big[ &
   \pair{T \Delta_{I, Z}f}{\Delta_{J, Z} g} \label{eq:dec1} \\
   &+\pair{T E_{I^1} \Delta_{I^{2,3}}f}{\Delta_{J, Z} g}
     +\pair{T \Delta_{I, Z}f}{E_{J^1}  \Delta_{J^{2,3}} g} \label{eq:dec2} \\
    &+\pair{T \Delta_{I^1} E_{I^{2,3}}f}{\Delta_{J, Z} g} 
    +\pair{T \Delta_{I, Z}f}{\Delta_{J^1} E_{J^{2,3}} g}  \label{eq:dec3} \\
    &+\pair{T E_{I^1} E_{I^{2,3}}f}{\Delta_{J, Z} g} 
    +\pair{T \Delta_{I, Z}f}{E_{J^1} E_{J^{2,3}} g} \label{eq:dec4} \\
    &+\pair{T E_{I^1} \Delta_{I^{2,3}}f}{\Delta_{J^1} E_{J^{2,3}} g} 
    +\pair{T \Delta_{I^1} E_{I^{2,3}}f}{E_{J^1}  \Delta_{J^{2,3}} g} \label{eq:dec5} \Big].
\end{align}

\section{Zygmund shifts}

We need certain more complicated functions $H$ to accompany our usual Haar functions $h$.
Given $I^m, J^m \in \calD^m$ with $\ell(I^m) = \ell(J^m)$ the functions $H_{I^m,J^m}$ satisfy
\begin{enumerate}
\item $H_{I^m,J^m}$ is supported on $I^m \cup J^m$ and constant on the children of $I^m$ and $J^m$, i.e., we have
$$
H_{I^m,J^m} = \sum_{L^m \in \ch(I^m) \cup \ch(J^m)} b_{L^m} 1_{L^m},\,\, b_{L^m} \in \R,
$$
\item $|H_{I^m,J^m}| \le |I^m|^{-1/2}$ and
\item $\int H_{I^m,J^m} = 0$.
\end{enumerate}
In practice, we will have
$
H_{I^m,J^m} \in \{h_{J^m}^0 - h_{I^m}^0, h_{I^m}^0 - h_{J^m}^0, h_{I^m}, h_{J^m}\},
$
but the abstract definition contains enough information to bound our upcoming operators. For example, instead
of $h_{I^1}$ or $h_{J^1}$ we may sometimes now have $H_{I^1, J^1}$.

We will also need to replace e.g. $h_{I^{2,3}}$ or $h_{J^{2,3}}$ with $H_{I^{2,3}, J^{2,3}}$ if
$\ell(I^2) = \ell(J^2)$ and $\ell(I^3) = \ell(J^3)$.
These functions are the obvious abstraction of the concrete choices
$$
H_{I^{2,3}, J^{2,3}} \in \{ h_{I^{2,3}}^0 - h_{J^{2,3}}^0,  h_{J^{2,3}}^0 - h_{I^{2,3}}^0, h_{I^{2,3}}, h_{J^{2,3}}\}.
$$

Given a rectangle $I = I^1 \times I^2 \times I^3 \in \calD$ and $k = (k^1, k^2, k^3)$, $k^i \in \{0,1,2,\ldots\}$, we define the parent 
$I^{(k)}=(I^1)^{(k^1)} \times (I^2)^{(k^2)} \times (I^3)^{(k^3)} \in \calD$.

\begin{defn}\label{E:defn1}
Let $k = (k^1, k^2,k^3)$, $k^i \in \{0,1,2,\ldots\}$, be fixed. The Zygmund shift $Q = Q_k$ has four different forms. We either have
$$
\langle Q f, g \rangle = 
\sum_{K \in \calD_{2^{-k^1-k^2+k^3}}} \sum_{ \substack{ I, J \in \calD_{Z} \\ I^{(k)} = J^{(k)} = K}} a_{IJK}
 \langle f, H_{I^1, J^1} \otimes H_{I^{2,3}, J^{2,3}} \rangle \langle g, h_{J, Z} \rangle
$$
or that $\langle Qf, g\rangle$ has the symmetric form, where we have $\langle f, h_{I,Z} \rangle \langle g,  H_{I^1, J^1} \otimes H_{I^{2,3}, J^{2,3}} \rangle$, or
$$
\langle Q f, g \rangle = 
\sum_{K \in \calD_{2^{-k^1-k^2+k^3}}} \sum_{ \substack{ I, J \in \calD_{Z} \\ I^{(k)} = J^{(k)} = K}} a_{IJK}
 \langle f, H_{I^1, J^1} \otimes h_{I^{2,3}} \rangle \langle g, h_{J^1} \otimes H_{I^{2,3}, J^{2,3}}\rangle,
$$
or that  $\langle Qf, g\rangle$ has the symmetric form, where we have
$\langle f, h_{I^1} \otimes H_{I^{2,3}, J^{2,3}} \rangle \langle g, H_{I^1, J^1} \otimes h_{J^{2,3}}\rangle$.
The coefficients satisfy the normalization
$$
|a_{IJK}| \le \frac{|I|}{|K|}.
$$
\end{defn}

Next we prove the weighted bound for the Zygmund shifts.

\begin{thm}\label{H:thm3}
Let $Q = Q_k$ be a Zygmund shift of complexity $k = (k^1, k^2, k^3)$. 
Let $p \in (1, \infty)$ and $w \in A_{p, Z}$. Then, there exist $C=C(p,w)$ and $\eta=\eta(p,w)>0$ so that
\begin{equation}\label{E:eq124}
\|Qf\|_{L^p(w)} \le  
C2^{\max(-k^2+k^3,0)(1-\eta)}(|k|+1)^2 \|f\|_{L^p(w)}.
\end{equation}
Also, if $w \in A_p(\R^3)$ is a tri-parameter weight, then
\begin{equation}\label{E:eq122}
\|Qf\|_{L^p(w)} 
\le  C(p,w)(|k|+1)^2 \|f\|_{L^p(w)}.
\end{equation}
\end{thm}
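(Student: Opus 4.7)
The plan is to reduce to $p = 2$ by extrapolation, then use duality and weighted Cauchy--Schwarz to split the bilinear form $\pair{Q f}{g}$ into a Zygmund square-function piece on the $g$-side (controlled by Theorem \ref{H:thm1}) and a more intricate $f$-side sum controlled by the $\calU_k$ square function (Lemma \ref{E:lem1}) together with a dilated-lattice maximal function (Proposition \ref{E:prop3}, applied after reduction to a 2D problem).

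First, as in the corollary following Proposition \ref{E:prop3}, Zygmund extrapolation transfers an $L^2(w)$ bound of the form $N([w]_{A_{2,Z}}) \cdot 2^{\max(k^3-k^2, 0)(1-c[w]_{A_{2,Z}}^{-1})} (|k|+1)^2$ to the corresponding $L^p(w)$ bound with a new $\eta>0$ depending on $[w]_{A_{p, Z}}$. The same plan works for part (b) with the class $A_p(\R^3)$. I focus on $p = 2$ and on the first form of $Q$ from Definition \ref{E:defn1}; the three other forms are treated by analogous arguments (in forms 3 and 4, the pairing $\pair{g}{h_{J^1} \otimes H_{I^{2,3}, J^{2,3}}}$ replaces $\pair{g}{h_{J, Z}}$, and a variant of the $\calU_k$ estimate is used on the $g$-side).

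For $p = 2$, by duality $\|Q f\|_{L^2(w)} = \sup_{\|g\|_{L^2(\sigma)} \le 1} |\pair{Q f}{g}|$ with $\sigma = w^{-1}$. Grouping the terms of the shift by $K$, I write
$$
\pair{Q f}{g} = \sum_{K \in \calD_{2^{-k^1-k^2+k^3}}} \int F_K \cdot \Delta^k_{K, Z} g,
$$
where $\Delta^k_{K, Z} g := \sum_{J \in \calD_Z: J^{(k)}=K} \Delta_{J, Z} g$ and $F_K := \sum_{J^{(k)} = K} \alpha_{JK} h_{J, Z}$ with $\alpha_{JK} := \sum_{I^{(k)} = K} a_{IJK} \pair{f}{H_{I^1, J^1} \otimes H_{I^{2,3}, J^{2,3}}}$. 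The pairwise disjoint supports of the $\Delta_{J, Z} g$ (for $J^{(k)} = K$ fixed) give $\sum_K |\Delta^k_{K, Z} g|^2 = |S_Z g|^2$ pointwise, so pointwise Cauchy--Schwarz in $K$ combined with H\"older and Theorem \ref{H:thm1} yield
$$
|\pair{Q f}{g}| \le \Big\|\Big(\sum_K |F_K|^2\Big)^{1/2}\Big\|_{L^2(w)} \|S_Z g\|_{L^2(\sigma)} \lesssim \Big\|\Big(\sum_K |F_K|^2\Big)^{1/2}\Big\|_{L^2(w)} \|g\|_{L^2(\sigma)}.
$$
The $L^2$-orthonormality of the $h_{J, Z}$ (for fixed-scale $J$) together with a further Cauchy--Schwarz in $I$ (using $\sum_I |a_{IJK}| \le 1$, coming from $|a_{IJK}| \le |I|/|K|$) reduce the problem to the estimate
$$
A_f := \sum_{K, I, J} |a_{IJK}| \ave{w}_J \bigl|\pair{f}{H_{I^1, J^1} \otimes H_{I^{2,3}, J^{2,3}}}\bigr|^2 \lesssim 2^{2\max(k^3 - k^2, 0)(1-\eta)} (|k|+1)^4 \|f\|_{L^2(w)}^2.
$$

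The main obstacle lies precisely in this bound on $A_f$. My plan is to expand each $H_{I^1, J^1}$ and $H_{I^{2,3}, J^{2,3}}$ as linear combinations of standard cancellative Haar functions ($h_{I^1}, h_{J^1}, \ldots$) and non-cancellative blocks ($h_{I^1}^0, h_{J^1}^0, \ldots$) on the children, and correspondingly expand $f$ in the one-parameter and 2D-one-parameter martingale differences $\Delta^1_{M^1}$ and $\Delta^{2, 3}_{M^{2, 3}}$. The zero-integral properties of the $H$'s restrict each pairing to a few martingale scales (either $M^1 \in \{I^1, J^1\}$ or $M^1$ the common dyadic ancestor of $I^1$ and $J^1$, and similarly for $M^{2,3}$). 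Regrouping the resulting sum by $(M^1, M^{2, 3})$ produces two types of contributions: ``Zygmund'' pairings (where $M^1 \times M^{2, 3}$ satisfies $\ell(M^1)\ell(M^2) = \ell(M^3)$) fit exactly the range of scales defining the operator $\calU_{K, k}$ in \eqref{E:eq6} and are controlled by squaring Lemma \ref{E:lem1}; ``sub-Zygmund'' pairings involve non-cancellative averages of $f$ that are dominated pointwise by a maximal function on the dilated lattice $\calD_{2^{-k^1-k^2+k^3}}$. The crude factor $(2^{-k^1-k^2+k^3})^{1-\eta}$ from a direct application of Proposition \ref{E:prop3} is too large; instead, one fixes the first variable $x_1$ (invoking the 1D Muckenhoupt theory $x_1 \mapsto w(x_1, \cdot, \cdot) \in A_2(\R)$ uniformly in $x_{2,3}$) to reduce to a 2D problem on $\R^2$ with dilated weight $\ave{w}_{I^1} \in A_{2, \ell(I^1)}(\R^2)$. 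Since $I^{2, 3} \in \calD^{2, 3}_{\ell(I^1)}$ while $K^{2, 3} \in \calD^{2, 3}_{2^{k^3 - k^2}\ell(I^1)}$, the 2D eccentricity mismatch is only $2^{k^3 - k^2}$, and a 2D analog of the Stein--Weiss interpolation used in Proposition \ref{E:prop3} then delivers the sharp factor $2^{\max(k^3 - k^2, 0)(1-\eta)}$, uniformly in $I^1$. Combining this with a complexity counting over the admissible $(M^1, M^{2, 3})$ scales (as in the proof of Lemma \ref{E:lem1}) produces the total $(|k|+1)^4$ in $A_f$, yielding \eqref{E:eq124} after taking square roots. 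For part (b), the tri-parameter $A_p(\R^3)$ condition makes the strong maximal $M_\calD$ bounded on $L^p(w)$ with constants independent of eccentricity, so the factor $2^{\max(k^3 - k^2, 0)(1-\eta)}$ collapses to $O(1)$, giving \eqref{E:eq122} with only the polynomial complexity $(|k|+1)^2$.
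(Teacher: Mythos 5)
Your overall decomposition is sound up to the point where all of the complexity gets pushed into the quantity $A_f$, but at that point your outline has a genuine gap. Applying pointwise Cauchy--Schwarz in $K$ and then H\"older to pull out $\|S_Z g\|_{L^2(\sigma)}$ commits the weight to the $J$-position: after Cauchy--Schwarz in $I$, the term you must bound has the form
\begin{equation*}
\sum_{K}\sum_{I,J:\,I^{(k)}=J^{(k)}=K}\frac{|I|}{|K|}\,\ave{w}_J\,\big|\pair{\calU_{K,k}f}{H_{I^1,J^1}\otimes H_{I^{2,3},J^{2,3}}}\big|^2,
\end{equation*}
and the expansion $|H|\le h^0_I+h^0_J$ produces cross-terms where the pairing of $f$ sits over $I$ (or a hybrid such as $J^1\times I^{2,3}$) while the weight average $\ave{w}_J$ sits over the disjoint sibling $J$. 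Carrying out the $I,J$-sums, the worst of these reduces to $\sum_K \ave{w}_K\int_K (M_{\calD_Z}\calU_{K,k}f)^2$, where $K\in\calD_{2^{-k^1-k^2+k^3}}$ is a possibly super-Zygmund rectangle. Controlling $\ave{w}_K$ for $w\in A_{p,Z}$ either requires the crude bound $\ave{w}_K\le\lambda\ave{w}_{\tilde K}$ (losing the full eccentricity $\lambda=2^{k^3-k^1-k^2}$ with no $(1-\eta)$ gain), or forces a maximal function to act on $w$ rather than on $\calU_{K,k}f$; neither closes the estimate you claim. The $1$D-freezing and $2$D Stein--Weiss ingredients you invoke are the right spirit (they are precisely the tools behind Proposition \ref{E:prop3}), but your sketch never shows how the weight average over $J$ gets reattached to the $f$-pairing over $I$ in a way that permits the interpolation with the sharp $2^{\max(k^3-k^2,0)(1-\eta)}$ loss.

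The paper sidesteps this entirely by \emph{not} collapsing the $g$-side early. It keeps the bilinear form, replaces both $f$ and $g$ by $\calU_{K,k}f$ and $\calU_{K,k}g$ inside the pairings, splits via $|H_{I^1,J^1}|\le h^0_{I^1}+h^0_{J^1}$ and $|H_{I^{2,3},J^{2,3}}|\le h^0_{I^{2,3}}+h^0_{J^{2,3}}$, and after summing the free indices recognizes each resulting term as $\sum_K\langle M_{?}\calU_{K,k}f,\calU_{K,k}g\rangle$ for one of the operators $M_{\calD_\lambda}$, $M_{\calD_{2^{-k^2+k^3}}}$, $M_{\calD_{\sub}}$, or $M_{\calD_Z}$. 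This way the dilated maximal operator always acts on the vector-valued function $(\calU_{K,k}f)_K$, never on $w$; a single Cauchy--Schwarz with weights, the vector-valued maximal estimate \eqref{E:eq121} (respectively \eqref{E:eq8}), and the $\calU_k$-square function bound (Lemma \ref{E:lem1}) for each side then deliver $2^{\max(-k^2+k^3,0)(1-\eta)}(|k|+1)^2$. I recommend restructuring your argument along these lines rather than trying to repair $A_f$.
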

\begin{proof}
We consider the three different kind of pairings
\begin{enumerate}
\item $\langle f, H_{I^1, J^1} \otimes H_{I^{2,3}, J^{2,3}} \rangle$,
\item $\langle f, H_{I^1, J^1} \otimes h_{I^{2,3}} \rangle$ and
\item $\langle f, h_{I^1} \otimes H_{I^{2,3}, J^{2,3}} \rangle$
\end{enumerate}
that appear here in addition to the easy pairing $\langle f, h_{I,Z}\rangle$ for which
$$\langle f, h_{I,Z} \rangle = \Big \langle \sum_{\substack{L \in \calD_Z \\ L^{(k)} = K}} \Delta_{L,Z} f, h_{I,Z}\Big\rangle.$$ 
With what can we replace $f$ with in these other three cases?
There are many possibilities, but the main thing is the following: we have to maintain the Zygmund structure. This is the difficult part in the proof.

We start with the case (1). Write
$$
\langle f, H_{I^1, J^1} \otimes H_{I^{2,3}, J^{2,3}} \rangle
= \sum_{L \in \calD_Z} \langle \Delta_{L,Z}f, H_{I^1, J^1} \otimes H_{I^{2,3}, J^{2,3}} \rangle.
$$
Suppose that $L \in \calD_Z$ is such that the above pairing is non-zero. 
First, we notice that
by the properties of $H_{I^1,J^1}$ we must have that $L^1 \subset K^1$ and $\ell(L^1) \ge 2^{-k^1} \ell(K^1)$.
Then, we notice that by the properties of $H_{I^{2,3}, J^{2,3}}$ we must have that $\ell(L^m) \ge \ell(I^m)$ for $m=2$ or $m=3$ and 
$L^m \subset K^m$ for $m=2$ or $m=3$.
We see that if $\ell(L^2) < 2^{-k^1} \ell(I^2)$, then
$$
\ell(L^3)=\ell(L^1)\ell(L^2) 
< 2^{k^1}\ell(I^1)2^{-k^1}\ell(I^2)
= \ell(I^3),
$$
and the condition $\ell(L^m) \ge \ell(I^m)$ for $m=2$ or $m=3$ can not be satisfied.
If $\ell(L^2) > 2^{\max(k^2,k^3)}\ell(I^2)$, then $\ell(L^2) > \ell(K^2)$ and
$$
\ell(L^3) = \ell(L^1)\ell(L^2) > \ell(I^1) 2^{\max(k^2,k^3)} \ell(I^2) \ge 2^{k^3} \ell(I^3)=\ell(K^3),
$$
and the condition $L^m \subset K^m$ for $m=2$ or $m=3$ can not be satisfied.
These observations show that
\begin{equation*}
\begin{split}
\langle f, H_{I^1, J^1} \otimes H_{I^{2,3}, J^{2,3}} \rangle
&=  \sum_{\substack{L \in \calD_Z :\ L^1 \subset K^1, \   \ell(L^1)  \ge 2^{-k^1}\ell(K^1) \\
 2^{-k^1} \ell(I^2)   \le  \ell(L^2) \le 2^{\max(k^2,k^3)} \ell(I^2) }} \langle \Delta_{L,Z}f, H_{I^1, J^1} \otimes H_{I^{2,3}, J^{2,3}} \rangle \\
& =\langle \calU_{K,k}f, H_{I^1, J^1} \otimes H_{I^{2,3}, J^{2,3}} \rangle,
\end{split}
\end{equation*}
where $\calU_{K,k}$ was defined in \eqref{E:eq6} (note that $\ell(K^2)=2^{k^2}\ell(I^2)$).

Consider the pairings in (2) and (3). Since $h_{I^{2,3}}$ can be seen as  
$H_{I^{2,3}, J^{2,3}}$ and $h_{I^1}$ can be seen as $H_{I^1, J^1}$, we can use the above to
write
$$
\langle f, H_{I^1, J^1} \otimes h_{I^{2,3}} \rangle
=\langle \calU_{K,k} f, H_{I^1, J^1} \otimes h_{I^{2,3}} \rangle
$$
and
$$
\langle f, h_{I^1} \otimes H_{I^{2,3}, J^{2,3}} \rangle
=\langle \calU_{K,k} f, h_{I^1} \otimes H_{I^{2,3}, J^{2,3}} \rangle.
$$
We point out that we could use the fact that there are normal Haar functions present in (2) and (3)
and use some smaller blocks than $\calU_{K,k}$ which
would lead to a better dependence on the complexity in the estimates. 

Having the decompositions at hand we start estimating the shifts. The different forms of shifts are handled similarly.
Suppose for example that $Q$ has the form
$$
\langle Qf,g \rangle
=\sum_{K \in \calD_{\lambda}} \sum_{ \substack{ I, J \in \calD_{Z} \\ I^{(k)} = J^{(k)} = K}} a_{IJK}
 \langle f, H_{I^1, J^1} \otimes h_{I^{2,3}} \rangle \langle g, h_{J^1} \otimes H_{I^{2,3}, J^{2,3}}\rangle,
$$
where $\lambda =2^{-k^1-k^2+k^3}$.
Replace $f$ with $\calU_{K,k}f$ and $g$ with $\calU_{K,k}g$ inside the pairings. 
This gives that
\begin{equation}\label{E:eq43}
\begin{split}
&|\langle Qf,g \rangle| \\
&\le \sum_{K \in \calD_{\lambda}} \sum_{ \substack{ I, J \in \calD_{Z} \\ I^{(k)} = J^{(k)} = K}} \frac{|I|}{|K|}
\langle |\calU_{K,k}f|, |H_{I^1, J^1} \otimes h_{I^{2,3}}| \rangle 
\langle |\calU_{K,k}f|, |h_{J^1} \otimes H_{I^{2,3}, J^{2,3}}|\rangle.
\end{split}
\end{equation}

Recall that $|H_{I^1, J^1}| \le h^0_{I^1}+h^0_{J^1}$ and $|H_{I^{2,3}, I^{2,3}}|
\le h^0_{I^{2,3}} + h^0_{J^{2,3}}$. This splits the above sum into four sums which we consider separately. 
The part related to $h_{I^1}$ and $h_{J^{2,3}}$ equals
\begin{equation}\label{E:eq94}
\begin{split}
\sum_{K \in \calD_{\lambda}} & \sum_{ \substack{ I, J \in \calD_{Z} \\ I^{(k)} = J^{(k)} = K}}  \frac{|I|}{|K|}
\langle |\calU_{K,k}f|, h^0_{I} \rangle 
\langle |\calU_{K,k}g|, h^0_{J} \rangle \\
& = \sum_{K \in \calD_{\lambda}} \frac{1}{|K|}
\langle |\calU_{K,k}f|, 1_K \rangle 
\langle |\calU_{K,k}g|, 1_K \rangle 
 \le \sum_{K \in \calD_{\lambda}}
\langle M_{\calD_\lambda}  \calU_{K,k}f, \calU_{K,k}g \rangle.
\end{split}
\end{equation} 
Using the estimate \eqref{E:eq121} for $M_{\calD_\lambda}$ and the square function estimate from Lemma \ref{E:lem1} we have that the last
term in \eqref{E:eq94} is dominated by
\begin{equation}\label{E:eq98}
\begin{split}
\Big \|&\Big(\sum_{K \in \calD_{\lambda}}
(M_{\calD_\lambda}  \calU_{K,k}f)^2 \Big)^{1/2} \Big \|_{L^p(w)}
\Big \|\Big(\sum_{K \in \calD_{\lambda}}
| \calU_{K,k}g|^2 \Big)^{1/2} \Big \|_{L^{p'}(w^{1-p'})} \\
& \lesssim 2^{\max(-k^1-k^2+k^3,0)(1-\eta)} (|k|+1)^2 \| f \|_{L^p(w)} \| g \|_{L^{p'}(w^{1-p'})},
\end{split}
\end{equation}
and thus this part satisfies the desired estimate.

We consider the next part of the shift related to $h_{J^1}$ and $h_{J^{2,3}}$.
This part is
\begin{equation*}
\begin{split}
\sum_{K \in \calD_{\lambda}} &\sum_{ \substack{ I, J \in \calD_{Z} \\ I^{(k)} = J^{(k)} = K}} \frac{|I|}{|K|}
\langle |\calU_{K,k}f|, h^0_{J^1\times I^{2,3}} \rangle 
\langle |\calU_{K,k}g|, h^0_{J} \rangle \\
& =\sum_{K \in \calD_{\lambda}}
\sum_{ (J^1)^{(k^1)}=K^1} \frac{1}{|K|}
\langle |\calU_{K,k}f|, 1_{J^1\times K^{2,3}} \rangle 
\langle |\calU_{K,k}g|, 1_{J^1 \times K^{2,3}} \rangle \sum_{(I^1)^{(k^1)}=K^1} 1.
\end{split}
\end{equation*}
Since
$$
\sum_{(I^1)^{(k^1)}=K^1} 1=\sum_{(I^1)^{(k^1)}=K^1} \frac{|I^1|}{|J^1|}
=\frac{|K^1|}{|J^1|}
$$
we get
\begin{equation}\label{E:eq95}
\begin{split}
\sum_{K \in \calD_{\lambda}}
\sum_{ (J^1)^{(k^1)}=K^1} \frac{1}{|J^1 \times K^{2,3}|}
&\langle |\calU_{K,k}f|, 1_{J^1\times K^{2,3}} \rangle 
\langle |\calU_{K,k}g|, 1_{J^1 \times K^{2,3}} \rangle \\
& \le \sum_{K \in \calD_{\lambda}}  
\langle M_{\calD_{2^{-k^2+k^3}}}  \calU_{K,k}f, \calU_{K,k}g \rangle.
\end{split}
\end{equation}
As in \eqref{E:eq98} this leads to the right estimate.

The third part of the shift related to $h_{I^1}$ and $h_{I^{2,3}}$ equals
\begin{equation*}
\begin{split}
&\sum_{K \in \calD_{\lambda}} \sum_{ \substack{ I, J \in \calD_{Z} \\ I^{(k)} = J^{(k)} = K}} \frac{|I|}{|K|}
\langle |\calU_{K,k}f|, h^0_{I} \rangle 
\langle |\calU_{K,k}g|, h^0_{J^1 \times I^{2,3}} \rangle \\
&=\sum_{K \in \calD_{\lambda}} \sum_{(I^{2,3})^{(k^{2,3})}=K^{2,3}} \frac{1}{|K|}
\langle |\calU_{K,k}f|, 1_{K^1 \times I^{2,3}} \rangle 
\langle |\calU_{K,k}g|, 1_{K^1 \times I^{2,3}} \rangle \sum_{(J^{2,3})^{(k^{2,3})}=K^{2,3}} 1.
\end{split}
\end{equation*}
The last sum gives $|K^{2,3}|/|I^{2,3}|$ so we get
\begin{equation}\label{E:eq96}
\begin{split}
\sum_{K \in \calD_{\lambda}} \sum_{(I^{2,3})^{(k^{2,3})}=K^{2,3}} & \frac{1}{|K^1 \times I^{2,3}|}
\langle |\calU_{K,k}f|, 1_{K^1 \times I^{2,3}} \rangle 
\langle |\calU_{K,k}g|, 1_{K^1 \times I^{2,3}} \rangle \\
& \le \sum_{K \in \calD_{\lambda}} \langle M_{\calD_{\sub}}  \calU_{K,k}f, \calU_{K,k}g \rangle.
\end{split}
\end{equation}
We can again conclude the estimate as before, this time applying the weighted estimate \eqref{E:eq8} of $M_{\calD_{\sub}}$. 

Finally, the last part of the shift related to $h_{J^1}$ and $h_{I^{2,3}}$ is
\begin{equation*}
\begin{split}
\sum_{K \in \calD_{\lambda}} &\sum_{ \substack{ I, J \in \calD_{Z} \\ I^{(k)} = J^{(k)} = K}} \frac{|I|}{|K|}
\langle |\calU_{K,k}f|, h^0_{J^1 \times I^{2,3}} \rangle 
\langle |\calU_{K,k}g|, h^0_{J^1 \times I^{2,3}} \rangle \\
&=\sum_{K \in \calD_{\lambda}} \sum_{ \substack{ J^1\times I^{2,3} \in \calD_{Z} \\ (J^1 \times I^{2,3})^{(k)} = K}} 
\frac{1}{|J^1 \times I^{2,3}|}
\langle |\calU_{K,k}f|, 1_{J^1 \times I^{2,3}} \rangle 
\langle |\calU_{K,k}g|, 1_{J^1 \times I^{2,3}} \rangle \\
& \le \sum_{K \in \calD_{\lambda}} \langle M_{\calD_{Z}}  \calU_{K,k}f, \calU_{K,k}g \rangle,
\end{split}
\end{equation*}
and we can conclude as in \eqref{E:eq96}. This ends the weighted estimate if $w \in A_{p,Z}$.

Suppose then that $w \in A_p(\R^3) \subset A_{p,Z}$ is a tri-parameter weight.
We can prove the estimate precisely as above, except that this time we do not get any complexity
dependence from the maximal functions. Indeed, all the maximal functions are pointwise dominated by
the tri-parameter maximal function
$$
Mf:= \sup_{R \in \calD} \langle | f | \rangle_R 1_R,
$$
where the supremum is now over all rectangles  $R \in \calD$. 
The maximal function $M$ satisfies the required weighted estimates for the proof,
since $w \in A_p(\R^3)$. This concludes the proof.
\end{proof}

\section{Dyadic representation theorem}
Let $\calD_0$ be the standard dyadic grid in $\R$. For $\omega \in \{0,1\}^{\Z}$, $\omega = (\omega_i)_{i \in \Z}$, 
we define the shifted lattice
$$
\calD(\omega) := \Big\{L + \omega := L + \sum_{i\colon 2^{-i} < \ell(L)} 2^{-i}\omega_i \colon L \in \calD_0\Big\}.
$$
Let $\bbP_{\omega}$ be the product probability measure on $\{0,1\}^{\Z}$. 
We recall the following notion of a $k$-good, $k \ge 2$, interval from \cite{GH}.
We say that $G \in \calD(\omega, k)$, $k \ge 2$,
if $G \in \calD(\omega)$ and
\begin{equation}\label{eq:DefkGood}
d(G, \partial G^{(k)}) \ge \frac{\ell(G^{(k)})}{4} = 2^{k-2} \ell(G).
\end{equation}
Notice that for all $L \in \calD_0$ and $k \ge 2$ we have
\begin{equation}\label{eq:gprob}
  \bbP_{\omega}( \{ \omega\colon L + \omega \in \calD(\omega, k) \})  = \frac{1}{2}.
\end{equation}
The key implication of $G \in \calD(\omega, k)$ is that for $n \in \Z$ with $|n| \le 2^{k - 2}$ we have
\begin{equation}\label{eq:kparent}
(G \dotplus n)^{(k)} = G^{(k)}, \qquad G \dotplus n = G + n\ell(G).
\end{equation}
Notice that if $L \in \calD_0$,
the position of $L + \omega$ depends only on the values of
$\omega_i$ for those $i$ such that $2^{-i} < \ell(L)$, and the $k$-goodness $L + \omega \in \calD(\omega, k)$
of $L + \omega$ for some $k \ge 2$ depends on $\omega_i$ for those $i$ such that
$\ell(L) \le 2^{-i} < \ell(L^{(k)})$. Thus, the position is independent of $k$-goodness for all $k \ge 2$.

Moving to our actual setting of $\R^3 = \R \times \R^2$ we define for
$$
\sigma = (\sigma^1, \sigma^2, \sigma^3) \in \{0,1\}^{\Z} \times \{0,1\}^{\Z} \times \{0,1\}^{\Z}, \qquad
\sigma^m = (\sigma^m_i)_{i \in \Z},
$$
that
$
\calD(\sigma) := \calD(\sigma^1) \times \calD(\sigma^2) \times \calD(\sigma^3).
$
Let $\bbP_{\sigma} := \bbP_{\sigma^1} \times \bbP_{\sigma^2} \times \bbP_{\sigma^3}.$
For $k = (k^1, k^2, k^3)$, $k^1, k^2, k^3 \ge 2$, we define
$
\calD(\sigma, k) = \calD(\sigma^1, k^1) \times \calD(\sigma^2, k^2) \times \calD(\sigma^3, k^3).
$
We also e.g. write
$$
\calD(\sigma, (k^1, 0, k^3)) = \calD(\sigma^1, k^1) \times \calD(\sigma^2) \times \calD(\sigma^3, k^3),
$$
that is, a $0$ will designate that we do not have goodness in that parameter.

As for most of the argument $\sigma$ is fixed, it makes sense to suppress it from the notation and
abbreviate
$\calD^m = \calD(\sigma^m)$, $m = 1,2,3$.
Then also
$
\calD = \calD(\sigma) = \prod_{m=1}^3 \calD^m.
$ 
The Zygmund rectangles $\calD_Z \subset \calD$ are, of course, defined as before.
For $k$-good Zygmund rectangles we use the obvious notation $\calD_Z(k) = \calD_Z(\sigma, k)$, which consists
of the Zygmund rectangles build using $\calD(k) = \calD(\sigma, k)$.

If $\lambda = 2^k$, $k \in \Z$, we recall that
$$
\calD_{\lambda } = \{I = I^1 \times I^2 \times I^3 \in \calD \colon \lambda\ell(I^1) \ell(I^2) = \ell(I^3)\}.
$$
so that $\calD_{1}=\calD_{Z}$.

\begin{thm}\label{E:thm1}
Let $T$ be a $(\theta, \alpha_1, \alpha_{23})$-CZZ operator. Then
$$
\ave{Tf,g}
= C\E_{\sigma} \sum_{k \in \N^3}\sum_{u=1}^{U_0} (|k|+1)\varphi(k) \langle Q_{k,u,\sigma} f, g \rangle,
$$
where
\begin{equation}\label{E:eq123}
\varphi(k):= 
 2^{-k^1\alpha_1-k^2\min\{\alpha_{23},\theta\}-\max\{k^3-k^1-k^2, 0\}\theta}.
\end{equation}
Here $C$ is a constant depending on the various constants related to the operator $T$,
$U_0 \le 100$ and $Q_{k,u, \sigma}$ is a Zygmund shift of complexity $k=(k^1,k^2,k^3)$
related to $\calD_{\sigma}$.
\end{thm}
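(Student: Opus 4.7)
The plan is to combine the Zygmund resolution \eqref{eq:dec1}--\eqref{eq:dec5} with the Nazarov--Treil--Volberg random dyadic grid / goodness technique, following the template of the dyadic representation theorems in \cite{Hy, Ma1}. I will work with a fixed $\sigma$ throughout; the outer expectation $\E_\sigma$ is produced by inserting the indicator of goodness of one of the two rectangles in each pair (the probability \eqref{eq:gprob} of being $k$-good is $1/2$ in each coordinate and, crucially, is independent of the position, so goodness may be inserted at a cost absorbed into the constant $C$). Given a pair $I, J \in \calD_Z$ with $\ell(I^u)=\ell(J^u)$ for $u=1,2,3$, define $k=(k^1,k^2,k^3)\in\N^3$ by letting $k^u$ be the smallest integer with $(I^u)^{(k^u)}=(J^u)^{(k^u)}$, and set $K:=I^{(k)}=J^{(k)}$. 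The Zygmund identity $\ell(I^1)\ell(I^2)=\ell(I^3)$ then forces $\ell(K^3)=2^{k^3-k^1-k^2}\ell(K^1)\ell(K^2)$, so $K\in\calD_{2^{-k^1-k^2+k^3}}$, exactly matching the indexing lattice in Definition \ref{E:defn1}.

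Next, I will group the terms in the resolution according to the common parent $K$ and the complexity $k$, and show that each family gives rise to a Zygmund shift $Q_{k,u,\sigma}$ of the corresponding complexity. For the purely cancellative term \eqref{eq:dec1}, one expands $\Delta_{I,Z}f=\langle f,h_{I,Z}\rangle h_{I,Z}$ and $\Delta_{J,Z}g$ similarly, so the operator coefficient becomes
\[
  a_{IJK}:=\frac{|K|}{|I|}\iint K(x,y)h_{I,Z}(y)h_{J,Z}(x)\ud y\ud x.
\]
This is estimated by subtracting Taylor-type cancellations coming from $\int h_{I,Z}=\int h_{J,Z}=0$ and invoking the full-kernel H\"older estimates \eqref{E:eq25}--\eqref{E:eq29}; goodness of $J$ guarantees that $I$ and $J$ are well inside $K$ in each coordinate, so the H\"older estimates apply along the full range from the Haar scale to the parent scale. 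The outcome is $|a_{IJK}|\lesssim\varphi(k)\cdot|I|/|K|$, so the surplus factor $\varphi(k)$ can be pulled out of the shift normalization of Definition \ref{E:defn1}.

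For the mixed terms \eqref{eq:dec2}--\eqref{eq:dec5}, where one of the two factors is non-cancellative in some subset of coordinates, the full kernel is unavailable and we must instead use the partial kernel representations \eqref{E:eq67} and \eqref{E:eq68}, telescoping the averaging operators $E_{I^1}$ or $E_{I^{2,3}}$ into cancellative differences $\Delta_{(I^u)^{(j)}}$ at scales $j=0,\ldots,k^u-1$ and applying the analogous partial-kernel H\"older bounds \eqref{E:eq63}, \eqref{E:eq110}. The cancellation hypotheses \eqref{E:eq143} force the genuine ``top-level'' paraproduct coefficients (those where neither side carries any cancellation in a given parameter) to vanish, so no paraproduct operator survives and every contribution rearranges as a Zygmund-shift structure of one of the four types listed in Definition \ref{E:defn1}. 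The prefactor $(|k|+1)$ in the statement absorbs the number of scales involved in such telescoping sums, and the finite enumeration $u=1,\ldots,U_0$ comes from the combinations of (i) the choice of Haar tuple $\eta\in\{0,1\}^m\setminus\{0\}$ in each parameter, (ii) the choice of which side carries the cancellative resp.\ non-cancellative Haar/$H$-function, and (iii) the five lines \eqref{eq:dec1}--\eqref{eq:dec5} of the resolution; one checks $U_0\le 100$ by direct count.

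The main obstacle is the sharp extraction of the decay $\varphi(k)$ from the kernel estimates. The three exponents in \eqref{E:eq123} encode different phenomena: $2^{-k^1\alpha_1}$ is a routine one-parameter H\"older gain in the first coordinate, while $2^{-k^2\min(\alpha_{23},\theta)}$ reflects the fact that in the $(2,3)$-block one may use either the H\"older estimate (gaining the exponent $\alpha_{23}$) or the intrinsic $D_\theta$ decay of the kernel on Zygmund-unbalanced arguments, whichever is better; finally $2^{-\max(k^3-k^1-k^2,0)\theta}$ captures the Zygmund-eccentricity of $K\in\calD_{2^{-k^1-k^2+k^3}}$ precisely when $K$ itself fails to be Zygmund (i.e.\ $k^3>k^1+k^2$). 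Obtaining these three factors simultaneously, uniformly across all the mixed terms \eqref{eq:dec2}--\eqref{eq:dec5}, and verifying that the cases produced by the full-kernel versus partial-kernel split and by the goodness constraints collapse into precisely the four families of shifts in Definition \ref{E:defn1}, is the delicate bookkeeping that drives the proof.
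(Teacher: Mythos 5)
Your proposal correctly identifies the high-level structure (the resolution \eqref{eq:dec1}--\eqref{eq:dec5}, the insertion of goodness via \eqref{eq:gprob}, the coefficient normalization against $\varphi(k)|I|/|K|$, and the vanishing of paraproducts via \eqref{E:eq143}). However, there is a genuine gap in your treatment of the mixed terms \eqref{eq:dec2}--\eqref{eq:dec5}, and the way you propose to handle them would not yield the shift structure of Definition \ref{E:defn1}.

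You propose ``telescoping the averaging operators $E_{I^1}$ or $E_{I^{2,3}}$ into cancellative differences $\Delta_{(I^u)^{(j)}}$ at scales $j=0,\ldots,k^u-1$''. The paper does not do this, and for good reason: such a telescoping produces contributions at many different scales, which would destroy precisely the equal-scale constraint $\ell(I^u)=\ell(J^u)$ that the authors highlight in the ``About the proof'' subsection as fundamental to preserving the Zygmund structure. Instead, the paper handles the non-cancellative Haar factors with a same-scale identity: for fixed $I,J$ with $\ell(I)=\ell(J)$, one writes $h^0_{I^1}=H_{I^1,J^1}+h^0_{J^1}$ with $H_{I^1,J^1}:=h^0_{I^1}-h^0_{J^1}$ (and similarly $h^0_{J^{2,3}}=H_{I^{2,3},J^{2,3}}+h^0_{I^{2,3}}$); the function $H_{I^1,J^1}$ has mean zero and is supported on $I^1\cup J^1$, so it plays the role of a cancellative Haar function at the same scale. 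Iterating this gives the four-term splitting \eqref{E:eq104} of $\Sigma_8$ into the shift piece (with both $H$'s), two pieces that would be partial paraproducts, and one full-paraproduct piece -- and only the latter three are killed by \eqref{E:eq143}. This $H$-function device is what makes the remaining contribution literally a Zygmund shift $Q_k$ in the sense of Definition \ref{E:defn1} (where the $H$'s appear explicitly), and it is the key idea you are missing. Without it, your telescoped objects do not match the announced model operators, and you would need a different (unproved) boundedness theory for whatever operators the telescoping actually produces.

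Two further inaccuracies worth flagging. First, you say ``the full kernel is unavailable'' for the mixed terms, but after the $H$-splitting the full kernel estimates \eqref{E:eq25}--\eqref{E:eq29} do apply to the shift piece $\Sigma^1_8$ for most values of $n$; the partial kernel representations \eqref{E:eq67}--\eqref{E:eq68} are only needed in the degenerate cases $n^1=0$ or $n^{2,3}=0$ in Section \ref{E:sec2}. Second, your explanation that $(|k|+1)$ ``absorbs the number of scales involved in such telescoping sums'' is not correct: that factor arises from genuine logarithmic growth in the kernel integral estimates (see for instance the proof of the Adjacent/Separated lemma, where factors like $k^2$ emerge from an integral of $\log(|I^3|/|x_3-y_3|)$), not from counting telescoping scales.
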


\begin{cor}\label{E:cor1}
Let $T$ be a $(\theta, \alpha_1, \alpha_{23})$-CZZ operator or a
$(\log, \alpha_1,\alpha_{23})$-CZZ operator, 
where the parameters $\theta, \alpha_1,\alpha_{23} \in (0,1]$ are arbitrary. 
Let $p \in (1, \infty)$ and let $w \in A_{p}(\R^3)$ be a tri-parameter weight.
Then $T$ satisfies the estimate
$$
\| Tf \|_{L^p(w)} \lesssim \|f \|_{L^p(w)}.
$$

Let $w \in A_{p,Z}$. If $T$ is a
$(1, 1, \alpha_{23})$-CZZ operator or a
$(\log, 1,\alpha_{23})$-CZZ operator, 
where $\alpha_{23} \in (0,1]$, then
$$
\| Tf \|_{L^p(w)} \lesssim \|f \|_{L^p(w)}.
$$
\end{cor}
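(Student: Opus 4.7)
The plan is to combine the dyadic representation of Theorem~\ref{E:thm1} with the weighted shift bounds of Theorem~\ref{H:thm3}, reducing everything to the summability of an explicit numerical series. Applying the representation to $\ave{Tf,g}$, dualizing over $\|g\|_{L^{p'}(w^{1-p'})}\le 1$, and using Theorem~\ref{H:thm3} uniformly in $\sigma$ and $u$, one arrives at
\begin{equation*}
  \|Tf\|_{L^p(w)} \lesssim \|f\|_{L^p(w)}\, \sum_{k\in\N^3}(|k|+1)\,\varphi(k)\,S(k),
\end{equation*}
where $S(k)$ denotes the operator-norm bound for $Q_{k,u,\sigma}$ coming from Theorem~\ref{H:thm3}. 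Thus the entire task is to check that this series converges.

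For the tri-parameter weight case one has $S(k)\lesssim(|k|+1)^2$ with no exponential growth in $k$, so it suffices to verify $\sum_k(|k|+1)^3\varphi(k)<\infty$. Splitting at $k^3=k^1+k^2$: on $\{k^3\le k^1+k^2\}$ the weight $\varphi(k)=2^{-k^1\alpha_1-k^2\min(\alpha_{23},\theta)}$ does not depend on $k^3$, so summing $k^3$ first costs only a linear factor, leaving geometric decay in $k^1,k^2$; on $\{k^3>k^1+k^2\}$ the extra decay $2^{-(k^3-k^1-k^2)\theta}$ makes the $k^3$-sum converge, with geometric decay in the remaining variables. In both regions the polynomial loss $(|k|+1)^3$ is easily absorbed.

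The Zygmund case with $\theta=\alpha_1=1$ is the critical step and relies on the $\eta$-gain of Proposition~\ref{E:prop3}: now $S(k)\lesssim 2^{\max(k^3-k^2,0)(1-\eta)}(|k|+1)^2$ for some $\eta=\eta(p,w)>0$, and $\varphi(k)=2^{-k^1-k^2\alpha_{23}-\max(k^3-k^1-k^2,0)}$. I would split into three subcases. First, if $k^3\le k^2$ the shift factor equals $1$ and $\varphi(k)=2^{-k^1-k^2\alpha_{23}}$ is trivially summable. Second, if $k^2<k^3\le k^1+k^2$ the combined exponent is $-k^1-k^2\alpha_{23}+(k^3-k^2)(1-\eta)$; summing the geometric series over $k^3$ yields at most $2^{k^1(1-\eta)}$, leaving $2^{-k^1\eta-k^2\alpha_{23}}$, which is summable. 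Third, if $k^3>k^1+k^2$ the exponent simplifies to $-k^2\alpha_{23}-\eta(k^3-k^2)$, and since $k^1<k^3-k^2$ the $k^1$-sum contributes only a linear factor while the remainder decays exponentially in both $k^2$ and $k^3-k^2$. In each subcase the factor $(|k|+1)^3$ is absorbed. The strict positivity of $\eta$ is the essential ingredient, and its failure would be precisely what places the regime $\theta<1$ out of reach.

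For the logarithmic variants I would argue by reduction. Since $D_{\log}\lesssim D_{\theta'}$ holds for every $\theta'\in(0,1)$ (with constant depending on $\theta'$), any $(\log,\alpha_1,\alpha_{23})$-CZZ operator is also $(\theta',\alpha_1,\alpha_{23})$-CZZ for such $\theta'$, and the tri-parameter estimate follows from the previous paragraph. For a $(\log,1,\alpha_{23})$-CZZ operator with $w\in A_{p,Z}$ this reduction is not available, since dropping $\theta$ below $1$ destroys the Zygmund weighted bound; instead one re-runs the proof of Theorem~\ref{E:thm1} directly with the logarithmic kernel, verifying that the $\log$ factor enters only as an additional polynomial factor in $k$ inside a modified coefficient $\varphi_{\log}(k)$ while preserving the exponential structure of $\varphi$. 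Because the three subcases above all admit strict exponential decay in every relevant direction, any polynomial overhead in $k$ is harmless, and the series still converges, completing the argument.
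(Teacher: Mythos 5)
Your plan—combine Theorem~\ref{E:thm1} with Theorem~\ref{H:thm3} and verify the numerical sum converges—is exactly the paper's strategy, and your three-way split at $k^3\le k^2$, $k^2<k^3\le k^1+k^2$, $k^3>k^1+k^2$ for the Zygmund case with $\theta=\alpha_1=1$ is carried out correctly: the exponent simplifications and the absorption of $(|k|+1)^3$ all check out, and the role of $\eta>0$ is identified correctly. The tri-parameter case and the reduction $D_{\log}\lesssim D_{\theta'}$ there are also handled as the paper does.

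There is, however, a genuine gap in the final paragraph. You assert that for a $(\log,1,\alpha_{23})$-CZZ operator with $w\in A_{p,Z}$ ``this reduction is not available, since dropping $\theta$ below $1$ destroys the Zygmund weighted bound,'' and you propose re-running the representation theorem directly with the logarithmic kernel. This conflates two different statements. The counterexamples of Proposition~\ref{prop:mainCounterex} show that for any \emph{fixed} $\theta<1$ there exist some $w\in A_{2,Z}$ and some $(\theta,\alpha,\alpha)$-kernel for which the weighted bound fails; they do not say that for a \emph{given} $w\in A_{p,Z}$ every choice of $\theta<1$ fails. Indeed, once $w$ and $p$ are fixed, Theorem~\ref{H:thm3} supplies an $\eta=\eta(p,w)>0$, and the reduction $D_{\log}\lesssim C(\theta')\,D_{\theta'}$ works perfectly well for any $\theta'\in(1-\eta,1)$: the implied constant from the reduction depends on $\theta'$ but that is harmless because $\theta'$ is fixed once $w$ and $p$ are. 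This is precisely what the paper does—it reduces $(\log,1,\alpha_{23})$ to $(\theta',1,\alpha_{23})$ with $\theta'>1-\eta$, applies Theorem~\ref{E:thm1}, and sums the series. Your computation in the three subcases goes through verbatim with $\theta'$ in place of $1$, so long as $\theta'+\eta>1$. By contrast, your proposed alternative of ``re-running'' the representation theorem with $D_{\log}$ is not actually carried out: Theorem~\ref{E:thm1} is stated only for $D_\theta$, so this would require redoing all the coefficient estimates of Section~\ref{E:sec2} with the logarithmic weight, and the claim that ``the $\log$ factor enters only as an additional polynomial factor in $k$'' is merely asserted. The reduction argument is both correct and much shorter, and you should use it.
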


\begin{proof}
Notice that if $t \ge 1$ and $\gamma \in (0,1]$, then
$
\log t \le \gamma^{-1} t^\gamma.
$
Therefore, a $(\log, \alpha_1, \alpha_{23})$-CZZ operator is also
a corresponding operator with respect to $(\beta, \alpha_1, \alpha_{23})$ with any $\beta \in (0,1)$.
However, the implicit constants in the kernel estimates grow as $\beta \to 1$.
Due to this it suffices to prove the weighted estimate with tri-parameter weights only
for $(\theta, \alpha_1, \alpha_{23})$-CZZ operators. Suppose $T$
is such an operator, $p \in (1, \infty)$ and $w \in A_p(\R^3)$.

Theorem \ref{E:thm1} gives
$$
|\ave{Tf,g}|
\lesssim \E_{\sigma} \sum_{k \in \N^3}\sum_{u=1}^{U_0} 
(|k|+1)\varphi(k) | \langle Q_{k,u,\sigma} f, g \rangle|.
$$
The weighted estimate \eqref{E:eq122} of the shifts shows that
$$
| \langle Q_{k,u,\sigma} f, g \rangle|
\lesssim (|k|+1)^{2} \| f \|_{L^p(w)} \| g \|_{L^{p'}(w^{1-p'})}.
$$
Therefore, we have that
\begin{equation*}
\begin{split}
|\ave{Tf,g}|
&\lesssim \E_{\sigma} \sum_{k \in \N^3}\sum_{u=1}^{U_0} 
(|k|+1)^{3}\varphi(k)  \| f \|_{L^p(w)} \| g \|_{L^{p'}(w^{1-p'})} \\
&\lesssim  \sum_{k \in \N^3} 
(|k|+1)^3\varphi(k) \| f \|_{L^p(w)} \| g \|_{L^{p'}(w^{1-p'})}
\end{split}
\end{equation*}
and it remains to sum the series.

Recalling the definition of $\varphi(k)$ in \eqref{E:eq123} and using for example that
\begin{equation}\label{E:eq125}
(|k|+1)^3
\lesssim ( k^1+k^2+1)^3( k^3+1)^3
\end{equation}
there holds that
\begin{equation*}
\begin{split}
\sum_{k \in \N^3} 
(|k|+1)^3 \varphi(k) 
\lesssim  \sum_{k^1, k^2=0}^\infty &( k^1+k^2+1)^32^{-k^1\alpha_1-k^2\min\{\alpha_{23},\theta\}} \\
&\Big[\sum_{k^3=0}^{k^1+k^2} ( k^3+1)^3 + 
\sum_{k^3=k^1+k^2+1}^\infty( k^3+1)^3 2^{-(k^3-k^1-k^2)\theta}\Big].
\end{split}
\end{equation*}
Here $[ \cdot ] \lesssim (k^1+k^2+1)^4+(k^1+k^2+1)^3 \lesssim (k^1+k^2+1)^4$.
Using this
shows that the series converges. This proves the weighted estimate with tri-parameter weights.

Suppose then that $w \in A_{p,Z}$. Let $T$ be a $(\log, 1, \alpha_{23})$-CZZ operator.
It suffices to prove the estimate for $T$, since a $(1,1,\alpha_{23})$-CZZ operator is clearly also a
$(\log, 1, \alpha_{23})$-CZZ operator.

Let $\eta=\eta(p,w)>0$ be the small number provided by Theorem \ref{H:thm3} so
that shifts satisfy the weighted estimate \eqref{E:eq124}. Let $\theta \in (0,1)$ be such that
$\theta > 1-\eta$. Using the observation we made in the beginning of this proof we can see $T$ also as a
$(\theta, 1, \alpha_{23})$-CZZ operator. Using the representation theorem, Theorem  \ref{E:thm1}, we have
that
$$
|\ave{Tf,g}|
\lesssim \E_{\sigma} \sum_{k \in \N^3}\sum_{u=1}^{U_0} 
(|k|+1)\varphi(k) | \langle Q_{k,u,\sigma} f, g \rangle|,
$$
where
$
\varphi(k):= 
 2^{-k^1-k^2\min\{\alpha_{23},\theta\}-\max\{k^3-k^1-k^2, 0\}\theta}.
$
Notice that here the implicit constant depends on the weight $w$ due to the fact that we viewed $T$ as
related to the parameters $(\theta, 1, \alpha_{23})$.
The weighted estimate \eqref{E:eq124} of the shifts gives that
$$
|\ave{Tf,g}|
\lesssim \sum_{k \in \N^3} 
(|k|+1)^3\varphi(k) 2^{\max(-k^2+k^3,0)(1-\eta)} \| f \|_{L^p(w)} \| g \|_{L^{p'}(w^{1-p'})}.
$$

To end the proof we show that the series converges.
Writing out the definition of $\varphi(k)$ and using \eqref{E:eq125} we get
\begin{equation*}
\begin{split}
\sum_{k^1,k^2=0}^\infty & (k^1+k^2+1)^3  2^{-k^1-k^2\min\{\alpha_{23},\theta\}}
\Big[
\sum_{k^3=0}^{k^2} (k^3+1)^3 \\
&+\sum_{k^3=k^2+1}^{k^1+k^2}(k^3+1)^3 2^{(k^3-k^2)(1-\eta)}
+\sum_{k^3=k^1+k^2+1}^{\infty}(k^3+1)^3 2^{-(k^3-k^1-k^2)\theta} 2^{(k^3-k^2)(1-\eta)}.
\Big]
\end{split}
\end{equation*}
Here inside the brackets the first sum is dominated by $(k^2+1)^4$
and the second by $(k^1+k^2+1)^32^{k^1(1-\eta)}$. The third one is
\begin{equation*}
\begin{split}
\sum_{k^3=1}^{\infty}(k^1+k^2+k^3+1)^3 2^{-k^3\theta} 2^{(k^1+k^3)(1-\eta)} 
&=2^{k^1(1-\eta)}\sum_{k^3=1}^{\infty}(k^1+k^2+k^3+1)^3 2^{-k^3(\theta -1+\eta)} \\
& \lesssim 2^{k^1(1-\eta)}(k^1+k^2+1)^3,
\end{split}
\end{equation*}
where we used the fact that $\theta -1+\eta>0$.
A substitution of this into the previous sum gives
\begin{equation*}
\sum_{k^1,k^2=0}^\infty  (k^1+k^2+1)^7  2^{-k^1\eta-k^2\min\{\alpha_{23},\theta\}}
< \infty.
\end{equation*}
\end{proof}

\begin{proof}[Proof of Theorem \ref{E:thm1}]
We use the decomposition \eqref{eq:dec1} through \eqref{eq:dec5} of $\langle Tf, g\rangle$. We perform it first
on $\calD_{Z} = \calD_{\sigma, Z}$ and then take an expectation $\E_{\sigma}$ of
this identity. Next, we start manipulating this basic decomposition. With a fixed $\sigma$ we denote the $j$th term of the decomposition
 \eqref{eq:dec1} through \eqref{eq:dec5} with $\Sigma_{j, \sigma} = \Sigma_j$, $j=1, \ldots, 9$. That is, we e.g. have
 $$
 \Sigma_1=  \sum_{\substack{I, J \in \calD_{Z} \\ \ell(I) = \ell(J)}} \pair{T \Delta_{I, Z}f}{\Delta_{J, Z} g}.
 $$

The term $\Sigma_{1}$ is a pure resolution term, while the remaining eight all involve some averaging operators as well. 
\begin{enumerate}
\item On each of the lines \eqref{eq:dec1} through \eqref{eq:dec5}, the two terms appearing on the same line are dual to each other.
\item The averaging $\E_{\sigma}$ will only be exploited in the middle of the proof in certain key places. It will be used to induce
some goodness to the appearing cubes so that they can be realised as shifts.
\end{enumerate}

We start with the term
\begin{align*}
\Sigma_{8} &= \sum_{\substack{I, J \in \calD_{Z} \\ \ell(I) = \ell(J)}} \pair{T E_{I^1} \Delta_{I^{2,3}}f}{\Delta_{J^1} E_{J^{2,3}} g} \\
&= \sum_{\substack{I, J \in \calD_{Z} \\ \ell(I) = \ell(J)}}  \langle T(h_{I^1}^0 \otimes h_{I^{2,3}}), h_{J^1} \otimes h_{J^{2,3}}^0\rangle
\langle f, h_{I^1}^0 \otimes h_{I^{2,3}} \rangle \langle g, h_{J^1} \otimes h_{J^{2,3}}^0\rangle.
\end{align*}
After handling this term completely, we have done most of the heavy lifting -- although there are some relevant details
we need to later discuss concerning the other main symmetries.
We now fix $I, J$. Writing 
$$
h_{I^1}^0=h_{I^1}^0-h_{J^1}^0+h_{J^1}^0 
\qquad \textup{and} \qquad
h_{J^{2,3}}^0=h_{J^{2,3}}^0-h_{I^{2,3}}^0+h_{I^{2,3}}^0,
$$ 
defining
$$
H_{I^1, J^1} = h_{I^1}^0 - h_{J^1}^0 \qquad \textup{and} \qquad H_{I^{2,3}, J^{2,3}} = h_{J^{2,3}}^0 - h_{I^{2,3}}^0,
$$
we have that
\begin{equation}\label{E:eq104}
\begin{split}
\langle f, & h_{I^1}^0 \otimes h_{I^{2,3}} \rangle \langle g, h_{J^1} \otimes h_{J^{2,3}}^0\rangle 
= \langle f, H_{I^1, J^1} \otimes h_{I^{2,3}} \rangle \langle g, h_{J^1} \otimes H_{I^{2,3}, J^{2,3}}\rangle \\
&+ \langle f, H_{I^1, J^1} \otimes h_{I^{2,3}} \rangle \langle g, h_{J^1} \otimes h_{I^{2,3}}^0\rangle 
+ \langle f, h_{J^1}^0 \otimes h_{I^{2,3}} \rangle \langle g, h_{J^1} \otimes H_{I^{2,3}, J^{2,3}}\rangle \\
&+  \langle f, h_{J^1}^0 \otimes h_{I^{2,3}} \rangle \langle g, h_{J^1} \otimes h_{I^{2,3}}^0\rangle.
\end{split}
\end{equation}

This gives us the splitting
$
\Sigma_{8} = \sum_{j=1}^4 \Sigma^j_{8}. 
$

\subsection{Shifts} The term $\Sigma^1_{8}$ will yield Zygmund shifts.
We write
\begin{equation}\label{E:eq10}
\Sigma^1_{8} = \sum_{\substack{n \in \Z^3 \\ n^1 \ne 0 \ne (n^2, n^3)}} \sum_{I \in \calD_{Z}} c_{I,n},
\end{equation}
where
\begin{equation}\label{E:eq85}
\begin{split}
c_{I, n} := \langle T(h_{I^1 \dot + n^1}^0& \otimes h_{I^{2,3}}), h_{I^1} \otimes h_{I^{2,3} \dotplus n^{2,3}}^0\rangle \\
& \times \langle f, H_{I^1, I^1 \dotplus n^1} \otimes h_{I^{2,3}} \rangle \langle g, h_{I^1 } \otimes H_{I^{2,3}, I^{2,3} \dotplus n^{2,3}}\rangle.
\end{split}
\end{equation}
Recall from \eqref{eq:kparent} that $\dotplus$ means that we are translating with a multiple of the side length: $I^1 \dotplus n^1 = I^1 + n^1\ell(I^1)$
and $I^{2,3} \dot + n^{2,3}:= (I^2 \dotplus n^2) \times (I^3 \dotplus n^3)$. 

Notice that if $0<\ell \in \Z$, then $\ell \in (2^{k-3}, 2^{k-2}]$ for some $k=2,3,4, \dots$ Therefore,
we can write the part of \eqref{E:eq10} where $n^m \not=0$ for $m=1,2,3$ as
\begin{equation}\label{E:eq99}
\sum_{\substack{n \in \Z^3 \\ n^m \ne 0, \  m=1,2,3}} \sum_{I \in \calD_{Z}} c_{I,n}
= \sum_{k^1,k^2,k^3=2}^\infty \sum_{\substack{n \in \Z^3 \\ |n^m| \in (2^{k^m-3}, 2^{k^m-2}]}} \sum_{I \in \calD_{Z}} c_{I,n}.
\end{equation}

Now, we add goodness, recall \eqref{eq:DefkGood}. This is based on \eqref{eq:gprob} and the independence properties explained after that.
Let $\calD_{0,Z}$ be the collection of Zygmund rectangles related to the standard lattice $\calD_0 \times \calD_0 \times \calD_0$.
If $I \in \calD_{0,Z}$ and $k = (k^1,k^2,k^3)$, $k^1,k^2,k^3 \in \{2,3, \dots\}$, the independence gives that
\begin{equation*}
\begin{split}
\E_\sigma c_{I + \sigma,n}
&=8 \E_{\sigma} 1_{\{(I  + \sigma) \in \calD_{Z}(k)\}} (\sigma)\E_\sigma c_{I + \sigma,n} 
=8\E_{\sigma} 1_{\{(I + \sigma) \in \calD_{Z}(k)\}} (\sigma) c_{I \dot + \sigma,n}.
\end{split}
\end{equation*}
Thus, the expectation of \eqref{E:eq99} can be written as
\begin{equation}\label{E:eq100}
\begin{split}
\E_\sigma & \sum_{k^1,k^2,k^3=2}^\infty \sum_{\substack{n \in \Z^3 \\ n^m \in (2^{k^m-3}, 2^{k^m-2}]}} \sum_{I \in \calD_{0, Z}} c_{I + \sigma,n} \\
&= 8 \E_\sigma \sum_{k^1,k^2,k^3=2}^\infty \sum_{\substack{n \in \Z^3 \\ |n^m| \in (2^{k^m-3}, 2^{k^m-2}]}} \sum_{I \in \calD_{Z}(k)} c_{I ,n}.
\end{split}
\end{equation}
Similarly, the expectations of the parts of \eqref{E:eq10} where $n^2=0$ or $n^3=0$ equal
\begin{equation}\label{E:eq102}
4 \E_\sigma \sum_{k^1,k^3=2}^\infty \sum_{\substack{n^1,n^3 \in \Z \\ |n^m| \in (2^{k^m-3}, 2^{k^m-2}]}} \sum_{I \in \calD_{Z}(k^1,0,k^3)} 
c_{I ,(n^1,0,n^3)}
\end{equation}
and
\begin{equation}\label{E:eq103}
4 \E_\sigma \sum_{k^1,k^2=2}^\infty \sum_{\substack{n^1,n^2 \in \Z \\ |n^m| \in (2^{k^m-3}, 2^{k^m-2}]}} 
\sum_{I \in \calD_{Z}(k^1,k^2,0)} 
c_{I ,(n^1,n^2,0)}.
\end{equation}

Consider \eqref{E:eq100}. It can be written as
\begin{equation}\label{E:eq101}
\begin{split}
8C \E_\sigma \sum_{k^1,k^2,k^3=2}^\infty& (|k|+1)\varphi(k) 
\sum_{K \in \calD_{\lambda}}\sum_{\substack{I \in \calD_{Z}(k) \\ I^{(k)}=K}} 
\sum_{\substack{n \in \Z^3 \\ |n^m| \in (2^{k^m-3}, 2^{k^m-2}]}}
\frac{c_{I,n}}{C(|k|+1)\varphi(k)},
\end{split}
\end{equation}
where $\lambda = 2^{-k^1-k^2+k^3}$ and $C$ is some suitable large constant as in the statement of Theorem \ref{E:thm1}. 
Here also $(I \dot+ n)^{(k)}=K$ by \eqref{eq:kparent}. Recalling the definition
of $c_{I,n}$ in \eqref{E:eq85} we have structurally proved the desired representation in terms of Zygmund shifts.
For the right normalization of the appearing constants, we have to show that
$$
|\langle T(h_{I^1 \dot + n^1}^0 \otimes h_{I^{2,3}}), h_{I^1} \otimes h_{I^{2,3} \dotplus n^{2,3}}^0\rangle| 
\le C (|k|+1)\varphi(k) \frac{|I|}{|K|},
$$
where $I$, $n$, $k$ and $K$ are as in \eqref{E:eq101}. These kernel estimates are proved separately in Section \ref{E:sec2}.

Of course, a similar reasoning applies to the terms in \eqref{E:eq102} and \eqref{E:eq103}. Therefore, we have shown that 
$\E_\sigma \Sigma^1_{8}$ can be written in the form required by the representation theorem.

\subsection{Paraproducts}\label{E:subsec10}
Recall that
\begin{align*}
\Sigma_{8}^4 &=  \sum_{\substack{I, J \in \calD_{Z} \\ \ell(I) = \ell(J)}} 
\langle T(h_{I^1}^0 \otimes h_{I^{2,3}}), h_{J^1} \otimes h_{J^{2,3}}^0\rangle
\langle f, h_{J^1}^0 \otimes h_{I^{2,3}} \rangle \langle g, h_{J^1} \otimes h_{I^{2,3}}^0\rangle.
\end{align*}
Summing first over for example $I^1$ gives
\begin{equation*}
 \sum_{J^1 \times I^{2,3} \in \calD_{Z}} 
 \sum_{\substack{ J^{2,3} \in \calD^{2,3}_{\ell(J^1)} \\ \ell(J^{2,3})=\ell(I^{2,3})}} 
\langle T(1 \otimes h_{I^{2,3}}), h_{J^1} \otimes h_{J^{2,3}}^0\rangle
\Big \langle f, \frac{1_{J_1}}{|J_1|} \otimes h_{I^{2,3}} \Big \rangle 
\langle g, h_{J^1} \otimes h_{I^{2,3}}^0\rangle
=0,
\end{equation*}
since the pairings $\langle T(1 \otimes h_{I^{2,3}}), h_{J^1} \otimes h_{J^{2,3}}^0\rangle$ are zero 
by the cancellation assumptions in Section \ref{E:subsec8}. Notice that formally summing further over $J^{2,3}$
would give
$$
\sum_{J^1 \times I^{2,3} \in \calD_{Z}} \langle T(1 \otimes h_{I^{2,3}}), h_{J^1} \otimes 1\rangle
\Big \langle f, \frac{1_{J_1}}{|J_1|} \otimes h_{I^{2,3}} \Big \rangle \Big \langle g, h_{J^1} \otimes \frac{1_{I^{2,3}}}{|I^{2,3}|} \Big \rangle
$$
which would be a full paraproduct of Zygmund type. 

Similarly, the terms $\Sigma_{8}^2$ and $\Sigma_{8}^3$ would yield some
partial paraproducts of Zygmund type, but they also vanish with our cancellation assumptions.

\subsection{Other main terms}
We consider the other terms $\Sigma_{i}$, $i \in \{1, \dots, 9\} \setminus\{8 \}$, from the
decomposition \eqref{eq:dec1}-\eqref{eq:dec5}. We demonstrate the structural identity with 
$$
\Sigma_{1}
= \sum_{\substack{I, J \in \calD_{Z} \\ \ell(I) = \ell(J)}}  
\langle Th_{I,Z} , h_{J,Z}\rangle
\langle f, h_{I,Z} \rangle \langle g, h_{J,Z}\rangle.
$$
Recall that $h_{I,Z}=h_{I^1} \otimes h_{I^{2,3}}$. Since all the appearing Haar functions are cancellative, we may directly write 
\begin{equation}\label{E:eq105}
\Sigma_{1}=\sum_{I \in \calD_{Z} }  \sum_{n \in \Z^3}c_{I,n},
\end{equation}
where 
\begin{equation}\label{E:eq106}
c_{I,n}=\langle Th_{I,Z} , h_{I \dot + n,Z}\rangle
\langle f, h_{I,Z} \rangle \langle g, h_{I \dot +n,Z}\rangle.
\end{equation}
The difference compared to the term $\Sigma_{8}$, which we treated above, is that in $\Sigma_{8}$ there are
non-cancellative Haar functions present. For this reason we did the splitting \eqref{E:eq104} with $\Sigma_{8}$,
and here we directly wrote the identity \eqref{E:eq105}.

After this we proceed similarly as we did with $\Sigma_{8}$.
The identity \eqref{E:eq105} corresponds to \eqref{E:eq10}, except that here all the cases where $n^m=0$ for some $m=1,2,3$ are allowed.
The parts of the sum where $n^m=0$ for some $m$ are handled similarly as in \eqref{E:eq102} and \eqref{E:eq103}. For example,
we have that
\begin{equation*}
\E_{\sigma}\sum_{I \in \calD_{Z} }  \sum_{n^3 \in \Z}c_{I,(0,0,n^3)}
=\E_{\sigma}\sum_{k^3=2}^\infty  \sum_{\substack{n^3 \in \Z \\ |n^3| \in (2^{k^3-3}, 2^{k^3-2}]}} \sum_{I \in \calD_{Z}(0,0,k^3) } 
c_{I,(0,0,n^3)}.
\end{equation*}
After organizing this correspondingly as in \eqref{E:eq101} it only remains to bound the shift coefficients, which is done in Section \ref{E:sec2}.

All the remaining terms $\Sigma_{j}$, $j \in \{1, \dots, 9\}\setminus\{1,8\}$ can be dealt with by
adapting the steps related to
 $\E_\sigma\Sigma_{1}$ and $\E_\sigma \Sigma_{8}$.
 Taking into account the estimates of the shift coefficients from section \ref{E:sec2}, this concludes the proof of
 Theorem \ref{E:thm1}.
 
 \end{proof}

\section{Estimates of the shift coefficients}\label{E:sec2}
This section is devoted to bounding the shift coefficients which appeared in the proof of Theorem \ref{E:thm1}.
We estimate coefficients of the form
\begin{equation}\label{E:eq80}
\langle T(h_{I^1 \dot +n^1}^0 \otimes h_{I^{2,3}}), h_{I^1 } \otimes h_{I^{2,3} \dotplus n^{2,3} }^0\rangle
\end{equation}
with all different values of $n \in \Z^3$. The coefficients coming from $\Sigma_{8}$ are of this form,
see \eqref{E:eq85}, but the coefficients related to the other main terms $\Sigma_{j}$ may have a different set of Haar functions.
For example, the term $\Sigma_{1}$ leads to coefficients of the form
$
\langle Th_{I,Z} , h_{I \dot + n,Z}\rangle,
$
see \eqref{E:eq106}.
However, these are all treated symmetrically. The point is that
related to the rectangles $I$ we have cancellative Haar functions. Related to $I^1\dot+n^1$ and
$I^{2,3}\dot+n^{2,3}$ the Haar function is either cancellative or non-cancellative, but this makes no difference.

One comment to avoid confusion. 
We'll estimate \eqref{E:eq80} also in the case that $n^1=0$ or $n^{2,3}=0$. As one sees for example from the 
treatment of $\Sigma_{8}$ above (see \eqref{E:eq10}), 
this kind of coefficients do not actually arise in the representation theorem. If $n^1=0$
then the Haar function related to $I^1\dot + n^1=I^1$ must be the cancellative Haar function $h_{I^1}$, and the same applies to $n^{2,3}$.
But since, as we already discussed, the form of these Haar functions does not affect the estimate, we simply estimate
coefficients of the form \eqref{E:eq80}.

Let us agree on the following notation. Let $n \in \Z$. If $n \not=0$, we define $k(n) \in \{2,3,4, \dots\}$ be the number such that
$|n| \in (2^{k-3},2^{k-2}]$. If $n=0$, define $k(n)=0$. If $n \in \Z^3$ we define $k(n)=(k(n^1),k(n^2),k(n^3))$. 
If $k=(k^1, k^2, k^3)$ recall that
\begin{equation}\label{E:eq116}
\varphi(k):= 
 2^{-k^1\alpha_1-k^2\min\{\alpha_{23},\theta\}-\max\{k^3-k^1-k^2, 0\}\theta},
\end{equation}
where $\theta$, $\alpha_1$ and $\alpha_{23}$  were defined in Section \ref{E:subsec6} and Section \ref{E:subsec5}.
All the coefficient estimates are included in the next proposition.

\begin{prop}\label{E:prop4}
Let $n \in \Z^3$. Then, with $k=k(n)=(k^1,k^2,k^3)$ there holds that
\begin{equation}\label{E:eq119}
|\langle T(h_{I^1\dot+ n^1}^0\otimes h_{I^{2,3}}), h_{I^1}\otimes h_{I^{2,3}\dot+n^{2,3}}^0\rangle|  
\lesssim (|k|+1)\varphi(k) \frac{|I|}{|K|},
\end{equation}
where $|K|=I^{(k)}$.
\end{prop}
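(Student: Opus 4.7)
The plan is to split the estimate into cases according to which of $n^1, n^2, n^3$ vanish, with the main (generic) case being $n^1 \ne 0$ and $(n^2, n^3) \ne (0,0)$, where the source and target Haar functions have disjoint supports in both variable~$1$ and the composite variable~$(2,3)$, so the full kernel representation applies. In this main case I would first use the zero integral of $h_{I^1}$ in $x_1$ to subtract $K((c, x_{2,3}), y)$ from $K(x,y)$ with $c$ the center of $I^1$, then use the zero integral of $h_{I^{2,3}}$ in $y_{2,3}$ to subtract $K(x, (y_1, c'))$ and $K((c, x_{2,3}), (y_1, c'))$ with $c'$ the center of $I^{2,3}$. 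This yields the fourfold difference
\[
\Delta K(x,y) := K(x,y) - K((c, x_{2,3}), y) - K(x, (y_1, c')) + K((c, x_{2,3}), (y_1, c')).
\]

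The crucial observation is that $\Delta K$ admits a mixed H\"older bound in $x_1$ and $y_{2,3}$ simultaneously. While such a bound is not directly stated among the full kernel hypotheses of Section~\ref{E:subsec6}, it follows from applying the ``$x$-only'' mixed H\"older estimate \eqref{E:eq29} to the adjoint kernel $K^*_{2,3}$. Indeed, under the change of variables $K^*_{2,3}(x,y) = K((x_1,y_2,y_3),(y_1,x_2,x_3))$, perturbations of $x_1$ and of $(x_2,x_3)$ in $K^*_{2,3}$ translate to perturbations of the first coordinate of the first argument and of the last two coordinates of the second argument of $K$, yielding exactly
\[
|\Delta K(x,y)| \lesssim \Bigl(\tfrac{|x_1 - c|}{|x_1 - y_1|}\Bigr)^{\alpha_1} \Bigl(\tfrac{|y_2 - c'_2|}{|x_2 - y_2|} + \tfrac{|y_3 - c'_3|}{|x_3 - y_3|}\Bigr)^{\alpha_{23}} \frac{D_\theta(x-y)}{|x_1-y_1||x_2-y_2||x_3-y_3|}.
\]
On the support we have $|x_1-c| \le \ell(I^1)$, $|y_i - c'_i| \le \ell(I^i)$, $|x_i - y_i| \sim 2^{k^i}\ell(I^i)$, and the Zygmund relation $\ell(I^1)\ell(I^2)=\ell(I^3)$ gives $D_\theta(x-y) \sim 2^{-|k^1+k^2-k^3|\theta}$; bounding the four Haar functions in $L^\infty$ by $|I|^{-1/2}$ each and integrating the trivial supports gives the claimed factor $\varphi(k)\,|I|/|K|$, after checking in each of the cases $k^3 \le k^2$, $k^2 < k^3 \le k^1+k^2$, $k^3 > k^1+k^2$ that $2^{-k^1\alpha_1}2^{-\min(k^2,k^3)\alpha_{23}}2^{-|k^1+k^2-k^3|\theta} \lesssim \varphi(k)$ (which is a short case analysis).

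For the degenerate cases where some $n^i = 0$, the supports overlap in that variable and the full kernel is not directly available. Here I would invoke the cancellation conditions \eqref{E:143}. For instance, if $n^1 = 0$ and $(n^2,n^3)\ne(0,0)$, the vanishing $\langle T(1\otimes h_{I^{2,3}}), h_{I^1}\otimes h^0_{I^{2,3}\dotplus n^{2,3}}\rangle = 0$ (from \eqref{E:eq143} extended by linearity in the cancellative $(2,3)$-factor) lets us replace $h^0_{I^1}=|I^1|^{-1/2}1_{I^1}$ on the source by $-|I^1|^{-1/2}1_{(I^1)^c}$, restoring disjoint supports in variable~$1$. One then proceeds as in the main case with a dyadic decomposition of $(I^1)^c$ according to the distance $|y_1 - c|$; summing across scales produces the logarithmic factor $(|k|+1)$. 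The case $n^{2,3}=(0,0)$, $n^1\ne 0$ is handled symmetrically, this time invoking the partial kernel $K_{I^{2,3}}$ which is a standard one-parameter Calder\'on--Zygmund kernel by \eqref{E:eq110}, together with the other family of cancellations in \eqref{E:eq143}. Finally, the fully degenerate case $n=0$ reduces to the weak boundedness property \eqref{E:eq74} after splitting the Haar functions into indicators.

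The main technical obstacle is the first one: recognising and cleanly invoking the mixed $x_1$--$y_{2,3}$ H\"older bound coming from the adjoint kernel $K^*_{2,3}$, which is exactly the right counterpart of the bi-parameter mixed H\"older assumption used in \cite{Ma1} and without which the product structure $2^{-k^1\alpha_1} \cdot 2^{-k^2\min(\alpha_{23},\theta)}$ cannot be obtained. After that the book-keeping through the various degenerate cases and the verification of $\varphi(k)$ in each sub-case is routine.
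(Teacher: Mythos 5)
Your plan correctly identifies the key insight for the \emph{fully separated} case ($\min\{|n^1|,|n^2|,|n^3|\}\ge 2$): apply the double H\"older estimate \eqref{E:eq29} to the adjoint kernel $K^*_{2,3}$, which yields exactly the mixed $x_1$--$y_{2,3}$ difference bound you write, and the $\varphi(k)$ case-analysis you sketch then goes through. This is the same argument the paper uses in the first sub-case of the Separated/Separated lemma. However, your delineation of ``main case'' ($n^1\ne 0$ and $(n^2,n^3)\ne(0,0)$) versus ``degenerate'' ($n^i=0$) is wrong in a way that creates a real gap. When some $|n^i|=1$, the supports are disjoint (so the full kernel representation applies) but they are \emph{adjacent}, and your center-subtraction argument fails: the H\"older estimate \eqref{E:eq29} requires $|x_1'-x_1|\le|x_1-y_1|/2$, while for $|n^1|=1$ one has $|x_1-c_{I^1}|$ up to $\ell(I^1)/2$ and $|x_1-y_1|$ arbitrarily close to $0$ near the shared endpoint. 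The same problem occurs for $|n^2|=1$ or $|n^3|=1$. Relatedly, your claim that $|x_i-y_i|\sim 2^{k^i}\ell(I^i)$ on the support is only correct when $|n^i|\ge 2$; for adjacent (and identical) components this fails, which is exactly what makes these cases delicate.

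These adjacent and ``mixed'' cases (one component separated, another adjacent or identical) are the bulk of the paper's Section~\ref{E:sec2} --- Lemmas Separated/Adjacent through Identical/Adjacent --- and they are not ``routine book-keeping.'' They require the size or mixed size/H\"older estimates together with the integral lemmas \eqref{E:eq113} and \eqref{E:eq117}; in particular, the logarithmic factor $(|k|+1)$ in the statement arises from \eqref{E:eq113} at $\theta=1$ in the Adjacent/Separated case, not (as you claim) from summing the dyadic decomposition of $(I^1)^c$ in the $n^1=0$ case. Your proposed route for $n^1=0$ --- using the cancellation condition to replace $h^0_{I^1}$ by $-|I^1|^{-1/2}1_{(I^1)^c}$ and doing a dyadic shell decomposition --- is a genuinely different strategy from the paper's, which instead splits into children of $I^1$ and, in the coinciding-child subcase, invokes the \emph{partial} kernel $K_{(I^1)'}$ together with its bound \eqref{E:eq110}. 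Your alternative is plausible but would need extra work to justify the improper-integral manipulation and the resulting scale sum, whereas the partial-kernel route is tailor-made to avoid this. In summary: the separated sub-case is right, but you need to redraw the case split around $|n^i|\in\{0,1\}$, and the adjacent and mixed cases need their own arguments along the lines of the paper's mixed estimates rather than a single unified H\"older bound.
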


We split the proof of Proposition \ref{E:prop4} into nine lemmas according to different values of $n$.

\begin{lem}[Separated/Separated]
The estimate \eqref{E:eq119} holds in the case that $|n^1|\ge 2$ and $\max\{|n^2|, |n^3|\}\ge 2$. 
\end{lem}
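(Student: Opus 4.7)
The hypotheses $|n^1|\ge 2$ and $\max(|n^2|,|n^3|)\ge 2$ ensure that $I^1$ and $I^1\dotplus n^1$ are disjoint intervals, and $I^{2,3}$ and $I^{2,3}\dotplus n^{2,3}$ are disjoint rectangles in $\R^2$. Therefore the full kernel representation of Section \ref{E:subsec6} applies, and the pairing equals
\begin{equation*}
\iint K(x,y)\, h^0_{I^1\dotplus n^1}(y_1)\,h_{I^{2,3}}(y_{2,3})\,h_{I^1}(x_1)\,h^0_{I^{2,3}\dotplus n^{2,3}}(x_{2,3})\ud x\ud y.
\end{equation*}
My plan is to exploit the cancellations $\int h_{I^1}(x_1)\ud x_1=0$ and $\int h_{I^{2,3}}(y_{2,3})\ud y_{2,3}=0$ to replace $K(x,y)$ by the mixed fourth-order finite difference
\begin{equation*}
\Delta K(x,y):=K(x,y)-K((c_{I^1},x_{2,3}),y)-K(x,(y_1,c_{I^{2,3}}))+K((c_{I^1},x_{2,3}),(y_1,c_{I^{2,3}})),
\end{equation*}
where the centers $c_{I^1},c_{I^{2,3}}$ are the natural subtraction points. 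Applying the mixed H\"older estimate \eqref{E:eq29} to the partial adjoint $K^*_{2,3}$ (which by assumption satisfies the same kernel estimates as $K$) yields, in the regime where the hypothesis $|y'-y|\le |x-y|/2$ on the appropriate components is satisfied,
\begin{equation*}
|\Delta K(x,y)|\lesssim \Big(\frac{\ell(I^1)}{|x_1-y_1|}\Big)^{\!\alpha_1}\Big(\frac{\ell(I^2)}{|x_2-y_2|}+\frac{\ell(I^3)}{|x_3-y_3|}\Big)^{\!\alpha_{23}}\cdot\frac{D_\theta(x-y)}{|x_1-y_1||x_2-y_2||x_3-y_3|}.
\end{equation*}

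After bounding the Haar functions by their sup-norms $\ell(I^1)^{-1/2}$ and $|I^{2,3}|^{-1/2}$, the $x_1,y_1$ integration contributes a factor $2^{-k^1(1+\alpha_1)}$, since $|x_1-y_1|\sim 2^{k^1}\ell(I^1)$ throughout the support. For the $x_{2,3},y_{2,3}$ integration, I will change variables so that $(x_2-y_2,x_3-y_3)$ is parametrized in scale-invariant form by $s=|x_2-y_2|/\ell(I^2)$ and $t=|x_3-y_3|/\ell(I^3)$, then invoke the Zygmund identity $\ell(I^3)=\ell(I^1)\ell(I^2)$ to rewrite $D_\theta(x-y)\sim(2^{k^1}s/t+t/(2^{k^1}s))^{-\theta}$. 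A case analysis on $k^2,k^3$ (whether $|n^m|\ge 2$ or $|n^m|\le 1$ for $m=2,3$) then yields a bound of the form $|I^{2,3}|\cdot 2^{-k^2-k^3-k^2\min(\alpha_{23},\theta)-\max(k^3-k^1-k^2,0)\theta}$ for the $x_{2,3}$-part, up to a factor $(|k|+1)$ that arises in the "logarithmic" subcases. Combining and using $|I|/|K|=2^{-k^1-k^2-k^3}$ produces \eqref{E:eq119}.

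The main obstacle is that when $|n^m|\le 1$ for some $m\in\{2,3\}$, the hypothesis $|y_m-c_{I^m}|\le |x_m-y_m|/2$ needed for the mixed H\"older estimate fails on a substantial part of the support, where $|x_m-y_m|$ may be arbitrarily small inside $(0,\ell(I^m))$. In this regime one must split the $y_m$-integration and fall back on a size-only estimate, exploiting the fact that $D_\theta(x-y)\sim(|x_m-y_m|/(2^{k^1+k^2}\ell(I^m)))^{\theta}$ (by the Zygmund relation and the large $x_1$-separation) is integrable against $|x_m-y_m|^{-1}$ over $(0,\ell(I^m))$ thanks to $\theta>0$. The structure of $h_{I^{2,3}}$ (one of the three one-parameter Haar functions on $\R^2$) dictates in which variable the cancellation is actually available, so the case analysis must match the chosen subtraction with the direction where smoothness is usable; any residual loss in $k$ is absorbed by the $(|k|+1)$ prefactor on the right-hand side of \eqref{E:eq119}.
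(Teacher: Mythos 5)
Your strategy for the fully separated sub-case $\min\{|n^2|,|n^3|\}\ge 2$ matches the paper: form the mixed fourth-order difference, rewrite it as a difference of $K^*_{2,3}$ at $c_{I^1}$ and $c_{I^{2,3}}$ in the first argument, apply \eqref{E:eq29}, and integrate with $|x_i-y_i|\sim 2^{k^i}\ell(I^i)$ and the Zygmund relation $\ell(I^3)=\ell(I^1)\ell(I^2)$, after which the exponential factor $2^{-k^1\alpha_1}(2^{-k^2}+2^{-k^3})^{\alpha_{23}}(2^{k^1+k^2-k^3}+2^{k^3-k^1-k^2})^{-\theta}2^{-k^1-k^2-k^3}$ is shown $\le\varphi(k)|I|/|K|$ by a case split on $k^3$ against $k^1+k^2$ and $k^2$. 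The difference taking, Haar normalizations, and the scale-invariant reparametrization $D_\theta\sim(2^{k^1}s/t+t/(2^{k^1}s))^{-\theta}$ are consistent with the paper.

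The gap is in the mixed sub-cases. Your proposed asymptotic $D_\theta(x-y)\sim(|x_m-y_m|/(2^{k^1+k^2}\ell(I^m)))^{\theta}$ is correct for $m=3$ (i.e.\ $|n^2|\ge 2>|n^3|$), but it is wrong for $m=2$ (i.e.\ $|n^2|<2\le|n^3|$). In that case $|x_1-y_1||x_2-y_2|/|x_3-y_3|\sim 2^{k^1-k^3}|x_2-y_2|/\ell(I^2)$, so the controlling exponent is $k^3-k^1$ and not $k^1+k^2=k^1$; moreover this ratio is not always $<1$, so a further split on $k^1\lessgtr k^3$ is needed (this is precisely the paper's change of variables $t=|I^2|/2^{k^1-k^3}$ and the two cases $t^{-1}|I^1|\ge 1$ and $<1$). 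The error is not harmless: when $k^3>2k^1$, integrating your bound over $|x_2-y_2|\in(0,\ell(I^2))$ gives only a factor $2^{-k^1\theta}$, whereas $\varphi(k)$ demands $2^{-(k^3-k^1)\theta}$ (with $k^2=0$, $\max(k^3-k^1-k^2,0)\theta=(k^3-k^1)\theta$), so you fall short by $2^{(k^3-2k^1)\theta}$, which diverges in $k^3$. Note also that there is no genuine logarithmic loss in this lemma, so invoking the $(|k|+1)$ prefactor as a safety margin here is a red flag that the estimate was not carried through; that prefactor is needed in the Adjacent/Separated case, not here. Finally, a small precision: in the mixed cases you do not fall back on a pure size estimate, but rather on the one-variable H\"older bound \eqref{E:eq25} (smoothness in $x_1$ is still used, combined with size and $D_\theta$-decay in the $(2,3)$-variable), otherwise the $2^{-k^1\alpha_1}$ factor would also be lost.
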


\begin{proof}
Suppose first that $\min\{|n^2|,|n^3|\} \ge 2$.
Using the zero integrals
$$
\int_{\R} h_{I^1} = 0 = \int_{\R^2} h_{I^{2,3}}
$$
we have that
\begin{equation*}
\begin{split} 
\langle &T(h_{I^1 \dot +n^1}^0  \otimes h_{I^{2,3}}), h_{I^1 } \otimes h_{I^{2,3} \dotplus n^{2,3} }^0\rangle 
=\iint K^*_{2,3}(x,y) h_{I\dot+n}^0(y) h_{I,Z}(x)  \ud x \ud y\\
&= \iint [K^*_{2,3}(x,y)-K^*_{2,3}((c_{I^1},x_{2,3}),y)-K^*_{2,3}((x_1,c_{I^{2,3}}),y) 
+K^*_{2,3}(c_I,y)] 
\\ & \hspace{9cm} h_{I\dot+n}^0(y) h_{I,Z}(x)  \ud x \ud y.
\end{split}
\end{equation*}
The H\"older estimate of the kernel \eqref{E:eq29} gives that the absolute value of the above integral is dominated by
\begin{align*}
 &\iint  2^{-k^1\alpha_1}(2^{-k^2}+  2^{-k^3})^{\alpha_{23}}  \frac {(2^{k^1+k^2-k^3}+2^{k^3-k^2-k^1})^{-\theta}}{2^{k^1+k^2+k^3}|I^3|^2}
h_{I\dot+n}^0(y) h^0_{I}(x)  \ud x \ud y\\
&= 2^{-k^1\alpha_1}(2^{-k^2}+2^{-k^3})^{\alpha_{23}}  \frac {(2^{k^1+k^2-k^3}+2^{k^3-k^2-k^1})^{-\theta}}{2^{k^1+k^2+k^3}}.
\end{align*}
Notice that $2^{-(k^1+k^2+k^3)}=|I|/|K|$. 
If $k^3 \ge k^1+k^2$, then also $k^3 \ge k^2$, and the above term is comparable with
$$
2^{-k^1\alpha_1}2^{-k^2\alpha_{23}} 2^{-(k^3-k^2-k^1)\theta}\frac{|I|}{|K|}
\le \varphi(k) \frac{|I|}{|K|}.
$$
Suppose that $k^3 < k^1+k^2$. If also $k^3 < k^2$, then the term is comparable with
$$
2^{-k^1\alpha_1}2^{-k^3\alpha_{23}} 2^{-(k^1+k^2-k^3)\theta}\frac{|I|}{|K|}
\le 2^{-k^1\alpha_1} 2^{-(k^1+k^2)\min \{ \alpha_{23},\theta\} }\frac{|I|}{|K|}
\le \varphi(k)\frac{|I|}{|K|}.
$$
Finally, if $k^3 < k^1+k^2$ and $k^3 \ge k^2$  we get
$$
2^{-k^1\alpha_1}2^{-k^2\alpha_{23}}  2^{-(k^1+k^2-k^3)\theta}\frac{|I|}{|K|}
\le 2^{-k^1\alpha_1}2^{-k^2\alpha_{23}} \frac{|I|}{|K|}
\le \varphi(k)\frac{|I|}{|K|}.
$$

We turn to the case $\min\{|n^2|,|n^3|\} < 2$.
Suppose first that $|n^2| < 2 \le |n^3|$.
We write 
\begin{align*}
\langle T(h_{I^1 \dot +n^1}^0 & \otimes h_{I^{2,3}}), h_{I^1 } \otimes h_{I^{2,3} \dotplus n^{2,3} }^0\rangle \\
&=\iint [K^*_{2,3}(x,y)-K^*_{2,3}((c_{I^1},x_{2,3}),y)] h_{I\dot+n}^0(y) h_{I,Z}(x)  \ud x \ud y.
\end{align*}
The mixed size and H\"older estimate \eqref{E:eq25} leads to
\begin{align*}
\iint & 2^{-k^1\alpha_1} 
\frac{\Big(\frac{2^{k^1-k^3}|x_2-y_2|}{|I^2|}+\frac{|I^2|}{2^{k^1-k^3}|x_2-y_2|}\Big)^{-\theta}}{2^{k^1}|I^1| |x_2-y_2| 2^{k^3}|I^3|}
h_{I\dot+n}^0(y) h^0_{I}(x)  \ud x \ud y \\
&= \frac{2^{-k^1(\alpha_1+1)-k^3}}{ |I^2|}
\iint \frac{\Big(\frac{2^{k^1-k^3}|x_2-y_2|}{|I^2|}+\frac{|I^2|}{2^{k^1-k^3}|x_2-y_2|}\Big)^{-\theta}}{|x_2-y_2| }
1_{I^2\dot+n^2}(y_2) 1_{I^2}(x_2)  \ud x_2 \ud y_2.
\end{align*}
Let $t=|I^2|/2^{k^1-k^3}$. By a change of variables the previous term equals
$$
2^{-k^1(\alpha_1+1)-k^3}2^{k^3-k^1}
\iint \frac{\Big(|x_2-y_2|+|x_2-y_2|^{-1}\Big)^{-\theta}}{|x_2-y_2| }
1_{t^{-1}(I^2\dot+n^2)}(y_2) 1_{t^{-1}I^2}(x_2)  \ud x_2 \ud y_2.
$$
Notice that $t^{-1} |I^2| = 2^{k^1-k^3}$. If $k^1 \ge k^3$, so that $t^{-1} |I^2| \ge 1$, we can estimate the integral by 
$t^{-1} |I^2|=2^{k^1-k^3}$. This gives
$$
2^{-k^1(\alpha_1+1)-k^3}2^{k^3-k^1}2^{k^1-k^3}
= 2^{-k^1\alpha_1} 2^{-k^1-k^3}
\lesssim \varphi (k) \frac{|I|}{|K|}.
$$
If $k^1 < k^3$, so that $t^{-1} |I^2| < 1$, we can estimate the integral by 
$(t^{-1}|I^2|)^{1+\theta}=2^{(k^1-k^3)(1+\theta)}$. This gives
$$
2^{-k^1(\alpha_1+1)-k^3}2^{k^3-k^1} 2^{(k^1-k^3)(1+\theta)}
=2^{-k^1\alpha_1}2^{-(k^3-k^1)\theta} 2^{-k^1-k^3}
\lesssim \varphi(k) \frac{|I|}{|K|}. 
$$

The final case $|n^2| \ge 2 > |n^3|$ is handled similarly. Let $t=2^{k^1+k^2}|I^3|$.
The mixed size and H\"older estimate \eqref{E:eq25} gives
\begin{equation*}
\begin{split}
&\iint  2^{-k^1\alpha_1} \frac{\Big(\frac{2^{k^1+k^2}|I^3|}{|x_3-y_3|} + \frac{|x_3-y_3|}{2^{k^1+k^2}|I^3|} \Big)^{-\theta}}
{2^{k^1+k^2}|I^3||x_3-y_3|} h^0_{I\dot+n}(y) h^0_{I}(x) \ud y \ud x \\
&=\frac{2^{-k^1\alpha_1-k^1-k^2}}{|I^3|} 2^{k^1+k^2}|I^3|
\iint \frac{(|x_3-y_3|^{-1} + |x_3-y_3| )^{-\theta}}
{|x_3-y_3|} 1_{t^{-1}(I^3\dot+n^3)}(y_3) 1_{t^{-1}I^3}(x_3) \ud y_3 \ud x_3.
\end{split}
\end{equation*} 
Since $t^{-1}|I^3| =2^{-k^1-k^2}<1$, the integral is dominated by $2^{-(k^1+k^2)(1+\theta)}$, and we end up with
\begin{equation*}
\frac{2^{-k^1\alpha_1-k^1-k^2}}{|I^3|} 2^{k^1+k^2}|I^3|
2^{-(k^1+k^2)(1+\theta)}
=2^{-k^1(\alpha_1+\theta)} 2^{-k^2\theta} 2^{-k^1-k^2}
\lesssim \varphi(k) \frac{|I|}{|K|}.\qedhere
\end{equation*}
\end{proof}

\begin{lem}[Separated/Adjacent]\label{lem:sa}
The estimate \eqref{E:eq119} holds in the case that $|n^1|\ge 2$ and $\max\{|n^2|, |n^3|\}= 1$. 
\end{lem}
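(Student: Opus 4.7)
My plan is to use the full kernel representation of $T$, which is valid here since the rectangles in $\R^3$ are disjoint (even when $\max\{|n^2|,|n^3|\}=1$, the $(2,3)$-parts $I^{2,3}$ and $I^{2,3}\dotplus n^{2,3}$ remain disjoint as subsets of $\R^2$, possibly only sharing boundary). Writing the pairing as a double integral against $K(x,y)$, I will exploit the two available cancellations: the $x_1$-cancellation of $h_{I^1}$ and the 2D zero integral of $h_{I^{2,3}}$ in $y_{2,3}$. With $x^*=(c_{I^1},x_{2,3})$ and $y^*=(y_1,c_{I^{2,3}})$ this turns the kernel into the mixed second-order difference
\begin{equation*}
\Delta^* K(x,y):=K(x,y)-K(x^*,y)-K(x,y^*)+K(x^*,y^*).
\end{equation*}
Because $k^1\ge 3$ (since $|n^1|\ge 2$), we have $|x_1-c_{I^1}|\le|I^1|/2\le|x_1-y_1|/2$, so the $x_1$-Hölder always applies; and when additionally the $y_{2,3}$-Hölder condition $|y_i-c_{I^i}|\le|x_i-y_i|/2$ holds for $i=2,3$, the mixed Hölder of the adjoint $K^*_{2,3}$ (which by assumption satisfies the same estimates as $K$) yields
\begin{equation*}
|\Delta^*K|\lesssim 2^{-k^1\alpha_1}\Big(\tfrac{|y_2-c_{I^2}|}{|x_2-y_2|}+\tfrac{|y_3-c_{I^3}|}{|x_3-y_3|}\Big)^{\alpha_{23}}\frac{D_\theta(x-y)}{2^{k^1}|I^1|\,|x_2-y_2|\,|x_3-y_3|}.
\end{equation*}
In the complementary region I discard the $y_{2,3}$-subtraction and bound $|\Delta^*K|\le|K(x,y)-K(x^*,y)|+|K(x,y^*)-K(x^*,y^*)|$ using only the $x_1$-Hölder on each summand.

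Since $k^2,k^3\in\{0,2\}$ with $\max\{k^2,k^3\}=2$, both $k^2+k^3$ and the extra $\varphi(k)$-factor $2^{-k^2\min\{\alpha_{23},\theta\}}$ are bounded constants, and $|I|/|K|=2^{-k^1-k^2-k^3}\sim 2^{-k^1}$. The target therefore reduces to showing $|\text{pairing}|\lesssim 2^{-k^1(\alpha_1+1)}$. Integrating $(x_1,y_1)\in I^1\times(I^1\dot+n^1)$ in the above pointwise bound produces exactly the factor $2^{-k^1(\alpha_1+1)}$ after the $|I^1|$'s cancel, and it remains to show that the surviving two-dimensional integral over $(x_{2,3},y_{2,3})\in(I^{2,3}\dot+n^{2,3})\times I^{2,3}$, normalized by $|I^{2,3}|^{-1}$, is $\lesssim 1$.

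This last two-dimensional estimate is the main obstacle: in the adjacent and (if applicable) overlapping subcoordinates, $|x_i-y_i|$ can be arbitrarily small and produces a non-integrable kernel singularity $1/|x_i-y_i|$. I plan to handle this via a dichotomy. Where the $y_{2,3}$-Hölder applies, the gain $(|y_i-c_{I^i}|/|x_i-y_i|)^{\alpha_{23}}$ renders the singular factor integrable after pairing up the $y_i$ and $x_i$ integrations (a standard one-dimensional calculation, since $\alpha_{23}>0$). Where it fails, the fallback first-order bound combined with the $D_\theta$ decay, which near the Zygmund manifold behaves like $|x_i-y_i|^{\theta-1}$ (integrable because $\theta>0$), together with the Zygmund balance $|I^3|=|I^1||I^2|$, delivers the needed uniform control. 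Assembling these pieces gives the claimed bound.
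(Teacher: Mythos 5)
Your approach is plausible and could be made to work, but it differs substantially from the paper's and adds complexity that does not pay off in this particular case. The paper's proof uses \emph{only} the $x_1$-cancellation of $h_{I^1}$ via the mixed size/H\"older bound \eqref{E:eq25}, and then invokes the one-line maximal-function estimate \eqref{E:eq117}, which integrates out one of the $(2,3)$-variables and absorbs \emph{both} the $D_\theta$ weight and the $1/|x_i-y_i|$ singularity in that variable, leaving a product of two clean one-dimensional quantities. Your plan instead invokes the full mixed H\"older \eqref{E:eq29} (i.e.\ both the $x_1$- and the $y_{2,3}$-cancellation) together with a dichotomy between the region where the $y_{2,3}$-H\"older condition is in force and its complement. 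That extra cancellation does not help here: since $k^2,k^3\in\{0,2\}$, the $(2,3)$-part of $\varphi(k)$ is already $O(1)$ and no additional decay is needed, while the case analysis makes the remaining two-dimensional integral (over $(x_{2,3},y_{2,3})$) considerably more delicate. With the paper's approach that integral is dispatched in one step.

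Two points in your sketch need correcting, and the second one matters. First, the statement that ``$D_\theta$ decay\ldots near the Zygmund manifold behaves like $|x_i-y_i|^{\theta-1}$'' has the geometry backwards: $D_\theta$ is \emph{maximal} (comparable to $1$) near the Zygmund manifold $|x_1-y_1||x_2-y_2|=|x_3-y_3|$ and decays \emph{away} from it; the correct mechanism is that if, say, $|x_2-y_2|\to 0$ while $|x_1-y_1|,|x_3-y_3|$ stay comparable to their natural scales, one moves away from the manifold and $D_\theta \sim |x_2-y_2|^\theta$, making $D_\theta/|x_2-y_2|$ integrable. Second, and more substantively, the convergence of the fallback-region integral is not a simple consequence of one-dimensional calculus: when $|n^2|=1$ or $|n^3|=1$ you must also use that the measure of pairs $(x_i,y_i)\in (I^i\dotplus n^i)\times I^i$ with $|x_i-y_i|=u$ small is only $\sim|u|$ (not $\sim|I^i|$), and when one of $n^2,n^3$ is $0$ you must pair this with the genuine $D_\theta$-decay along the other variable (exactly what \eqref{E:eq117} packages up). These facts can be verified and your outline can be completed, but as written it skips the parts that would actually require work, whereas the paper's route via \eqref{E:eq117} makes them unnecessary.
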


\begin{proof}
We first record the estimate
\begin{equation}\label{E:eq117}
\int_{\R} \frac{\Big(\frac {t}{|u|} + \frac {|u|}{t}\Big)^{-\theta}}{t|u|} |f(u)| \ud u
\lesssim t^{-1} Mf(0), \quad t>0,
\end{equation}
where the implicit constant is independent of $t$.
Write
\begin{equation}\label{E:eq107}
\begin{split}
|\langle T(h_{I^1 \dot +n^1}^0  &\otimes h_{I^{2,3}}), h_{I^1 } \otimes h_{I^{2,3} \dotplus n^{2,3} }^0\rangle |\\
&=\Big|\iint [K^*_{2,3}(x,y)-K^*_{2,3}((c_{I^1},x_{2,3}),y)] h_{I\dot+n}^0(y) h_{I,Z}(x)  \ud x \ud y\Big|.
\end{split}
\end{equation}
The mixed size and H\"older estimate \eqref{E:eq25} gives  
\begin{equation}\label{E:eq118}
2^{-k^1\alpha_1}|I^1| \iint \frac{\Big(\frac{2^{k^1}|I^1||x_2-y_2|}{|x_3-y_3|} +\frac{|x_3-y_3|}{2^{k^1}|I^1||x_2-y_2|} \Big)^{-\theta}}
{2^{k^1}|I^1||x_2-y_2||x_3-y_3|}h^0_{I^{2,3} \dot+ n^{2,3}}(y_{2,3}) h^0_{I^{2,3}}(x_{2,3}) \ud x_{2,3} \ud y_{2,3}.
\end{equation}

Suppose that $|n^2|=1$ and $|n^3|\le 1$. Then we integrate first with respect to $y_3$ and
use \eqref{E:eq117} to have that \eqref{E:eq118} is dominated by
\begin{equation*}
\begin{split}
&2^{-k^1\alpha_1}|I^1|
\iint \frac{h^0_{I^2 \dot +n^2}(y_2) h^0_{I^2}(x_2)}{2^{k^1}|I^1||x_2-y_2|} \ud x_2 \ud y_2
\int M(h^0_{I^3 \dot + n^3})(x_3) h^0_{I^3}(x_3) \ud x_3 \\
&\lesssim 2^{-k^1\alpha_1}|I^1| \frac{1}{2^{k^1}|I^1|}
\lesssim \varphi(k) \frac{|I|}{|K|}.
\end{split}
\end{equation*}
If $(|n^2|, |n^3|)=(0,1)$, then we first integrate with respect to $y_2$ to have that \eqref{E:eq118}
is dominated by
\begin{equation*}
\begin{split}
&2^{-k^1\alpha_1}|I^1|
\iint \frac{h^0_{I^3 \dot +n^3}(y_3) h^0_{I^3}(x_3)}{2^{k^1}|I^1||x_3-y_3|} \ud x_3 \ud y_3
\int M(h^0_{I^2 })(x_2) h^0_{I^2}(x_2) \ud x_2 \\
& \lesssim 2^{-k^1\alpha_1}|I^1| \frac{1}{2^{k^1}|I^1|}
\lesssim \varphi(k) \frac{|I|}{|K|}.\qedhere
\end{split}
\end{equation*}
\end{proof}

\begin{lem}[Separated/Identical]
The estimate \eqref{E:eq119} holds in the case that $|n^1|\ge 2$ and $n^2=n^3=0$. 
\end{lem}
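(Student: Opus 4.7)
The plan is to reduce the pairing to a one-dimensional Calder\'on--Zygmund estimate in the first variable by decomposing the two-dimensional Haar functions on $I^{2,3}$ into indicators on its children. Writing
\[
h_{I^{2,3}}=|I^{2,3}|^{-1/2}\sum_{S\in\ch(I^{2,3})}\sigma_{S}\mathbf 1_{S},\qquad h_{I^{2,3}}^{0}=|I^{2,3}|^{-1/2}\sum_{S'\in\ch(I^{2,3})}\mathbf 1_{S'},
\]
with signs $\sigma_{S}\in\{\pm 1\}$ summing to zero, the left-hand side of \eqref{E:eq119} becomes
\[
A := \langle T(h_{I^{1}\dotplus n^{1}}^{0}\otimes h_{I^{2,3}}), h_{I^{1}}\otimes h_{I^{2,3}}^{0}\rangle = |I^{2,3}|^{-1}\sum_{S,S'}\sigma_{S}\,B_{S,S'},
\]
where $B_{S,S'}:=\langle T(h_{I^{1}\dotplus n^{1}}^{0}\otimes\mathbf 1_{S}), h_{I^{1}}\otimes\mathbf 1_{S'}\rangle$. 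I will split this sum into the diagonal contribution ($S=S'$) and the off-diagonal contribution ($S\neq S'$), each handled by a different representation of $T$.

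For the diagonal terms, since each child $S$ is itself a two-dimensional rectangle, the partial kernel representation \eqref{E:eq68} applied to $S$ yields $B_{S,S}=\iint K_{S}(x_{1},y_{1})h_{I^{1}\dotplus n^{1}}^{0}(y_{1})h_{I^{1}}(x_{1})\,\ud x_{1}\ud y_{1}$. The cancellation $\int h_{I^{1}}=0$ combined with the H\"older estimate \eqref{E:eq109} of $K_{S}$---which has $CZ_{\alpha_{1}}(\mathbb{R})$-norm $\lesssim |S|$ by \eqref{E:eq110}---gives $|B_{S,S}|\lesssim |S|\cdot 2^{-k^{1}(1+\alpha_{1})}$, so that the total diagonal contribution to $|A|$ is $\lesssim 2^{-k^{1}(1+\alpha_{1})}=\varphi(k)|I|/|K|$, already within the target.

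For the off-diagonal terms, the children $S$ and $S'$ are disjoint two-dimensional rectangles, so combined with the disjointness of $I^{1}$ and $I^{1}\dotplus n^{1}$ the full kernel representation of Section \ref{E:subsec6} applies. Invoking $\int h_{I^{1}}=0$ and the mixed estimate \eqref{E:eq25} in the $x_{1}$-direction reduces $|B_{S,S'}|$ to $2^{-k^{1}(1+\alpha_{1})}\cdot\mathcal I(S,S')$, where
\[
\mathcal I(S,S'):=\iint_{S'\times S}\frac{D_{\theta}(2^{k^{1}}|I^{1}|,\, x_{2}-y_{2},\, x_{3}-y_{3})}{|x_{2}-y_{2}|\cdot|x_{3}-y_{3}|}\,\ud x_{2,3}\ud y_{2,3}.
\]
The remaining task is to show $\mathcal I(S,S')\lesssim |I^{2,3}|$ uniformly; summing the finitely many off-diagonal pairs and combining with the diagonal part will complete the proof with the uniform bound $|A|\lesssim 2^{-k^{1}(1+\alpha_{1})}=\varphi(k)|I|/|K|$.

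The main obstacle is the verification of $\mathcal I(S,S')\lesssim |I^{2,3}|$ in the subcases where $S$ and $S'$ share a face, as then one of $|x_{2}-y_{2}|, |x_{3}-y_{3}|$ can be arbitrarily small and the naive bound $D_{\theta}\le 1$ leaves a non-integrable singular factor. The resolution is a one-dimensional variant of \eqref{E:eq117}: the substitution $w=tu/v$ shows that both
\[
\int_{\mathbb R}\frac{D_{\theta}(t,u,v)}{|u|}\,\ud u\lesssim 1\quad\text{and}\quad\int_{\mathbb R}\frac{D_{\theta}(t,u,v)}{|v|}\,\ud v\lesssim 1
\]
hold with constants depending only on $\theta$, uniformly in the other parameters. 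Applying whichever is relevant in the singular direction---together with the trivial bounds $\rho_{u}(u)\le|I^{2}|/2$ and $\rho_{v}(v)\le|I^{3}|/2$ for the marginal densities of the differences $x_{2}-y_{2}$ and $x_{3}-y_{3}$ over $S'\times S$---converts the potentially singular integrand into a bounded expression and reduces the remaining integration to the elementary estimate $\int_{0}^{|I^{3}|}\rho_{v}(v)/|v|\,\ud v\lesssim |I^{3}|$ (or its symmetric counterpart), giving $\mathcal I(S,S')\lesssim |I^{2}|\cdot|I^{3}|=|I^{2,3}|$. In the fully separated subcase (no shared face) the factor $D_{\theta}$ yields an extra $2^{-k^{1}\theta}$ gain beyond what is needed.
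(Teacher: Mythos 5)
Your proof is correct and follows essentially the same route as the paper: both split the pairing over pairs $(S,S')$ of children of $I^{2,3}$ and handle the diagonal terms $S=S'$ via the partial kernel representation \eqref{E:eq68} combined with the H\"older bound and normalization \eqref{E:eq109}--\eqref{E:eq110} of the one-dimensional kernel $K_S$. For the off-diagonal terms the paper simply refers to the already-proved Separated/Adjacent case (Lemma \ref{lem:sa}), whereas you re-derive the required bound $\mathcal I(S,S')\lesssim |I^{2,3}|$ directly from the uniform estimate $\int_\R D_\theta(t,u,v)/|u|\,\ud u\lesssim 1$ and the marginal densities of $x_2-y_2$, $x_3-y_3$ over $S'\times S$ -- this is the same mechanism as the paper's \eqref{E:eq117}, just carried out explicitly instead of via the maximal function.
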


\begin{proof}
Define
$$
\ch(I^{2,3})= \{ (I^2)' \times (I^3)' \colon (I^m)' \in \ch(I^m)\}.
$$
We split the pairing $\langle T(h_{I^1 \dot +n^1}^0  \otimes h_{I^{2,3}}), h_{I^1 } \otimes h_{I^{2,3}  }^0\rangle$ as 
\[
\sum_{(I^{2,3})',(I^{2,3})'' \in \ch(I^{2,3}) }
\langle T(h_{I^1 \dotplus n^1 }^0 \otimes h_{I^{2,3} }1_{(I^{2,3})'}), h_{I^1 } \otimes h_{I^{2,3}}^0 1_{(I^{2,3})''}\rangle.
\]
Every term in the sum is estimated separately.
If $(I^{2,3})'\neq (I^{2,3})''$, the situation is essentially as in Lemma \ref{lem:sa}. 
So we only need to consider the case $(I^{2,3})'= (I^{2,3})''$. 
Then the partial kernel representation \eqref{E:eq68} gives 
that
\begin{equation*}
\begin{split}
\langle T&(h_{I^1 \dotplus n^1 }^0 \otimes h_{I^{2,3} }1_{(I^{2,3})'}), h_{I^1 } \otimes h_{I^{2,3}}^0 1_{(I^{2,3})'}\rangle \\
&=\pm |I^{2,3}|^{-1} \iint K_{(I^{2,3})'}(y_1,x_1) h_{I^1 \dotplus n^1 }^0(y_1) h_{I^1 }(x_1) \ud y_1 \ud x_1.
\end{split}
\end{equation*}
The kernel $K_{(I^{2,3})'}$ is assumed to satisfy $\| K_{(I^{2,3})'}\|_{CZ_{\alpha_1}(\R)} \lesssim |(I^{2,3})'| \le  |I^{2,3}|$, 
see \eqref{E:eq110}. Thus, by the zero average of $ h_{I^1 }$ and the H\"older estimate of $K_{(I^{2,3})'}$ (see \eqref{E:eq109}), 
we get that the last term can be estimated by
\begin{equation*}
|I^{2,3}|^{-1} \iint |I^{2,3}| \frac{1}{2^{k^1\alpha_1}} \frac{1}{2^{k^1}|I^1|}  h_{I^1 \dotplus n^1 }^0(y_1) h^0_{I^1 }(x_1) \ud y_1 \ud x_1
= \frac{1}{2^{k^1(1+\alpha_1)}} 
= \varphi(k)\frac{|I|}{|K|}.\qedhere
\end{equation*}
\end{proof}

\begin{lem}[Adjacent/Separated]\label{lem:as}
The estimate \eqref{E:eq119} holds in the case that $|n^1|=1$ and $\max\{|n^2|, |n^3|\}\ge 2$. 
\end{lem}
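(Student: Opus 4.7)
The approach parallels the Separated/Separated case: since $|n^1|=1$, the intervals $I^1$ and $I^1\dotplus n^1$ are disjoint (with disjoint interiors), so the full kernel representation applies. The cancellation $\int h_{I^{2,3}} = 0$ lets us subtract $K(x,(y_1,c_{I^{2,3}}))$ from $K(x,y)$; the adjoint version of the H\"older estimate \eqref{E:eq28} then bounds the difference by
$$
\Bigl(\tfrac{|y_2-c_{I^2}|}{|x_2-y_2|}+\tfrac{|y_3-c_{I^3}|}{|x_3-y_3|}\Bigr)^{\alpha_{23}}\,\frac{D_\theta(x-y)}{|x_1-y_1||x_2-y_2||x_3-y_3|}\lesssim 2^{-\min(k^2,k^3)\alpha_{23}}\,\frac{D_\theta(x-y)}{|x_1-y_1||x_2-y_2||x_3-y_3|},
$$
valid in the subcase where both $|n^2|,|n^3|\ge 2$, so the H\"older conditions $|c_{I^j}-y_j|\le|x_j-y_j|/2$ are satisfied.

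The novelty relative to Separated/Separated lies in the $(x_1,y_1)$-integration over the adjacent pair $I^1\times(I^1\dotplus n^1)$. Writing $r:=|x_1-y_1|$, the two-dimensional measure of pairs with $|x_1-y_1|\approx r$ on this configuration is $\sim\min(r,2|I^1|-r)\,\mathrm{d}r$; the extra $\min(r,2|I^1|-r)$-weight suppresses the near-diagonal contribution by an additional factor of $r$ when $r$ is small. Setting $t:=2^{k^3-k^2}|I^1|$ so that $D_\theta\sim(r/t+t/r)^{-\theta}$, a direct calculation (splitting at $r\sim t$ and $r\sim|I^1|$) yields
$$
\iint_{I^1\times(I^1\dotplus n^1)}\frac{D_\theta(r,2^{k^2}|I^2|,2^{k^3}|I^3|)}{r}\,\mathrm{d}x_1\,\mathrm{d}y_1\sim |I^1|\cdot 2^{-|k^2-k^3|\theta}
$$
in both regimes $k^2\lessgtr k^3$. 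The suppression factor is crucial: without it -- as e.g. when applying \eqref{E:eq117} in the $x_1$-direction -- the $r$-integral would sweep across the Zygmund peak at $r\sim t$ and yield only $O(|I^1|)$, losing the necessary $\theta$-decay. Combining with the standard $(x_{2,3},y_{2,3})$-integration in the separated regime, which gives $|I^2||I^3|/2^{k^2+k^3}$, and the Haar normalization $|I^1|^{-1}|I^{2,3}|^{-1}$, the pairing is controlled by $2^{-\min(k^2,k^3)\alpha_{23}-|k^2-k^3|\theta}\cdot|I|/|K|$. An elementary case check verifies
$$
\min(k^2,k^3)\alpha_{23}+|k^2-k^3|\theta\ge k^2\min(\alpha_{23},\theta)+\max(k^3-k^2,0)\theta,
$$
so this is $\lesssim\varphi(k)|I|/|K|$, as required.

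The remaining subcases, in which exactly one of $|n^2|,|n^3|$ lies in $\{0,1\}$, are treated in the spirit of Lemma \ref{lem:sa}: exploit the cancellation of $h_{I^{2,3}}$ only in the separated direction (where the H\"older subtraction is legitimate), and absorb the singular factor $1/|x_j-y_j|$ in the adjacent direction via the maximal function estimate \eqref{E:eq117}, then close the argument with the same $(x_1,y_1)$-computation as above. The main obstacle is the subcase in which $h_{I^{2,3}}$ carries cancellation \emph{only} in the adjacent $(2,3)$-direction (e.g.\ $h_{I^{2,3}}=h^0_{I^2}\otimes h^1_{I^3}$ with $|n^3|\le 1$), so the standard H\"older subtraction $K(x,y)-K(x,(y_1,y_2,c_{I^3}))$ is invalid because $|c_{I^3}-y_3|$ may exceed $|x_3-y_3|/2$; this is resolved by invoking \eqref{E:eq117} in the $y_3$-variable to absorb the $1/|x_3-y_3|$-singularity against the $D_\theta$-decay, which still yields an $O(1)$ factor in that coordinate, while the $(x_1,y_1)$-integration continues to furnish the $\theta$-decay dictated by $\varphi(k)$.
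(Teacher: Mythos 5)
Your main computation, for the subcase $\min\{|n^2|,|n^3|\}\ge 2$, is sound and is in essence the paper's argument, merely unified into one display rather than being split into the cases $|n^2|\ge|n^3|$ and $|n^2|<|n^3|$. The key observation — that the $(x_1,y_1)$-integration over the adjacent pair $I^1\times(I^1\dotplus n^1)$, carrying the whole $D_\theta$-factor, is suppressed near the diagonal and produces the Zygmund decay $|I^1|\cdot 2^{-|k^2-k^3|\theta}$ — is precisely what the paper isolates as \eqref{E:eq113}. (For $\theta=1$ your scaling picks up a factor $1+|k^2-k^3|$ that you left implicit; this is harmless, being absorbed by the $(|k|+1)$-prefactor in \eqref{E:eq119}, but worth stating.)

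The remaining subcases $\min\{|n^2|,|n^3|\}\le 1$ contain a genuine gap, precisely at the obstacle you yourself identify: when $h_{I^{2,3}}$ carries cancellation only in the adjacent $(2,3)$-coordinate, say $h_{I^{2,3}}=h^0_{I^2}\otimes h^1_{I^3}$ with $|n^2|\ge 2>|n^3|$. You propose to "invoke \eqref{E:eq117} in the $y_3$-variable to absorb the $1/|x_3-y_3|$-singularity against the $D_\theta$-decay, $\dots$ while the $(x_1,y_1)$-integration continues to furnish the $\theta$-decay dictated by $\varphi(k)$." But \eqref{E:eq117} consumes the entire $D_\theta$-factor: it returns
\[
\frac{1}{|x_1-y_1|\,|x_2-y_2|}\,M\big(h^{\eta_3}_{I^3}\big)(x_3)
\]
with nothing left over. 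After that, the adjacent $(x_1,y_1)$-integral is merely $\iint_{I^1\times(I^1\dotplus n^1)}|x_1-y_1|^{-1}\ud x_1\ud y_1\sim|I^1|$ and offers no $\theta$-decay. The resulting bound for the pairing is $O(2^{-k^2})$, whereas one needs $O\big((|k|+1)\,2^{-k^2(1+\min\{\alpha_{23},\theta\})}\big)$ here — and likewise in the symmetric subcase $|n^2|<2\le|n^3|$ one needs $O(2^{-k^3(1+\theta)})$ rather than the $O(1)$ this approach yields. The $D_\theta$-decay can be cashed in once; you cannot spend it on the $y_3$-integral and then still claim $\theta$-decay from the $(x_1,y_1)$-integral.

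The fix, which is exactly the paper's route and is consistent with your own main insight, is to not invoke any $(2,3)$-cancellation in these subcases at all: apply the pure size estimate \eqref{E:eq30}, integrate the separated $(2,3)$-variable first (it merely fixes $|x_j-y_j|\sim 2^{k^j}|I^j|$), run the $(x_1,y_1)$-integration with $D_\theta$ still present at fixed $|x_3-y_3|$ — this is \eqref{E:eq113}, scaled to an interval of length $2^{k^2}|I^3|/|x_3-y_3|\ge 1$ — and then integrate the adjacent $(2,3)$-variable. The $\theta$-gain is produced by the combination of \eqref{E:eq113} and that final adjacent integration, not by \eqref{E:eq117}. (For $|n^2|<2\le|n^3|$ the argument is even shorter: peel $D_\theta\le\big(\tfrac{|x_1-y_1||x_2-y_2|}{|x_3-y_3|}\big)^{\theta}$ to render the near-diagonal singularities $|x_1-y_1|^{-(1-\theta)}$ and $|x_2-y_2|^{-(1-\theta)}$ integrable, directly yielding $2^{-k^3(1+\theta)}$.)
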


\begin{proof}
We first consider  the case $|n^2| \ge 2 >|n^3|$. By the size estimate of the kernel \eqref{E:eq30} we have 
\begin{equation}\label{E:eq114}
\begin{split}
&|\langle  T(h_{I^1 \dot +n^1}^0  \otimes h_{I^{2,3}}), h_{I^1 } \otimes h_{I^{2,3} \dotplus n^{2,3} }^0\rangle |\\
& \lesssim \frac{|I^2|}{|I^{1,3}|} 
\iint \frac{\Big(\frac{|x_1-y_1|2^{k^2}|I^2|}{|x_3-y_3|}+\frac{|x_3-y_3|}{|x_1-y_1|2^{k^2}|I^2|}\Big)^{-\theta}}{|x_1-y_1|2^{k^2}|I^2||x_3-y_3|} 
1_{I^{1,3}\dot+ n^{1,3}}(y_{1,3})1_{I^{1,3}}(x_{1,3})\ud x_{1,3} \ud y_{1,3}. \\
\end{split}
\end{equation}

Suppose $L \subset \R$ is an interval with $|L| \ge 1$. 
Since $|n^1|=1$ the intervals $L^1$ and $L^1 \dot + n^1$ are adjacent. 
There holds that
\begin{equation}\label{E:eq113}
\begin{split}
\iint \frac{(|x_1-y_1|+|x_1-y_1|^{-1})^{-\theta}}{|x_1-y_1|} 
&1_{L^1\dot+ n^1}(y_1) 1_{L^1}(x_1) \ud y_1 \ud x_1 \\
&\lesssim \begin{cases} 1+ \log(|L|), \quad &\theta=1,\\
|L|^{1-\theta}, \quad &\theta\in (0,1).
\end{cases}
\end{split}
\end{equation}

Now fix $x_3 \in I^3$ and $y_3 \in I^3 \dot + n^3$ and consider the remaining integral with respect to $x_1$ and $y_1$ in 
\eqref{E:eq114}. 
Let $r=2^{-k^2}|I^2|^{-1} |x_3-y_3|$ and notice that
$r^{-1} |I^1| \ge 1 $. A change of variables and \eqref{E:eq113} give that
\begin{equation}\label{E:eq111}
\begin{split}
& \iint \frac{\Big(\frac{|x_1-y_1|2^{k^2}|I^2|}{|x_3-y_3|}+\frac{|x_3-y_3|}{|x_1-y_1|2^{k^2}|I^2|}\Big)^{-\theta}}{|x_1-y_1|2^{k^2}|I^2||x_3-y_3|} 
1_{I^{1}\dot+ n^{1}}(y_{1})1_{I^{1}}(x_{1})\ud x_1 \ud y_1\\
&= \frac{r}{2^{k^2}|I^2||x_3-y_3|}
\iint \frac{\Big(|x_1-y_1|+\frac{1}{|x_1-y_1|}\Big)^{-\theta}}{|x_1-y_1|} 
1_{r^{-1}(I^{1}\dot+ n^{1})}(y_{1})1_{r^{-1}I^{1}}(x_{1})\ud x_1 \ud y_1 \\
&\lesssim \frac {1+\log (r^{-1}|I^1|)+(r^{-1}|I^1|)^{1-\theta}}{2^{2k^2}|I^2|^2}
=\frac {1+k^2\log 2+\log\Big(\frac{|I^3|}{|x_3-y_3|}\Big)+\Big(\frac{2^{k^2}|I^3|}{|x_3-y_3|}\Big)^{1-\theta}}{2^{2k^2}|I^2|^2}.
\end{split}
\end{equation}
A substitution of \eqref{E:eq111} into \eqref{E:eq114} leads to
\begin{align*}
\frac{|I^2|}{|I^{1,3}|}\iint \frac {k^2+\Big|\log \Big(\frac{|I^3|}{|x_3-y_3|}\Big)\Big|
+\Big(\frac{2^{k^2}|I^3|}{|x_3-y_3|}\Big)^{1-\theta}}{2^{2k^2}|I^2|^2}& 1_{I^3 \dot + n^3}(y_3) 1_{I^3}(x_3) \ud y_{3} \ud x_{3}\\&
\lesssim \frac{k^2}{2^{k^2(1+\theta)}} 
\lesssim k^2 \varphi(k)\frac{|I|}{|K|}.
\end{align*}

Assume then that $|n^2| \ge |n^3| \ge 2$. We can use the mixed size and H\"older estimate \eqref{E:eq28} which leads to
\begin{align*}
& 2^{-k^3\alpha_{23}}\frac{|I^{2,3}|}{|I^1|} 
\iint \frac{\Big(\frac{|x_1-y_1|2^{k^2}|I^2|}{2^{k^3}|I^3|} + \frac{2^{k^3}|I^3|}{|x_1-y_1|2^{k^2}|I^2|}\Big)^{-\theta}}
{|x_1-y_1|2^{k^2}|I^2|2^{k^3}|I^3|}1_{I^1 \dot + n^1}(y_1) 1_{I^1}(x_1)\ud x_1 \ud y_1 \\
&=2^{-2k^2-k^3\alpha_{23}}
\iint \frac{\Big(|x_1-y_1| + \frac{1}{|x_1-y_1|}\Big)^{-\theta}}
{|x_1-y_1|}1_{r^{-1}(I^1 \dot + n^1)}(y_1) 1_{r^{-1}I^1}(x_1)\ud x_1 \ud y_1,
\end{align*}
where $r=2^{k^3}|I^3|/(2^{k^2}|I^2|)$. 
Since $r^{-1}|I^1|=2^{k^2-k^3} \ge 1$, we have by 
\eqref{E:eq113} that this is dominated by
\begin{align*}
2^{-2k^2}2^{-k^3\alpha_{23}}(1+k^2-k^3+ 2^{(k^2-k^3)(1-\theta)})
&\lesssim k^2 2^{-k^2(1+\theta)-k^3(\alpha_{23}+1-\theta)}\\
&\lesssim k^2 2^{-k^2\theta-k^3(\alpha_{23}-\theta)}\frac{|I|}{|K|}.
\end{align*}
If $\alpha_{23} \ge \theta$, then 
$
2^{-k^2\theta-k^3(\alpha_{23}-\theta)} 
\le 2^{-k^2\theta} \lesssim \varphi(k). 
$
If $\alpha_{23} < \theta$, then
$$
2^{-k^2\theta-k^3(\alpha_{23}-\theta)} 
=2^{-k^2\alpha_{23}-(k^2-k^3)(\theta-\alpha_{23})} 
\le 2^{-k^2\alpha_{23}} \lesssim \varphi(k).
$$
In any case, this finishes the estimate under the assumption $|n^2| \ge |n^3| \ge 2$.

Assume then that $|n^2|< 2 \le  |n^3|$. The size estimate of the kernel \eqref{E:eq30} gives 
\begin{equation}
\begin{split}
 \frac{|I^3|}{|I^{1,2}|} & \iint \frac{1_{I^{1,2}\dot+ n^{1,2}}(y_{1,2})1_{I^{1,2}}(x_{1,2})}{ (2^{k^3}|I^3|)^{1+\theta} (|x_1-y_1||x_2-y_2|)^{1-\theta}}
  \ud y_{1,2} \ud x_{1,2}\\
&\lesssim 2^{-k^3(1+\theta)}
\sim 2^{-k^3\theta}\frac{|I|}{|K|}
\sim \varphi(k)\frac{|I|}{|K|}.
\end{split}
\end{equation}

Finally, assume that
$2 \le |n^2| < |n^3|$. The mixed size and H\"older estimate \eqref{E:eq28} gives
\begin{align*}
2^{-k^2\alpha_{23}} & \frac{|I^{2,3}|}{|I^1|} 
\iint \frac{ 1_{I^1\dot+n^1}(y_1) 1_{I^1}(x_1)   }{(2^{k^2}|I^2|)^{1-\theta}(2^{k^3}|I^3|)^{1+\theta}|x_1-y_1|^{1-\theta}}
 \ud y_1 \ud x_1 \\
& \lesssim \frac{2^{-k^2\alpha_{23}}}{2^{k^2(1-\theta)}2^{k^3(1+\theta)}}
\lesssim 2^{-k^2\alpha_{23}-(k^3-k^2)\theta} \frac{|I|}{|K|} 
\lesssim \varphi(k)\frac{|I|}{|K|}.\qedhere
\end{align*}
\end{proof}

\begin{lem}[Adjacent/Adjacent]\label{lem:aa}
The estimate \eqref{E:eq119} holds in the case that $|n^1|=1$ and $\max\{|n^2|, |n^3|\}=1$. 
\end{lem}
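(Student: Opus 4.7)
Since $|n^1|=1$ and $\max\{|n^2|,|n^3|\}=1$, we have $k^1=2$ and $k^2,k^3\in\{0,2\}$, so $(|k|+1)\varphi(k)\cdot |I|/|K|$ is a universal constant. Consequently, the task reduces to establishing the bound $|\textup{Pair}|\lesssim 1$, where $\textup{Pair}$ denotes the shift coefficient in \eqref{E:eq119}.

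Because $|n^1|=1$ makes the first-coordinate supports of the input and output functions disjoint (though adjacent), the full kernel representation \eqref{E:eq30} applies. My plan is to exploit simultaneously the cancellations $\int h_{I^1}(x_1)\,dx_1=0$ and $\int h_{I^{2,3}}(y_{2,3})\,dy_{2,3}=0$ to replace $K(x,y)$, at no cost to the pairing, by the four-term difference
\[
\tilde K(x,y):=K(x,y)-K((c_{I^1},x_{2,3}),y)-K(x,(y_1,c_{I^{2,3}}))+K((c_{I^1},x_{2,3}),(y_1,c_{I^{2,3}})).
\]
The combined H\"older estimate \eqref{E:eq29} applied to the adjoint kernel $K^{*}_{2,3}$ (whose $x$-variable modifications correspond to modifications of $x_1$ and $y_{2,3}$ in $K$) yields, in the region where both admissibility conditions $|c_{I^1}-x_1|\le\tfrac12|x_1-y_1|$ and $|c_{I^{2,3}}-y_{2,3}|\le\tfrac12|x_{2,3}-y_{2,3}|$ hold,
\[
|\tilde K(x,y)|\lesssim\Bigl(\tfrac{|I^1|}{|x_1-y_1|}\Bigr)^{\alpha_1}\Bigl(\tfrac{|I^2|}{|x_2-y_2|}+\tfrac{|I^3|}{|x_3-y_3|}\Bigr)^{\alpha_{23}}\frac{D_\theta(x-y)}{|x_1-y_1||x_2-y_2||x_3-y_3|};
\]
in the remaining boundary strips I would fall back on the bare size estimate \eqref{E:eq30}.

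I would then handle the three sub-cases according to $(|n^2|,|n^3|)\in\{(1,1),(1,0),(0,1)\}$. In the case $|n^2|=|n^3|=1$ all three pairs of supports are disjoint (adjacent) intervals of equal length, so the iterated double integral $\iint |x_m-y_m|^{-1}\,dx_m\,dy_m$ over each adjacent pair gives $O(|I^m|)$; multiplied together and balanced against the normalizing factor $|I|^{-1}$, this produces $O(1)$. In the sub-cases $n^3=0$ (and symmetrically $n^2=0$), the coordinates $x_3,y_3$ both lie in the same $I^3$, so $1/|x_3-y_3|$ alone is not integrable; here the decay $D_\theta(x-y)\lesssim (|x_3-y_3|/|x_1-y_1||x_2-y_2|)^\theta$ valid in the Zygmund regime $|x_3-y_3|\le|x_1-y_1||x_2-y_2|$ is what saves the day, converting the singular factor into an integrable one, and the Zygmund identity $|I^1||I^2|=|I^3|$ collapses the remaining product to a constant.

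The main obstacle is the bookkeeping required when the double H\"older admissibility fails along the shared boundaries, precisely where $|x_1-y_1|$ or $|x_m-y_m|$ ($m=2,3$) is small. I expect this to require splitting the domain of integration into a H\"older-admissible portion (where the combined $\tilde K$ bound above is used) and a boundary portion (where the plain size estimate is used together with the $D_\theta$ decay). The sub-cases with $n^m=0$ are the most delicate, since the two-dimensional cancellation of $h_{I^{2,3}}$ does not, in general, factorize into one-dimensional cancellations in $y_2$ and $y_3$; nevertheless, the $D_\theta$ factor from the Zygmund regime provides exactly the compensation needed to integrate the surviving one-dimensional singularity in the shared coordinate, closing the estimate $|\textup{Pair}|\lesssim 1$ and hence \eqref{E:eq119}.
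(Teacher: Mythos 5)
Your observation that $(|k|+1)\varphi(k)|I|/|K|$ is a universal constant here, so that only $|\text{Pair}|\lesssim 1$ is needed, is exactly right --- but this also means the H\"older gain you are trying to extract from the four-term difference $\tilde K$ via \eqref{E:eq29} is not needed at all, and the detour through $\tilde K$ creates real difficulties rather than saving work. After passing to $K^*_{2,3}$, the admissibility conditions $|c_{I^1}-x_1|\le|x_1-y_1|/2$ and $|c_{I^{2,3}}-y_{2,3}|\le|x_{2,3}-y_{2,3}|/2$ fail on a region of proportionate measure, not on small boundary strips: when $|n^1|=1$ the intervals are adjacent, and when $n^m=0$ the two variables live in the same interval, so the variable whose center you subtract is typically comparable in size to the difference $|x_m-y_m|$. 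Consequently you must anyhow bound the bulk of the integral using only the size estimate \eqref{E:eq30}, and once you see that the size estimate alone suffices everywhere, the $\tilde K$ step is superfluous.

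The part your sketch correctly identifies --- that in the sub-case $n^3=0$ (resp.\ $n^2=0$) the $D_\theta$ factor must absorb the singularity $1/|x_3-y_3|$ (resp.\ $1/|x_2-y_2|$) --- is indeed the whole proof, but your sketch only addresses the Zygmund regime $|x_3-y_3|\le|x_1-y_1||x_2-y_2|$; the complementary regime, where $D_\theta$ decays via $(|x_1-y_1||x_2-y_2|/|x_3-y_3|)^\theta$, must also be handled, and at $\theta=1$ a careless splitting here is delicate. The paper sidesteps all of this by packaging the $D_\theta$-integration into the clean one-dimensional estimate \eqref{E:eq117},
\[
\int_\R \frac{\bigl(t/|u|+|u|/t\bigr)^{-\theta}}{t\,|u|}\,|f(u)|\,\ud u\ \lesssim\ t^{-1}\,Mf(0),
\]
valid uniformly for $\theta\in(0,1]$. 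One integrates out $y_3$ (when $|n^2|=1$) or $y_2$ (when $|n^3|=1$) first; this converts the $D_\theta/(\prod|x_m-y_m|)$ kernel into $\frac{1}{|x_1-y_1||x_2-y_2|}\,M(h^0_{I^3\dotplus n^3})(x_3)$ (or the analogue with indices $2,3$ swapped). The remaining single integrals of $1/|x_i-y_i|$ are over adjacent intervals of equal length and therefore $O(|I^i|)$, while $\int M(h^0_{I^3\dotplus n^3})\,h^0_{I^3}\lesssim 1$; multiplying through the Haar normalizations gives $|\text{Pair}|\lesssim 1$. No cancellation of $h_{I^1}$ or $h_{I^{2,3}}$ and no H\"older estimate are used at all in this case.
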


\begin{proof}
We first consider the case $|n^2|=1$ and $|n^3|\le 1$. The size estimate of the kernel
\eqref{E:eq30} leads to 
\begin{equation}\label{eq:E1204}
\begin{split}
\iint \frac{\Big(\frac{|x_1-y_1||x_2-y_2|}{|x_3-y_3|} + \frac{|x_3-y_3|}{|x_1-y_1||x_2-y_2|}\Big)^{-\theta} }{|x_1-y_1||x_2-y_2||x_3-y_3|}
h_{I\dot+n}^0(y) h_I^0(x) \ud x \ud y.
\end{split}
\end{equation}
We first integrate with respect to $y_3$ and use \eqref{E:eq117}. This gives
\begin{equation*}
\prod_{i=1}^2 \iint \frac {h_{I^i\dot+ n^i}^0(y_i) h_{I^i}^0(x_i)}{|x_i-y_i|} \ud y_i \ud x_i
\int M(h^0_{I^3\dot + n^3})(x_3) h^0_I(x_3) \ud x_3
\lesssim 1.
\end{equation*}

The remaining case $n^2=0$ and $|n^3|=1$ is done in the same way. One first uses \eqref{E:eq117} with respect to $y_2$ and 
the proceeds as above.
\end{proof}

\begin{lem}[Adjacent/Identical]\label{E:lem2}
The estimate \eqref{E:eq119} holds in the case that $|n^1|=1$ and $n^2=n^3=0$. 
\end{lem}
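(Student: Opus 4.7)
The plan is to adapt the strategy used in Lemma \ref{E:lem2} (Separated/Identical), with the modification that the adjacency $|n^1|=1$ precludes exploiting any dyadic decay in the first variable. For $n$ with $|n^1|=1$ and $n^2=n^3=0$ we have $k=(2,0,0)$, so $(|k|+1)\varphi(k)|I|/|K|\sim 2^{-2\alpha_1}$ is a constant, and it suffices to establish a uniform absolute bound on the pairing.

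First I would split the two $\R^2$-side Haar functions over the four one-parameter children of $I^{2,3}$:
$$
h_{I^{2,3}}=\sum_{L\in\ch(I^{2,3})}b_L1_L,\qquad h_{I^{2,3}}^0=|I^{2,3}|^{-1/2}\sum_{M\in\ch(I^{2,3})}1_M,
$$
with $|b_L|\leq|I^{2,3}|^{-1/2}$. This reduces the pairing to a sum of $16$ terms of the form
$$
b_L|I^{2,3}|^{-1/2}\langle T(h_{I^1\dotplus n^1}^0\otimes 1_L),h_{I^1}\otimes 1_M\rangle,
$$
so it suffices to bound each summand by a constant.

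For the diagonal terms $L=M$, apply the partial kernel representation \eqref{E:eq68} with the child $L$ in place of $I^{2,3}$, giving $\iint K_L(x_1,y_1)h_{I^1\dotplus n^1}^0(y_1)h_{I^1}(x_1)\ud x_1\ud y_1$. Since $h_{I^1}$ has zero average one subtracts $K_L(c_{I^1},y_1)$ and applies the H\"older estimate \eqref{E:eq109} combined with $\|K_L\|_{CZ_{\alpha_1}(\R)}\lesssim|L|$ from \eqref{E:eq110}; this is exactly the argument used at the end of Lemma \ref{E:lem2}, and it yields a bound of order $|L|\cdot 2^{-\alpha_1}$, which multiplied by $|I^{2,3}|^{-1}$ is a constant.

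For the off-diagonal terms $L\ne M$, both $(I^1,I^1\dotplus n^1)$ and $(L,M)$ consist of (interior-)disjoint sets, so the full kernel representation is in force and the size estimate \eqref{E:eq30} alone suffices. When $L^2\cap M^2=\emptyset=L^3\cap M^3$ one has $|x_m-y_m|\sim|I^m|$ for $m=2,3$, and only $|x_1-y_1|^{-1}$ is singular on the adjacent pair $(I^1,I^1\dotplus n^1)$, producing a logarithmic contribution comparable to $|I^1|$ that cancels against the Haar normalizations. The more delicate subcase is when $L$ and $M$ agree in exactly one coordinate (say $L^3=M^3$), so that $|x_3-y_3|$ can be arbitrarily small. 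Here I would fix $x_1,x_2,x_3,y_1,y_2$ and integrate first in $y_3$: writing the $D_\theta$-factor in the form $(t/|x_3-y_3|+|x_3-y_3|/t)^{-\theta}$ with $t=|x_1-y_1||x_2-y_2|$ and invoking \eqref{E:eq117}, the singularity in $y_3$ is absorbed and one is left with an integrable factor $t^{-1}$. The remaining integrations in $y_2$ and in $(x_1,y_1)$ are both logarithmic over adjacent intervals and produce only powers of $|I^m|$ that cancel against the Haar and $|I^{2,3}|^{-1}$ normalizations.

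The main obstacle is the off-diagonal subcase where the two children coincide in exactly one $\R^2$-coordinate. This is precisely the place where the Zygmund scaling $\ell(I^3)=\ell(I^1)\ell(I^2)$ enters through the right choice of $t$ in \eqref{E:eq117}, in close analogy with the adjacent/adjacent analysis of Lemma \ref{lem:aa}. Once this is handled, summing the $16$ contributions yields the constant bound matching $(|k|+1)\varphi(k)|I|/|K|$ for $k=(2,0,0)$.
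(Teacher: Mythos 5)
Your high-level structure -- splitting the $\R^2$ Haar functions over the four children of $I^{2,3}$ and separating the diagonal ($L=M$) from the off-diagonal ($L\ne M$) contributions -- matches the paper's proof. However, there are two substantive gaps in the execution.

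\textbf{Diagonal case.} You propose to subtract $K_L(c_{I^1},y_1)$ and apply the H\"older estimate \eqref{E:eq109}, citing (circularly) ``the argument used at the end of Lemma \ref{E:lem2}'' -- you presumably mean the Separated/Identical case, but that argument does not transfer here. When $|n^1|=1$ the intervals $I^1$ and $I^1\dotplus n^1$ are merely adjacent, so for $x_1\in I^1$ near the shared endpoint and $y_1\in I^1\dotplus n^1$ close to it, the separation condition $|c_{I^1}-x_1|\le|c_{I^1}-y_1|/2$ required for \eqref{E:eq109} fails, and the resulting $|x_1-y_1|^{-(1+\alpha_1)}$ singularity is not uniformly integrable over the adjacent pair (and actually diverges when $\alpha_1=1$). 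Moreover the ``gain'' $2^{-\alpha_1}$ you claim is spurious for $|n^1|=1$; there is no separation to produce it. The paper instead uses only the \emph{size} estimate $|K_L(x_1,y_1)|\lesssim|L|/|x_1-y_1|$ from \eqref{E:eq115}--\eqref{E:eq110}: since $\iint_{I^1\times(I^1\dotplus n^1)}|x_1-y_1|^{-1}\ud x_1\ud y_1\lesssim|I^1|$, the $h^0$-normalizations immediately yield a constant bound with no need for cancellation in the first variable. You should replace the H\"older-based argument with this size-only bound.

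\textbf{Off-diagonal case.} The assertion ``when $L^2\cap M^2=\emptyset=L^3\cap M^3$ one has $|x_m-y_m|\sim|I^m|$ for $m=2,3$'' is false: the children $L^m,M^m\in\ch(I^m)$ with $L^m\ne M^m$ are \emph{adjacent} halves of $I^m$, so $|x_m-y_m|$ can be arbitrarily small, and the singularities in all three variables are live simultaneously. Your ``more delicate subcase'' (one shared coordinate) correctly invokes \eqref{E:eq117}, but the same care is needed in every off-diagonal configuration. This is exactly what Lemma \ref{lem:aa} does, and the paper simply reduces the off-diagonal case to Lemma \ref{lem:aa} rather than re-deriving the estimates. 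I would recommend the same reduction.

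In summary, the decomposition is right, but the diagonal treatment needs to drop the H\"older estimate in favor of the size estimate of the partial kernel, and the off-diagonal treatment must account for the adjacency of the children in all coordinates rather than pretending they are separated.
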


\begin{proof}
We first split the paring $\langle T(h_{I^1 \dot +n^1}^0  \otimes h_{I^{2,3}}), h_{I^1 } \otimes h_{I^{2,3}  }^0\rangle$ as 
\[
\sum_{(I^{2,3})',(I^{2,3})'' \in \ch(I^{2,3})}
\langle T(h_{I^1 \dotplus n^1 }^0 \otimes h_{I^{2,3} }1_{(I^{2,3})'}), h_{I^1 } \otimes h_{I^{2,3}}^0 1_{(I^{2,3})''}\rangle.
\]
Every term in the sum is estimated separately.
If $(I^{2,3})'\neq (I^{2,3})''$, then the situation is essentially as in Lemma \ref{lem:aa}. 
So we only need to consider the case $(I^{2,3})'= (I^{2,3})''$. 
Then the partial kernel representation \eqref{E:eq68} gives that the pairing in question can be written as
$$
\pm |I^{2,3}|^{-1} \iint K_{(I^{2,3})'}(y_1,x_1) h_{I^1 \dotplus n^1 }^0(y_1) h_{I^1 }(x_1) \ud y_1 \ud x_1.
$$
The kernel $K_{(I^{2,3})'}$ is assumed to satisfy $\| K_{(I^{2,3})'}\|_{CZ_{\alpha_1}(\R)} \lesssim |(I^{2,3})'| \le  |I^{2,3}|$, 
see \eqref{E:eq110}.
By the size estimate of $K_{(I^{2,3})'}$ (see \eqref{E:eq115}) the above is controlled by 
\begin{equation*}
 |I^{2,3}|^{-1} \iint \frac{|I^{2,3}|}{|x_1-y_1|} h_{I^1 \dotplus n^1 }^0(y_1) h_{I^1 }(x_1) \ud y_1 \ud x_1\lesssim 1.\qedhere
\end{equation*}
\end{proof}

\begin{lem}[Identical/Separated]
The estimate \eqref{E:eq119} holds in the case that $n^1=0$ and $\max\{|n^2|, |n^3|\}\ge 2$. 
\end{lem}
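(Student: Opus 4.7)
The plan is to split the pairing over pairs of children of $I^1$ and handle the diagonal and off-diagonal contributions via arguments that mirror, respectively, the partial-kernel and full-kernel steps used in the earlier lemmas of this section. Concretely,
$$
\langle T(h_{I^1}^0 \otimes h_{I^{2,3}}), h_{I^1} \otimes h_{I^{2,3} \dotplus n^{2,3}}^0\rangle
= \sum_{(I^1)', (I^1)'' \in \ch(I^1)} \langle T\big((h_{I^1}^0 1_{(I^1)'})\otimes h_{I^{2,3}}\big), (h_{I^1} 1_{(I^1)''})\otimes h_{I^{2,3} \dotplus n^{2,3}}^0\rangle.
$$

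For the off-diagonal pairs $(I^1)' \ne (I^1)''$, the first-variable factors $|I^1|^{-1/2}1_{(I^1)'}$ and $\pm |I^1|^{-1/2}1_{(I^1)''}$ are supported on disjoint adjacent children of $I^1$, so the full-kernel representation applies. After extracting the prefactor $|I^1|^{-1}$, these pairings have exactly the structure of Lemma~\ref{lem:as} (Adjacent/Separated) at the scale of the children, with the cancellation of $h_{I^{2,3}}$ still available. Crucially, the proof of Lemma~\ref{lem:as} does not exploit any first-variable cancellation -- only the adjacency of the first-variable intervals and the size or mixed H\"older estimates of the full kernel -- so the same subcase analysis goes through verbatim and yields the required bound with $k^1 = 0$.

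For the diagonal pairs $(I^1)' = (I^1)'' = L^1 \in \ch(I^1)$, the first-variable factor collapses to $\pm |I^1|^{-1}1_{L^1}(x_1)$, and since $\max\{|n^2|,|n^3|\}\ge 2$ makes $I^{2,3}$ and $I^{2,3}\dotplus n^{2,3}$ disjoint, the partial-kernel representation \eqref{E:eq67} reduces the term to
$$
\pm\, |I^1|^{-1} \iint K_{L^1}(x_{2,3}, y_{2,3})\, h_{I^{2,3}}(y_{2,3})\, h_{I^{2,3} \dotplus n^{2,3}}^0(x_{2,3})\, \ud y_{2,3}\, \ud x_{2,3}.
$$
I would estimate this in direct analogy with Lemma~\ref{lem:as}, replacing the full kernel by $K_{L^1}$ and noting that $|L^1|\sim |I^1|$ plays the role that $|I^1|$ did there. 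If $\min\{|n^2|,|n^3|\}\ge 2$, the vanishing marginal integral of $h_{I^{2,3}}$ (in whichever of $y_2, y_3$ it has cancellation) combined with the H\"older estimate \eqref{E:eq63} of $K_{L^1}$ produces a gain $2^{-k^m \alpha_{23}}$ in that coordinate, which, via the Zygmund relation $|L^1||I^2|\sim|I^3|$, combines with the factor $(2^{k^2-k^3}+2^{k^3-k^2})^{-\theta}$ extracted from $D_\theta(|L^1|, x_2-y_2, x_3-y_3)$ to match $\varphi(k)|I|/|K|$. If instead $|n^m|\le 1$ for some $m\in\{2,3\}$, the size estimate \eqref{E:eq62} alone suffices: the singularity at $x_m - y_m = 0$ is tamed by $D_\theta(|L^1|, x_2-y_2, x_3-y_3)$, which vanishes to order $\theta$ at the Zygmund manifold, yielding an integrable gain of $2^{-k^{m'}(1+\theta)}$ in the separated direction $m'$.

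The main obstacle is the exponent arithmetic verifying that the gains $2^{-k^m\alpha_{23}}2^{-|k^2-k^3|\theta}$ (from the H\"older subcase) and $2^{-k^{m'}(1+\theta)}$ (from the size-only subcase) each dominate the target exponent $\varphi(k)|I|/|K| = 2^{-k^2-k^3-k^2\min\{\alpha_{23},\theta\}-\max\{k^3-k^2,0\}\theta}$ in all regimes determined by the relative sizes of $\alpha_{23}$ vs.\ $\theta$ and of $k^2$ vs.\ $k^3$. This verification is a routine but careful adaptation of the arithmetic already carried out in Lemma~\ref{lem:as}, and it goes through without surprise once the partial kernel has been invoked.
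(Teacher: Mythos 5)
Your proposal is correct and follows essentially the same route as the paper's proof: split the first-variable Haar pairing over pairs of children of $I^1$, reduce the off-diagonal pairs $(I^1)'\neq(I^1)''$ to the Adjacent/Separated lemma (whose proof indeed uses only adjacency, not cancellation, in the first variable), and handle the diagonal pairs via the partial-kernel representation with $K_{(I^1)'}$ together with the size and H\"older estimates \eqref{E:eq62}--\eqref{E:eq63}, subcased according to $\min\{|n^2|,|n^3|\}$. One small imprecision: in the H\"older subcase the cancellation that is exploited is the full two-dimensional mean zero $\int_{\R^2}h_{I^{2,3}}=0$ paired with the joint one-parameter H\"older estimate \eqref{E:eq63} in $(x_2,x_3)$ jointly, not a marginal cancellation ``in whichever of $y_2,y_3$ it has cancellation''; this does not affect the conclusion.
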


\begin{proof}
We may split the pairing as 
\[
\sum_{(I^1)', (I^1)''\in {\rm ch}(I^1)}\langle T(h_{I^1}^01_{(I^1)'}\otimes h_{I^{2,3}}), h_{I^1}1_{(I^1)''}\otimes h_{I^{2,3}\dot+n^{2,3}}^0\rangle.
\]
If $(I^1)'\neq (I^1)''$, this can essentially be reduced to Lemma \ref{lem:as}. So we focus on the case $(I^1)'= (I^1)''$. 
By the partial kernel representation we can write the pairing as 
\begin{equation}\label{eq:E12041}
\pm |I^1|^{-1}\iint K_{(I^1)'}(x_{2,3}, y_{2,3}) h_{I^{2,3}}(y_{2,3}) h_{I^{2,3}\dot+ n^{2,3}}^0(x_{2,3})\ud y_{2,3}\ud x_{2,3}.
\end{equation}

Suppose that $|n^3| < 2 \le |n^2|.$ By the size estimate of $K_{(I^1)'}$, see \eqref{E:eq62}, 
there holds that
\begin{equation*}
\begin{split}
|K_{(I^1)'}(x_{2,3}, y_{2,3})|
&\lesssim \frac{|I^1|}{2^{k^2}|I^2||x_3-y_3|} 
\Big(\frac{|x_3-y_3|}{|I^1|2^{k^2}|I^2|}
+\frac{|I^1|2^{k^2}|I^2|}{|x_3-y_3|}\Big)^{-\theta}\\
& \le \frac{|I^1|}{|I^1|^{\theta}(2^{k^2}|I^2|)^{1+\theta}}\cdot \frac 1{|x_3-y_3|^{1-\theta}}.
\end{split}
\end{equation*}
Therefore, the term in \eqref{eq:E12041} is controlled by 
\begin{equation*}
\begin{split}
\frac 1{|I^1|^{\theta}(2^{k^2}|I^2|)^{1+\theta}} & \frac{|I^2|}{|I^3|}
\iint \frac {1_{I^3}(y_3) 1_{I^3\dot + n^3}(x_3)}{|x_3-y_3|^{1-\theta}}\ud y_3 \ud x_3 \\
&\lesssim  2^{-k^2(1+\theta)}
\sim 2^{-k^2\theta}\frac{|I|}{|K|}
\le   \varphi(k)\frac{|I|}{|K|}.
\end{split}
\end{equation*}
Similarly, when $|n^2|<2 \le |n^3|$ the coefficient is controlled by 
$2^{-k^3\theta}|I|/|K| \sim \varphi(k)|I|/|K|$. 

Assume then that $\min \{|n^2|, |n^3|\}\ge 2$. Now we can use the cancellation of $h_{I^{2,3}}$ and the H\"older estimate of the kernel $ K_{(I^1)'}$, see \eqref{E:eq63}. We get that \eqref{eq:E12041} is controlled by 
\begin{align*}
&|I^1|^{-1}|I^{2,3}|(2^{-k^2}+2^{-k^3})^{\alpha_{23}}
\frac{|I^1|}{2^{k^2+k^3}|I^2||I^3|}2^{-|k^3-k^2|\theta}\\
&\hspace{1cm}= (2^{-k^2}+2^{-k^3})^{\alpha_{23}}2^{-|k^3-k^2|\theta} \frac 1{2^{k^2+k^3}}
\le \varphi(k)\frac{|I|}{|K|}.\qedhere
\end{align*}
\end{proof}

\begin{lem}[Identical/Adjacent]\label{E:lem4}
The estimate \eqref{E:eq119} holds in the case that $n^1=0$ and $\max\{|n^2|, |n^3|\}=1$. 
\end{lem}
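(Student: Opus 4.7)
The proof follows the template of Lemma \ref{E:lem2} with the roles of the first coordinate and the $(2,3)$-block swapped. Since $n^1=0$, the two first-coordinate Haar factors $h_{I^1}^0$ and $h_{I^1}$ live on the same interval, so the full-kernel integration is not directly applicable; the plan is to split the pairing via the children of $I^1$:
\begin{equation*}
 \sum_{(I^1)', (I^1)'' \in \ch(I^1)} \big\langle T\big(h_{I^1}^0 1_{(I^1)'} \otimes h_{I^{2,3}}\big),\, h_{I^1} 1_{(I^1)''} \otimes h_{I^{2,3} \dot+ n^{2,3}}^0 \big\rangle,
\end{equation*}
and handle the two resulting geometries separately.

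For $(I^1)' \neq (I^1)''$, the first-coordinate supports are disjoint sibling half-intervals of $I^1$, while the $(2,3)$-block supports are disjoint adjacent rectangles (since $\max\{|n^2|,|n^3|\}=1$ forces $I^{2,3}$ and $I^{2,3}\dot+ n^{2,3}$ to be disjoint). This is the same geometry as Adjacent/Adjacent, so I would use the full-kernel size estimate \eqref{E:eq30} together with the single-variable integration bound \eqref{E:eq117} in the same way as in Lemma \ref{lem:aa}, yielding a bound of order one.

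For $(I^1)' = (I^1)''$, since $h_{I^1}^0 1_{(I^1)'} \cdot h_{I^1} 1_{(I^1)'} = \pm |I^1|^{-1} 1_{(I^1)'} \otimes 1_{(I^1)'}$ and $I^{2,3}, I^{2,3}\dot+ n^{2,3}$ are disjoint, the partial kernel representation \eqref{E:eq67} rewrites this term as
\begin{equation*}
 \pm |I^1|^{-1} \iint K_{(I^1)'}(x_{2,3}, y_{2,3})\, h_{I^{2,3}}(y_{2,3})\, h_{I^{2,3}\dot+ n^{2,3}}^0(x_{2,3})\, \ud y_{2,3} \ud x_{2,3}.
\end{equation*}
This I would bound using the size and H\"older estimates \eqref{E:eq62}--\eqref{E:eq63} for $K_{(I^1)'}$, exploiting the cancellation of $h_{I^{2,3}}$ when convenient, essentially by applying the two-variable analogue of the Lemma \ref{lem:aa} argument: the fixed scale $|(I^1)'|\sim|I^1|$ plays the role of $|x_1-y_1|$ inside the $D_\theta$ factor, and $|x_i-y_i|\lesssim|I^i|$ for $i=2,3$ in the support, so the Zygmund relation $|I^1||I^2|=|I^3|$ keeps the $D_\theta$ factor of order one (and even provides extra decay in the subcases where $n^2=0$ or $n^3=0$ forces one of the differences to be small).

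The normalization required by \eqref{E:eq119} is modest: here $k=(0,k^2,k^3)$ with $k^2,k^3\in\{0,2\}$, so both $|I|/|K|$ and $\varphi(k)$ are bounded below by absolute constants, and the claim reduces to showing that the pairing is $\lesssim 1$, which is exactly what the two cases above deliver. The main obstacle is purely bookkeeping: verifying that the $D_\theta$ factor in the partial-kernel bound is controlled uniformly across the three adjacent subcases $(|n^2|,|n^3|)\in\{(1,0),(0,1),(1,1)\}$, and that the passage through the partial kernel $K_{(I^1)'}$ is justified (it is, since $I^{2,3}$ and $I^{2,3}\dot+ n^{2,3}$ are disjoint in all three).
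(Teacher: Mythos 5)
Your proposal matches the paper's proof: split over children of $I^1$, handle $(I^1)'\neq(I^1)''$ via the Adjacent/Adjacent argument of Lemma \ref{lem:aa}, and for $(I^1)'=(I^1)''$ pass to the partial kernel $K_{(I^1)'}$ in \eqref{eq:E12041} and bound it with the size estimate \eqref{E:eq62} combined with the maximal-function estimate \eqref{E:eq117}. The only inessential difference is that the H\"older estimate \eqref{E:eq63} and the cancellation of $h_{I^{2,3}}$ are not actually needed; the size estimate and the \emph{decay} of the $D_\theta$-factor (which is what makes the near-diagonal integration converge in the coordinate where $n^m=0$, not merely a factor ``of order one'') already suffice.
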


\begin{proof}
As in the previous case we can reduce the estimate to bounding the integral \eqref{eq:E12041}. 
The size estimate \eqref{E:eq62} of $K_{(I^1)'}$ gives that the integral dominated by
\begin{align*}
\iint \frac{1}{|x_2-y_2||x_3-y_3|} 
\Big(\frac{|x_3-y_3|}{|I^1||x_2-y_2|}+\frac{|I^1||x_2-y_2|}{|x_3-y_3|}\Big)^{-\theta} 
 h_{I^{2,3}}^0(y_{2,3}) h_{I^{2,3}\dot+ n^{2,3}}^0(x_{2,3})\ud y_{2,3}\ud x_{2,3}.
\end{align*}
Suppose that $|n^2|=1$. We use \eqref{E:eq117} with respect to $y_3$ which gives
\begin{equation*}
\iint \frac{h_{I^2}^0(y_2) h_{I^2\dot+ n^2}^0(x_2)}{|x_2-y_2|} \ud x_2 \ud y_2
\int M(h_{I^3}^0) (x_3) h_{I^3\dot+ n^3}^0(x_3) \ud x_3
\lesssim 1.
\end{equation*}
The case $|n^3|=1$ is estimated similarly.
\end{proof}

\begin{lem}[Identical/Identical]
The estimate \eqref{E:eq119} holds in the case that $n=0$.
\end{lem}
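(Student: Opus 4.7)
With $n=0$ we have $k=(0,0,0)$, so $(|k|+1)\varphi(k)|I|/|K|=1$ and the goal is to show $|\langle T(h^0_{I^1}\otimes h_{I^{2,3}}), h_{I^1}\otimes h^0_{I^{2,3}}\rangle|\lesssim 1$. After pulling out the $L^2$-normalization factors of the two non-cancellative Haar functions this reduces to proving
$$|B|\lesssim |I|^{1/2}, \qquad B:=\langle T(1_{I^1}\otimes h_{I^{2,3}}),\, h_{I^1}\otimes 1_{I^{2,3}}\rangle.$$
The difficulty is that the supports of input and output overlap in \emph{both} parameters, so no kernel representation applies directly; the decisive tool will be the cancellation assumption \eqref{E:eq143}.

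The plan is to first invoke $\langle T(1_{I^1}\otimes h_{I^{2,3}}),1_{J^1}\otimes 1\rangle = 0$ from \eqref{E:eq143}. Taking $J^1=(I^1)_l$ and $J^1=(I^1)_r$ and forming their difference gives $\langle T(1_{I^1}\otimes h_{I^{2,3}}),\,h_{I^1}\otimes 1\rangle = 0$, so splitting $1 = 1_{I^{2,3}}+1_{(I^{2,3})^c}$ on the output yields
$$B = -\langle T(1_{I^1}\otimes h_{I^{2,3}}),\, h_{I^1}\otimes 1_{(I^{2,3})^c}\rangle,$$
where now the $(2,3)$-supports are disjoint. Next I would expand $1_{I^1}=\sum_{P\in\ch(I^1)}1_P$ and $h_{I^1}=|I^1|^{-1/2}\sum_{P'\in\ch(I^1)}\sigma_{P'}1_{P'}$, producing four subpairings of the form $\langle T(1_P\otimes h_{I^{2,3}}),1_{P'}\otimes 1_{(I^{2,3})^c}\rangle$. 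When $P=P'$ the first parameters match and I would use the partial kernel representation \eqref{E:eq67}, then subtract $K_P(x_{2,3},c_{I^{2,3}})$ (allowed because $\int h_{I^{2,3}}=0$) and apply the H\"older estimate \eqref{E:eq63} of $K_P$; when $P\ne P'$ all four supports are disjoint and I would use the full kernel with the mixed estimate \eqref{E:eq28}, exactly as in Lemmas \ref{lem:aa} and \ref{E:lem4}.

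Each resulting $(2,3)$-integral is a standard $D_\theta$-weighted Calder\'on--Zygmund tail of the type already controlled in the preceding lemmas of this section, producing a factor of order $\|h_{I^{2,3}}\|_1\lesssim |I^{2,3}|^{1/2}$, while the first-parameter integration contributes at most $|I^1|$ (equal to $|P|$ in the diagonal case, and $\iint|x_1-y_1|^{-1}\,dx_1\,dy_1\lesssim|I^1|$ over adjacent children of $I^1$ in the off-diagonal case). Summing the four terms and restoring the prefactor $|I^1|^{-1/2}$ therefore gives $|B|\lesssim|I^1|^{1/2}|I^{2,3}|^{1/2}=|I|^{1/2}$, as required. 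The main conceptual obstacle to recognize is that no sub-rectangle $P\times Q$ with $P\in\ch(I^1)$ and $Q\in\ch(I^{2,3})$ is Zygmund (they satisfy $\ell(R^3)=2\ell(R^1)\ell(R^2)$), so the weak boundedness property \eqref{E:eq74} is of no direct use; it is the cancellation condition \eqref{E:eq143} that trades the overlapping $(2,3)$-supports for disjoint ones and brings everything into the standard Calder\'on--Zygmund regime already exploited in the eight preceding lemmas.
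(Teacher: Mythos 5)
Your proof takes a genuinely different route from the paper's, and the comparison is worth spelling out. The paper splits the pairing over children in \emph{both} the first variable and the $(2,3)$-variable (on both input and output sides), producing four cases: three of them reduce to the Adjacent/Adjacent, Adjacent/Identical and Identical/Adjacent lemmas already proved, and the fourth (fully diagonal) case is handled by the weak boundedness property $|\langle T1_{(I^1)'\times(I^{2,3})'},1_{(I^1)'\times(I^{2,3})'}\rangle|\lesssim|(I^1)'||(I^{2,3})'|$. You instead invoke the cancellation assumption \eqref{E:eq143} to replace the overlapping output indicator $1_{I^{2,3}}$ by $-1_{(I^{2,3})^c}$, so that the $(2,3)$-supports become disjoint, then split only over children of $I^1$ and use the partial kernel (diagonal) or the full kernel (off-diagonal) together with the decay of $D_\theta$ to close the tail.

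A notable feature of your approach is that it avoids the weak boundedness property altogether. This is more than a stylistic difference: you correctly observe that the sub-rectangle $(I^1)'\times(I^{2,3})'$ is \emph{not} a Zygmund rectangle (its dimensions satisfy $\ell((I^3)')=2\ell((I^1)')\ell((I^2)')$), whereas the weak boundedness property \eqref{E:eq74} is stated only for Zygmund rectangles. The paper nevertheless applies \eqref{E:eq74} to this rectangle in the fully diagonal case; to make that step literal one would have to enlarge the WBP hypothesis to cover these near-Zygmund rectangles (which is easy to do and consistent with how WBP is actually verified for the concrete operator classes — the verification via \eqref{E:eq133} in Proposition \ref{E:eqlem6} works for arbitrary rectangles). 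Your cancellation-based argument sidesteps this issue entirely, which is a real conceptual gain. On the other hand, your proof is a sketch in places: the partial-kernel representation \eqref{E:eq67} is stated for two disjoint rectangles, not for $1_{(I^{2,3})^c}$, so one has to decompose $(I^{2,3})^c$ into rectangular shells (with the H\"older/$D_\theta$ decay furnishing convergence of the tail), and the annular region $3I^{2,3}\setminus I^{2,3}$ where the H\"older inequality's separation hypothesis fails needs its own size-estimate argument; similarly the off-diagonal case $P\neq P'$ requires care near the common boundary of $P,P'$ and of $I^{2,3},(I^{2,3})^c$. These are all standard manoeuvres already present in Lemmas \ref{lem:sa}--\ref{E:lem4}, and your dimensional bookkeeping ($|P|\cdot|I^{2,3}|^{1/2}$ from the kernel tail, the prefactor $|I^1|^{-1/2}$, total $|I|^{1/2}$) is consistent, so the structure is sound even though the details aren't fully written out.
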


\begin{proof}
We split the pairing into terms of the form
$$
\langle T(h_{I^1  }^0 1_{(I^1)'} \otimes h_{I^{2,3} }1_{(I^{2,3})'}), 
h_{I^1 }1_{(I^1)''} \otimes h_{I^{2,3}}^0 1_{(I^{2,3})'' }\rangle,
$$
where $(I^1)', (I^1)'' \in \ch (I^1)$ and $(I^{2,3})',(I^{2,3})'' \in \ch(I^{2,3})$. 
The case $(I^1)' \not =(I^1)''$ and $(I^{2,3})' \not=(I^{2,3})''$ is 
essentially contained in Lemma \ref{lem:aa}, the case $(I^1)' \not =(I^1)''$ and $(I^{2,3})' =(I^{2,3})''$ 
in Lemma \ref{E:lem2} and the case $(I^1)' =(I^1)''$ and $(I^{2,3})' \not =(I^{2,3})''$ in Lemma \ref{E:lem4}.
Finally, the estimate in the case $(I^1)' =(I^1)''$ and $(I^{2,3})'  =(I^{2,3})''$ follows immediately from 
the weak boundedness property 
$$
|\langle T(1_{(I^1)' \times (I^{2,3})'}), 1_{(I^1)' \times (I^{2,3})'} \rangle | 
\lesssim |(I^1)'| |(I^{2,3})'|,
$$
see \eqref{E:eq74}.
\end{proof}

\end{document}